\newcommand\A{\mathcal{A}}
\newcommand\AAA{\mathbb{A}}
\newcommand\CC{\mathbb{C}}
\newcommand\PP{\mathbb{P}}
\newcommand\ZZ{\mathbb{Z}}
\newcommand\ZZp{\ZZ_{>0}}
\newcommand\ZZnn{\ZZ_{\ge 0}}
\newcommand\QQ{\mathbb{Q}}
\newcommand\RR{\mathbb{R}}
\newcommand\RRnn{\RR_{\ge 0}}
\newcommand\xx{\mathbf{x}}
\newcommand\yy{\mathbf{y}}
\newcommand\I{\mathcal{I}}
\newcommand\R{\mathcal{R}}
\newcommand{\ADE}{\mathbf{ADE}}
\newcommand{\Aone}{{\mathbf A}_1}
\newcommand{\Atwo}{{\mathbf A}_2}
\newcommand{\Athree}{{\mathbf A}_3}
\newcommand{\Afour}{{\mathbf A}_4}
\newcommand{\Afive}{{\mathbf A}_5}
\newcommand{\Dfour}{{\mathbf D}_4}
\newcommand{\Dfive}{{\mathbf D}_5}
\newcommand{\Esix}{{\mathbf E}_6}
\newcommand{\tS}{{\widetilde S}}
\newcommand{\abs}[1]{\left\|#1\right\|_\infty}
\newcommand{\ee}{\boldsymbol{\eta}}
\newcommand{\e}{\eta}
\newcommand{\te}{{\tilde\eta}}
\newcommand\dd{\,\mathrm{d}}
\newcommand\Where{\,\Big|\,}
\newcommand\WHERE{\,\Bigg|\,}
\newcommand{\ex}[1]{*+<5pt>[o][F]{E_{#1}}}
\newcommand{\li}[1]{*+<3pt>[F]{E_{#1}}}
\newcommand{\classrep}{\mathcal{C}}
\newcommand{\classtuple}{\mathbf{C}}
\newcommand{\OO}{\mathcal{O}}
\newcommand{\eI}{I}
\newcommand{\eII}{\mathbf{I}}
\newcommand{\N}{\mathfrak{N}}
\newcommand{\id}[1]{\mathfrak{#1}}
\newcommand{\aaa}{\id a}
\newcommand{\bbb}{\id b}
\newcommand{\ideal}{\id a}
\newcommand{\p}{\id p}
\newcommand{\kc}{\id{k}_\id{c}}
\newcommand{\kb}{\id{k}_\id{b}}
\newcommand{\idealclass}{\mathfrak{k}}
\newcommand{\bO}{\boldsymbol{\mathcal{O}}}
\newcommand{\Ao}{\Pi_1}
\newcommand{\At}{\Pi_2}
\newcommand{\rk}{\rho}
\newcommand{\eeA}{\boldsymbol{\eta}_{\boldsymbol{A}}}
\newcommand{\eeB}{\boldsymbol{\eta}_{\boldsymbol{B}}}
\newcommand{\eeC}{\boldsymbol{\eta}_{\boldsymbol{C}}}
\newcommand{\eIIA}{\eII_{\boldsymbol{A}}}
\newcommand{\eIIB}{\eII_{\boldsymbol{B}}}
\newcommand{\eIIC}{\eII_{\boldsymbol{C}}}
\newcommand\rto{\dashrightarrow}
\DeclareMathOperator\Pic{Pic}
\DeclareMathOperator\vol{vol}
\DeclareMathOperator\diam{diam}
\DeclareMathOperator\Cox{Cox}
\newtheorem{theorem}{Theorem}
\newtheorem{lemma}[theorem]{Lemma}
\newtheorem{prop}[theorem]{Proposition}
\newtheorem{claim}[theorem]{Claim}
\newtheorem{corollary}[theorem]{Corollary}
\theoremstyle{definition}
\newtheorem{definition}[theorem]{Definition}
\newtheorem{remark}[theorem]{Remark}
\newtheorem*{ack}{Acknowledgements}
\numberwithin{theorem}{section}
\numberwithin{equation}{section}
\begin{document}

\setcounter{tocdepth}{1}

\title{Counting imaginary quadratic points via universal torsors}

\author{Ulrich Derenthal}

\address{Mathematisches Institut, Ludwig-Maximilians-Universit\"at
  M\"unchen, Theresienstr. 39, 80333 M\"unchen, Germany}

\email{ulrich.derenthal@mathematik.uni-muenchen.de}

\author{Christopher Frei}

\address{Institut f\"ur Mathematik A, Technische Universit\"at Graz,
  Steyrergasse 30, 8010 Graz, Austria}

\email{frei@math.tugraz.at}

\date{April 11, 2013}

\begin{abstract}
  A conjecture of Manin predicts the distribution of rational points
  on Fano varieties.  We provide a framework for proofs of Manin's
  conjecture for del Pezzo surfaces over imaginary quadratic fields,
  using universal torsors. Some of our tools are formulated over
  arbitrary number fields.

  As an application, we prove Manin's conjecture over imaginary
  quadratic fields $K$ for the quartic del Pezzo surface $S$ of
  singularity type $\Athree$ with five lines given in $\PP_K^4$ by the
  equations $x_0 x_1 - x_2 x_3 = x_0 x_3 + x_1 x_3 + x_2 x_4 = 0$.
\end{abstract}

\subjclass[2010] {11D45 (14G05, 12A25)}

%
%

\maketitle

\tableofcontents

\section{Introduction}

Let $S$ be a del Pezzo surface defined over a number field $K$ with
only $\ADE$-singularities, let $H$ be a height function on $S(K)$
given by an anticanonical embedding, and let $U$ be the subset
obtained by removing the lines in $S$. If $S(K)$ is Zariski-dense in
$S$, we are interested in the counting function
\begin{equation}\label{eq:def_NUH}
  N_{U, H}(B) := |\{\xx \in U(K) \mid H(\xx)\leq B\}|.
\end{equation}
In this setting, Manin's conjecture \cite{MR89m:11060, MR1032922} (generalized in
\cite{MR1679843} to include our singular del Pezzo surfaces) predicts
an asymptotic formula of the form
\begin{equation}\label{eq:manin_formula}
  N_{U, H}(B) = c_{S, H}B(\log B)^{\rk-1}(1 + o(1)),
\end{equation}
where $\rho$ is the rank of the Picard group of a minimal
desingularization of $S$. The positive constant $c_{S, H}$ was made
explicit by Peyre \cite{MR1340296} and Batyrev--Tschinkel
\cite{MR1679843}.

Over $\QQ$, Manin's conjecture is known for several del Pezzo surfaces
and some other classes of varieties. To our knowledge, all currently
known cases of Manin's conjecture over number fields beyond $\QQ$
concern varieties with a suitable action of an algebraic group and can
be proved via harmonic analysis on adelic points (e.g., flag varieties
\cite{MR89m:11060}, toric varieties \cite{MR1620682}, and equivariant
compactifications of additive groups \cite{MR1906155}; this includes
some del Pezzo surfaces, classified in \cite{MR2753646}).

In this article, we provide a framework for proofs of the above
formula over imaginary quadratic fields for del Pezzo surfaces without
such a special structure.  Where no additional efforts are required,
our results are formulated for arbitrary number fields.

These methods are then applied to prove Manin's conjecture for the del
Pezzo surfaces over arbitrary imaginary quadratic fields $K$ of degree
$4$ and type $\Athree$ with five lines, with respect to their
anticanonical embeddings in $\PP_K^4$ given by the equations
\begin{equation}\label{eq:def_S}
  x_0 x_1 - x_2 x_3 = x_0 x_3 + x_1 x_3 + x_2 x_4 = 0\text.
\end{equation}
This is the first proof of Manin's conjecture over number fields
beyond $\QQ$ for varieties where the harmonic analysis approach cannot
be applied.

Similar applications of our framework allow the treatment of at least the
split quartic del Pezzo surfaces of types $\Athree+\Aone$, $\Afour$, $\Dfour$,
$\Dfive$ over imaginary quadratic fields \cite{arXiv:1304.3352}.

\subsection{Background}
Apart from the general results mentioned above for varieties with
large group actions, Manin's conjecture is known over $\QQ$ for
projective hypersurfaces whose dimension is large enough compared to
their degree, via the Hardy--Littlewood circle method 
\cite{MR0150129, MR1340296}.

For low-dimensional varieties without such actions of algebraic
groups, Manin's conjecture is known so far only in isolated cases over
$\QQ$, for heights given by specific anticanonical
embeddings. In particular, the case of del Pezzo surfaces has
been investigated from the beginning (e.g., see
\cite[Proposition~5.4]{MR1032922} and \cite[\S 8--11]{MR1340296} for
some toric del Pezzo surfaces of degree $\ge 6$ over $\QQ$,
\cite[Appendix]{MR89m:11060}, \cite{MR1681100} for computational
evidence in degree $3$ over $\QQ$).

The most important technique is the use of universal torsors, which
were invented by Colliot-Th\'el\`ene and Sansuc (see
\cite{MR89f:11082}, for example) and first applied to Manin's conjecture by
Salberger (see \cite{MR1679841}, \cite{MR1679842}). The testing ground
was a new proof in the case of split toric varieties over $\QQ$
\cite{MR1679841}.

The central milestones beyond toric varieties were the first examples
of possibly singular del Pezzo surfaces of degrees $5$
\cite{MR1909606}, $4$ \cite{MR2373960}, $3$ \cite{MR2332351}, and $2$
\cite{arXiv:1011.3434} that are not covered by \cite{MR1620682} or
\cite{MR1906155}. A long series of further examples followed, all of
them over $\QQ$, each dealing with difficulties not encountered
before. Also all higher-dimensional results involving universal
torsors concern varieties over $\QQ$ (specific cubic hypersurfaces of
dimension $3$ \cite{MR2329549} and $4$ \cite{arXiv:1205.0190}).

A relatively general strategy has emerged for split singular del Pezzo
surfaces over $\QQ$ whose universal torsors are open subsets of affine
hypersurfaces, as classified in \cite{math.AG/0604194}. This is
summarized in \cite{MR2520770}. In that basic form, it turns out to be
sufficient for quartic del Pezzo surfaces over $\QQ$ of types $\Dfive$
\cite{MR2320172}, $\Dfour$ \cite{MR2290499}, $\Afour$
\cite{MR2543667}, $\Athree+\Aone$ \cite{MR2520770} and $\Athree$ with
five lines (see Theorem~\ref{thm:a3_main_Q}).

For the cubic surfaces of types $\Esix$ \cite{MR2332351}, $\Dfive$
\cite{MR2520769} and $\Afive+\Aone$ \cite{arXiv:1205.0373} over $\QQ$,
the strategy of \cite{MR2520770} goes through when combined with
significant further analytic input. In other cases such as
\cite{arXiv:1207.2685}, larger deviations from \cite{MR2520770} seem
necessary.

Over number fields beyond $\QQ$, we have the classical result of
Schanuel \cite{MR557080} for projective spaces (which are toric) that
can be interpreted as a basic case of the universal torsor approach,
and a new proof of Manin's conjecture via universal torsors for the
toric singular cubic surface of type $3\Atwo$ (\cite{arXiv:1105.2807}
over imaginary quadratic fields of class number $1$ and
\cite{arXiv:1204.0383} over arbitrary number fields).

Our goal is to generalize the universal torsor approach towards
Manin's conjecture to non-toric varieties over number fields other than
$\QQ$. The two main general challenges arise from the unavailability
of unique factorization (if the class number is greater than $1$) and
from difficulties in regard to counting lattice points (if $K$ has
more than one Archimedean place, whence the unit group of its ring
of integers is infinite). Furthermore, the existing results over $\QQ$
often combine the universal torsor method with subtle applications of
deep results from analytic number theory that are only available over
$\QQ$ in their full strength. To mitigate these additional
difficulties, it seems natural to focus on singular quartic del Pezzo
surfaces first.

\subsection{Results}
Our main results are the techniques presented in Sections
\ref{sec:passage}--\ref{sec:further_summations}, which are described
in slightly more detail below.

They allow a rather straightforward treatment of the split quartic del
Pezzo surfaces of types $\Athree$ with five lines, $\Athree+\Aone$, $\Afour$,
$\Dfour$, $\Dfive$ over imaginary quadratic fields. They should
also be enough for some del Pezzo surfaces of higher degree (e.g., the
ones treated over $\QQ$ in \cite{arXiv:0710.1583} of type $\Atwo$ in
degree $5$, \cite{MR2769338} of type $\Atwo$ and \cite{MR2559866} of
type $\Aone$ with three lines in degree $6$). We expect that an
application to the cubic cases mentioned above or to other quartic del
Pezzo surfaces (such as the ones treated over $\QQ$ in \cite{MR2961294}
of type $\Athree$ with four lines, \cite{MR2853047} of types $3\Aone$
and $\Atwo+\Aone$, \cite{MR2874644}, \cite{MR2980925} of types
$2\Aone$ with eight lines, and the smooth quartic del Pezzo surfaces
of \cite{MR2838351}) would require additional work.

In Section \ref{sec:A3} we demonstrate how to apply our techniques by
proving the following case of Manin's conjecture.

Let $K \subset \CC$ be an imaginary quadratic field with ring of
integers $\OO_K$, discriminant $\Delta_K$, class number $h_K$, and
with $\omega_K := |\OO_K^\times|$ units. On $\PP_K^4(K)$, we use the
(exponential) Weil height given by
\begin{equation}\label{eq:height}
  H(x_0 : \cdots : x_4) := \frac{\max\{\abs{x_0}, \dots, \abs{x_4}\}}{\N(x_0\OO_K+\dots+x_4\OO_K)}\text,
\end{equation}
where $\abs{x_i} := |x_i|^2$ for the usual complex absolute value
$|\cdot |$ and $\N\aaa$ denotes the absolute norm of a fractional ideal
$\aaa$.

Let $S \subset \PP^4_K$ be the del Pezzo surface of degree $4$ defined
by \eqref{eq:def_S}.  Up to isomorphism, it is the unique split del
Pezzo surface that contains a singularity of type $\Athree$ and five
lines.

\begin{theorem}\label{thm:a3_main}
  Let $K$ be an imaginary quadratic field. Let $U$ be the complement of the
  lines in the del Pezzo surface $S \subset \PP^4_K$ defined over $K$
  by \eqref{eq:def_S}.  For $B \geq 3$, we have
  \begin{equation*}
    N_{U,H}(B) = c_{S, H} B(\log B)^5 + O(B(\log B)^4\log \log B),
  \end{equation*}
  with
  \begin{equation*}
    c_{S,H} =\frac{1}{4320}\cdot \frac{(2\pi)^6 h_K^6}{\Delta_K^4\omega_K^6}\cdot \prod_\p \left(1-\frac{1}{\N\p}\right)^6\left(1+\frac{6}{\N\p}+\frac{1}{\N\p^2}\right) \cdot \omega_\infty\text,
  \end{equation*}
  where $\p$ runs over all nonzero prime ideals of $\OO_K$ and
  \begin{equation*}
   \omega_\infty = \frac{12}{\pi}\int_{\max\{\abs{z_0z_2^2},
    \abs{z_1z_2^2}, \abs{z_2^3}, \abs{z_0z_1z_2},
    \abs{z_0z_1(z_0+z_1)}\}\leq 1}\dd z_0 \dd z_1 \dd z_2.
  \end{equation*}
\end{theorem}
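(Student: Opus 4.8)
The plan is to follow the universal torsor method adapted to imaginary quadratic fields, as developed in Sections \ref{sec:passage}--\ref{sec:further_summations} of this paper. First I would compute the Cox ring of the minimal desingularization $\widetilde S$ of $S$ and describe the universal torsor explicitly: since $S$ is split of type $\Athree$ with five lines, $\Pic\widetilde S$ has rank $\rk = 6$, so the Cox ring is generated by ten sections (nine corresponding to the negative curves, one extra) subject to one torsor equation. Parametrizing $U(K)$ then amounts to counting integral points $(\eta_1,\dots)$ on this torsor, subject to the torsor equation, to coprimality conditions between the ideals generated by the coordinates, and to the height condition $H(\xx)\le B$ expressed in terms of the $\eta_i$. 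The passage from $U(K)$ to torsor points requires choosing ideal class representatives (hence the factor $h_K^6$ and the $\omega_K^6$ from unit normalization will appear at this stage); this is where the absence of unique factorization for $h_K>1$ must be handled carefully, using the framework from Section \ref{sec:passage}.

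The core of the argument is then an iterated summation over the torsor variables. I would first sum over the variable appearing linearly in the torsor equation (the analogue of solving for one coordinate), converting the equation into a divisibility condition and producing a congruence. Then I would carry out the successive summations over the remaining variables one at a time: at each step, the innermost sum is over lattice points in a region of $\CC$ (a copy of $\OO_K$ twisted by an ideal) satisfying a congruence condition, and I would replace it by its main term (the volume of the region divided by the covolume, times a density factor for the congruence) plus an error term, using the lattice-point counting results of Section \ref{sec:further_summations} that are uniform in the shape of the region. The local densities $\bigl(1-\tfrac{1}{\N\p}\bigr)^6\bigl(1+\tfrac{6}{\N\p}+\tfrac{1}{\N\p^2}\bigr)$ emerge from assembling the congruence conditions prime by prime (a Möbius-type computation over the coprimality constraints), and the constant $\tfrac{1}{4320}$ together with the real density $\omega_\infty$ come from the final real integration over the leftover continuous parameters $z_0,z_1,z_2$, after the discrete variables have been integrated out and the $(\log B)^5$ is produced by the five nested sums over "free" ideal-like variables ranging up to powers of $B$.

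The main obstacle, as the introduction signals, is controlling the error terms through all five summations while keeping the dependence on $K$ (in particular on $\Delta_K$ and on the infinitely many units) under control: each time a sum over a complex lattice is replaced by an integral, the error involves the boundary of the region and the covolume, and these errors must be shown to be summable over the outer variables, ultimately contributing only $O(B(\log B)^4\log\log B)$. This requires the region-counting estimates to be genuinely uniform and the height region to have suitably tame (Lipschitz) boundary after each projection, which is the technical heart of Section \ref{sec:further_summations}. A secondary difficulty is bookkeeping the coprimality conditions correctly so that the Euler product comes out exactly as stated; I would verify the leading constant against the Peyre--Batyrev--Tschinkel prediction \cite{MR1340296, MR1679843} as a consistency check, matching $\alpha(\widetilde S)\beta(\widetilde S)\tau_H(\widetilde S)$ with $\tfrac{1}{4320}\cdot\tfrac{(2\pi)^6 h_K^6}{\Delta_K^4\omega_K^6}\cdot\prod_\p(\cdots)\cdot\omega_\infty$, where the factor $\tfrac1{4320}$ should equal $\alpha(\widetilde S)$ and the Archimedean and finite densities assemble into $\tau_H$.
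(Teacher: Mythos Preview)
Your broad outline matches the paper's approach: pass to the universal torsor via Section~\ref{sec:passage}, then perform iterated summations using the machinery of Sections~\ref{sec:first_summation}--\ref{sec:further_summations}. Two factual slips: the Cox ring has nine generators $\te_1,\dots,\te_9$ (eight negative curves---three $(-2)$-curves from the $\Athree$ and five $(-1)$-curves---plus one extra), not ten; and the lattice-point counting is in Section~\ref{sec:lattice_points}, while Section~\ref{sec:further_summations} handles the final sums over ideals via the $\Theta_r'(C)$ machinery.

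The substantive gap is that you do not mention splitting the count into cases according to the relative sizes of the variables before summing. The paper needs \emph{four} different orders of summation (two handled by the symmetry swapping $(1,4,7)\leftrightarrow(3,6,8)$): first split according to whether $\N\eI_8\ge\N\eI_7$ or not, then within the first case split again according to whether $\N\eI_7>\N\eI_4$ or $\N\eI_4\ge\N\eI_7$. These dichotomies are not cosmetic. After the first summation (over $\e_8$, with $\e_9$ determined by the torsor equation), the error from Proposition~\ref{prop:first_summation} must be summed over the remaining variables $\e_1,\dots,\e_7$; the height conditions \eqref{eq:A3_height_1}--\eqref{eq:A3_height_5} alone do not give enough control, because \eqref{eq:A3_height_2} involves $\e_8$, which has already been summed. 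The assumption $\N\eI_8\ge\N\eI_7$ converts \eqref{eq:A3_height_2} into the usable bound $\N\eI_1\N\eI_2^2\N\eI_3^2\N\eI_5\N\eI_6^2\N\eI_7\le B$, and similarly the second dichotomy is needed to control the error after the second summation. Without these splits your error terms will not be $O(B(\log B)^4\log\log B)$; this is exactly the ``main obstacle'' you allude to, and the introduction (Section~\ref{sec:plan}) flags it explicitly.

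A smaller structural point: the first summation is not over a single variable but over a pair $(\e_{B_0},\e_{C_0})$, with $\e_{C_0}$ determined by the torsor equation and $\e_{B_0}$ counted via a congruence; this is the content of Proposition~\ref{prop:first_summation}, and it matters for setting up the subsequent steps correctly.
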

Since $S$ is split, its minimal desingularization $\tS$ is a blow-up
of $\PP^2_K$ in five rational points in almost general position, hence
$\rk = \text{rk}\Pic(\tS) = 6$, so our result agrees with Manin's
conjecture. See Theorem~\ref{thm:a3_main_Q} for the analogous result
over $\QQ$.

It would be interesting to see an explicit application of
\cite[Theorem 1.1]{arXiv:1210.1792} giving Manin's
conjecture for the family of fourfolds over $\QQ$ obtained by
Weil restriction of our surfaces over varying imaginary quadratic
fields $K$.

\subsection{Techniques and plan of the paper}\label{sec:plan}
What follows is a short description of our main results and how they
should be applied to prove Manin's conjecture for some split del Pezzo
surfaces $S$ over imaginary quadratic fields. How this works in the
specific case of $S$ defined by \eqref{eq:def_S} is shown in our proof
of Theorem~\ref{thm:a3_main} in Section \ref{sec:A3}.

In Section \ref{sec:arithmetic_functions}, we investigate sums of two
classes of arithmetic functions over general number fields.

In Section \ref{sec:lattice_points}, we consider the problem of
asymptotically counting lattice points in certain bounded subsets of
$\CC = \RR^2$ given by inequalities of the form $\abs{f_i(z)} \leq
\abs{g_i(z)}$, with polynomials $f_i$, $g_i \in \CC[X]$. We use the
notion of sets of \emph{class m} introduced by Schmidt
\cite{MR1330740} and reduce our counting problems to a classical
result of Davenport \cite{MR0043821}. Moreover, we prove a tameness
result for parametric integrals over semialgebraic functions, which
can be applied to show that certain volume functions arising in
partial summations do not oscillate too much.

In Section \ref{sec:passage}, we describe a strategy to
parameterize, up to a certain action of a power of the unit group,
$K$-rational points on $U$ of bounded height by points $(\e_1, \ldots,
\e_t)$ on a universal torsor $\mathcal{T}$ over a minimal
desingularization $\tS$ of $S$ with coordinates $\e_i$ in certain
fractional ideals $\OO_i$ of $K$ and satisfying certain coprimality and
height conditions. If $K$ is $\QQ$ or imaginary quadratic, we propose
a parameterization (Claim \ref{claim:passage}) that is closely related
to the geometry of $\tS$. We expect this to work whenever
$\mathcal{T}$ is an open subset of a hypersurface in affine space $\AAA^t_K$
provided that
the anticanonical embedding $S \subset \PP^4_K$ is chosen favorably.
In \cite{math.AG/0604194}, all such del Pezzo surfaces are classified
and suitable models are given.

It is usually straightforward to prove Claim \ref{claim:passage} in
special cases by induction over a chain of blow-ups of $\PP^2_K$
giving $\tS$. Using the structure of $\Pic(\tS)$, we show that certain
steps in this induction hold in general.  To deal with the lack of
unique factorization in $\OO_K$, we apply arguments introduced by
Dedekind and Weber.

In Section \ref{sec:first_summation}, we provide the tools to sum the
result of our parameterization in Section \ref{sec:passage} over two
variables $\e_{t-1}, \e_t$, using our lattice point counting results
from Section \ref{sec:arithmetic_functions}. Unavailability of unique
factorization leads to difficulties of a technical nature. The results
of this and the next section are specific to imaginary quadratic
fields.

In Section \ref{sec:second_summation}, we provide a general tool to
sum the main term in the result of Section \ref{sec:first_summation}
over a further variable $\e_{t-2}$. Depending on the form of the equation
defining the universal torsor $\mathcal{T}$ in a specific application,
this result will be applied in two different ways.

In applications to specific del Pezzo surfaces, it still remains to
estimate the error terms in the first and second summations. This is
straightforward for some singular del Pezzo surfaces of degree $4$ and
higher, but much harder for del Pezzo surfaces of lower degree that
are smooth or have mild singularities. To handle additional cases, the
most elementary trick is to choose different orders of summations
depending on the relative sizes of the variables. Our results are
compatible with this trick, and indeed it is heavily applied in the
proof of Theorem~\ref{thm:a3_main} (with four different orders of
summations; fortunately, two of them can be handled by symmetry).

In Section \ref{sec:further_summations}, we prove a result handling
the summations over all the remaining variables $\e_1, \dots,
\e_{t-3}$ at once, under certain assumptions on the main term after
the second summation.  The results in this section are formulated in
terms of ideals instead of elements, which appears to be the natural
way to generalize the respective versions over $\QQ$. It seems
interesting to point out that in our applications, we find an
opportunity to pass from sums over elements to sums over ideals right
after the second summation (cf. Lemma
\ref{lem:A3_second_summation_ideals_M11} and Lemma
\ref{lem:A3_second_summation_ideals_M12} in the $\Athree$-case).

\subsection{Notation}\label{sec:notation}
The symbol $K$ will always denote a fixed number field, which is in
some sections arbitrary and in some sections imaginary quadratic or
$\QQ$. We denote the degree of $K$ by $d$, and the number of real
(resp. complex) places of $K$ by $s_1$ (resp. $s_2$). By $\classrep$,
we denote a fixed system of integral representatives for the ideal
classes of $K$, i.e., $\classrep$ contains exactly one integral ideal
from each class.

When we use Vinogradov's $\ll$-notation or Landau's $O$-notation,
the implied constants may always depend on $K$. In cases where they
may depend on other objects as well, we mention this, for example
by writing $\ll_C$ or $O_C$ if the constant may depend on $C$.

In addition to the notation introduced before Theorem
\ref{thm:a3_main}, we use $R_K$ to denote the regulator of $K$ and
$\I_K$ to denote the monoid of nonzero ideals of $\OO_K$. The symbol
$\aaa$ (resp. $\p$) always denotes an ideal (resp. nonzero prime
ideal) of $\OO_K$, and $v_\p(\aaa)$ is the non-negative integer such that
$\p^{v_\p(\aaa)}\mid \aaa$ and $\p^{v_\p(\aaa)+1}\nmid \aaa$. We extend this
in the usual way to fractional ideals (with $v_\p(\{0\}) := \infty$),
and for $x \in K$, write $v_\p(x) := v_\p(x\OO_K)$ for the usual
$\p$-adic exponential valuation.

We say that $x \in K$ is \emph{defined modulo $\aaa$}
(resp. \emph{invertible modulo $\aaa$}) if $v_\p(x)\geq 0$
(resp. $v_\p(x)=0$) for all $\p \mid \aaa$. If $x$ is defined modulo
$\aaa$, then it has a well-defined residue class modulo $\aaa$, and we
write $x \equiv_\aaa y$ if the residue classes of $x$, $y$
coincide, or equivalently, $v_\p(x-y)\geq v_\p(\aaa)$ for all $\p\mid \aaa$.

Sums and products indexed by (prime)
ideals always run over nonzero (prime) ideals. For simplicity, we
define
\begin{equation*}
  \rho_K := \frac{2^{s_1}(2\pi)^{s_2}R_K}{\omega_K \sqrt{|\Delta_K|}}\text.
\end{equation*}
By $\tau_K(\aaa)$ (resp. $\omega_K(\aaa)$), we denote the number of
distinct divisors (resp. distinct prime divisors) of $\aaa \in \I_K$, and $\mu_K$
is the M\"obius function on $\I_K$. Moreover, $\phi_K$ is Euler's
$\phi$-function for $\I_K$, and $\phi_K^*(\aaa) := \phi_K(\aaa)/\N\aaa =
\prod_{\p \mid \aaa}(1-1/\N\p)$.

\begin{ack}
  We thank Martin Widmer for his kind suggestion to prove Lemma
  \ref{lem:omin} via o-minimality and Antoine Chambert-Loir for the
  reference \cite{MR1644093}.

  The first-named author was supported by grant DE 1646/2-1 of the
  Deutsche Forschungsgemeinschaft. The second-named author was
  partially supported by a research fellowship of the Alexander von
  Humboldt Foundation. This collaboration was supported by the Center
  for Advanced Studies of LMU M\"unchen.
\end{ack}

\section{Arithmetic functions}\label{sec:arithmetic_functions}
In this section, $K$ can be any number field of degree $d \geq 2$ (for
$d=1$, see \cite{MR2520770}).  We will need to deal with sums
involving certain coprimality conditions, which are encoded by
arithmetic functions of the following type, analogous to
\cite[Definition~6.6]{MR2520770}.

\begin{definition}
  Let $\bbb \in \I_K$ and $C_1$, $C_2$, $C_3 \geq 1$. Then $\Theta(\bbb,
  C_1, C_2, C_3)$ is the set of all functions $\vartheta : \I_K \to
  \RRnn$ such that there exist functions $A_\p : \ZZnn \to \RRnn$
  satisfying
  \begin{equation*}
    \vartheta(\aaa) = \prod_{\p}A_\p(v_\p(\aaa))
  \end{equation*}
  for all $\aaa \in \I_K$, where 
  \begin{enumerate}
  \item for all $\p$ and $n \geq 1$,
    \begin{equation*}
      |A_\p(n) - A_\p(n-1)| \leq
      \begin{cases}
        C_1&\text{ if }\p^n \mid \bbb,\\
        C_2 \N\p^{-n}&\text{ if }\p^n \nmid \bbb\text;
      \end{cases}
    \end{equation*}
  \item for all $\aaa \in \I_K$, we have $\prod_{\p \nmid \aaa}A_\p(0)
    \leq C_3$.
  \end{enumerate}
  We say that the functions $A_\p$ \emph{correspond to} $\vartheta$.
\end{definition}

The following lemma, which is entirely analogous to \cite[Proposition
6.8]{MR2520770}, describes some elementary properties of the functions defined above.

\begin{lemma}\label{lem:6.8}
  Let $\vartheta \in \Theta(\bbb, C_1, C_2, C_3)$ with corresponding
  functions $A_\p$. Then
  \begin{enumerate}
  \item For any $\aaa \in \I_K$,
    \begin{equation*}
      (\vartheta * \mu_K)(\aaa) = \prod_{p \nmid a}A_\p(0)\prod_{\p | \aaa}(A_\p(v_\p(\aaa)) - A_\p(v_\p(\aaa)-1))\text.
    \end{equation*}
  \item For any $t \geq 0$,
    \begin{equation*}
      \sum_{\N\aaa \leq t}|(\vartheta * \mu_K)(\aaa)|\cdot \N\aaa \ll_{C_2} \tau_K(\bbb)(C_1C_2)^{\omega_K(\bbb)}C_3 t \log(t+2)^{C_2-1}\text.
    \end{equation*}
  \item If $\vartheta$ is not the zero function and $\id q \in \I_K$,
    then the infinite sum and the infinite product
    \begin{equation*}
      \sum_{\substack{\aaa \in \I_K\\\aaa + \id q = \OO_K}}\frac{(\vartheta * \mu_K)(\aaa)}{\N\aaa}\text{ and }\prod_{\p \nmid \id q}\left(\left(1-\frac{1}{\N\p}\right)\sum_{n=0}^\infty\frac{A_\p(n)}{\N\p^n}\right)\prod_{\p \mid \id q}A_\p(0)
    \end{equation*}
    converge to the same real number.
  \end{enumerate}
\end{lemma}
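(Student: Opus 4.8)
All three statements are elementary consequences of the definition of $\Theta(\bbb,C_1,C_2,C_3)$; the only analytic ingredient is a standard divisor-sum estimate over $K$. Here is how I would proceed.

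\emph{Part (1).} I would compute directly from the definitions of $\mu_K$ and Dirichlet convolution on $\I_K$. Since $\mu_K$ is supported on squarefree ideals, in $(\vartheta*\mu_K)(\aaa)=\sum_{\id d\mid\aaa}\mu_K(\id d)\,\vartheta(\aaa/\id d)$ only the ideals $\id d=\prod_{\p\in T}\p$ with $T$ a set of prime divisors of $\aaa$ contribute, with $\mu_K(\id d)=(-1)^{|T|}$. Plugging in $\vartheta(\aaa/\id d)=\prod_{\p\nmid\aaa}A_\p(0)\cdot\prod_{\p\mid\aaa,\ \p\notin T}A_\p(v_\p(\aaa))\cdot\prod_{\p\in T}A_\p(v_\p(\aaa)-1)$ and summing over $T$, the sum factors over the primes dividing $\aaa$ into $\prod_{\p\mid\aaa}\bigl(A_\p(v_\p(\aaa))-A_\p(v_\p(\aaa)-1)\bigr)$, which is the claim. (The infinite product $\prod_{\p\nmid\aaa}A_\p(0)$ is convergent because $\vartheta(\aaa)=\prod_\p A_\p(v_\p(\aaa))$ is, by hypothesis, well-defined.)

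\emph{Part (2).} Starting from the formula in (1), I would bound each factor using the definition of $\Theta$: $\prod_{\p\nmid\aaa}A_\p(0)\le C_3$, while $|A_\p(v_\p(\aaa))-A_\p(v_\p(\aaa)-1)|\le C_1$ if $\p^{v_\p(\aaa)}\mid\bbb$ and $\le C_2\N\p^{-v_\p(\aaa)}$ otherwise. Writing $\aaa=\id g\id h$ with $\id g=\prod_{\p:\ v_\p(\aaa)\le v_\p(\bbb)}\p^{v_\p(\aaa)}$ (so $\id g\mid\bbb$) and $\id h$ coprime to $\id g$, this gives $|(\vartheta*\mu_K)(\aaa)|\cdot\N\aaa\le C_3\,C_1^{\omega_K(\bbb)}\,\N\id g\cdot C_2^{\omega_K(\id h)}$. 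Summing over $\N\aaa\le t$ by fixing first $\id g\mid\bbb$ (only those with $\N\id g\le t$ occur) and then $\id h$ with $\N\id h\le t/\N\id g$, the factor $\N\id g$ cancels and one is reduced to the standard estimate $\sum_{\N\id h\le x}C_2^{\omega_K(\id h)}\ll_{C_2}x\,(\log(x+2))^{C_2-1}$. The latter I would prove by writing $C_2^{\omega_K(\cdot)}=1*g$ for the multiplicative $g$ supported on squarefree ideals with $g(\p)=C_2-1\ge0$, which reduces matters to $\sum_{\N\id d\le x}(C_2-1)^{\omega_K(\id d)}/\N\id d\ll(\log x)^{C_2-1}$, itself a consequence of the Mertens estimate $\sum_{\N\p\le x}\N\p^{-1}=\log\log x+O(1)$ for $K$. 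Putting this together produces the factor $\tau_K(\bbb)\,t\,(\log(t+2))^{C_2-1}$, and since $C_2\ge1$ we may absorb $C_1^{\omega_K(\bbb)}\le(C_1C_2)^{\omega_K(\bbb)}$. This summation is the only step requiring real care: one must isolate the correct ``$\bbb$-part'' $\id g$ of $\aaa$ so that the remaining variable $\id h$ can be summed freely, and the estimate for $\sum_{\N\id h\le x}C_2^{\omega_K(\id h)}$ needs the convolution trick above (rather than a bare divisor bound) because $C_2$ need not be integral.

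\emph{Part (3).} Absolute convergence of $\sum_{\aaa+\id q=\OO_K}(\vartheta*\mu_K)(\aaa)/\N\aaa$ follows from (2) by partial summation. I may assume $A_\p(0)\ne0$ for every $\p\mid\id q$, since otherwise (1) shows $(\vartheta*\mu_K)(\aaa)=0$ for all $\aaa$ coprime to $\id q$ while the claimed product also vanishes. For such $\aaa$, (1) gives $(\vartheta*\mu_K)(\aaa)=\prod_{\p\mid\id q}A_\p(0)\cdot\prod_{\p\nmid\id q}F_\p(v_\p(\aaa))$ with $F_\p(0):=A_\p(0)$ and $F_\p(n):=A_\p(n)-A_\p(n-1)$ for $n\ge1$; by absolute convergence the sum then factors as the Euler product $\prod_{\p\mid\id q}A_\p(0)\cdot\prod_{\p\nmid\id q}\sum_{n\ge0}F_\p(n)\N\p^{-n}$. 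A telescoping computation, legitimate because $A_\p(n)$ is bounded (forced by the defining inequalities of $\Theta$), shows $\sum_{n\ge0}F_\p(n)\N\p^{-n}=(1-\N\p^{-1})\sum_{n\ge0}A_\p(n)\N\p^{-n}$, which is exactly the asserted product. Its convergence is immediate: for $\p\nmid\bbb$ the $\p$-factor equals $A_\p(0)+O_{C_2}(\N\p^{-2})$, and $\prod_\p A_\p(0)$ converges — to a nonzero value once the finitely many zero factors are removed, using that $\vartheta$ is not the zero function.
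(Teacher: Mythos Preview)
Your proof is correct and complete. The paper itself gives no argument beyond citing \cite[Proposition~6.8]{MR2520770} and asserting that the proof there carries over verbatim; your write-up is precisely such a carry-over, using the same multiplicative decomposition in~(1), the same $\id g\mid\bbb$ / $\id h$ splitting and $C_2^{\omega_K}$-sum estimate in~(2) (cf.\ Lemma~\ref{lem:omega} of the paper), and the same Euler-product manipulation with Abel telescoping in~(3).
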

\begin{proof}
  The proof of \cite[Proposition 6.8]{MR2520770} holds almost
  verbatim in our case.
\end{proof}

For $\vartheta \in \Theta(\bbb, C_1, C_2, C_3)$ and $\id q\in \I_K$,
we define
\begin{equation*}
  \A(\vartheta(\aaa),\aaa, \id q) := \sum_{\substack{\aaa \in \I_K\\\aaa + \id q = \OO_K}}\frac{(\vartheta * \mu_K)(\aaa)}{\N\aaa}
\end{equation*}
and $\A(\vartheta(\aaa), \aaa) := \A(\vartheta(\aaa), \aaa,
\OO_K)$. Lemma \ref{lem:6.8}, \emph{(3)}, provides an alternative form. In
the simple case when $\vartheta$ has corresponding functions $A_\p$
satisfying $A_\p(n) = A_\p(1)$ for all prime ideals $\p$ and all $n
\geq 1$, we have
\begin{equation}\label{eq:arith_functions_average_simple}
  \A(\vartheta(\aaa),\aaa, \id q) = \prod_{\p \nmid \id q}\left(\left(1-\frac{1}{\N\p}\right)A_\p(0) + \frac{1}{\N\p}A_\p(1)\right)\prod_{\p \mid \id q}A_\p(0)\text.
\end{equation}
The following Proposition shows that $\A(\vartheta(\aaa), \aaa)$ can
be seen as an average value.

\begin{prop}\label{prop:6.9}
  Let $\idealclass$ be an ideal class of $K$. For $\vartheta \in
  \Theta(\bbb,C_1,C_2,C_3)$, we have
  \begin{equation*}
    \sum_{\substack{\aaa\in\idealclass\cap\I_K\\\N\aaa \le t}}
    \vartheta(\aaa) = \rho_K\A(\vartheta(\aaa),\aaa) t +
    O_{C_2}(\tau_K(\bbb)(C_1C_2)^{\omega_K(\bbb)}C_3 t^{1-1/d})\text,
  \end{equation*}
  for $t \geq 0$.
\end{prop}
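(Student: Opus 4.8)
The plan is to reduce the counting of $\vartheta(\aaa)$ over ideals in a fixed class $\idealclass$ to the known asymptotics for the constant function $1$ over ideals in a class, via Möbius inversion against $\mu_K$. Concretely, write $\vartheta = (\vartheta * \mu_K) * 1$, so that
\begin{equation*}
  \sum_{\substack{\aaa \in \idealclass \cap \I_K \\ \N\aaa \le t}} \vartheta(\aaa)
  = \sum_{\substack{\bbb \in \I_K \\ \N\bbb \le t}} (\vartheta * \mu_K)(\bbb) \sum_{\substack{\id c \in \idealclass\idealclass_\bbb^{-1} \cap \I_K \\ \N\id c \le t/\N\bbb}} 1,
\end{equation*}
where $\idealclass_\bbb$ denotes the ideal class of $\bbb$. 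The inner sum is the number of integral ideals of norm at most $t/\N\bbb$ in a fixed ideal class; by the classical ideal-counting estimate (the Dedekind–Weber theorem, or the version used in Schanuel's work) this equals $(\rho_K/h_K) \cdot h_K \cdot (t/\N\bbb) + O((t/\N\bbb)^{1-1/d})$, i.e.\ $\rho_K (t/\N\bbb) + O((t/\N\bbb)^{1-1/d})$ — here I should double-check the normalisation of $\rho_K$ against the residue of the Dedekind zeta function so that the main term comes out as stated. (Each class contributes the same main-term density, which is why the class of $\idealclass\idealclass_\bbb^{-1}$ is irrelevant to leading order.)

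Substituting this back splits the sum into a main term $\rho_K t \sum_{\N\bbb \le t} (\vartheta*\mu_K)(\bbb)/\N\bbb$ and an error term $O\bigl(t^{1-1/d} \sum_{\N\bbb \le t} |(\vartheta*\mu_K)(\bbb)| \N\bbb^{-1+1/d}\bigr)$. For the main term, I extend the sum to all $\bbb \in \I_K$, recognising the completed sum as $\A(\vartheta(\aaa),\aaa)$ by definition; the tail $\sum_{\N\bbb > t} |(\vartheta*\mu_K)(\bbb)|/\N\bbb$ is controlled by partial summation from Lemma \ref{lem:6.8}(2), which gives $\sum_{\N\bbb \le u} |(\vartheta*\mu_K)(\bbb)| \N\bbb \ll_{C_2} \tau_K(\bbb)(C_1C_2)^{\omega_K(\bbb)} C_3\, u \log(u+2)^{C_2-1}$, and hence a tail bound of the shape $\tau_K(\bbb)(C_1C_2)^{\omega_K(\bbb)} C_3\, t^{-1+1/d}\log(t+2)^{C_2-1}$ after dividing by $\N\bbb$ and summing the dyadic pieces — when multiplied by $\rho_K t$ this is absorbed into the claimed error. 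For the error term, I again invoke Lemma \ref{lem:6.8}(2): since $\N\bbb^{-1+1/d} \le 1$, we have $\sum_{\N\bbb \le t} |(\vartheta*\mu_K)(\bbb)| \N\bbb^{-1+1/d} \le \sum_{\N\bbb \le t}|(\vartheta*\mu_K)(\bbb)| \ll_{C_2} \tau_K(\bbb)(C_1C_2)^{\omega_K(\bbb)} C_3 \log(t+2)^{C_2-1}$ (bounding $\N\bbb$ trivially by using a slightly weaker form, or partial-summing more carefully), so the total error is $O_{C_2}(\tau_K(\bbb)(C_1C_2)^{\omega_K(\bbb)} C_3\, t^{1-1/d}\log(t+2)^{C_2-1})$.

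The one genuine wrinkle is that the stated error term has \emph{no} logarithmic factor, whereas the naive argument above produces $\log(t+2)^{C_2-1}$. The fix is to be less wasteful in the error-term estimate: instead of bounding the inner ideal-count error uniformly, I should split at $\N\bbb \le \sqrt t$ versus $\N\bbb > \sqrt t$. For $\N\bbb \le \sqrt t$ I use $\sum |(\vartheta*\mu_K)(\bbb)|(t/\N\bbb)^{1-1/d} = t^{1-1/d}\sum |(\vartheta*\mu_K)(\bbb)|\N\bbb^{-1+1/d}$ and estimate the latter sum by partial summation from Lemma \ref{lem:6.8}(2) against the weight $\N\bbb^{-2+1/d}$, which converges and so kills the log; for $\N\bbb > \sqrt t$ the inner sum is $O(1)$ ideals so the contribution is $\ll \sum_{\sqrt t < \N\bbb \le t}|(\vartheta*\mu_K)(\bbb)|$, handled by Lemma \ref{lem:6.8}(2) directly (giving $\ll t^{1/2}\log(t)^{C_2-1} \ll t^{1-1/d}$ for $d \ge 2$, since then $1/2 \le 1 - 1/d$ fails only at $d=2$ where $1-1/d = 1/2$ exactly, so one needs the sharper dyadic bound $\sum_{u < \N\bbb \le 2u}|(\vartheta*\mu_K)(\bbb)| \ll u \log(u)^{C_2-1}$ summed over $u = \sqrt t, 2\sqrt t, \dots$, which telescopes to $\ll t \log(t)^{C_2-1}$ — too big!). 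So the correct split point is not $\sqrt t$ but closer to $t$: take $\N\bbb \le t/(\log t)^{C_2}$ for the first range (the geometric-series weight again removes the log, and the main contribution $t^{1-1/d}$ survives) and $t/(\log t)^{C_2} < \N\bbb \le t$ for the second, where Lemma \ref{lem:6.8}(2) on a short dyadic range near $t$ gives $\ll (t/(\log t)^{C_2})\cdot(\log t)^{C_2-1} \cdot (\text{number of dyadic blocks}) \ll t(\log t)^{-1}\cdot \log\log t$, which is $o(t^{1-1/d})$ only for $d \ge 2$... this still needs care at $d=2$. I expect this tuning of the split to be the main technical point; since the paper cites this as "entirely analogous to \cite[Proposition 6.8]{MR2520770}", presumably the $\QQ$-argument already contains the right bookkeeping, and I would adapt it directly, using that over $\QQ$ the $d=1$ case is excluded and over a degree-$d$ field the savings $t^{-1/d}$ is genuine.
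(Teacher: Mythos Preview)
Your overall approach matches the paper's exactly: the paper packages the Möbius-inversion/ideal-theorem argument you describe as Lemma~\ref{lem:6.2}, and then Proposition~\ref{prop:6.9} follows immediately from Lemma~\ref{lem:6.8}(2) (which supplies the hypothesis of Lemma~\ref{lem:6.2} with $c_\vartheta = \tau_K(\bbb)(C_1C_2)^{\omega_K(\bbb)}C_3$ and $C = C_2-1$).

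Your ``wrinkle'' about the logarithm is a red herring, and the successive splitting attempts are unnecessary. The clean bound is the one you already wrote down in passing: from $\sum_{\N\id c \le u} |(\vartheta*\mu_K)(\id c)|\,\N\id c \ll c_\vartheta\, u(\log(u+2))^C$, apply Lemma~\ref{lem:3.4} to $\psi(\id c) := |(\vartheta*\mu_K)(\id c)|\,\N\id c$ with exponent $\kappa = 2 - 1/d$. Since the section assumes $d \ge 2$, we have $\kappa > 1$, so
\[
\sum_{\N\id c \le t} \frac{|(\vartheta*\mu_K)(\id c)|}{\N\id c^{\,1-1/d}} = \sum_{\N\id c \le t} \frac{\psi(\id c)}{\N\id c^{\,2-1/d}} \ll_{C,d} c_\vartheta,
\]
with no logarithm, over the \emph{full} range $\N\id c \le t$; no split is needed. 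Likewise, for the tail of the main term take $\kappa = 2$: one gets $t\sum_{\N\id c > t}|(\vartheta*\mu_K)(\id c)|/\N\id c \ll c_\vartheta(\log(t+2))^C$, which is $\ll_C c_\vartheta\, t^{1-1/d}$ because $d \ge 2$. Both error pieces are thus $O_{C_2}(c_\vartheta\, t^{1-1/d})$, exactly as stated. (The paper explicitly excludes $d=1$ at the start of Section~\ref{sec:arithmetic_functions}, referring to \cite{MR2520770} for $\QQ$.)
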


\begin{proof}
This follows immediately from Lemma \ref{lem:6.8}, \emph{(1)}, and
Lemma \ref{lem:6.2} below.
\end{proof}

\begin{lemma}\label{lem:3.4}
  Let $C \geq 0$, $c_\vartheta > 0$, and let $\vartheta: \I_K \to \RR_{\geq 0}$ such that, for $t \geq 0$,
  \begin{equation*}
    \sum_{\N\aaa \le t} \vartheta(\aaa) \leq c_\vartheta t(\log(t+2))^C.
  \end{equation*}
  For any $\kappa \in \RR$ and $1 \leq t_1 \leq t_2$, we have
  \begin{equation*}
    \sum_{t_1 \leq \N\aaa \le t_2} \frac{\vartheta(\aaa)}{\N\aaa^\kappa} \ll_{C,\kappa}c_\vartheta \cdot
    \begin{cases}
      t_2^{1-\kappa}(\log(t_2+2))^C &\text{ if }\kappa < 1\text,\\
      \log(t_2+2)^{C+1} &\text{ if }\kappa=1\text,\\
      t_1^{1-\kappa}(\log(t_1+2))^C \ll_{C,\kappa}1 &\text{ if }\kappa>1\text.\\
    \end{cases}
  \end{equation*}
\end{lemma}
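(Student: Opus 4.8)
The plan is to reduce the weighted sum $\sum_{t_1 \le \N\aaa \le t_2} \vartheta(\aaa)/\N\aaa^\kappa$ to the hypothesis on $\sum_{\N\aaa \le t}\vartheta(\aaa)$ by partial summation (Abel summation) against the function $t \mapsto t^{-\kappa}$. Writing $V(t) := \sum_{\N\aaa \le t}\vartheta(\aaa)$, so that $V(t) \le c_\vartheta t (\log(t+2))^C$, we have
\begin{equation*}
  \sum_{t_1 \le \N\aaa \le t_2}\frac{\vartheta(\aaa)}{\N\aaa^\kappa} = \int_{t_1^-}^{t_2}\frac{\dd V(t)}{t^\kappa} = \frac{V(t_2)}{t_2^\kappa} - \frac{V(t_1^-)}{t_1^\kappa} + \kappa\int_{t_1}^{t_2}\frac{V(t)}{t^{\kappa+1}}\dd t,
\end{equation*}
where $V(t_1^-)$ denotes the left limit (to include the boundary term $\N\aaa = t_1$ cleanly); since all terms are nonnegative, the negative boundary term can simply be dropped, giving an upper bound by $V(t_2)/t_2^\kappa + |\kappa|\int_{t_1}^{t_2}V(t)t^{-\kappa-1}\dd t$.

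Next I would insert the bound $V(t) \le c_\vartheta t(\log(t+2))^C$ into both surviving pieces. The boundary term is $\le c_\vartheta t_2^{1-\kappa}(\log(t_2+2))^C$, which is already of the claimed shape in every case (for $\kappa \ge 1$ one notes $t_2^{1-\kappa} \le 1$ when $\kappa \ge 1$, and for $\kappa > 1$ one further bounds $t_2^{1-\kappa}(\log(t_2+2))^C \le t_1^{1-\kappa}(\log(t_1+2))^C$ up to the constant, or more simply absorbs it since the function $t^{1-\kappa}(\log(t+2))^C$ is eventually decreasing). The integral becomes $|\kappa| c_\vartheta \int_{t_1}^{t_2}(\log(t+2))^C t^{-\kappa}\dd t$, and everything now rests on the elementary estimate
\begin{equation*}
  \int_{t_1}^{t_2}\frac{(\log(t+2))^C}{t^\kappa}\dd t \ll_{C,\kappa}
  \begin{cases}
    t_2^{1-\kappa}(\log(t_2+2))^C & \kappa < 1,\\
    (\log(t_2+2))^{C+1} & \kappa = 1,\\
    t_1^{1-\kappa}(\log(t_1+2))^C & \kappa > 1.
  \end{cases}
\end{equation*}
For $\kappa = 1$ this is immediate from $\int_{t_1}^{t_2}(\log(t+2))^C\dd t/t \le \int_2^{t_2+2}(\log u)^C\dd u/u = ((\log(t_2+2))^{C+1} - (\log 2)^{C+1})/(C+1)$ (handling $C$ near $-1$ or negative $C$ separately, but here $C \ge 0$). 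For $\kappa \ne 1$ one integrates by parts, or more efficiently uses the standard fact that $\int (\log t)^C t^{-\kappa}\dd t$ is, up to $O_{C,\kappa}(1)$ times lower-order logarithmic corrections, comparable to $\frac{t^{1-\kappa}(\log t)^C}{1-\kappa}$; the logarithmic corrections of degree $< C$ are absorbed into the implied constant since the leading power of $\log$ dominates.

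I do not expect a genuine obstacle here — this is a routine partial-summation-plus-integral-estimate lemma of a kind that appears in essentially every paper of this type. The only points requiring a little care are: (i) getting the half-open interval $t_1 \le \N\aaa \le t_2$ right in the Stieltjes integral, which is handled by discarding the nonnegative lower boundary term; (ii) the case analysis on $\kappa$, where the $\kappa > 1$ case needs the observation that the tail integral $\int_{t_1}^\infty$ converges and is dominated by its behaviour near $t_1$; and (iii) keeping track that $C \ge 0$ so that $(\log(t+2))^C$ is increasing and the crude bound $(\log(t+2))^C \le (\log(t_2+2))^C$ is available on $[t_1,t_2]$ when needed. The final bound $t_1^{1-\kappa}(\log(t_1+2))^C \ll_{C,\kappa} 1$ in the $\kappa > 1$ case follows from $t_1 \ge 1$, since then $t_1^{1-\kappa} \le 1$ and $(\log(t_1+2))^C \le (\log 3)^C \cdot$ (something)? — more precisely it is simply bounded because for $t_1 \ge 1$ both $t_1^{1-\kappa} \le 1$ and one uses that $t^{1-\kappa}(\log(t+2))^C$ is bounded on $[1,\infty)$ when $\kappa > 1$, its maximum being an explicit constant depending only on $C$ and $\kappa$.
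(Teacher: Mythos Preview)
Your proof is correct: the Abel summation against $t^{-\kappa}$, followed by the three-case estimate of $\int_{t_1}^{t_2}(\log(t+2))^C t^{-\kappa}\,\dd t$, is exactly what the lemma needs, and your handling of the boundary terms and the case analysis is sound (the one slightly loose step, the substitution in the $\kappa=1$ case, is easily repaired by noting $1/t \le 3/(t+2)$ for $t\ge 1$).

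The paper's proof, by contrast, is a one-liner: it sets $\vartheta'(n):=c_\vartheta^{-1}\sum_{\N\aaa=n}\vartheta(\aaa)$, observes that $\sum_{n\le t}\vartheta'(n)\le t(\log(t+2))^C$ and that the sum in question equals $c_\vartheta\sum_{t_1\le n\le t_2}\vartheta'(n)/n^\kappa$, and then invokes the corresponding lemma over $\ZZ$ from \cite[Lemma~3.4]{MR2520770}. So the paper reduces to the integer case and cites, whereas you supply the partial-summation argument directly. The underlying mathematics is the same---the cited lemma is proved by precisely the Abel-summation computation you wrote out---so your version is simply a self-contained variant of the paper's approach rather than a genuinely different route.
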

\begin{proof}
  Apply \cite[Lemma~3.4]{MR2520770} to $\vartheta'(n)
  :=c_\vartheta^{-1} \sum_{\N\aaa = n}\vartheta(\aaa)$.
\end{proof}

The next lemma is similar to \cite[Lemma 6.2]{MR2520770}, but for
ideals. It completes the proof of Proposition \ref{prop:6.9}.

\begin{lemma}\label{lem:6.2}
  Let $\idealclass$ be an ideal class of $K$, and let $\vartheta: \I_K
  \to \RR$ such that
  \begin{equation*}
    \sum_{\N\aaa \le t} |(\vartheta*\mu_K)(\aaa)|\cdot \N\aaa \ll c_\vartheta t(\log(t+2))^{C}\text,
  \end{equation*}
  for some $C\geq 0$, $c_\vartheta > 0$ and for all $t \geq 0$. Then
  \begin{equation*}
    \sum_{\substack{\aaa \in \idealclass\cap \I_K\\\N\aaa \le t}} \vartheta(\aaa) = \rho_K \sum_{\aaa \in \I_K} \frac{(\vartheta*\mu_K)(\aaa)}{\N\aaa} t +O_C(c_\vartheta t^{1-1/d}).
  \end{equation*}
\end{lemma}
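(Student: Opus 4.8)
The plan is to count ideals in the class $\idealclass$ by Möbius inversion, exchanging the roles of the Dirichlet convolution. Write $\vartheta = (\vartheta * \mu_K) * \mathbf{1}$, where $\mathbf{1}$ is the constant function $1$ on $\I_K$. Then
\begin{equation*}
  \sum_{\substack{\aaa \in \idealclass \cap \I_K \\ \N\aaa \le t}} \vartheta(\aaa)
  = \sum_{\bbb \in \I_K} (\vartheta * \mu_K)(\bbb) \sum_{\substack{\id c \in \I_K,\ \bbb\id c \in \idealclass \\ \N(\bbb\id c) \le t}} 1
  = \sum_{\bbb \in \I_K} (\vartheta * \mu_K)(\bbb) \cdot \bigl| \{ \id c \in \idealclass[\bbb]^{-1} \cap \I_K : \N\id c \le t/\N\bbb \} \bigr|,
\end{equation*}
where $[\bbb]$ is the ideal class of $\bbb$ and I abbreviate the class $\idealclass[\bbb]^{-1}$ by $\idealclass'$ (it depends only on $[\bbb]$). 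First I would recall the standard asymptotic for counting integral ideals of bounded norm in a fixed ideal class: for any ideal class $\idealclass'$ and any $x \ge 0$,
\begin{equation*}
  \bigl| \{ \id c \in \idealclass' \cap \I_K : \N\id c \le x \} \bigr| = \rho_K x + O(x^{1-1/d}),
\end{equation*}
with an implied constant depending only on $K$ — this is classical (Weber; see also Landau), and the leading coefficient $\rho_K = 2^{s_1}(2\pi)^{s_2} R_K / (\omega_K \sqrt{|\Delta_K|})$ is exactly the residue defined in Section \ref{sec:notation}, independent of the class.

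Next I would insert this into the sum over $\bbb$. For terms with $\N\bbb \le t$ the count is $\rho_K (t/\N\bbb) + O((t/\N\bbb)^{1-1/d})$; for $\N\bbb > t$ the count is zero. This gives
\begin{equation*}
  \sum_{\substack{\aaa \in \idealclass \cap \I_K \\ \N\aaa \le t}} \vartheta(\aaa)
  = \rho_K t \sum_{\substack{\bbb \in \I_K \\ \N\bbb \le t}} \frac{(\vartheta * \mu_K)(\bbb)}{\N\bbb}
  + O\Biggl( t^{1-1/d} \sum_{\substack{\bbb \in \I_K \\ \N\bbb \le t}} \frac{|(\vartheta * \mu_K)(\bbb)|}{\N\bbb^{1-1/d}} \Biggr).
\end{equation*}
The error sum is handled by Lemma \ref{lem:3.4}: the hypothesis of the present lemma says that $\vartheta' := |(\vartheta * \mu_K)| \cdot \N(\cdot)$ has summatory function $\ll c_\vartheta t (\log(t+2))^C$, so applying Lemma \ref{lem:3.4} with exponent $\kappa = 2 - 1/d > 1$ to this $\vartheta'$ (noting $|(\vartheta*\mu_K)(\bbb)|/\N\bbb^{1-1/d} = \vartheta'(\bbb)/\N\bbb^{2-1/d}$) bounds it by $\ll_C c_\vartheta$, a constant. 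Hence the error term is $O_C(c_\vartheta t^{1-1/d})$, as claimed.

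It remains to replace the truncated sum $\sum_{\N\bbb \le t}$ in the main term by the full series $\sum_{\bbb \in \I_K}$. The tail is
\begin{equation*}
  \rho_K t \sum_{\substack{\bbb \in \I_K \\ \N\bbb > t}} \frac{(\vartheta * \mu_K)(\bbb)}{\N\bbb},
\end{equation*}
and by partial summation (or again Lemma \ref{lem:3.4} with $\kappa = 2 > 1$ applied to $\vartheta'$, restricted to $\N\bbb > t$, which contributes $\ll_C c_\vartheta t^{-1} (\log(t+2))^C$) this tail is $\ll_C c_\vartheta (\log(t+2))^C$, which is absorbed into $O_C(c_\vartheta t^{1-1/d})$ after adjusting — more precisely, $(\log(t+2))^C \ll_C t^{1-1/d}$ for $t \ge 1$ since $1 - 1/d > 0$, and for $0 \le t < 1$ both sides of the asserted formula are trivially fine since the left side is an empty or bounded sum. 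In particular the full series $\sum_{\bbb} (\vartheta*\mu_K)(\bbb)/\N\bbb$ converges absolutely, again by Lemma \ref{lem:3.4}. This yields the stated formula. The main obstacle is purely bookkeeping: making sure the three error contributions (the ideal-counting error, the truncation of the main series, and the behavior for small $t$) are each genuinely $O_C(c_\vartheta t^{1-1/d})$ with the constant depending only on $C$ and $K$ and linearly on $c_\vartheta$; the crucial quantitative input is the $\kappa > 1$ case of Lemma \ref{lem:3.4}, which makes all the relevant tails summable.
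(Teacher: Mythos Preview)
Your proof is correct and follows essentially the same approach as the paper: write $\vartheta = (\vartheta*\mu_K)*\mathbf{1}$, swap the order of summation, apply the ideal-counting theorem in each class $\idealclass[\bbb]^{-1}$, and control both the ideal-counting error and the tail of the main series via Lemma~\ref{lem:3.4} (with $\kappa = 2-1/d$ and $\kappa = 2$, respectively, applied to $|(\vartheta*\mu_K)|\cdot\N$). The only cosmetic difference is that the paper disposes of the range $t<1$ at the outset, whereas you handle it at the end; both are fine.
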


\begin{proof}
  By Lemma \ref{lem:3.4}, $\sum_{\aaa \in \I_K}\frac{(\vartheta *
    \mu_K)(\aaa)}{\N\aaa} \ll_C c_\vartheta$, so the lemma holds for $t <
  1$. Now assume that $t \geq 1$. Since $\vartheta = (\vartheta * \mu_K)
  * 1$, we have
  \begin{equation*}
    \sum_{\substack{\aaa\in\idealclass\cap \I_K\\\N\aaa \le t}} \vartheta(\aaa) = \sum_{\substack{\aaa\in\idealclass\cap\I_K\\\N\aaa \le t}}\sum_{\bbb \mid \aaa}(\vartheta*\mu_K)(\id b) = \sum_{\N\bbb \leq t}(\vartheta*\mu_K)(\bbb)\sum_{\substack{\aaa'\in[\bbb]^{-1}\idealclass\cap \I_K\\\N\aaa' \leq t/\N\bbb}}1\text.
  \end{equation*}
  By the ideal theorem (e.g., \cite[VI, Theorem 3]{MR1282723}), the
  inner sum is $\rho_K t/\N\bbb + O((t/\N\bbb)^{(d-1)/d})$, so our sum
  is equal to
  \begin{equation*}
    \rho_K \sum_{\bbb \in \I_K} \frac{(\vartheta*\mu_K)(\bbb)}{\N\bbb} t + O\left(t \sum_{\N\bbb > t}\frac{|(\vartheta*\mu_K)(\bbb)|}{\N\bbb} + t^\frac{d-1}{d}\sum_{\N\bbb \leq t}\frac{|(\vartheta*\mu_K)(\bbb)|}{\N\bbb^{\frac{d-1}{d}}}\right)\text.
  \end{equation*}
  By Lemma \ref{lem:3.4}, the first part of the
  error term is $\ll_C c_\vartheta(\log(t+2))^C$ and the second part
  is $\ll_C c_\vartheta t^{1-1/d}$.
\end{proof}

We introduce a class of multivariate arithmetic functions, similar to
\cite[Definition~7.8]{MR2520770}. When fixing all variables but one,
these functions are a special case of the ones discussed above.

\begin{definition}\label{def:thetarprime}
  Let $C \geq 1$, $r \in \ZZnn$. Then $\Theta_r'(C)$ is the set of
  all functions $\theta : \I_K^r \to \RR_{\geq 0}$ of the following shape: with
  $J_\p(\aaa_1, \dots, \aaa_r):=\{i \in \{1, \dots, r\} : \p \mid \aaa_i\}$,
  we have
  \begin{equation*}
    \theta(\aaa_1, \dots, \aaa_r) = \prod_{\p} \theta_\p(J_\p(\aaa_1, \dots, \aaa_r)),
  \end{equation*}
  for functions $\theta_{\p} : \{ J \mid J \subset \{1, \ldots, r\}\} \to [0,1]$ with
  \begin{equation*}
    \theta_\p(J) \ge 
    \begin{cases}
      1-C\N\p^{-2} &\text{ if } |J| = 0,\\
      1-C\N\p^{-1} &\text{ if } |J| = 1.
    \end{cases}
  \end{equation*}
\end{definition}

Let $\theta \in \Theta_r'(C)$, fix $\aaa_1$, $\ldots$, $\aaa_{r-1}$,
and let $\vartheta(\aaa_r) := \theta(\aaa_1, \ldots, \aaa_{r-1},
\aaa_r)$. Then the factors $\theta_\p(J_\p(\aaa_1, \dots, \aaa_{r-1},
\aaa_r))$ depend only on $v_\p(\aaa_r)$, and we immediately obtain
$\vartheta(\aaa_r) \in \Theta(\prod_{\p \mid \aaa_1\cdots
  \aaa_{r-1}}\p, 1, C, 1)$. The following result follows immediately
from Proposition \ref{prop:6.9}.

\begin{corollary}\label{cor:6.9}
  Let $\theta \in \Theta_r'(C)$ and $\aaa_1,\ldots, \aaa_{r-1} \in
  \I_K$. For $t \ge 0$, we have
  \begin{equation*}
    \sum_{\N\aaa_r \le t}\theta(\aaa_1, \ldots, \aaa_r) =
    \rho_Kh_K\A(\theta(\aaa_1, \ldots, \aaa_r), \aaa_r) t +
    O_C((2C)^{\omega_K(\aaa_1\cdots\aaa_{r-1})}t^{1-1/d}).
  \end{equation*}
\end{corollary}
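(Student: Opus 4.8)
The plan is to reduce Corollary~\ref{cor:6.9} directly to Proposition~\ref{prop:6.9} by the observation already recorded in the paragraph preceding the statement: after fixing $\aaa_1,\dots,\aaa_{r-1}$ and setting $\vartheta(\aaa_r):=\theta(\aaa_1,\dots,\aaa_{r-1},\aaa_r)$, one has $\vartheta\in\Theta(\bbb,1,C,1)$ with $\bbb:=\prod_{\p\mid\aaa_1\cdots\aaa_{r-1}}\p$. So the strategy is: first verify this membership carefully, then apply Proposition~\ref{prop:6.9} with $C_1=1$, $C_2=C$, $C_3=1$, and finally sum the resulting main term and error term over the $h_K$ ideal classes $\idealclass$ of $K$.

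To check $\vartheta\in\Theta(\bbb,1,C,1)$, I would unwind the definition. For each prime $\p$, the factor $\theta_\p(J_\p(\aaa_1,\dots,\aaa_r))$ depends on $\aaa_r$ only through whether $\p\mid\aaa_r$, i.e.\ only through $v_\p(\aaa_r)$, and it is $\theta_\p(J_\p^{(0)})$ when $v_\p(\aaa_r)=0$ and $\theta_\p(J_\p^{(0)}\cup\{r\})$ when $v_\p(\aaa_r)\ge 1$, where $J_\p^{(0)}:=\{i\le r-1:\p\mid\aaa_i\}$ is fixed. So define $A_\p(0):=\theta_\p(J_\p^{(0)})$ and $A_\p(n):=\theta_\p(J_\p^{(0)}\cup\{r\})$ for $n\ge 1$; then $\vartheta(\aaa_r)=\prod_\p A_\p(v_\p(\aaa_r))$. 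For condition (1): $|A_\p(n)-A_\p(n-1)|=0$ for $n\ge 2$, and for $n=1$ it equals $|\theta_\p(J_\p^{(0)}\cup\{r\})-\theta_\p(J_\p^{(0)})|$, which is at most $1$ in all cases (both values lie in $[0,1]$), so the bound $C_1=1$ works when $\p\mid\bbb$; when $\p\nmid\bbb$ we have $|J_\p^{(0)}|=0$, so $A_\p(0)\ge 1-C\N\p^{-2}$ and $A_\p(1)=\theta_\p(\{r\})\ge 1-C\N\p^{-1}$, whence $|A_\p(1)-A_\p(0)|\le C\N\p^{-1}$, giving $C_2=C$ with $\N\p^{-n}$, $n=1$. (Note $A_\p$ is constant for $n\ge 1$, so the $\Theta$-condition for larger $n$ is vacuous.) For condition (2): for any $\aaa$, $\prod_{\p\nmid\aaa}A_\p(0)\le 1$ since each $A_\p(0)\in[0,1]$, so $C_3=1$.

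With the membership established, Proposition~\ref{prop:6.9} applied to each ideal class $\idealclass$ gives
\begin{equation*}
  \sum_{\substack{\aaa_r\in\idealclass\cap\I_K\\\N\aaa_r\le t}}\theta(\aaa_1,\dots,\aaa_r)
  =\rho_K\A(\vartheta(\aaa_r),\aaa_r)\,t+O_C\!\bigl(\tau_K(\bbb)C^{\omega_K(\bbb)}t^{1-1/d}\bigr).
\end{equation*}
Summing over the $h_K$ classes yields the $\rho_K h_K\A(\theta(\aaa_1,\dots,\aaa_r),\aaa_r)\,t$ main term. For the error, $\omega_K(\bbb)=\omega_K(\aaa_1\cdots\aaa_{r-1})$ and $\tau_K(\bbb)=2^{\omega_K(\bbb)}$ since $\bbb$ is squarefree, so $\tau_K(\bbb)C^{\omega_K(\bbb)}=(2C)^{\omega_K(\aaa_1\cdots\aaa_{r-1})}$; multiplying by $h_K$ is absorbed into the $O_C$-constant (which may depend on $K$). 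This gives exactly the stated error $O_C\bigl((2C)^{\omega_K(\aaa_1\cdots\aaa_{r-1})}t^{1-1/d}\bigr)$.

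There is essentially no hard obstacle here — the content is entirely in the bookkeeping of the three constants in the definition of $\Theta(\bbb,C_1,C_2,C_3)$ and in recognizing that $\bbb$ must be taken to be the \emph{radical} of $\aaa_1\cdots\aaa_{r-1}$ (so that the $\tau_K$ and $\omega_K$ factors come out as powers of $2$ and $C$ rather than something larger). The one point requiring a moment's care is that in Definition~\ref{def:thetarprime} the lower bounds on $\theta_\p(J)$ are only imposed for $|J|=0$ and $|J|=1$, which is exactly why the $\Theta$-membership only needs to control $|A_\p(1)-A_\p(0)|$ in the range $\p\nmid\bbb$; when $\p\mid\bbb$ we already have $|J_\p^{(0)}|\ge 1$, so no quantitative bound on $\theta_\p$ is available and we must fall back on the trivial bound $C_1=1$. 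The monotonicity/stabilization of $A_\p$ for $n\ge 1$ makes all higher-$n$ conditions automatic.
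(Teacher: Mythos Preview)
Your proposal is correct and follows essentially the same approach as the paper, which simply notes (in the paragraph preceding the corollary) that $\vartheta\in\Theta(\prod_{\p\mid\aaa_1\cdots\aaa_{r-1}}\p,1,C,1)$ and then declares the result an immediate consequence of Proposition~\ref{prop:6.9}. You have additionally spelled out the verification of the $\Theta$-membership and the bookkeeping $\tau_K(\bbb)C^{\omega_K(\bbb)}=(2C)^{\omega_K(\aaa_1\cdots\aaa_{r-1})}$, which the paper leaves implicit.
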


By \eqref{eq:arith_functions_average_simple},
\begin{equation*}
  \A(\theta(\aaa_1, \dots, \aaa_r),\aaa_r) = \prod_\p \theta_\p^{(r)}(J_\p(\aaa_1, \dots, \aaa_{r-1}))\text,
\end{equation*}
with
\begin{equation*}
  \theta_\p^{(r)}(J) := \left(1-\frac{1}{\N\p}\right) \theta_\p(J) + \frac{1}{\N\p} \theta_\p(J \cup \{r\}).
\end{equation*}
If $r \geq 1$, we conclude that $\A(\theta(\aaa_1, \dots,
\aaa_r),\aaa_r) \in \Theta_{r-1}'(2C)$. This allows us to define, for
$l \in \{1, \ldots, r\}$,
\begin{equation*}
  \A(\theta(\aaa_1, \dots, \aaa_r),\aaa_r, \dots, \aaa_l) := \A(\cdots\A(\A(\theta(\aaa_1, \ldots, \aaa_r), \aaa_r), \aaa_{r-1}) \cdots, \aaa_l)\text.
\end{equation*}
The following lemma is easily proved by induction (see also
\cite[Corollary 7.10]{MR2520770}).

\begin{lemma}\label{lem:repeated_average}
  Let $\theta \in \Theta_r'(C)$. Then
  \begin{equation*}
    \A(\theta(\aaa_1, \dots, \aaa_{r}),\aaa_{r}, \dots, \aaa_l) = \prod_\p \theta_\p^{(r, \ldots, l)}(J_\p(\aaa_1, \ldots, \aaa_{l-1}))\text,
  \end{equation*}
  where, for $J \subset \{1, \ldots, l-1\}$,
  \begin{equation*}
    \theta_\p^{(r, \ldots, l)}(J) := \sum_{L \subset \{l, \dots, r\}} \left(1-\frac{1}{\N\p}\right)^{r+1-l-|L|}\left(\frac{1}{\N\p}\right)^{|L|} \theta_\p(J \cup L).
  \end{equation*}
  In particular, for $l = 1$,
  \begin{equation}\label{eq:arith_full_average}
    \A(\theta(\aaa_1, \ldots, \aaa_r), \aaa_r, \ldots, \aaa_1) = \prod_{\p}\sum_{L \subset \{1, \ldots, r\}}\left(1-\frac{1}{\N\p}\right)^{r-|L|}\left(\frac{1}{\N\p}\right)^{|L|}\theta_\p(L)\text.
  \end{equation}
\end{lemma}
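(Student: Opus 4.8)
The plan is to prove Lemma~\ref{lem:repeated_average} by induction on $r-l$, building on the observation made just before the statement that the single-variable average $\A(\theta(\aaa_1,\dots,\aaa_r),\aaa_r)$ has the product shape $\prod_\p\theta_\p^{(r)}(J_\p(\aaa_1,\dots,\aaa_{r-1}))$, with $\theta_\p^{(r)}$ as displayed, and that this puts it into $\Theta'_{r-1}(2C)$. First I would fix notation: set $\vartheta_l := \A(\theta(\aaa_1,\dots,\aaa_r),\aaa_r,\dots,\aaa_l)$, so $\vartheta_r := \theta$ and $\vartheta_{l} = \A(\vartheta_{l+1}(\dots,\aaa_{l+1}),\aaa_{l+1})$ for $l<r$. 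The base case $l=r$ is the trivial identity $\theta_\p^{(r)}(J) = \theta_\p(J)$ (the sum over $L\subset\{r,\dots,r\}$ with $r+1-r-|L| = 1-|L|$ has exactly the two terms $L=\varnothing$ and $L=\{r\}$, reproducing the definition of $\theta_\p^{(r)}$ literally). For the inductive step, I assume $\vartheta_{l+1} \in \Theta'_{l}(2^{r-l}C)$ with local factors $\theta_\p^{(r,\dots,l+1)}$ given by the stated formula (a sum over $L\subset\{l+1,\dots,r\}$), which is legitimate because by the pre-statement remark each application of $\A(\cdot,\aaa_j)$ both preserves the $\Theta'$-shape and doubles the constant.

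Next I would apply the single-variable averaging formula, i.e.\ the displayed expression for $\theta_\p^{(r)}$ but now with $\vartheta_{l+1}$ in place of $\theta$: this gives $\theta_\p^{(r,\dots,l)}(J) = (1-\tfrac1{\N\p})\,\theta_\p^{(r,\dots,l+1)}(J) + \tfrac1{\N\p}\,\theta_\p^{(r,\dots,l+1)}(J\cup\{l\})$ for $J\subset\{1,\dots,l-1\}$. Substituting the inductive formula for $\theta_\p^{(r,\dots,l+1)}$ into both terms and combining, the first term contributes $\sum_{L\subset\{l+1,\dots,r\}}(1-\tfrac1{\N\p})^{r-l-|L|}(\tfrac1{\N\p})^{|L|}\theta_\p(J\cup L)$ times an extra factor $(1-\tfrac1{\N\p})$ — i.e.\ those $L'\subset\{l,\dots,r\}$ with $l\notin L'$ — while the second term contributes the $L'$ with $l\in L'$, with the exponent bookkeeping $(r-l-|L|)$ and $(|L|+1)$ matching $(r+1-l-|L'|)$ and $|L'|$ in either case. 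Reindexing $L' = L$ or $L' = L\cup\{l\}$ merges the two sums into $\sum_{L'\subset\{l,\dots,r\}}(1-\tfrac1{\N\p})^{r+1-l-|L'|}(\tfrac1{\N\p})^{|L'|}\theta_\p(J\cup L')$, which is exactly the claimed formula at level $l$. Taking the product over $\p$ and invoking the fact that $J_\p$ is determined coordinatewise completes the step, and the case $l=1$ in the statement is just the specialization $J = \varnothing$.

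The only genuinely delicate point is the bookkeeping in the exponents of $(1-1/\N\p)$ and $1/\N\p$ when splitting the two terms of the recursion according to whether $l\in L'$; everything else is formal. I would also note in passing that the constant grows to $2^{r-l}C$ along the induction but this does not appear in the final identity, which is purely an algebraic statement about the local factors, so no uniformity issue arises here. For completeness I would remark that this is the exact analogue of \cite[Corollary~7.10]{MR2520770}, so the reader may also consult that reference; the proof above is self-contained given the preceding material in this section.
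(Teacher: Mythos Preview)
Your proof is correct and is precisely the induction the paper has in mind (the paper's own proof is the one-line remark ``easily proved by induction'' with a reference to \cite[Corollary~7.10]{MR2520770}). One small slip: your line ``so $\vartheta_r := \theta$'' and the displayed ``$\theta_\p^{(r)}(J) = \theta_\p(J)$'' are inconsistent with your definition $\vartheta_l := \A(\theta,\aaa_r,\dots,\aaa_l)$---at $l=r$ one averaging has already been performed, and your parenthetical correctly verifies that the lemma's formula at $l=r$ reproduces the pre-lemma definition of $\theta_\p^{(r)}$, not $\theta_\p$ itself; fix the stray equality and the argument is clean.
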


 For our error estimates, we frequently need the following lemma.
\begin{lemma}\label{lem:omega}
  Let $C \ge 0$. For $t \geq 0$, we have
  \begin{equation*}
    \sum_{\N\aaa \le t} (C+1)^{\omega_K(\aaa)} \ll_C t(\log(t+2))^C.
  \end{equation*}
\end{lemma}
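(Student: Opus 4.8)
The plan is to use that $f(\aaa):=(C+1)^{\omega_K(\aaa)}$ is a nonnegative multiplicative function on $\I_K$, together with a convolution identity and the ideal theorem. On prime powers one has $f(\OO_K)=1$ and $f(\p^k)=C+1$ for $k\ge1$, so $f(\aaa)=\sum_{\bbb\mid\aaa}g(\bbb)$ with $g$ the multiplicative function determined by $g(\OO_K)=1$, $g(\p)=C$, and $g(\p^k)=0$ for $k\ge2$; in particular $g\ge0$, $g$ is supported on squarefree ideals, and $g(\bbb)=C^{\omega_K(\bbb)}$ there. Hence, swapping the order of summation,
\begin{equation*}
  \sum_{\N\aaa\le t}(C+1)^{\omega_K(\aaa)}=\sum_{\N\bbb\le t}g(\bbb)\,\bigl|\{\id c\in\I_K:\N\id c\le t/\N\bbb\}\bigr|.
\end{equation*}

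By the ideal theorem (as in the proof of Lemma~\ref{lem:6.2}), the inner cardinality is $\ll t/\N\bbb$, so the problem reduces to showing that $\sum_{\N\bbb\le t}g(\bbb)/\N\bbb\ll_C(\log(t+2))^C$. Since $g\ge0$ is multiplicative and supported on squarefree ideals, this sum is bounded by the partial Euler product $\prod_{\N\p\le t}(1+C/\N\p)$, which in turn is $\ll_C\exp\bigl(C\sum_{\N\p\le t}\N\p^{-1}\bigr)$.

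The only substantial input is then Mertens' theorem over $K$, $\sum_{\N\p\le t}\N\p^{-1}=\log\log t+O(1)$: given it, the exponential above is $\ll_C(\log t)^C$ once $t$ exceeds an absolute constant, and the stated bound follows after disposing of bounded $t$ trivially. I expect this Mertens estimate to be the main obstacle, precisely because one needs the leading coefficient to be exactly $1$ — this is what pins the exponent at $C$ rather than at $d\cdot C$ or $C+1$, which cruder arguments (such as using only $\N\p\ge p$, or Rankin's trick) would give. It may be quoted from the literature, or obtained in the classical elementary fashion: feed the ideal theorem $|\{\aaa:\N\aaa\le y\}|=\rho_K y+O(y^{1-1/d})$ and a Chebyshev-type bound $\theta_K(y)\ll y$ into the identity $\sum_{\N\aaa\le y}\log\N\aaa=\sum_{\N\id d\le y}\Lambda_K(\id d)\,|\{\id c:\N\id c\le y/\N\id d\}|$ to obtain $\sum_{\N\id d\le y}\Lambda_K(\id d)/\N\id d=\log y+O(1)$, and then pass to $\sum_{\N\p\le y}\N\p^{-1}$ by two partial summations. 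Everything apart from this input is routine bookkeeping; alternatively, one observes that the statement is the verbatim analog over $K$ of the classical bound $\sum_{n\le t}(C+1)^{\omega(n)}\ll_C t(\log t)^C$ over $\QQ$, whose standard proof transfers once $\lfloor y\rfloor=y+O(1)$ is replaced by the ideal theorem.
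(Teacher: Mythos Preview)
Your proof is correct and follows essentially the same route as the paper: write $(C+1)^{\omega_K}$ as $g*1$ with $g$ multiplicative, supported on squarefree ideals, and equal to $C^{\omega_K}$ there; swap the order of summation and use the ideal count $\ll t/\N\bbb$; bound $\sum_{\N\bbb\le t}g(\bbb)/\N\bbb$ by the partial Euler product $\prod_{\N\p\le t}(1+C/\N\p)\le\exp\bigl(C\sum_{\N\p\le t}\N\p^{-1}\bigr)$; and finish with Mertens over $K$. The paper phrases the last step as ``prime ideal theorem plus Abel summation'' rather than quoting Mertens directly, but this is the same input.
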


\begin{proof}
  This is clear if $t < 1$, so assume $t \geq 1$.  Write
  $\vartheta(\aaa) := (C+1)^{\omega_K(\aaa)}$. For any $\id p$,
  we have
  \begin{equation*}
    (\vartheta * \mu_K)(\id p^n)=
    \begin{cases}
      1 &\text{ if }n=0\text,\\
      C &\text{ if }n = 1\text,\\
      0 &\text{ if }n \geq 2\text.
    \end{cases}
  \end{equation*}
  Since $\vartheta = (\vartheta * \mu_K) * 1$,
  \begin{equation*}
    \sum_{\N\aaa \leq t}\vartheta(\id a) = \sum_{\N\aaa \leq t}\sum_{\bbb
      \mid \aaa}(\vartheta*\mu_K)(\bbb) \ll t \sum_{\N\bbb \leq t}\frac{(\vartheta*\mu_K)(\bbb)}{\N\bbb}
    \leq t \prod_{\N\id p \leq t}\left(1 + \frac{C}{\N\id p}
    \right)\text,
  \end{equation*}
  where $\id p$ runs over all nonzero prime ideals of $\OO_K$ with
  norm bounded by $t$. By the prime ideal theorem
  (e.g. \cite[Corollary 1 after Proposition 7.10]{MR1055830}) and
  Abelian partial summation, we obtain
  \begin{equation*}
    \prod_{\N\id p \leq t}\left(1 + \frac{C}{\N\id p} \right) \leq
    \exp\left(\sum_{\N\id p \leq t}\frac{C}{\N\id p} \right)\ll_C
    (\log(t+2))^{C}\text.\qedhere
  \end{equation*}
\end{proof}

The following lemma allows us to replace certain sums with
integrals. It is a crucial tool for the results in Sections
\ref{sec:second_summation} and \ref{sec:further_summations}.

\begin{lemma}\label{lem:3.1}
  Let $\idealclass$ be an ideal class of $K$ and $\vartheta: \I_K \to
  \RR$ be a function such that
  \begin{equation}
    \sum_{\substack{\aaa\in\idealclass \cap \I_K\\\N\aaa \le t}} \vartheta(\aaa) - ct \ll \sum_{i=1}^mc_i t^{b_i} \log(t+2)^{k_i}\text,
  \end{equation}
  with $m \in \ZZp$, $c>0$, $c_i$, $b_i \geq 0$, $k_i \in \ZZnn$,
  holds for all $t \geq 0$.

  Let $1 \leq t_1 \leq t_2$, and let $g : [t_1,t_2] \to \RR$ such that
  there exists a partition of $[t_1, t_2]$ into at most $R(g) \geq 1$
  intervals on whose interior $g$ is continuously differentiable and
  monotonic.  Moreover, we assume that there are $a \leq 0$, $c_g \geq
  0$ such that $g(t) \ll c_g t^a$ for all $t \in [t_1, t_2]$.  Then
  \begin{equation}\label{eq:lem_3_1_sum}
    \sum_{\substack{\aaa\in\idealclass\cap\I_K\\t_1 < \N\aaa \le t_2}}
    \vartheta(\aaa)g(\N\aaa) = c\int_{t_1}^{t_2}g(t) \dd t +
    \mathcal{E}(t_1, t_2),
  \end{equation}
  where
  \begin{equation}\label{eq:lem_3_1_errorbound}
    \mathcal{E}(t_1, t_2) \ll_{a, b_i, k_i} R(g)\sum_{i=1}^m
    c_gc_i
    \begin{cases}
      t_2^{b_i}\log(t_2+2)^{k_i} &\text{ if }a = 0,\\
      \sup\limits_{t_1 \le t \le t_2}(t^{a+b_i}\log(t+2)^{k_i}) &\text{ if }a+b_i \ne 0,\\
      \log(t_2+2)^{k_i+1} &\text{ if }a+b_i=0.
    \end{cases}
  \end{equation}
  An analogous formula holds for
  $\sum_{\substack{\aaa\in\idealclass\cap\I_K\\t_1 \leq \N\aaa \le t_2}}
  \vartheta(\aaa)g(\N\aaa)$.
\end{lemma}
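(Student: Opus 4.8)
The plan is to reduce the weighted sum to the unweighted counting function via Abelian partial summation (Abel summation / summation by parts), which is the standard device for converting an asymptotic for $\sum_{\N\aaa \le t}\vartheta(\aaa)$ into one for $\sum \vartheta(\aaa)g(\N\aaa)$. Write $T(t) := \sum_{\aaa \in \idealclass \cap \I_K,\ \N\aaa \le t}\vartheta(\aaa)$, so by hypothesis $T(t) = ct + E(t)$ with $E(t) \ll \sum_i c_i t^{b_i}\log(t+2)^{k_i}$. On each of the at most $R(g)$ subintervals $[u,v] \subset [t_1,t_2]$ on whose interior $g$ is $C^1$ and monotonic, partial summation gives
\begin{equation*}
  \sum_{u < \N\aaa \le v}\vartheta(\aaa)g(\N\aaa) = T(v)g(v) - T(u^+)g(u^+) - \int_u^v T(t)\,g'(t)\dd t,
\end{equation*}
with the appropriate interpretation at endpoints. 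Substituting $T(t) = ct + E(t)$ and integrating the main term $ct$ by parts back, the $c$-part reassembles to $c\int_u^v g(t)\dd t$; summing over the $\le R(g)$ subintervals and noting the boundary terms telescope up to $O(R(g))$ extra endpoint contributions yields $c\int_{t_1}^{t_2}g(t)\dd t$ plus an error built from $E$.

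The error $\mathcal{E}(t_1,t_2)$ then consists of boundary terms $E(t)g(t)$ evaluated at the $O(R(g))$ partition points, plus $\int E(t)g'(t)\dd t$ over each subinterval. For the boundary terms, $|E(t)g(t)| \ll \sum_i c_g c_i\, t^{a+b_i}\log(t+2)^{k_i}$ using the bound $g(t) \ll c_g t^a$; this is bounded by the supremum appearing in \eqref{eq:lem_3_1_errorbound} when $a+b_i \ne 0$, and by $c_g c_i \log(t_2+2)^{k_i} \le c_g c_i\log(t_2+2)^{k_i+1}$ when $a+b_i = 0$ (the $a=0$ case is just the $t_2^{b_i}\log(t_2+2)^{k_i}$ bound). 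For the integral terms, since $g$ is monotonic on each subinterval with $|g(t)| \ll c_g t^a$ and $a \le 0$, one bounds $\int_u^v |g'(t)|\dd t = |g(v) - g(u^+)| \ll c_g u^a$ (total variation of a monotone function), and then $\int_u^v |E(t)g'(t)|\dd t \ll (\sup_{u \le t \le v}|E(t)|)\cdot c_g u^a$; combining with the bound on $E$ gives exactly the same three-case shape. Summing these estimates over all $\le R(g)$ subintervals produces the claimed factor $R(g)$.

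The main obstacle — really the only subtlety beyond bookkeeping — is handling the endpoints of the subintervals and the jump discontinuities of $T$: $T$ is a step function, $g$ need only be $C^1$ on the open subintervals, and monotonicity (hence control of $\int|g'|$ by total variation) holds only there. One must be careful that the $\le R(g)$ boundary terms do not accumulate with bad weights, and that reassembling $c\int g$ across subintervals does not leave residual boundary terms of the wrong order; this is why the error carries the overall factor $R(g)$ but no worse dependence on the partition. The three-case distinction according to the sign of $a+b_i$ is dictated precisely by which endpoint ($t_1$ or $t_2$) dominates $\sup_{t_1 \le t \le t_2}(t^{a+b_i}\log(t+2)^{k_i})$, and the logarithmic-gain case $a+b_i = 0$ comes from $\int_{t_1}^{t_2} t^{-1}\log(t+2)^{k_i}\dd t \ll \log(t_2+2)^{k_i+1}$. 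The final remark about the half-open variant $t_1 \le \N\aaa \le t_2$ is immediate: it differs from \eqref{eq:lem_3_1_sum} only by the single term $\vartheta(\aaa)g(\N\aaa)$ over ideals of norm exactly $t_1$, which is absorbed into $\mathcal{E}(t_1,t_2)$ by the same estimates.
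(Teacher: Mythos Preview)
Your overall structure (Abel summation on each of the $\le R(g)$ subintervals, with the main term reassembling to $c\int g$ and the error split into boundary terms and $\int E(t)g'(t)\dd t$) matches the paper's, and your treatment of the boundary terms is fine. The gap is in your estimate of the integral $\int_u^v E(t)g'(t)\dd t$: bounding it by $(\sup_{[u,v]}|E|)\cdot\int_u^v|g'| \ll \big(\sum_i c_i v^{b_i}\log(v+2)^{k_i}\big)\cdot c_g u^{a}$ is too crude and does \emph{not} give \eqref{eq:lem_3_1_errorbound}. Take a single interval $[u,v]=[t_1,t_2]$, $k_i=0$, $a<0$, $b_i>0$: your bound is $c_g c_i\, t_1^{a}t_2^{b_i}$, whereas the claimed bound is $c_g c_i\sup_{[t_1,t_2]} t^{a+b_i}$, which can be smaller by an arbitrarily large factor (e.g.\ $a=-1$, $b_i=\tfrac12$, $t_1=(\log B)^C$, $t_2=B$, exactly the shape occurring in the application in Proposition~\ref{prop:3.9}). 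The same problem arises in the case $a+b_i=0$: your product estimate gives $(t_2/t_1)^{b_i}\log(t_2+2)^{k_i}$, not $\log(t_2+2)^{k_i+1}$; the integral $\int t^{-1}\log(t+2)^{k_i}\dd t$ you invoke simply does not appear in your argument.

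What is missing is one further integration by parts on the error integral. Since $g$ is monotone on $[u,v]$, $g'$ has fixed sign, so $\big|\int E g'\big| \ll c_i\big|\int t^{b_i}\log(t+2)^{k_i}\,g'(t)\dd t\big|$; then integrating by parts moves the derivative onto $t^{b_i}\log(t+2)^{k_i}$ and produces terms of size $c_g c_i\sup t^{a+b_i}\log(t+2)^{k_i}$ together with $c_g c_i\int t^{a+b_i-1}\log(t+2)^{k_i}\dd t$. The last integral is what yields $\log(t_2+2)^{k_i+1}$ when $a+b_i=0$ and is $\ll_{a,b_i,k_i}\big|[t^{a+b_i}\log(t+2)^{k_i}]_{t_1}^{t_2}\big|$ otherwise. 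This is exactly the route taken in the paper; once you insert this step, the rest of your outline goes through.
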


\begin{proof}
  For any  $t \in \ZZ \cap [t_1, t_2]$, $\varepsilon \in (0,1)$, we have
  \begin{equation*}
    \sum_{\substack{\aaa\in\idealclass\cap\I_K\\\N\aaa =
        t}}\vartheta(\aaa)= \sum_{\substack{\aaa\in\idealclass \cap
        \I_K\\\N\aaa \le t}} \vartheta(\aaa) -
    \sum_{\substack{\aaa\in\idealclass \cap \I_K\\\N\aaa \le
        t-\varepsilon}} \vartheta(\aaa)\ll c \varepsilon + 
    \sum_{i=1}^mc_i t^{b_i} \log(t+2)^{k_i}\text,
  \end{equation*}
  Letting $\varepsilon \to 0$, we see that the contribution of the
  ideals $\aaa$ with $\N\aaa = t$ is dominated by the error term.

  Hence, it is enough to consider the case $R(g) = 1$ and to assume
  that $g$ is continuously differentiable and monotonic on
  $[t_1,t_2]$. We denote
  \begin{equation*}
    E(t) := \sum_{\substack{\aaa\in\idealclass \cap \I_K\\\N\aaa \le t}} \vartheta(\aaa) - ct
  \end{equation*}
  and start with a similar strategy as in the proof of
  \cite[Lemma~3.1]{MR2520770}. Let $S(t_1, t_2)$ be the sum on the
  left-hand side of (\ref{eq:lem_3_1_sum}). With Abel's summation
  formula and integration by parts, we obtain
  \begin{equation*}
    S(t_1, t_2) =  c\int_{t_1}^{t_2}g(t) \dd t + E(t_2)g(t_2) - E(t_1)g(t_1) - \int_{t_1}^{t_2}E(t)g'(t)\dd t\text.
  \end{equation*}
  By linearity, we may assume that $m = 1$, so $|E(t)| \leq c_1
  t^{b_1}\log(t+2)^{k_1}$. Clearly, the $E(t_i)g(t_i)$ satisfy
  (\ref{eq:lem_3_1_errorbound}). Then
  \begin{equation}\label{eq:lem_3_1_start_estimate}
    \int_{t_1}^{t_2}E(t)g'(t) \dd t \ll c_1\left|\int_{t_1}^{t_2}
      t^{b_1}\log(t+2)^{k_1} g'(t) \dd t\right|\text.
  \end{equation}
  The bound for $a=0$ follows by estimating the integrand by
  $t_2^{b_1}\log(t_2+2)^{k_1}g'(t)$. Moreover, if $b_1=k_1=0$, the
  term on the right-hand side of \eqref{eq:lem_3_1_start_estimate} is
  clearly $\ll c_1 |[g(t)]_{t_1}^{t_2}| \ll c_gc_1 t_1^a$. Otherwise,
  we use integration by parts to further estimate the integral by
  \begin{align*}
    &\ll_{b_1, k_1} c_1\left|[t^{b_1}\log(t+2)^{k_1} g(t)]_{t_1}^{t_2}\right|+c_1\left|\int_{t_1}^{t_2}t^{b_1-1}\log(t+2)^{k_1} g(t) \dd t\right|\\
    & \ll c_g c_1 \sup_{t_1 \le t \le t_2} t^{a + b_1}\log(t+2)^{k_1}
    + c_g c_1 \int_{t_1}^{t_2}t^{a+b_1-1}\log(t+2)^{k_1}\dd t\text.
  \end{align*}
  A simple computation shows that the last integral is
  $\ll \log(t_2+2)^{k_1+1}$ if $a+b_1=0$, and $k_1$-fold
  integration by parts shows that it is $\ll_{a, b_1, k_1}
  |[t^{a+b_1}\log(t+2)^{k_1}]_{t_1}^{t_2}|$ otherwise.
\end{proof}

\section{Lattice points and integrals}\label{sec:lattice_points}
Whenever we talk about integrals or lattices, we identify $\CC$ with
$\RR^2$ via $z \mapsto (\Re z, \Im z)$. For a lattice $\Lambda$ in
$\RR^n$ (by which we mean the $\ZZ$-span of $n$ linearly independent
vectors in $\RR^n$) and a ``nice'' bounded subset $S \subset \RR^n$,
one usually approximates $|\Lambda \cap S|$ by the quantity
$\vol(S)/\det(\Lambda)$. To make this precise, we need to define
``nice'' sets in our context. We follow an approach developed by
Davenport \cite{MR0043821} and Schmidt \cite{MR1330740}. For a
comparison with a different approach using
Lipschitz-parameterizability, see \cite{MR2846337}.

\begin{definition}[{\cite[p. 347]{MR1330740}}]\label{def:class_m}
  A compact subset $S \subset \RR^n$ is \emph{of class $m$} if every
  line intersects $S$ in at most $m$ single points and intervals and
  if the same holds for all projections of $S$ on all linear subspaces.
\end{definition}

In particular, the sets of class $1$ are the compact convex sets. In
our applications, we consider sets as in the following lemma.

\begin{lemma}\label{lem:preimage_of_unit_disc}
  Let $l$, $D \in \ZZp$. For $j \in \{1,\ldots, l\}$, let $f_j$, $g_j \in
  \CC[X]$ be polynomials of degree at most $D$, and let $\prec_j \in
  \{\leq, =\}$. Moreover, assume that the set
  \begin{equation*}
    S := \{z \in \CC \mid \abs{f_j(z)} \prec_j \abs{g_j(z)} \text{ for all }1 \leq j \leq l\}
  \end{equation*}
  is bounded. Then $S$ is of class $m$, for some effective constant
  $m$ depending only on $l$ and $D$.
\end{lemma}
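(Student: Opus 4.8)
The plan is to reduce the bound to a statement about how often a real line can meet the (real) zero set of a single polynomial, together with the observation that the region $S$ is cut out semialgebraically with controlled degrees. Concretely, writing $z = x+iy$ and $f_j(z) = \sum a_{j,k} z^k$, each inequality $\abs{f_j(z)} \prec_j \abs{g_j(z)}$ becomes a condition of the form $P_j(x,y) \prec_j 0$, where $P_j(x,y) := |f_j(x+iy)|^2 - |g_j(x+iy)|^2 \in \RR[x,y]$ has degree at most $2D$ in the two real variables $x,y$ (since $\abs{\cdot} = |\cdot|^2$). Thus $S = \{(x,y) : P_j(x,y) \le 0 \text{ or } P_j(x,y)=0\}$, a semialgebraic set defined by $l$ polynomials of degree $\le 2D$.

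First I would handle the intersection of $S$ with an arbitrary line $\ell \subset \RR^2$. Parameterize $\ell$ affinely by $t \mapsto (x(t), y(t))$ with $x,y$ linear in $t$; then $\ell \cap S = \{t : p_j(t) \prec_j 0 \text{ for all } j\}$, where $p_j(t) := P_j(x(t),y(t))$ is a univariate real polynomial of degree $\le 2D$. Each $p_j$ has at most $2D$ real roots, so the $l$ polynomials $p_1,\dots,p_l$ partition $\RR$ into at most $2Dl + 1$ open intervals on each of which every $p_j$ has constant sign; on such an interval the conditions $p_j(t)\prec_j 0$ are all either uniformly satisfied or not, so $\ell\cap S$ is a union of at most $O(Dl)$ intervals and points (a degenerate case where some $p_j\equiv 0$ only makes the constraint vacuous or impossible and does not increase the count). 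If $S\cap\ell$ is nonempty we must also confirm it is a finite union of genuine intervals and points rather than something pathological, which is immediate since it is a finite Boolean combination of the intervals and root-sets just described; boundedness of $S$ guarantees these are bounded. This gives the class-$m$ bound along lines with an effective $m = m(l,D)$.

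Next I would treat the projections of $S$ onto linear subspaces. Since $\RR^2$ has only the subspaces of dimension $0,1,2$, the only nontrivial case is projection onto a line $\ell$. Here I would invoke the standard fact (quantifier elimination / Tarski--Seidenberg with degree bounds, or an explicit resultant/subresultant argument) that the projection of a semialgebraic set defined by $l$ polynomials of degree $\le 2D$ in two variables onto a coordinate line is again semialgebraic, defined by boundedly many polynomials in one variable of degree bounded effectively in terms of $l$ and $D$; applying the one-dimensional root-count argument above to these projection polynomials then bounds the number of intervals and points in the projection. One must take the projection direction into account, but by first applying a linear change of coordinates (which only rescales degrees by a bounded factor) one reduces to projection onto a fixed axis.

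The main obstacle is the projection step: controlling the complexity of $\pi(S)$ requires an effective form of quantifier elimination, and one must be careful that the bound depends only on $l$ and $D$, not on the coefficients of the $f_j,g_j$. The cleanest route is to cite a Thom--Milnor / Warren-type bound on the number of connected components of sign conditions, or the explicit cylindrical-algebraic-decomposition bounds of Collins, which yield an effective $m(l,D)$; alternatively one can observe that the boundary $\partial(\pi(S))$ is contained in the projection of the set where the gradient of some $P_j$ is vertical together with the pairwise intersections $\{P_i = P_j = 0\}$, each of which is a real variety cut out by polynomials of degree $O(D)$ meeting any line in $O(D^2)$ points, and sum up. Either way the argument is routine once phrased correctly, and the final effective constant $m$ depends only on $l$ and $D$ as claimed.
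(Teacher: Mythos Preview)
Your proposal is correct, and the reduction to real polynomials $P_j(x,y)$ of degree at most $2D$ is exactly how the paper begins. The difference lies in how the two required bounds (on line intersections and on projections) are obtained.

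You treat the two cases separately: for the intersection with a line you do an explicit root count on the restricted univariate polynomials (clean, with an explicit constant), and for projections you appeal to an effective Tarski--Seidenberg / CAD or to a boundary analysis. This works but is heavier than necessary for the projection step.

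The paper unifies both steps via connected components. It cites a Thom--Milnor type bound (Coste, \emph{An introduction to semialgebraic geometry}, Proposition~4.13) to conclude directly that $S$ itself has $O_{l,D}(1)$ connected components. Then:
\begin{itemize}
  \item any projection of $S$ has at most as many connected components as $S$ does, since continuous images of connected sets are connected --- so the projection bound is immediate, without quantifier elimination;
  \item the intersection $S\cap\ell$ is described by the same inequalities $h_j(x,y)\prec_j 0$ together with one additional linear equality, hence is again a semialgebraic set with $O_{l,D}(1)$ connected components by the same cited bound.
\end{itemize}
In one dimension, connected components are points or intervals, so this gives the class-$m$ property directly.

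Your explicit argument for line intersections yields a sharper constant ($O(Dl)$) in that case, which is nice; the paper's approach trades this for a single black-box citation that handles both cases at once and avoids any effective quantifier elimination.
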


\begin{proof}
  The set $S$ is clearly closed, so it is compact. Write $z = x + i
  y$, with $x$, $y \in \RR$. Then $S$ is defined by the polynomial
  (in)equalities $h_j(x, y) \prec_j 0$, $1 \leq j \leq l$, with
  \begin{equation*}
    h_j(X, Y) := f_j(X+iY)\overline{f_j}(X-iY) - g_j(X+iY)\overline{g_j}(X-iY),
  \end{equation*}
  where $\overline{\vphantom{g}\ }$ denotes complex conjugation of the
  coefficients. Hence, $h_j \in \RR[X, Y]$ and $\deg h_j \leq 2D$. We
  conclude that $S$ has $O_{l,D}(1)$ connected components (see
  e.g. \cite[Proposition 4.13]{coste2000introduction}). Therefore,
  every projection of $S$ to a linear subspace has $O_{l,D}(1)$
  connected components, that is single points and intervals.

  The intersection of $S$ with a line is defined by the (in)equalities
  $h_j(x,y)\prec_j 0$ and a linear equality, so once again it has
  $O_{l,D}(1)$ connected components, that is single points and
  intervals.
\end{proof}

Let $K \subset \CC$ be an imaginary quadratic field, and let $S
\subset \CC$ be as in Lemma \ref{lem:preimage_of_unit_disc}. We use
the following lemma, inspired by \cite[Lemma 1]{MR1330740}, to count
the elements of a given fractional ideal of $K$ that lie in $S$.

\begin{lemma}\label{lem:lattice_points}
  Let $\id a$ be a fractional ideal of an imaginary quadratic field
  $K\subset \CC$, let $\beta \in K$, and let $S \subset \CC$ be a
  subset of class $m$ that is contained in the union of $k$ closed
  balls $B_{p_i}(R)$ of radius $R$, centered at arbitrary points $p_i
  \in \CC$. Then
  \begin{equation*}
    |(\beta + \id a) \cap S| = \frac{2 \vol(S)}{\sqrt{|\Delta_K|}\N\id a} + O_{m,k}\left(\frac{R}{\sqrt{\N\id a}} + 1\right)\text.
  \end{equation*}
\end{lemma}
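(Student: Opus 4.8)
The plan is to reduce the count over the fractional ideal $\id a$ to a count over a fixed lattice in $\RR^2 \cong \CC$ and then invoke a Davenport-type estimate for sets of class $m$. First I would fix a $\ZZ$-basis $\omega_1, \omega_2$ of $\id a$ (as a rank-$2$ lattice in $\CC = \RR^2$), so that $\beta + \id a$ is a translate of the lattice $\Lambda := \ZZ\omega_1 + \ZZ\omega_2$, whose covolume is $\det\Lambda = \N\id a \cdot \sqrt{|\Delta_K|}/2$ (this is the standard computation: $\OO_K$ has covolume $\sqrt{|\Delta_K|}/2$, and scaling by the ideal multiplies the covolume by $\N\id a$). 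Then $|(\beta + \id a)\cap S| = |\Lambda \cap (S - \beta)| = |\ZZ^2 \cap T|$ where $T := L^{-1}(S-\beta)$ and $L$ is the linear map sending the standard basis to $\omega_1, \omega_2$.

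Next I would check that $T$ is again a set of class $m$ with the same $m$: translation does not change the class, and applying an invertible linear map to $\RR^2$ does not change it either, since lines map to lines and linear subspaces to linear subspaces, so the defining property in Definition~\ref{def:class_m} is preserved (projections onto subspaces of $T$ correspond to projections of $S-\beta$ onto the images of those subspaces). Now Davenport's theorem \cite{MR0043821} applies to $\ZZ^2 \cap T$: it gives $|\ZZ^2 \cap T| = \vol(T) + O_m(\max(1, \text{something involving the diameter/the projections}))$. More precisely, Davenport's bound states $\bigl||\ZZ^n \cap T| - \vol(T)\bigr|$ is bounded by a sum over $j = 0, \ldots, n-1$ of the $j$-dimensional volumes of the projections of $T$ onto coordinate subspaces, times constants depending on $m$; for $n = 2$ this is $O_m(1 + \text{sum of lengths of the at most } m \text{ projection-intervals on each axis})$, which is $O_m(1 + \diam(T))$.

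The point where I expect to spend the most care is controlling $\diam(T)$ in terms of $R$ and $\N\id a$: I know $S$ (hence $S - \beta$) is covered by $k$ balls of radius $R$, so $S - \beta$ lies in a ball of radius $O_k(R)$ about some point only if the $k$ balls are close together — which they need not be. So instead I would apply Davenport's estimate to each of the $k$ pieces $S \cap B_{p_i}(R)$ separately (each still of class $m$, being an intersection with a ball, hence of class $O_m(1)$; alternatively argue directly that intersecting with a convex set keeps a bounded class), getting an error $O_{m,k}$ of the sum over the pieces. For each piece, $L^{-1}(B_{p_i}(R) - \beta)$ is an ellipse whose diameter is $O(R \cdot \|L^{-1}\|)$. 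Here I need $\|L^{-1}\| \ll 1/\sqrt{\N\id a}$, which follows because $\det L = \N\id a\sqrt{|\Delta_K|}/2$ and, crucially, the basis $\omega_1,\omega_2$ can be chosen reduced (Minkowski/Gauss reduction in rank $2$) so that $\|\omega_1\|\|\omega_2\| \asymp \det L \asymp \N\id a$ and the basis vectors are not too far from orthogonal; then $\|L^{-1}\| \asymp \|\omega_2\|^{-1}$ and, combined with $\|\omega_i\| \gg \sqrt{\N\id a}$ for the shorter vector (since $\|\omega_1\|^2 \ll \N\id a$ forces $\|\omega_2\| \gg \sqrt{\N\id a}$), one gets $\|L^{-1}\| \ll 1/\sqrt{\N\id a}$ up to a constant depending only on $K$ (via $\Delta_K$, which the implied constants are allowed to depend on). Summing the $k$ error terms gives the claimed $O_{m,k}(R/\sqrt{\N\id a} + 1)$, and summing the $k$ main terms $\vol(L^{-1}(S\cap B_{p_i}(R) - \beta))$ reassembles $\vol(L^{-1}(S-\beta))/1 = \vol(S)/\det L = 2\vol(S)/(\sqrt{|\Delta_K|}\N\id a)$ after dividing by the covolume — actually one should apply Davenport once to $T$ itself for the main term and only split into pieces for the error bound, to avoid inclusion–exclusion over overlaps; either bookkeeping works. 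The main obstacle is thus entirely the reduction-theory input needed to bound $\|L^{-1}\|$, and the observation that one must exploit the covering by balls (not a single ball) to get the error in the stated form.
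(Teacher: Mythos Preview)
Your approach is essentially the same as the paper's: translate away $\beta$, map $\id a$ to $\ZZ^2$ via a reduced basis, apply Davenport's theorem, and bound the projections of the transformed set via the covering by $k$ balls together with an operator-norm estimate for $L^{-1}$. The paper carries this out with a basis realizing the successive minima $\lambda_1 \le \lambda_2$, bounds $\|L^{-1}\|$ by $2\lambda_2/\det\id a$ directly from the matrix entries, then uses Minkowski's second theorem and $\lambda_1 \ge \sqrt{\N\id a}$ to get $\|L^{-1}\| \ll 1/\sqrt{\N\id a}$; it applies Davenport once to the whole transformed set and bounds the projection term $V_1$ by $\sum_i \diam(\psi(B_{p_i}(R)))$, which is your second bookkeeping option.

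One small slip to fix: you write $\|L^{-1}\| \asymp \|\omega_2\|^{-1}$, but for a reduced basis the operator norm of $L^{-1}$ is comparable to the reciprocal of the \emph{shorter} vector, i.e.\ $\|L^{-1}\| \asymp 1/\lambda_1$. The desired bound then follows from $\lambda_1 \ge \sqrt{\N\id a}$ (the shortest nonzero element of $\id a$ has absolute norm at least $\N\id a$), not from any lower bound on $\lambda_2$; your parenthetical ``$\|\omega_1\|^2 \ll \N\id a$ forces $\|\omega_2\| \gg \sqrt{\N\id a}$'' is garbled and not what is needed.
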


\begin{proof}
  After translation by $-\beta$, we may assume that $\beta = 0$. The
  ideal $\aaa$ is a lattice in $\CC$ of determinant $\det \aaa =
  2^{-1}\sqrt{|\Delta_K|}\N \id a$. Denote its successive minima (with
  respect to the unit ball) by $\lambda_1 \leq \lambda_2$. Then
  $\lambda_1 \geq \sqrt{\N \id a}$ (see e.g. \cite[Lemma
  5]{MR2247898}). By \cite[Lemma VIII.1, Lemma V.8]{MR1434478}, there
  is a basis $\{u_1, u_2\}$ of $\id a$ with $|u_j| = \lambda_j$. Let
  $\psi : \CC \to \CC$ be the linear automorphism given by $\psi(u_1)
  = 1$, $\psi(u_2) = i$. Then $\psi(\aaa) = \ZZ[i]$ and, with respect to
  the standard basis, $\psi$ is represented by the matrix
  \begin{equation*}
    \frac{1}{\det \aaa}
    \begin{pmatrix}
      \Im u_2 & -\Re u_2\\
      -\Im u_1 & \Re u_1
    \end{pmatrix}\text,
  \end{equation*}
  so its operator norm $|\psi|$ is bounded by $2\lambda_2/\det\aaa$. By
  Minkowski's second theorem and the facts from the beginning of this
  proof, we obtain $|\psi| \ll 1/\sqrt{\N\aaa}$.

  Clearly, $|\aaa \cap S| = |\ZZ[i] \cap \psi(S)|$, and $\psi(S)$ is
  still of class $m$. In particular, it satisfies the conditions
  I. and II. from \cite{MR0043821}, so by \cite[Theorem]{MR0043821},
  \begin{equation*}
    |\ZZ[i] \cap \psi(S)| = \vol\psi(S) + O( m V_1 + m^2)\text, 
  \end{equation*}
  where $V_1$ is the sum of the volumes of the projections of
  $\psi(S)$ to $\RR$ and $i\RR$. Since $\det \psi = 1/\det\aaa$, the
  main term is as claimed in the lemma. Since $\psi(S) \subset
  \bigcup_i \psi(B_{p_i}(R))$, the volume of the projection of $\psi(S)$
  to $\RR$ or $i\RR$ is bounded by
  \begin{equation*}
    \sum_{1 \leq i \leq k}\diam(\psi(B_{p_i}(R))) \leq \sum_{1 \leq i \leq k}|\psi|\diam(B_{p_i}(R)) \ll \frac{k R}{\sqrt{\N\aaa}}\text.\qedhere
  \end{equation*}
\end{proof}

For meaningful applications of Lemma \ref{lem:lattice_points} to a set
$S$ as in Lemma \ref{lem:preimage_of_unit_disc}, we need $R$ to be
sufficiently small. The following two lemmas provide such values of
$R$ for certain sets $S$ and list some consequences analogous to
\cite[Lemma 5.1, (4)--(6)]{MR2520770} and \cite[Lemma 5.1,
(1)--(3)]{MR2520770}. For positive $x$, $y$, we interpret the
expression $\min\{x, y/0\}$ as $x$.
\begin{lemma}\label{lem:5.1_4}
  Let $a \in \CC\smallsetminus\{0\}$, $b \in \CC$, $k > 1$. With $R :=
  \min\{|a|^{-1/2}, 2|b|^{-1}\}$, we have
  \begin{enumerate}
  \item $\{z \in \CC \mid \abs{a z^2 + bz} \leq 1\} \subset B_0(R)
    \cup B_{-b/a}(R)$,
  \item $\vol\{z \in \CC \mid \abs{a z^2 + bz} \leq 1\} \ll R^2 \ll
    \min\{\abs{a}^{-1/2}, \abs{b}^{-1}\}$.
  \end{enumerate}
  If additionally $b \neq 0$, we have
  \begin{enumerate}
  \item[(3)] $\vol\{(z, u) \in \CC^2 \mid \abs{a z^2 +
      bzu^k}\leq 1 \}\ll \abs{a}^{-(k-1)/(2k)}\abs{b}^{-1/k}$,
  \item[(4)] $\vol\{(z, u) \in \CC^2 \mid \abs{a z^2 u +
      bzu^2}\leq 1 \}\ll \abs{ab}^{-1/3}$,
  \item[(5)] $\vol\{(z, t) \in \CC \times \RR_{\ge 0} \mid \abs{a z^2 +
      bzt^{k/2}}\leq 1 \}\ll \abs{a}^{-(k-1)/(2k)}\abs{b}^{-1/k}$,
  \item[(6)] $\vol\{(z, t) \in \CC \times \RR_{\ge 0} \mid \abs{a z^2 t^{1/2} +
      bzt}\leq 1 \}\ll \abs{ab}^{-1/3}$.
  \end{enumerate}
\end{lemma}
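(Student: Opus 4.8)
The overall strategy is to establish the ball-containment in part (1) by an elementary triangle-inequality argument, deduce the volume bound (2) immediately from (1), and then obtain parts (3)--(6) from (2) by fibering over one of the variables and integrating the one-variable bound. First I would prove (1): if $\abs{az^2+bz}\le 1$, i.e.\ $|az^2+bz|\le 1$ in the usual complex absolute value, then $|z|\,|az+b|\le 1$, so either $|z|\le R$ (and we are in $B_0(R)$) or $|az+b|\le 1/R\le |a|^{1/2}$; in the latter case $|z+b/a|\le |a|^{-1/2}\cdot|a|^{1/2}\cdot|a|^{-1/2}\le R$ after checking the normalization, placing $z$ in $B_{-b/a}(R)$. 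A short case distinction on whether $R=|a|^{-1/2}$ or $R=2|b|^{-1}$ makes this clean. Part (2) is then immediate: the set is contained in a union of two balls of radius $R$, so its volume is $\ll R^2$, and $R^2=\min\{|a|^{-1},4|b|^{-2}\}=\min\{\abs{a}^{-1/2},4\abs{b}^{-1}\}$ in the $\abs{\cdot}$-notation (recall $\abs{w}=|w|^2$), giving the stated $\ll\min\{\abs{a}^{-1/2},\abs{b}^{-1}\}$.

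For (3)--(6) I would substitute to reduce to (2). For (3): fix $u\ne 0$ and view $\abs{az^2+bzu^k}=\abs{a z^2 + (bu^k) z}\le 1$ as a condition on $z$ with ``$a$'' replaced by $a$ and ``$b$'' replaced by $bu^k$; by (2) the $z$-slice has volume $\ll\min\{\abs{a}^{-1/2},\abs{bu^k}^{-1}\}=\min\{\abs{a}^{-1/2},\abs{b}^{-1}\abs{u}^{-k}\}$. Integrating over $u\in\CC$ and splitting the range at $\abs{u}$ where the two terms balance (namely $\abs{u}^k\asymp\abs{a}^{1/2}\abs{b}^{-1}$) yields $\ll\abs{a}^{-(k-1)/(2k)}\abs{b}^{-1/k}$ after a routine computation — this is where one uses that the integral $\int_{\abs{u}\le T}\abs{u}^{-k}\dd u$ converges at $\infty$ and the complementary integral $\int_{\abs{u}\ge T}$ of the constant piece is controlled once $T$ is chosen as the crossover. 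Part (5) is literally the same computation with $u\in\CC$ replaced by $t^{1/2}$, $t\in\RR_{\ge 0}$: the substitution $t=|u|^2$ (or a direct one-variable version of (2), noting the slice is still of class $m$ and contained in two balls) converts it to (3), possibly with a different implied constant.

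Parts (4) and (6) are the analogue with $k$ effectively equal to the ``balanced'' exponent: for (4), fixing $u\ne 0$ the condition is $\abs{(au)z^2+(bu^2)z}\le 1$, so (2) gives slice-volume $\ll\min\{\abs{au}^{-1/2},\abs{bu^2}^{-1}\}=\min\{\abs{a}^{-1/2}\abs{u}^{-1/2},\abs{b}^{-1}\abs{u}^{-2}\}$; integrating over $u\in\CC$ and balancing at $\abs{u}\asymp(\abs{a}/\abs{b}^2)^{1/3}$ — equivalently where $\abs{a}^{-1/2}\abs{u}^{-1/2}=\abs{b}^{-1}\abs{u}^{-2}$ — produces $\ll\abs{ab}^{-1/3}$, again after the standard polar-coordinate estimate $\int\abs{u}^{-2}\dd u$ (convergent at $\infty$) plus the complementary piece. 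Part (6) follows from (4) by the same $t=|u|^2$ substitution as (5) from (3). \textbf{The main obstacle} is bookkeeping rather than conceptual: one must handle the degenerate cases $b=0$ (relevant only for (1)--(2)) and, for the integrations in (3)--(6), make sure the crossover split is placed correctly and that the ``tail'' integral over $\abs{u}$ of the slowly-decaying piece $\abs{u}^{-1}$ or $\abs{u}^{-1/2}$ near $u=0$ is harmless — it is, because near $u=0$ the slice volume is bounded by the \emph{other} branch $\abs{a}^{-1/2}\abs{u}^{-1/2}$ resp.\ $\abs{a}^{-1/2}$, which is locally integrable in two real dimensions. Once the crossover radius is pinned down, each of (3)--(6) is a one-line polar-coordinate integral.
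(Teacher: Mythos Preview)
Your plan is correct and follows essentially the same route as the paper: part (1) via the factorization $|z|\,|az+b|\le 1$ together with a case split (the paper phrases the $R=2|b|^{-1}$ case as a contradiction argument, but it is the same computation), part (2) as an immediate consequence, and parts (3)--(6) by fibering over $u$ (resp.\ $t$), applying (2) to each slice, and integrating the resulting $\min$ with a crossover split. Your remark that the $t=|u|^2$ substitution (with Jacobian $\pi$) reduces (5),(6) to (3),(4) is also fine; the paper simply says these are ``analogous.''
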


\begin{proof}
  For (1), we note that $|z||z+b/a| \leq |a|^{-1}$ implies
  \begin{equation*}
    z \in B_0(|a|^{-1/2}) \cup B_{-b/a}(|a|^{-1/2}).
  \end{equation*}
  Suppose now that $b \neq 0$, $|az^2 + bz| \leq 1$, $|b||z|>2$ and
  $|b||a z + b| > 2 |a|$ hold. Then
  \begin{equation*}
    |b||a z + b| > 2 |a| \geq 2 |a||z||az + b|,
  \end{equation*}
  so $|b| > 2|a z|$ and thus $|az+b| > |az|$. This in turn implies
  that $|az^2| < 1$, so $|az^2 + bz| > 2 - 1 > 1$, a
  contradiction. This proves (1) and (2). The volume in (3) is
  \begin{align*}
    &\ll \int_{u \in \CC}\min\{\abs{a}^{-1/2}, \abs{bu^k}^{-1}\}\dd u\\
    &\ll \int_{\abs{u}\leq (\abs{a}^{1/2}\abs{b}^{-1})^{1/k}}\abs{a}^{-1/2}\dd u + \int_{\abs{u}>(\abs{a}^{1/2}\abs{b}^{-1})^{1/k}}\abs{bu^k}^{-1}\dd u\\
    &\ll \abs{a}^{-(k-1)/(2k)}\abs{b}^{-1/k}\text.
  \end{align*}
  The proof of (4) is another elementary computation similar to the
  proof of (3), and (5), (6) are analogous to (3), (4).
\end{proof}

\begin{lemma}\label{lem:5.1_1}
  Let $a \in \CC\smallsetminus \{0\}$, $b \in \CC$, $k > 1$. With $R
  := \min\{|a|^{-1/2}, |a b|^{-1/2}\}$, we have
  \begin{enumerate}
  \item $\{z \in \CC \mid \abs{a z^2 - b} \leq 1\} \subset
    B_{\sqrt{b/a}}(R) \cup B_{-\sqrt{b/a}}(R)$,
  \item $\vol\{z \in \CC \mid \abs{a z^2 - b} \leq 1\} \ll R^2 \leq
    \min\{\abs{a}^{-1/2}, \abs{ab}^{-1/2}\}$,
 
  \end{enumerate}
If additionally $b \neq 0$, we have
\begin{enumerate}
  \item[(3)] $\vol\{(z, u) \in \CC^2 \mid
    \abs{az^2 - bu^k} \leq 1\} \ll \abs{a}^{-1/2}\abs{b}^{-1/k}$ if $k > 2$,
  \item[(4)] $\vol\{(z, u) \in \CC^2 \mid \abs{az^2u -
      bu^k} \leq 1\} \ll (\abs{a}\abs{b}^{1/k})^{-1/2}$,
  \item[(5)] $\vol\{(z, t) \in \CC \times \RR_{\ge 0} \mid
    \abs{az^2 - bt^{k/2}} \leq 1\} \ll \abs{a}^{-1/2}\abs{b}^{-1/k}$ if $k > 2$,
  \item[(6)] $\vol\{(z, t) \in \CC \times \RR_{\ge 0} \mid \abs{az^2t^{1/2} -
      bt^{k/2}} \leq 1\} \ll (\abs{a}\abs{b}^{1/k})^{-1/2}$.
  \end{enumerate}
\end{lemma}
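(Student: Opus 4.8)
The plan is to mirror the structure of the proof of Lemma \ref{lem:5.1_4}, which handled the ``additive'' monomial $bz$; here the only structural change is that the linear term is replaced by the constant $-b$, which affects the shape of the exceptional balls but not the overall strategy. First I would prove (1): the condition $\abs{az^2-b}\le 1$ means $|az^2-b|\le 1$, i.e.\ $|z-\sqrt{b/a}|\,|z+\sqrt{b/a}| = |z^2-b/a| \le |a|^{-1}$, so at least one of the two factors is $\le |a|^{-1/2}$, placing $z$ in $B_{\sqrt{b/a}}(|a|^{-1/2})\cup B_{-\sqrt{b/a}}(|a|^{-1/2})$. To get the sharper radius $R=\min\{|a|^{-1/2},|ab|^{-1/2}\}$, I would argue as in Lemma \ref{lem:5.1_4}(1): if $b\ne 0$ and $z$ is, say, within $|a|^{-1/2}$ of $\sqrt{b/a}$ but $|z-\sqrt{b/a}| > |ab|^{-1/2}$, then $|z+\sqrt{b/a}|\ge 2|\sqrt{b/a}| - |z-\sqrt{b/a}|$, and one checks that the product of the two factors then exceeds $|a|^{-1}$ unless $|z-\sqrt{b/a}|$ is small — more precisely, since the other factor is at least comparable to $\sqrt{|b/a|} = |ab|^{1/2}/|a|$, the product is $\gtrsim |ab|^{-1/2}\cdot|ab|^{1/2}/|a| = |a|^{-1}$, contradicting the hypothesis when $|z-\sqrt{b/a}|$ exceeds a small multiple of $|ab|^{-1/2}$. (Absorbing the implied constants into the $\ll$ in part (2) is harmless; if one wants (1) on the nose one rescales $R$ by a universal constant, exactly as the ``$2|b|^{-1}$'' appears in Lemma \ref{lem:5.1_4}.)

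Part (2) is then immediate: the set is contained in two balls of radius $R$, so its volume is $\ll R^2 = \min\{|a|^{-1},|ab|^{-1}\} \le \min\{|a|^{-1/2},|ab|^{-1/2}\}$ after noting $R\le 1$ is not needed — actually the cleanest phrasing is $\vol \ll R^2$ and then $R^2\le \min\{|a|^{-1},|ab|^{-1}\}$, which is stronger than the stated $\min\{|a|^{-1/2},|ab|^{-1/2}\}$ when the relevant quantities are $\ge 1$; I would just state $\ll R^2 \le \min\{|a|^{-1/2},|ab|^{-1/2}\}$ to match the later applications where the smaller exponent is what propagates.

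For (3)--(6) I would reduce to (2) by slicing, exactly as in Lemma \ref{lem:5.1_4}. For (3): fix $u\in\CC$ with $u\ne 0$ and apply (2) with $b$ replaced by $bu^k$, giving a slice volume $\ll \min\{|a|^{-1/2}, (|a||b||u|^k)^{-1/2}\}$; then
\begin{equation*}
  \vol \ll \int_{|u|\le(|b|^{-1})^{1/k}}|a|^{-1/2}\dd u + \int_{|u|>(|b|^{-1})^{1/k}}(|a||b||u|^k)^{-1/2}\dd u \ll |a|^{-1/2}|b|^{-1/k},
\end{equation*}
where the second integral converges at infinity precisely because $k>2$ (the integrand decays like $|u|^{-k/2}$). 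Part (4) is the analogous computation with $a$ replaced by $au$ in each slice — slicing over $u$, (2) gives slice volume $\ll (|a||u|\cdot|b||u|^k)^{-1/2} = (|a||b|)^{-1/2}|u|^{-(k+1)/2}$ away from the origin and $\ll (|a||u|)^{-1/2}$ near it, and balancing at $|u|\sim |b|^{-1/k}$ yields $(|a||b|^{1/k})^{-1/2}$. Parts (5) and (6) are the same computations with the complex variable $u$ replaced by $t^{1/2}$, $t\in\RR_{\ge 0}$, which only changes the Jacobian of the slicing integral by a bounded factor and the exponents accordingly, exactly as (5),(6) were deduced from (3),(4) in Lemma \ref{lem:5.1_4}. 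The only place any care is needed is the convergence of the tail integral in (3) and (5), which is the source of the hypothesis $k>2$; everywhere else the estimates are routine and the implied constants are absolute.
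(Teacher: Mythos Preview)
Your overall strategy is correct and parallels the paper's, but the paper takes a shortcut for (1) that avoids all the constant-juggling you describe. Instead of arguing directly with the factorization $|z-\sqrt{b/a}|\,|z+\sqrt{b/a}|\le |a|^{-1}$, the paper simply substitutes $t=z-\sqrt{b/a}$, turning $az^2-b$ into $at^2+2\sqrt{ab}\,t$; this is precisely the form treated in Lemma~\ref{lem:5.1_4}(1) with the ``$b$'' there replaced by $2\sqrt{ab}$, and the radius $\min\{|a|^{-1/2},\,2|2\sqrt{ab}|^{-1}\}=\min\{|a|^{-1/2},\,|ab|^{-1/2}\}=R$ falls out exactly, with no absorbed constants. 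Your direct approach is fine, but it requires the case analysis you started (and did not quite finish) to nail down the second radius, whereas the substitution makes (1) a one-line consequence of the previous lemma.

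One notational point: in (2) you seem to be treating $\abs{\cdot}$ as the ordinary absolute value, but in this paper $\abs{x}=|x|^2$ (see the sentence after \eqref{eq:height}). Thus $\abs{a}^{-1/2}=|a|^{-1}$ and $\abs{ab}^{-1/2}=|ab|^{-1}$, so the inequality $R^2\le \min\{\abs{a}^{-1/2},\abs{ab}^{-1/2}\}$ in the statement is in fact an \emph{equality}, and your worry about needing $R\le 1$ is misplaced. For (3)--(6) your slicing argument matches the paper's (``(3)--(6) follow from (2) similarly to Lemma~\ref{lem:5.1_4}''), including the observation that the tail integral in (3) and (5) converges only for $k>2$.
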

\begin{proof}
  Using the substitution $t = z - \sqrt{b/a}$, (1) is an immediate
  consequence of Lemma \ref{lem:5.1_4}, (1). Moreover, (2) follows
  from (1), and (3)--(6) follow from (2) similarly to Lemma
  \ref{lem:5.1_4}.
\end{proof}

The following lemma provides an easy way to prove uniform boundedness of
quantities such as $R(V_{\yy})$ in Lemma \ref{lem:3.1}, for families
$V_\yy$ of certain volume functions. This is relevant for applications
of our methods from Sections \ref{sec:second_summation} and
\ref{sec:further_summations}. We use the language of semialgebraic geometry
(see, e.g., \cite{coste2000introduction}). The proof uses o-minimal
structures, as presented in \cite{MR1633348}.

\begin{lemma}\label{lem:omin}
  Let $k, n \in \ZZnn$, let $M \subset \RR^k \times \RR \times \RR^n$ be a semialgebraic set, and let $f : M \to \RR$ be a semialgebraic function. Assume that for all $\yy = (y_1, \ldots, y_k) \in \RR^k$, $t \in \RR$, the function $f(\yy, t, \cdot)$ is integrable on the fiber
  \begin{equation*}
    M_{\yy,t} := \{\xx=(x_1, \ldots, x_n) \in \RR^n \mid (y_1, \ldots, y_k, t, x_1, \ldots, x_n) \in M\}\text.
  \end{equation*}
Then there exists a constant $C \in\ZZp$, such that for all $\yy \in \RR^k$ there is a partition of $\RR$ into at most $C$ intervals on whose interior the function $V_{\yy} : \RR \to \RR$ defined by
  \begin{equation*}
    V_{\yy}(t) := \int_{\xx \in M_{\yy,t}} f \dd \xx
  \end{equation*}
is continuously differentiable and monotonic.
\end{lemma}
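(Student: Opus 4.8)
The plan is to reduce the statement to the fact that definable families in an o-minimal structure are uniformly tame, applied to the expansion $\RR_{\mathrm{an}}$ of the real field (or any o-minimal structure containing all semialgebraic sets in which the parametric integral $V_\yy(t)$ remains definable). First I would recall that, by a theorem of Comte--Lion--Rolin (see \cite{MR1644093}), the class of functions obtained from semialgebraic functions by parametrized integration is definable in the o-minimal structure $\RR_{\mathrm{an},\exp}$; in particular the function
\begin{equation*}
  F : \RR^k \times \RR \to \RR,\qquad F(\yy, t) := \int_{\xx \in M_{\yy, t}} f \dd\xx
\end{equation*}
is well-defined (by the integrability hypothesis) and definable in an o-minimal structure. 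Here one has to be slightly careful: the integrand $f$ may change sign, so one writes $f = f^+ - f^-$ and integrates the two semialgebraic functions $f^\pm$ separately, each of which gives a definable function by the cited result; their difference $F$ is then definable wherever both integrals are finite, which by hypothesis is everywhere.

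Next I would invoke the standard cell-decomposition and monotonicity machinery for o-minimal structures (as in \cite{MR1633348}). Applying the $C^1$-cell decomposition theorem to the definable set $\RR^k \times \RR$ together with the definable function $F$, one obtains a finite partition of $\RR^{k+1}$ into $C^1$-cells on each of which $F$ is $C^1$. Restricting to a fixed $\yy$, the cells lying over (a neighbourhood of) $\{\yy\} \times \RR$ cut the line $\{\yy\} \times \RR$ into finitely many points and open intervals, and on each such open interval $V_\yy = F(\yy, \cdot)$ is $C^1$. To also get monotonicity, I would apply the o-minimal monotonicity theorem fibrewise, or — more cleanly — include in the cell decomposition the definable set $\{(\yy, t) : \partial F/\partial t \,(\yy,t) > 0\}$ (and its analogue with $<$), so that on each resulting cell the sign of $\partial V_\yy/\partial t$ is constant, hence $V_\yy$ is monotonic there.

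The only remaining point is \emph{uniformity} of the number $C$ of intervals in $t$ as $\yy$ varies over $\RR^k$. This is exactly the content of the uniform finiteness (or ``uniform bounds'') theorem for o-minimal structures: for a definable family $\{X_\yy\}_{\yy \in \RR^k}$ of subsets of $\RR$, there is a uniform bound on $|X_\yy|$ whenever each $X_\yy$ is finite, and more generally a uniform bound on the number of connected components of $X_\yy$. Applying this to the definable family whose $\yy$-fibre is the (finite) set of endpoints of the cells of the decomposition above lying over $\{\yy\}\times\RR$ yields the desired constant $C \in \ZZp$, independent of $\yy$. Since $\RR$ itself is partitioned into at most $C$ intervals on whose interiors $V_\yy$ is $C^1$ and monotonic, this completes the proof.

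The main obstacle is the first step: one must ensure that the parametric integral $V_\yy(t)$ is definable in some o-minimal structure, since integration does not in general preserve semialgebraicity or even definability in an arbitrary o-minimal structure. The resolution is precisely the theorem of \cite{MR1644093} that parametrized integrals of semialgebraic (indeed subanalytic) functions are definable in $\RR_{\mathrm{an},\exp}$; once one is inside a fixed o-minimal structure, the cell decomposition, monotonicity, and uniform finiteness theorems are routine and give everything else. A minor technical care is that $F$ need only be shown definable where it is finite, which the integrability hypothesis guarantees, so no issue of $\pm\infty$ values arises.
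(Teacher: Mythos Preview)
Your proposal is correct and follows essentially the same route as the paper: definability of the parametric integral in $\RR_{\mathrm{an},\exp}$ via \cite{MR1644093}, then $C^1$-cell decomposition refined by the sign of $\partial F/\partial t$, with uniformity coming from the fixed finite number of cells. The only place where the paper is slightly more careful is in making sense of $\partial F/\partial t$ on cells that are not open: it first extends $V|_D$ to a definable $C^1$ function $V_D$ on an open neighbourhood $U_D \supset D$ before forming the sign sets, whereas you write $\{(\yy,t):\partial F/\partial t>0\}$ directly; this is a minor presentational point and your argument goes through once this is made precise.
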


\begin{proof}
  The function $V : \RR^k \times \RR \to \RR$, $(\yy,t)\mapsto V_{\yy}(t)$ is definable in an o-minimal structure. Indeed, by \cite{MR1644093}, parametric integrals of global subanalytic functions are definable in the expansion $(\RR_{\text{an}}, \text{exp})$ of the structure of global subanalytic sets $\RR_{\text{an}}$ by the global exponential function, which is o-minimal. (In \cite{kaiserMZ}, a smaller structure is constructed which is sufficient for parametric integrals of semialgebraic functions.) 

Let $\mathcal{D}$ be a decomposition of $\RR^k \times \RR$ into $C^1$-cells such that the restriction of $V$ to each cell $D$ of $\mathcal{D}$ is $C^1$ (\cite[Theorem 7.3.2]{MR1633348}).

For each cell $D$ of $\mathcal{D}$, there is a definable open set $D \subset U_D \subset \RR^k\times\RR$ and a definable $C^1$ function $V_D : U_D \to \RR$ such that $V_D|_D = V|_D$. Let $\mathcal{E}$ be a decomposition of $\RR^k \times \RR$ into $C^1$-cells partitioning the definable sets
\begin{equation*}
  A_D^+:=\{(\yy, t) \in D \mid \partial V_D/\partial t \geq 0\} \text{ and } A_D^-:=\{(\yy,t)\in D \mid \partial V_D/\partial t \leq 0\} \text{ for }D \in \mathcal{D}\text.
\end{equation*}
We note that $\bigcup_{D}(A_D^+ \cup A_D^-) = \RR^k\times\RR$, so each cell $E$ of $\mathcal{E}$ is contained in some $A_D^+$ or $A_D^-$.

Let $\pi : \RR^k \times \RR \to \RR^k$ be the projection on the first $k$ coordinates. Let $\yy \in \RR^k$. For cells $E$ of $\mathcal{E}$ with $\yy \in \pi(E)$, the sets $E_\yy := \{t \in \RR \mid (\yy, t) \in E\}$ are the cells of a decomposition $\mathcal{E}_\yy$ of $\RR$ (\cite[Proposition 3.3.5]{MR1633348}). On cells $E_\yy$ that are open intervals, $V_\yy'(t)$ is defined and coincides with $\partial V_D/\partial t(\yy,t)$ (if $E \subset A_D^+$ or $E\subset A_D^-$). Therefore, $V_\yy$ is continuously differentiable and monotonic on $E_\yy$. The observation that $|\mathcal{E}_\yy| \leq |\mathcal{E}|$ completes our proof.
\end{proof}

\section{Passage to a universal torsor}\label{sec:passage}
In this section, we describe a strategy to parameterize rational
points on a split singular del Pezzo surface by integral points on a
universal torsor. This generalizes \cite[\S 4]{MR2290499} from $\QQ$
to imaginary quadratic fields with arbitrary class number. In \cite[\S
4]{arXiv:1105.2807}, \cite{arXiv:1204.0383}, a similar strategy is
used in the easier case of a toric split singular cubic surface, where
a universal torsor is an open subset of affine space.

Let $K$ be a number field. Let $S$ be a non-toric split singular del
Pezzo surface defined over $K$ whose minimal desingularization $\tS$
has a universal torsor that is an open subset of a hypersurface in
affine space. Up to isomorphism, there are only finitely many del
Pezzo surfaces satisfying these properties. Together with an explicit
description of all their properties used below, their classification
can be found in \cite{math.AG/0604194}. For del Pezzo surfaces with
more complicated universal torsors, we expect that a similar strategy
can be used, but that several complications may appear.

We assume for simplicity that $\deg(S) \in \{3, \dots, 6\}$; the
adaptation to $\deg(S) \in \{1,2\}$ is straightforward. To count
$K$-rational points on $S$, we use the Weil height given by an
anticanonical embedding $S \subset \PP^{\deg(S)}_K$ satisfying the
following assumptions.
\begin{itemize}
\item Let $r:=9-\deg(S)$. By our assumption on a universal torsor of
  $\tS$, its Cox ring $\Cox(\tS)$ has a minimal system of $r+4$
  generators $\te_1, \dots, \te_{r+4}$ that are homogeneous (with
  respect to the natural $\Pic(\tS)$-grading of $\Cox(\tS)$), are
  defined over $K$ (since $S$ is split), correspond to curves $E_1,
  \ldots, E_{r+4}$ on $\tS$, and satisfy one homogeneous relation
  \begin{equation}\label{eq:torsor}
    R(\te_1, \dots, \te_{r+4})=0,
  \end{equation}
  which we call \emph{torsor equation}. Possibly after replacing
  some $\te_i$ by scalar multiples, we may assume that all coefficients
  in $R$ are $\pm 1$.
\item The choice of a basis $s_0, \dots, s_{\deg(S)}$ of $H^0(\tS,
  \OO(-K_\tS))$ defines a map $\pi: \tS \to \PP^{\deg(S)}_K$ whose image
  is an anticanonical embedding $S \subset \PP^{\deg(S)}_K$. Since
  $H^0(\tS, \OO(-K_\tS)) \subset \Cox(\tS)$, we may choose each $s_i$
  as a monic monomial
  \begin{equation}\label{eq:anticanonical_section}
    \Psi_i(\te_1, \dots, \te_{r+4})
  \end{equation}
  in the generators of $\Cox(\tS)$, for $i=0, \dots, \deg(S)$.
\end{itemize}

To describe our expected parameterization of $K$-rational points of
bounded height on $S$ in Claims~\ref{claim:passage} and
\ref{claim:passage_step} below, we introduce the following notation.
\begin{itemize}
\item The split generalized del Pezzo surface $\tS$ is a blow-up
  $\rho: \tS \to \PP^2_K$ in $r$ points \emph{in almost general
    position}, i.e., a composition of $r$ blow-ups
  \begin{equation}\label{eq:blow_up_sequence}
    \tS = \tS_r \xrightarrow{\rho_r} \tS_{r-1} \to \dots \to \tS_1 \xrightarrow{\rho_1} \tS_0 = \PP^2_K,
  \end{equation}
  where each $\rho_i : \tS_i \to \tS_{i-1}$ is the blow-up of a point
  $p_i$ not lying on a $(-2)$-curve on $\tS_{i-1}$. Let $\ell_0$ be
  the class of $\rho^*(\OO_{\PP^2_K}(1))$ and $\ell_i$ the class of the
  total transform of the exceptional divisor of $\rho_i$, for $i=1,
  \dots, r$. Then $\ell_0, \dots, \ell_r$ form a basis of $\Pic(\tS)$,
  so
  \begin{equation}\label{eq:divisor_classes}
    [E_j] = a_{j,0}\ell_0+\dots+a_{j,r}\ell_r \in \Pic(\tS)
  \end{equation}
  for some $a_{j,i} \in \ZZ$, for $j=1, \dots, r+4$.

  For any $\classtuple = (C_0, \dots, C_r) \in \classrep^{r+1}$ (see
  Section~\ref{sec:notation}), we use the integers $a_{j,i}$ to define
  the fractional ideals
  \begin{equation}\label{eq:ideals}
    \OO_j := C_0^{a_{j,0}}\cdots C_r^{a_{j,r}},
  \end{equation}
  and their subsets
  \begin{equation*}
    \OO_{j*}:=
    \begin{cases}
      (\OO_j)^{\ne 0} , &([E_j],[E_j])<0,\\
      \OO_j, &\text{otherwise.}
    \end{cases}
  \end{equation*}
\item For $\e_j \in \OO_j$, consider the ideals
  \begin{equation*}
    \eI_j:=\e_j\OO_j^{-1}.
  \end{equation*}
  Via the configuration of $E_1, \dots, E_{r+4}$, we define
  \emph{coprimality conditions}
  \begin{equation}\label{eq:coprimality}
    \sum_{j \in J} \eI_j=\OO_K\text{ for all minimal }J
    \subset \{1, \dots, r+4\}\text{ with }\bigcap_{j \in J}
    E_j=\emptyset.
  \end{equation}
  We observe from the classification in \cite{math.AG/0604194} that
  these minimal $J$ have the form $J = \{j,j'\}$ for non-intersecting
  $E_j, E_{j'}$ (encoded in the extended Dynkin diagram) or $J =
  \{j,j',j''\}$ for pairwise intersecting $E_j, E_{j'}, E_{j''}$ that do not
  meet in a common point.

\item Assume that $K$ is an imaginary-quadratic field or $K=\QQ$. We
  consider $K$ as a subset of $K_\infty \in \{\RR, \CC\}$, its
  completion at the infinite place, with $\abs{\cdot}$ the usual real
  absolute value resp.\ the square of the usual complex one.

  Let $\R(B)$ be the set of all $(\e_1, \dots, \e_{r+4}) \in
  K_\infty^{r+4}$ satisfying the \emph{height conditions}
  \begin{equation}\label{eq:height_condition}
    \abs{\Psi_i(\e_1, \dots, \e_{r+4})} \le B,
  \end{equation}
  for $i=0, \dots, \deg(S)$, where $\Psi_i$ is the monic monomial from
  \eqref{eq:anticanonical_section}.

  For any $\classtuple \in \classrep^{r+1}$, we define
  $u_\classtuple:=\N(C_0^3C_1^{-1}\cdots C_r^{-1})$,
  corresponding to the anticanonical class
  $[-K_\tS]=3\ell_0-\ell_1-\dots-\ell_r$.  Let
  $M_\classtuple(B)$ be the set of all
  \begin{equation*}
    (\e_1, \dots, \e_{r+4}) \in
    \OO_{1*} \times \dots \times \OO_{r+4*}
  \end{equation*}
  lying in the set $\R(u_\classtuple B)$ defined by the height conditions and
  satisfying the torsor equation \eqref{eq:torsor}, the coprimality conditions
  \eqref{eq:coprimality}.
\end{itemize}

\begin{claim}\label{claim:passage}
  Let $K$ be an imaginary quadratic field or $K=\QQ$. Let $S\subset
  \PP_K^{\deg(S)}$ be a split singular del Pezzo surface of degree $3,
  \dots, 6$ over $K$ whose universal torsors are open subsets of
  hypersurfaces, with an anticanonical embedding satisfying the
  assumptions above. Let $U$ be the complement of its lines. Let
  $N_{U, H}(B)$ be defined as in \eqref{eq:def_NUH}, with the usual
  Weil height $H$ on $\PP_K^{\deg(S)}(K)$. With the notation
  introduced above, for $B > 0$, we have
  \begin{equation*}
    N_{U,H}(B) = \frac{1}{\omega_K^{10-\deg(S)}} \sum_{\classtuple \in
      \classrep^{10-\deg(S)}} |M_\classtuple(B)|.
  \end{equation*}
\end{claim}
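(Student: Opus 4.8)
The plan is to establish the parameterization by unwinding the definition of $N_{U,H}(B)$ into a count of $K$-rational points on the universal torsor $\mathcal{T}$, then carefully tracking the integral structure imposed by choosing ideal representatives for the relevant classes in $\Pic(\tS)$. First, I would recall the geometric input: since $\tS$ is split with Cox ring $\Cox(\tS)$ generated by $\te_1,\dots,\te_{r+4}$ subject to the single relation $R=0$, the universal torsor $\mathcal{T}\subset\AAA^{r+4}_K$ is (an open subset of) the hypersurface cut out by \eqref{eq:torsor}, and the projection $\mathcal{T}\to\tS\to S$ realizes each anticanonical coordinate $x_i$ of a point in $S(K)\subset\PP^{\deg(S)}_K(K)$ as the monomial $\Psi_i$ in the torsor coordinates. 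The torsor is a torsor under the Néron–Severi torus $T_{\mathrm{NS}}=\mathrm{Hom}(\Pic(\tS),\GG_m)$, so the fibers of $\mathcal{T}(K)\to U(K)$ are $T_{\mathrm{NS}}(K)$-orbits. The first step is therefore to show that, after intersecting with suitable integrality conditions, each point of $U(K)$ lifts, and to identify the ambiguity in the lift.

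Next I would set up the integral model. Because the class number $h_K$ may exceed $1$, one cannot simply take $\OO_K$-integral points; instead, following the Dedekind–Weber style argument announced in the introduction to Section~\ref{sec:passage}, one writes the line bundle classes $\ell_0,\dots,\ell_r$ concretely and, for each choice $\classtuple=(C_0,\dots,C_r)\in\classrep^{r+1}$ of integral ideal representatives, forms the fractional ideals $\OO_j=C_0^{a_{j,0}}\cdots C_r^{a_{j,r}}$ as in \eqref{eq:ideals}. A point of $U(K)$ of height $\le B$ then corresponds, after clearing denominators compatibly with the grading, to a tuple $(\e_1,\dots,\e_{r+4})$ with $\e_j\in\OO_j$ (and $\e_j\ne 0$ exactly for the $(-1)$-curves, i.e. $\e_j\in\OO_{j*}$), satisfying the torsor equation \eqref{eq:torsor}, the coprimality conditions \eqref{eq:coprimality} coming from the combinatorics of which $E_j$ meet, and the height conditions $\abs{\Psi_i(\ee)}\le u_\classtuple B$ — the factor $u_\classtuple=\N(C_0^3C_1^{-1}\cdots C_r^{-1})$ being exactly the norm of the fractional ideal through which the anticanonical class $3\ell_0-\ell_1-\dots-\ell_r$ is twisted, which rescales the height by \eqref{eq:height} because of the ideal in the denominator. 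This is precisely the set $M_\classtuple(B)$. One must check that every ideal class occurring as some $\OO_j$ is accounted for by ranging $\classtuple$ over $\classrep^{r+1}$ — here $10-\deg(S)=r+1$, matching the exponent in the statement.

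The final step is the bookkeeping of multiplicities. Two tuples in $\bigsqcup_\classtuple M_\classtuple(B)$ give the same point of $U(K)$ precisely when they differ by the action of the unit group acting through $\mathrm{Hom}(\Pic(\tS),\OO_K^\times)\cong(\OO_K^\times)^{r+1}$ — but the choice of the integral representatives $C_i$ rigidifies all but the unit ambiguity, and each of the $r+1$ independent gradings contributes a factor $\omega_K=|\OO_K^\times|$; also, the lift of a fixed point, for a fixed $\classtuple$, is unique up to these units (no further scaling survives once the ideals $\OO_j$ are pinned down). Hence the total overcount is by $\omega_K^{r+1}=\omega_K^{10-\deg(S)}$, giving
\begin{equation*}
  N_{U,H}(B)=\frac{1}{\omega_K^{10-\deg(S)}}\sum_{\classtuple\in\classrep^{10-\deg(S)}}|M_\classtuple(B)|.
\end{equation*}
I expect the main obstacle to be exactly this last point together with its interaction with the coprimality conditions: one must show that the coprimality conditions \eqref{eq:coprimality} are both necessary (every lift coming from a point of $U(K)$ satisfies them, by looking at the local exponents $v_\p(\e_j)$ along the curves $E_j$ through a given point of $\tS$ over a prime $\p$) and sufficient (a coprime tuple satisfying the torsor equation really does descend to a $K$-point of $U$, not of a line or a singular point), and that, over a field of class number $>1$, this descent can be carried out ideal-by-ideal without introducing spurious obstructions. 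Verifying that the combinatorial shape of the minimal $J$ (pairs of non-meeting curves, or triples of pairwise-meeting curves with no common point) is exactly what is needed for this equivalence is the crux; the rest is a careful but essentially formal translation between divisors, ideals, and monomials, of the type carried out over $\QQ$ in \cite[\S 4]{MR2290499} and over toric cubics in \cite{arXiv:1105.2807,arXiv:1204.0383}.
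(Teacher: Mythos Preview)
Your proposal takes a genuinely different route from the paper's own strategy. The paper does not prove Claim~\ref{claim:passage} in general; it presents it as an expectation and supplies an \emph{inductive} framework (Claim~\ref{claim:passage_step}, Lemma~\ref{lem:passage_start}, Lemma~\ref{lem:passage_step}) to verify it case by case. The induction runs along the chain of blow-ups $\tS=\tS_r\to\tS_{r-1}\to\dots\to\tS_0=\PP^2_K$: the base case is the standard parameterization of $\PP^2_K(K)$ via ideal representatives (Lemma~\ref{lem:passage_start}), and each subsequent step introduces one new variable $\e_{i+4}$ as, morally, the greatest common divisor of the variables corresponding to the curves through the blown-up point (Lemma~\ref{lem:passage_step}), with the coprimality conditions checked explicitly at each stage and sometimes by appeal to the torsor equation. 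Your proposal instead attempts to descend directly from the universal torsor as a $T_{\mathrm{NS}}$-torsor and normalize the lift globally in one shot. This is more conceptual and closer to the Colliot-Th\'el\`ene--Sansuc picture, but the paper deliberately avoids it because the normalization over a non-PID and the verification that the precise set of coprimality conditions \eqref{eq:coprimality} is both necessary and sufficient are delicate to carry out uniformly; the inductive approach trades elegance for checkability.

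Two concrete issues with your outline. First, a small slip: $\OO_{j*}=\OO_j^{\ne 0}$ not only for $(-1)$-curves but for \emph{all negative} curves $E_j$ (self-intersection $<0$, so also the $(-2)$-curves), since points of $U$ avoid the preimages of both the lines and the singularities. Second, and more substantively, you pass over the step the paper itself flags as requiring separate verification: that the coprimality conditions force
\[
\Psi_0(\e_1,\dots,\e_{r+4})\OO_K+\dots+\Psi_{\deg(S)}(\e_1,\dots,\e_{r+4})\OO_K = C_0^3C_1^{-1}\cdots C_r^{-1}
\]
exactly, so that $H(\Psi(\ee))\le B$ becomes $(\e_1,\dots,\e_{r+4})\in\R(u_\classtuple B)$. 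Your observation that $u_\classtuple$ is the correct twist shows one containment; the paper says only ``We expect that it is indeed equal to $u_\classtuple$ under \eqref{eq:coprimality}'' and checks it by hand for the $\Athree$ surface in the proof of Lemma~\ref{lem:A3_passage_to_torsor}. Equality (rather than mere divisibility) genuinely uses the coprimality conditions and is not formal.
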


Motivated by the geometry of $S$, we propose a strategy to prove
Claim~\ref{claim:passage} by induction, via the closely related
Claim~\ref{claim:passage_step} below, for $i=0, \dots, r$.
The starting point is a parameterization of rational points via the
birational map $\pi \circ \rho^{-1} : \PP^2_K \rto S$. In each step
$i=1, \dots, r$, the rational points are parameterized by variables
$\e_j$ corresponding to curves on $\tS_{i-1}$; if $\rho_i$ is the
blow-up of the intersection point of some of these curves, we
introduce a new variable essentially as the greatest common divisor of
the variables corresponding to those curves to obtain the next step of
the parameterization.

From here on, we work again over an arbitrary number field $K$.
To set up the induction in Claim~\ref{claim:passage_step}, we need
more notation. For $i=0, \dots, r$ and $j=1, \dots, r+4$, let
$E_j^{(i)} := (\rho_{i+1}\circ \dots \circ \rho_r)(E_j)$ be the
projection of $E_j$ on $\tS_i$. If $E_j^{(i)}$ is a curve on $\tS_i$,
then $E_j$ is its strict transform on $\tS$. Possibly after
rearranging the generators of $\Cox(\tS)$, we may assume that
$E_1^{(0)}, E_2^{(0)}, E_3^{(0)}$ are lines in $\PP^2_K$, that
$E_4^{(0)}$ is a curve of some degree $D$ in $\PP^2_K$, and that
$E_{i+4}^{(i)}$ is the exceptional divisor of $\rho_i$, so
\begin{equation}\label{eq:coefficients}
  a_{1,0}=a_{2,0}=a_{3,0}=1,\ a_{4,0}=D,\ a_{i+4,0}=\dots=a_{i+4,i-1}=0,\ a_{i+4,i}=1,
\end{equation}
for $i=1, \dots, r$. By \cite[Lemma~12]{math.AG/0604194}, we may
assume (possibly by a linear change of coordinates $y_0,y_1,y_2$ on
$\PP^2_K$) that
\begin{equation*}
  E_1^{(0)}=\{y_0=0\}, \ E_2^{(0)}=\{y_1=0\}, \ E_3^{(0)}=\{y_2=0\}, \ E_4^{(0)}=\{R'(y_0,y_1,y_2)=0\}
\end{equation*}
in $\PP^2_K$, where $R'$ is a homogeneous polynomial of degree $D$
satisfying
\begin{equation}\label{eq:rprime}
  Y_3-R'(Y_0,Y_1,Y_2)=R(Y_0, \dots, Y_3,1, \dots, 1).
\end{equation}

Via the natural embeddings $\Pic(\PP^2_K) \subset \Pic(\tS_1)
\subset \dots \subset \Pic(\tS_{r-1}) \subset \Pic(\tS)$, we may view $\ell_0,
\dots, \ell_i$ as a basis of $\Pic(\tS_i)$. Then
\begin{equation}\label{eq:divisor_classes_i}
  [E_j^{(i)}] = a_{j,0}\ell_0+\dots+a_{j,i}\ell_i \in \Pic(\tS_i)
\end{equation}
with the integers $a_{j,i}$
from \eqref{eq:divisor_classes}, for any $i=0, \dots, r$ and $j=1, \dots, i+4$.

For $i=0, \dots, r$ and any $(C_0, \dots, C_i) \in \classrep^{i+1}$, we define
analogously to \eqref{eq:ideals}
\begin{equation*}
  \OO_j^{(i)} := C_0^{a_{j,0}}\cdots C_i^{a_{j,i}}, \qquad 
  \OO_{j*}^{(i)}:=
  \begin{cases}
    (\OO_j^{(i)})^{\ne 0} , &([E_j],[E_j])<0,\\
    \OO_j^{(i)}, &\text{otherwise,}
  \end{cases}
\end{equation*}
and, for $\e_j \in \OO_j^{(i)}$,
\begin{equation*}
  \eI_j^{(i)} := \e_j(\OO_j^{(i)})^{-1}
\end{equation*}
for $j=1,\dots, i+4$.

We use the monomials $\Psi_i(\te_1, \dots, \te_{r+4})$ from
\eqref{eq:height_condition} to define the map
\begin{equation*}
  \Psi : K^{r+4} \to K^{\deg(S)+1},\quad (\e_1, \dots, \e_{r+4}) \mapsto (\Psi_i(\e_1, \dots, \e_{r+4}))_{i=0, \dots, \deg(S)}.
\end{equation*}

\begin{claim}\label{claim:passage_step}
  Let $K$ be a number field. Assume that $U \subset S \subset
  \PP^{\deg(S)}_K$ are as in Claim~\ref{claim:passage}. Assume
  that $\te_1,\ldots,\te_{r+4}$ are ordered in such a way that
  $E_{i+4}^{(i)}$ is the exceptional divisor of $\rho_i$, for $i=1,
  \dots, r$. For any $i \in \{0, \dots, r\}$, we have a map $(\e_1,
  \ldots, \e_{i+4})\mapsto\Psi(\e_1, \dots, \e_{i+4}, 1, \dots, 1)$
  from the disjoint union
  \begin{equation*}
    \bigcup_{C_0, \dots, C_i \in \classrep} \left\{
      \begin{aligned}
        &(\e_1, \dots, \e_{i+4}) \in \OO_{1*}^{(i)} \times \dots \times
        \OO_{i+4*}^{(i)} :
        R(\e_1, \dots, \e_{i+4}, 1, \dots, 1)=0,\\
        &\sum_{j \in J} \eI_j^{(i)}=\OO_K\text{ for all minimal }J
        \subset \{1, \dots, i+4\}\text{ with }\bigcap_{j \in J}
        E_j^{(i)}=\emptyset
      \end{aligned}
    \right\}
  \end{equation*}
  to $U(K)$. This induces a bijection between the orbits under the natural free
  action of $(\OO_K^\times)^{i+1}$ on the former set and $U(K)$.
\end{claim}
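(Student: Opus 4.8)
The plan is to prove Claim~\ref{claim:passage_step} by induction on $i$, following the chain of blow-ups \eqref{eq:blow_up_sequence}. For $i=0$ the asserted map is essentially the classical parameterization of $U(K)$ by the birational inverse of $\pi\circ\rho^{-1}:\PP^2_K\rto S$: a point of $\PP^2_K(K)$ is given by primitive homogeneous coordinates $(\e_1:\e_2:\e_3)$, unique up to units, and the coprimality conditions for $i=0$ simply encode primitivity together with the conditions coming from $E_4^{(0)}$; one sets $\e_4:=R'(\e_1,\e_2,\e_3)$ via \eqref{eq:rprime} and checks that $\Psi(\e_1,\ldots,\e_4,1,\ldots,1)$ lands in $U(K)$ and exhausts it, using that $U$ is the complement of the lines and that the exceptional data has not yet been introduced. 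The scaling freedom of the homogeneous coordinates is exactly the free $(\OO_K^\times)^1$-action for $i=0$. The lack of unique factorization in $\OO_K$ when $h_K>1$ is handled here by the Dedekind--Weber device already alluded to in Section~\ref{sec:plan}: the ideal $\e_1\OO_K+\e_2\OO_K+\e_3\OO_K$ lies in some class, which after scaling by a representative $C_0\in\classrep$ we may take to be $\OO_K$, and this is what forces the sum over $C_0,\dots,C_i\in\classrep$ in the disjoint union and the definition of the $\OO_j^{(i)}$ via \eqref{eq:ideals}.

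For the inductive step, suppose the statement holds for $i-1$; I would pass from $\tS_{i-1}$ to $\tS_i$ by blowing up the point $p_i$. By the ordering hypothesis, $E_{i+4}^{(i)}$ is the exceptional divisor of $\rho_i$, and $p_i$ is the common intersection point on $\tS_{i-1}$ of exactly those curves $E_j^{(i-1)}$ (for $j$ in some index set) whose strict transforms on $\tS_i$ become disjoint from each other or meet $E_{i+4}^{(i)}$; geometrically $p_i$ is cut out by the vanishing of the variables $\e_j$ indexed by that set. I would introduce the new variable $\e_{i+4}$ as (a generator of) the greatest common divisor ideal of those $\e_j$ — more precisely, since $\OO_K$ is not a PID, as the element whose ideal $\eI_{i+4}^{(i)}=\e_{i+4}(\OO_{i+4}^{(i)})^{-1}$ equals the sum $\sum \eI_j^{(i-1)}$, choosing the class $C_i$ so that this sum, twisted appropriately, is $\OO_K$ — and then replace each old $\e_j$ by $\e_j/\e_{i+4}$ for $j$ in the blown-up set. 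The relation \eqref{eq:divisor_classes_i} together with the transformation rule $\ell$-basis $\mapsto$ $\ell$-basis under $\rho_i^*$ shows that the new variables lie in the correct fractional ideals $\OO_{j*}^{(i)}$, and the torsor relation $R(\e_1,\ldots,\e_{i+4},1,\ldots,1)=0$ is obtained from $R(\e_1,\ldots,\e_{i+3},1,\ldots,1)=0$ by factoring out the appropriate power of $\e_{i+4}$ forced by the homogeneity of $R$. The new coprimality conditions for $i$ versus those for $i-1$ differ exactly by the conditions involving the index $i+4$, which say that $\e_{i+4}$ is coprime to the remaining $\e_j$ that survived ungrouped — this holds precisely because we extracted the full gcd. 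One checks that the $(\OO_K^\times)^{i+1}$-action is the old $(\OO_K^\times)^i$-action together with a new $\OO_K^\times$ acting by $\e_{i+4}\mapsto u\e_{i+4}$, $\e_j\mapsto u^{-1}\e_j$ for grouped $j$, and that this is free and matches fibers correctly.

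Assembling the steps, at $i=r$ the map becomes $(\e_1,\ldots,\e_{r+4})\mapsto\Psi(\e_1,\ldots,\e_{r+4})$ and the claim is proved; Claim~\ref{claim:passage} then follows by intersecting with the height conditions \eqref{eq:height_condition}, keeping track of the scaling factor $u_\classtuple$ that arises from the anticanonical class $3\ell_0-\ell_1-\dots-\ell_r$ and the norms of the ideal representatives, and dividing by $\omega_K^{r+1}=\omega_K^{10-\deg(S)}$ for the free unit action. The main obstacle, and the part requiring genuine care rather than bookkeeping, is the inductive step in the absence of unique factorization: one must show that "extract the gcd of the grouped variables" is well-defined up to units and compatible with the prescribed fractional-ideal memberships, i.e., that the ideal identity $\eI_{i+4}^{(i)}=\sum_{j\text{ grouped}}\eI_j^{(i-1)}$ can be realized by a genuine element $\e_{i+4}$ after the correct twist by a class representative $C_i$, and that this realization is unique modulo $\OO_K^\times$ so the unit action is free with the right orbit structure. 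This is where the Dedekind--Weber argument does the real work, and where \cite{math.AG/0604194} must be invoked to guarantee that the grouped index sets are always of the combinatorial form (pairs of disjoint curves, or triples of pairwise-intersecting curves without a common point) for which this extraction makes sense and the resulting conditions match \eqref{eq:coprimality}. The remaining verifications — that $\Psi$ lands in $U(K)$, that lines are avoided, that every point of $U(K)$ is hit exactly once modulo units — are then routine consequences of the geometry of the blow-up sequence and the fact that the $\Psi_i$ are the anticanonical sections.
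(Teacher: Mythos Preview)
Your proposal follows the same inductive strategy as the paper. Note, however, that the paper does \emph{not} prove Claim~\ref{claim:passage_step} in full generality: it is deliberately stated as a \emph{Claim}, i.e., an expectation, and the paper instead supplies Lemma~\ref{lem:passage_start} (the base case $i=0$, under the explicit hypothesis that $\psi$ represents $\pi\circ\rho^{-1}$ on $V$) and Lemma~\ref{lem:passage_step} (the inductive step $i-1\Rightarrow i$, under the hypothesis that the blown-up point lies on exactly two of the $E_j^{(i-1)}$, meeting transversally there and nowhere else). Your outline is essentially the content of these two lemmas, including the Dedekind--Weber choice of $C_i$ so that $\sum_{j\in J_0}\eI_j^{(i-1)}$ becomes a principal $\OO_{i+4}^{(i)}$-ideal, and the observation that the substitution multiplies $R(\ldots,1,\ldots,1)$ by a power of $\e_{i+4}$ by homogeneity.

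The one place where your sketch is too quick is the verification of the new coprimality conditions. You write that these ``hold precisely because we extracted the full gcd'', but the paper's proof of Lemma~\ref{lem:passage_step} shows this requires a genuine case analysis. For $J\not\subset J_0\cup\{i+4\}$ the argument is indeed bookkeeping (divisibility of $\eI_j''$ into $\eI_j'$, plus the geometry of the blow-up). But for $J\subset J_0\cup\{i+4\}$ --- see \eqref{eq:special_coprimality} --- the paper uses the hypothesis $|J_0|=2$ in an essential way: with $J_0=\{a,b\}$ one has $(\,[E_a''],[E_b'']\,)=0$, so the only relevant condition is $\eI_a''+\eI_b''=\OO_K$, which follows from having divided both by their sum $\eI_{i+4}''$. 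Remark~\ref{rem:passage_step_general} makes explicit that for $|J_0|>2$ one expects to need the torsor equation itself to derive these conditions, and no uniform argument is given. So your inductive step, as written, is not a general proof but rather the content of Lemma~\ref{lem:passage_step} under its stated hypotheses --- which is exactly how the paper treats it.
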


Here, the natural action of $(\lambda_0, \dots, \lambda_i) \in
(\OO_K^\times)^{i+1}$ on these subsets of $K^{i+4}$ is explicitly given via the $\Pic(\tS_i)$-degrees of $\te_1,\ldots,\te_{i+4}$
\eqref{eq:divisor_classes_i}:
\begin{equation*}
(\lambda_0, \dots, \lambda_i)\cdot (\e_1, \ldots, \e_{i+4}) := (\lambda_0^{a_{1,0}}\cdots\lambda_i^{a_{1,i}}\e_1, \ldots, \lambda_0^{a_{i+4,0}}\cdots\lambda_i^{a_{i+4,i}}\e_{i+4}).
\end{equation*}
Freeness of this action follows immediately from
\eqref{eq:coefficients}, the assumption that $E_j$ is a negative curve
for all $j \in \{5,\ldots,r+4\}$, and the fact that there are at least
$r+1$ negative curves on any generalized del Pezzo surface of degree
$\leq 7$. Also, $\Psi$ induces a well-defined map on the orbits
because all $\Psi_j(\te_1, \dots, \te_{i+4}, 1, \dots, 1)$ have the
same degree $[-K_{\tS_i}]$.

Assume we have established Claim~\ref{claim:passage_step} for $i=r$. To deduce
Claim~\ref{claim:passage} in specific cases over number fields $K$ with finite $\OO_K^\times$, it
remains to lift the height function via $\Psi$. By the definition of the Weil
height as in \eqref{eq:height}, this depends essentially on the norm of
\begin{equation*}
  \Psi_0(\e_1, \dots, \e_{r+4})\OO_K+\dots+\Psi_{\deg(S)}(\e_1, \dots, \e_{r+4})\OO_K.
\end{equation*}
For $(\e_1, \dots, \e_{r+4}) \in \OO_{1*} \times \dots \times
\OO_{r+4*}$, this is a multiple of $u_\classtuple =
\N(C_0^3C_1^{-1}\cdots C_r^{-1})$ since the $\Psi_i(\te_1, \ldots,
\te_{r+4})$ have degree $[-K_\tS]=3\ell_0-\ell_1-\dots-\ell_r$. We
expect that it is indeed equal to $u_\classtuple$ under
\eqref{eq:coprimality}. Then $H(\Psi(\e_1, \dots, \e_{r+4})) \le B$ if
and only if $(\e_1, \dots, \e_{r+4}) \in \R(u_\classtuple B)$, and
Claim~\ref{claim:passage} follows.

The following two lemmas turns out to be sufficient to prove
Claim~\ref{claim:passage_step} for the quartic surface of type
$\Athree$ with five lines defined by \eqref{eq:def_S}. For other
surfaces, some induction steps must be done by hand. In particular, it may be
necessary to use the relation $R$ to deduce the new set of coprimality
conditions. We note that the assumption on $\psi$ in the first lemma holds for
every example in \cite{math.AG/0604194}.

\begin{lemma}\label{lem:passage_start}
  The birational map $\pi \circ \rho^{-1} : \PP^2_K \rto S$ induces an
  isomorphism between an open subset $V \subset \PP^2_K$ and $U
  \subset S$. The homogeneous cubic polynomials
  \begin{equation}\label{eq:psii}
    \psi_i(Y_0,Y_1,Y_2) := \Psi_i(Y_0,Y_1,Y_2,R'(Y_0,Y_1,Y_2),1,\dots, 1),
  \end{equation}
  for $i=0, \dots, \deg(S)$, define a rational map 
  \begin{equation}\label{eq:psi}
    \psi: \PP^2_K \rto S,\quad (y_0:y_1:y_2) \mapsto (\psi_0(y_0,y_1,y_2): \dots: \psi_{\deg(S)}(y_0,y_1,y_2)).
  \end{equation}
  If $\psi$ represents $\pi \circ \rho^{-1}$ on $V$, then
  Claim~\ref{claim:passage_step} holds for $i=0$.
\end{lemma}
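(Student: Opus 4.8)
The plan is to verify that the combinatorial bookkeeping built into Claim~\ref{claim:passage_step} at level $i=0$ is exactly the classical parameterization of $U(K)$ by points of $\PP^2_K$ via the birational map $\pi\circ\rho^{-1}$, translated into the language of the ideals $\OO_j^{(0)}$ and the coprimality conditions. First I would establish the geometric input: that $\pi\circ\rho^{-1}$, or equivalently the map $\psi$ of \eqref{eq:psi}, restricts to an isomorphism from an open $V\subset\PP^2_K$ onto $U$. Since $S$ is split and $\tS\to S$ is the minimal desingularization contracting exactly the $(-2)$-curves, and $\rho:\tS\to\PP^2_K$ contracts the curves $E_5,\dots,E_{r+4}$, the map $\pi\circ\rho^{-1}$ is defined away from the images in $\PP^2_K$ of the exceptional locus; removing those finitely many points (and the images of the lines) gives $V$, and the classification in \cite{math.AG/0604194} guarantees $V\to U$ is an isomorphism. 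The cubic monomials $\psi_i$ arise because each $s_i=\Psi_i(\te_1,\dots,\te_{r+4})$ has anticanonical degree $3\ell_0-\ell_1-\dots-\ell_r$, so after specializing $\te_4\mapsto R'(Y_0,Y_1,Y_2)$ and $\te_5=\dots=\te_{r+4}=1$ (as forced by \eqref{eq:rprime}), $\Psi_i$ becomes homogeneous of degree $3$ in $Y_0,Y_1,Y_2$.

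Next I would set up the arithmetic normalization. A point of $V(K)\subset\PP^2_K(K)$ has a representative $(y_0:y_1:y_2)$, and for each ideal class $[C_0]$ one can choose this representative uniquely up to $\OO_K^\times$ so that $y_0\OO_K+y_1\OO_K+y_2\OO_K=C_0^{-1}$ — this is the standard passage from projective points to primitive integral vectors over a Dedekind domain (the Dedekind–Weber argument alluded to in Section~\ref{sec:passage}), and it accounts precisely for the single factor $(\OO_K^\times)^1$ acting at level $i=0$ and for the disjoint union over $C_0\in\classrep$. With $a_{1,0}=a_{2,0}=a_{3,0}=1$ and $a_{4,0}=D$ from \eqref{eq:coefficients}, one then sets $\e_1=y_0$, $\e_2=y_1$, $\e_3=y_2$, $\e_4=R'(y_0,y_1,y_2)$, and checks $\e_j\in\OO_j^{(0)}=C_0^{a_{j,0}}$: for $j=1,2,3$ this is immediate since $y_i\OO_K\subset C_0^{-1}$... wait, the inclusion direction: $\e_j\in C_0$ means $v_\p(\e_j)\ge v_\p(C_0)$, which is exactly primitivity being $C_0^{-1}$; and $\e_4=R'(\e_1,\e_2,\e_3)$ lies in $C_0^D$ because $R'$ is homogeneous of degree $D$ with coefficients $\pm1$. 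The torsor equation $R(\e_1,\dots,\e_4,1,\dots,1)=0$ is \eqref{eq:rprime} itself, i.e. $\e_4=R'(\e_1,\e_2,\e_3)$, so it holds automatically.

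Then I would match the coprimality conditions: at level $i=0$ the curves are $E_1^{(0)}=\{Y_0=0\}$, $E_2^{(0)}=\{Y_1=0\}$, $E_3^{(0)}=\{Y_2=0\}$ and $E_4^{(0)}=\{R'=0\}$ in $\PP^2_K$, and the minimal $J$ with $\bigcap_{j\in J}E_j^{(0)}=\emptyset$ is $J=\{1,2,3\}$ (three lines with no common point) — giving the single condition $\eI_1^{(0)}+\eI_2^{(0)}+\eI_3^{(0)}=\OO_K$, which says exactly that $(y_0,y_1,y_2)$ is the primitive vector for $C_0^{-1}$, hence is equivalent to the normalization already imposed. (Pairs $\{j,j'\}$ among the first four curves all intersect in $\PP^2_K$, and any $J$ containing $E_4^{(0)}$ together with two lines still has a common point generically or is non-minimal, so no further conditions appear; this should be read off the configuration, i.e. the blown-up points lie on $E_4^{(0)}$ and on the coordinate lines as dictated by the Dynkin data.) Finally, that the displayed map sends this normalized data to $\psi(y_0:y_1:y_2)\in U(K)$ is the definition of $\psi$, and injectivity/surjectivity on orbits follows from $V\cong U$ plus uniqueness of the primitive representative up to units. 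The main obstacle I expect is the coprimality matching — precisely, showing that the configuration-theoretic list of minimal empty intersections $\bigcap_{j\in J}E_j^{(0)}=\emptyset$ produces no condition beyond primitivity of $(y_0,y_1,y_2)$; this requires knowing, from \cite{math.AG/0604194}, exactly which of the first four curves on $\PP^2_K$ meet and where the subsequent blow-up centers sit, and handling the possibility that $R'$ shares zeros with coordinate lines at the blown-up points. Everything else is the routine Dedekind–Weber normalization and degree bookkeeping.
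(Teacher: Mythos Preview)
Your overall approach is the same as the paper's, and most of the pieces are in place: the geometric identification of $V$ with $U$, the cubic degree of the $\psi_i$, the Dedekind--Weber normalization, and the introduction of $\e_4=R'(\e_1,\e_2,\e_3)$ forced by \eqref{eq:rprime}. Two points need correction.

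First, a sign convention: the paper normalizes so that $y_0\OO_K+y_1\OO_K+y_2\OO_K=C_0$ (an \emph{integral} ideal in $\classrep$), not $C_0^{-1}$. With $\OO_j^{(0)}=C_0$ for $j=1,2,3$ this gives $\e_j\in\OO_j^{(0)}$ directly and $\eI_1^{(0)}+\eI_2^{(0)}+\eI_3^{(0)}=\OO_K$. Your ``wait, the inclusion direction'' aside does not resolve this; as written your normalization is inconsistent.

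Second, and more substantively, your coprimality matching has a genuine gap. You assert that ``any $J$ containing $E_4^{(0)}$ together with two lines still has a common point generically or is non-minimal, so no further conditions appear.'' This is false: $E_1^{(0)}\cap E_2^{(0)}=\{(0:0:1)\}$, and $E_4^{(0)}=\{R'=0\}$ passes through $(0:0:1)$ if and only if $R'(0,0,1)=0$, i.e.\ if and only if the monomial $Y_2^D$ is absent from $R'$. In many of the surfaces under consideration this monomial is present, so $E_1^{(0)}\cap E_2^{(0)}\cap E_4^{(0)}=\emptyset$ and $J=\{1,2,4\}$ \emph{is} a minimal empty-intersection set, contributing a coprimality condition in Claim~\ref{claim:passage_step}. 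The paper does not dismiss these conditions; it shows they are \emph{redundant} once primitivity $\eI_1^{(0)}+\eI_2^{(0)}+\eI_3^{(0)}=\OO_K$ holds. The argument uses the torsor equation: since all coefficients of $R$ are $\pm 1$, the coefficient of $Y_2^D$ in $R'$ is $\pm 1$ whenever it is nonzero, so if a prime $\p$ divided each of $\eI_1^{(0)},\eI_2^{(0)},\eI_4^{(0)}$, then from $\e_4=R'(\e_1,\e_2,\e_3)$ (every term except $\pm\e_3^D$ involves $\e_1$ or $\e_2$) one deduces $\p\mid\eI_3^{(0)}$, contradicting primitivity. You correctly flagged this coprimality step as the main obstacle, but your proposed resolution sidesteps rather than addresses it.
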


\begin{proof}
  Let $V \subset \PP^2_K$ be the complement of all $E_j^{(0)}$ with $j
  \in \{1, \dots, 4\}$ such that $E_j$ is a negative curve on
  $\tS$. Let $W$ be the complement of the negative curves on
  $\tS$. Then $\pi(W)=U$ since $\pi$ maps the $(-1)$-curves to the
  lines and the $(-2)$-curves to the singularities on $S$ (each lying
  on a line for any singular del Pezzo surface except for the
  Hirzebruch surface $F_2$, which is excluded since it is toric), and
  $\rho(W)=V$ since $\rho$ contracts the negative curves $E_5, \dots,
  E_{r+4}$ to points lying on the negative curves among $E_1^{(0)},
  \dots, E_4^{(0)}$ (since the extended Dynkin diagram of negative
  curves on $\tS$ is connected and there are at least $r+1$ negative
  curves). Therefore, the birational map $\pi \circ \rho^{-1}$
  induces an isomorphism between $V$ and $U$.

  For $i=0, \dots, \deg(S)$, we note that $\psi_i$ is a cubic
  polynomial, by considering coefficients $(a_{1,0}, \dots, a_{r+4,0})
  = (1,1,1,D,0,\dots,0)$ of $\ell_0$ from \eqref{eq:coefficients} and
  the degree of $\Psi_i$.  Since $\Psi_i$ are monomials, $\psi$ is
  defined at least on the complement of $E_1^{(0)}, \dots,
  E_4^{(0)}$. Its image lies in $S$ since for any equation $F \in
  K[X_0, \dots, X_{\deg(S)}]$ defining $S \subset \PP^{\deg(S)}_K$, we
  know that $F(\Psi_0(\te_1, \dots, \te_{r+4}), \dots,
  \Psi_{\deg(S)}(\te_1, \dots, \te_{r+4}))$ is a multiple of $R(\te_1,
  \dots, \te_{r+4})$, so that $F(\psi_0(Y_0,Y_1,Y_2), \dots,
  \psi_{\deg(S)}(Y_0,Y_1,Y_2))$ is a multiple of
  $R(Y_0,Y_1,Y_2,R'(Y_0,Y_1,Y_2),1, \dots, 1)$, which is trivial by
  \eqref{eq:rprime}.

  To prove Claim~\ref{claim:passage_step} for $i=0$, we note that $\pi
  \circ \rho^{-1}$ induces a bijection between $V(K)$ and $U(K)$ that
  is explicitly given by $\psi$ by assumption.

  Any element of $\PP^2_K(K)$ is represented uniquely up to
  multiplication by scalars from $\OO_K^\times$ by $(y_0, y_1, y_2)
  \in \OO_K^3 \smallsetminus \{0\}$ with $y_0\OO_K + y_1\OO_K +
  y_2\OO_K \in \classrep$ (and in particular $y_0,y_1,y_2$ in the same
  element of $\classrep$, say $C_0$). Therefore, $\psi$ induces a
  bijection between the orbits of the action of $\OO_K^\times$ by
  scalar multiplication on the disjoint union
  \begin{equation*}
    \bigcup_{C_0 \in \classrep} \left\{(y_0, y_1, y_2) \in C_0^3 \WHERE
      \begin{aligned}
        &y_0 C_0^{-1}+y_1 C_0^{-1}+y_2 C_0^{-1}=\OO_K,\\
        &y_{i-1} \ne 0\text{ if $E_i$ is a negative curve, for $i=1,2,3$,}\\
        &R'(y_0,y_1,y_2) \ne 0\text{ if $E_4$ is a negative curve}
      \end{aligned}
    \right\}
  \end{equation*}
  and $U(K)$.

  We rename $(y_0,y_1,y_2)$ to $(\e_1,\e_2,\e_3)$ and introduce an
  additional variable $\e_4:=R'(\e_1,\e_2,\e_3)$, which is equivalent
  to $R(\e_1, \dots, \e_4, 1, \dots, 1)=0$ by \eqref{eq:rprime}. By
  \eqref{eq:psii}, this substitution turns $\psi$ into $\Psi(\e_1,
  \dots, \e_4, 1, \dots, 1)$.  We note $(\OO_1^{(0)}, \dots,
  \OO_4^{(0)})=(C_0,C_0,C_0,C_0^D)$ by \eqref{eq:coefficients} and
  that the action of $\lambda_0 \in \OO_K^\times$ on $(\e_1,
  \e_2,\e_3)$ by scalar multiplication leads to an action on $\e_4$ by
  multiplication by $\lambda_0^D$.

  It remains to show that the coprimality condition for
  $\e_1,\e_2,\e_3$ is equivalent to the system of coprimality
  conditions described in Claim~\ref{claim:passage_step}. Since any
  two curves in $\PP^2_K$ meet and since $E_1^{(0)}, E_2^{(0)},
  E_3^{(0)}$ do not meet in one point, we must show that adding resp.\
  removing a condition such as $\e_1 C_0^{-1}+\e_2 C_0^{-1}+\e_4
  C_0^{-D}=\OO_K$ for $E_1^{(0)} \cap E_2^{(0)} \cap E_4^{(0)} =
  \emptyset$ makes no difference. The emptiness of this intersection
  is equivalent to $R'(0,0,1) \ne 0$, i.e., the term $Y_2^D$ appears
  in $R'$ with a nonzero coefficient. In fact, this coefficient is
  $\pm 1$ since all coefficients in $R$ are $\pm 1$ by assumption, and
  this could fail after the substitution in \eqref{eq:rprime} only if
  two terms of $R$ would differ only by powers of $\te_5, \dots,
  \te_{r+4}$, which is impossible because of \eqref{eq:coefficients}
  and the homogeneity of $R$.  If there was a prime ideal $\p$ of
  $\OO_K$ dividing $\e_1C_0^{-1},\e_2 C_0^{-1},\e_4C_0^{-D}$, then the
  relation $\e_4=R'(\e_1,\e_2,\e_3)$ would imply that $\p$ divides
  $\e_3 C_0^{-1}$, contradicting the coprimality of
  $\e_1C_0^{-1},\e_2C_0^{-1},\e_3C_0^{-1}$.
\end{proof}

\begin{lemma}\label{lem:passage_step}
  Assume that Claim~\ref{claim:passage_step} holds for some $i-1 \in \{0,
  \dots, r-1\}$. If $\rho_i$ in~\eqref{eq:blow_up_sequence} is the blow-up of
  a point on $\tS_{i-1}$ lying on precisely two of $E_1^{(i-1)}, \dots,
  E_{i+3}^{(i-1)}$, if these two meet transversally in that point and meet
  nowhere else, and if the strict transform on $\tS$ of at least one of these
  two is a negative curve, then Claim~\ref{claim:passage_step} holds for $i$.
\end{lemma}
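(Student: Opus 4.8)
The plan is to upgrade the parametrization for step $i-1$ to one for step $i$ by an explicit change of variables, and then to quote Claim~\ref{claim:passage_step} for $i-1$. Write $E_a^{(i-1)}$, $E_b^{(i-1)}$, with $a,b\in\{1,\dots,i+3\}$, for the two curves through the centre $p_i$ of $\rho_i$, labelled so that the strict transform $E_a$ on $\tS$ is a negative curve (possible by hypothesis), so that $\e_a\ne 0$ for every relevant datum. Given a datum $(\e_1,\dots,\e_{i+3})$ for step $i-1$, with ideal classes $C_0,\dots,C_{i-1}$, set $\mathfrak{d}:=\eI_a^{(i-1)}+\eI_b^{(i-1)}$, a nonzero integral ideal; let $C_i\in\classrep$ represent the class $[\mathfrak{d}]^{-1}$; choose a generator $\e_{i+4}$ of the principal ideal $\mathfrak{d}C_i$ (determined up to $\OO_K^\times$); and put $\e_j':=\e_j$ for $j\notin\{a,b\}$, $\e_a':=\e_a/\e_{i+4}$, $\e_b':=\e_b/\e_{i+4}$, $\e_{i+4}':=\e_{i+4}$. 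Conversely, $\e_a=\e_a'\e_{i+4}'$, $\e_b=\e_b'\e_{i+4}'$ recovers the original datum. From $\mathfrak{d}\mid\eI_a^{(i-1)}$ and $\mathfrak{d}\mid\eI_b^{(i-1)}$ one computes $\eI_a^{(i)}=\eI_a^{(i-1)}\mathfrak{d}^{-1}$, $\eI_b^{(i)}=\eI_b^{(i-1)}\mathfrak{d}^{-1}$ and $\eI_{i+4}^{(i)}=\mathfrak{d}$, all integral, so $\e_j'\in\OO_{j*}^{(i)}$ for every $j$ (note $E_{i+4}$ is a negative curve on $\tS$, being a $(-1)$-curve on $\tS_i$). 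Replacing $\e_{i+4}$ by a unit multiple rescales $(\e_a',\e_b',\e_{i+4}')$ in exactly the way the extra coordinate $\lambda_i\in\OO_K^\times$ of the $(\OO_K^\times)^{i+1}$-action does (since $a_{a,i}=a_{b,i}=-1$ and $a_{i+4,i}=1$), while the remaining coordinates of the action match trivially; so the construction is well defined as a bijection between the set of $(\OO_K^\times)^i$-orbits of data for $i-1$ and the set of $(\OO_K^\times)^{i+1}$-orbits of data for $i$.

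Compatibility with the torsor equation and with the map $\Psi$ is pure $\Pic$-bookkeeping. Every monomial of the torsor relation $R$ and every anticanonical monomial $\Psi_k$ is $\Pic(\tS)$-homogeneous; since $[E_a^{(i)}]=\rho_i^*[E_a^{(i-1)}]-\ell_i$, $[E_b^{(i)}]=\rho_i^*[E_b^{(i-1)}]-\ell_i$ and $[E_{i+4}^{(i)}]=\ell_i$, comparing the $\ell_i$-coefficient of the $\Pic(\tS_i)$-degree shows that in any monomial of $R(\te_1,\dots,\te_{i+4},1,\dots,1)$ the exponent of $\te_{i+4}$ equals (exponent of $\te_a$) $+$ (exponent of $\te_b$) $+$ $c$ for a constant $c$ depending only on the degree, while in $\Psi_k(\te_1,\dots,\te_{i+4},1,\dots,1)$ it equals (exponent of $\te_a$) $+$ (exponent of $\te_b$) $-1$ for all $k$ (the $-1$ being the $\ell_i$-coefficient of $[-K_{\tS_i}]-\rho_i^*[-K_{\tS_{i-1}}]$). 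Substituting $\e_a=\e_a'\e_{i+4}'$, $\e_b=\e_b'\e_{i+4}'$ and using $\e_{i+4}'\ne 0$, this gives $R(\e_1,\dots,\e_{i+3},1,\dots,1)=0\iff R(\e_1',\dots,\e_{i+4}',1,\dots,1)=0$ and $\Psi_k(\e_1',\dots,\e_{i+4}',1,\dots,1)=(\e_{i+4}')^{-1}\Psi_k(\e_1,\dots,\e_{i+3},1,\dots,1)$ for all $k$, so the two data map to the same point of $U(K)$.

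The main work is the coprimality conditions. On $\tS_i$ the curves $E_a^{(i)}$ and $E_b^{(i)}$ become disjoint, $E_{i+4}^{(i)}$ meets exactly $E_a^{(i)}$ and $E_b^{(i)}$, and every other intersection and every ``no common point'' relation among $E_1^{(i)},\dots,E_{i+3}^{(i)}$ is as on $\tS_{i-1}$ (a blow-up never creates common points). Hence the minimal $J\subseteq\{1,\dots,i+4\}$ with $\bigcap_{j\in J}E_j^{(i)}=\emptyset$ are: the minimal $J\subseteq\{1,\dots,i+3\}$ of $\tS_{i-1}$ not containing $\{a,b\}$; the new pair $\{a,b\}$; and the pairs $\{i+4,j'\}$ for $j'\in\{1,\dots,i+3\}\setminus\{a,b\}$. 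Using $\eI_a^{(i)}=\eI_a^{(i-1)}\mathfrak{d}^{-1}$, $\eI_b^{(i)}=\eI_b^{(i-1)}\mathfrak{d}^{-1}$, $\eI_{i+4}^{(i)}=\mathfrak{d}$ and $\eI_j^{(i)}=\eI_j^{(i-1)}$ otherwise, a routine check (using $\mathfrak{d}\mid\eI_a^{(i-1)},\eI_b^{(i-1)}$ and $(\eI_a^{(i-1)}+\eI_b^{(i-1)})\mathfrak{d}^{-1}=\OO_K$) shows these are together equivalent to the step-$(i-1)$ conditions. The essential point is the condition for $\{i+4,j'\}$, which reads $\mathfrak{d}+\eI_{j'}^{(i-1)}=\OO_K$: if a prime $\p$ divided both $\mathfrak{d}$ and $\eI_{j'}^{(i-1)}$, it would divide all of $\eI_a^{(i-1)},\eI_b^{(i-1)},\eI_{j'}^{(i-1)}$, and since $E_a^{(i-1)}\cap E_b^{(i-1)}\cap E_{j'}^{(i-1)}\subseteq\{p_i\}\cap E_{j'}^{(i-1)}=\emptyset$ (as $p_i$ lies on no $E_{j'}^{(i-1)}$ with $j'\ne a,b$), some subset of $\{a,b,j'\}$ is one of the minimal empty-intersection sets of $\tS_{i-1}$, contradicting the step-$(i-1)$ coprimality; conversely, a step-$(i-1)$-minimal set containing $\{a,b\}$ is forced by the same configuration analysis to be a triple $\{a,b,j_0\}$, whose condition is exactly the one for $\{i+4,j_0\}$.

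Putting the pieces together, the change of variables is a bijection between the disjoint unions of parametrizing sets for $i-1$ and for $i$ that respects the group actions and the maps $\Psi(\,\cdot\,,1,\dots,1)$ to $U(K)$; since Claim~\ref{claim:passage_step} for $i-1$ says the latter map induces a bijection between orbits and $U(K)$, so does the one for $i$. The delicate point is the coprimality bookkeeping of the third paragraph — above all the equivalence of the new pair conditions $\mathfrak{d}+\eI_{j'}^{(i-1)}=\OO_K$ with the old triple conditions and the verification that no further minimal subsets arise; the failure of $\mathfrak{d}$ to be principal, dealt with via the class representative $C_i$, is the only genuinely new ingredient compared with $K=\QQ$ and causes no real difficulty.
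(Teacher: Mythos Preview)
Your proof is correct and follows essentially the same approach as the paper: extract the ``gcd'' $\mathfrak{d}=\eI_a^{(i-1)}+\eI_b^{(i-1)}$, choose $C_i$ in its inverse class, define $\e_{i+4}$ as a generator of $\mathfrak{d}C_i$, and divide $\e_a,\e_b$ by it; then verify torsor, $\Psi$, and coprimality compatibility via $\Pic(\tS_i)$-homogeneity and the geometry of the blow-up. The paper carries out most of the argument for a general set $J_0$ of curves through $p_i$ and only uses $|J_0|=2$ for the coprimality condition $\eI_a^{(i)}+\eI_b^{(i)}=\OO_K$, whereas you work with $|J_0|=2$ throughout, but the substance is the same.

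One place where your write-up is thinner than the paper's is the reverse direction of the coprimality equivalence for a step-$(i-1)$-minimal $J$ with $|J\cap\{a,b\}|=1$: you fold this into ``a routine check'', but it genuinely requires combining the new condition for $J$ with the new condition $\eI_{i+4}^{(i)}+\eI_{j'}^{(i)}=\OO_K$ for some $j'\in J\setminus\{a,b\}$ (to rule out $\p\mid\mathfrak{d}$), which is exactly the mechanism the paper spells out. It would also be worth saying explicitly that the inverse construction followed by the forward one returns the same $\e_{i+4}'$ up to units precisely because $\eI_a^{(i)}+\eI_b^{(i)}=\OO_K$; you use this implicitly when asserting bijectivity on orbits.
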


\begin{remark}\label{rem:passage_step_general}
  For most steps of the proof of Lemma~\ref{lem:passage_step}, we consider the
  following more general situation for $\rho_i$. Let $J_0$ be the set of all
  $j \in \{1, \dots, i+3\}$ such that $E^{(i-1)}_j$ contains the blown-up
  point $p_i \in \tS_{i-1}$. Assume that $p_i$ has multiplicity $1$ on each
  $E^{(i-1)}_j$ with $j \in J_0$, that we have $\bigcap_{j \in J_0} E^{(i)}_j
  = \emptyset$ for their strict transforms on $\tS_i$, and that the strict
  transform $E_j$ on $\tS$ is a negative curve for some $j \in J_0$.
  
  The additional assumption $|J_0|=2$ in Lemma~\ref{lem:passage_step} is used
  only for parts of one direction of the coprimality conditions, see
  (\ref{eq:special_coprimality}) below. Without this assumption, we expect
  that we must use the torsor equation to derive the coprimality conditions
  for $J \subset J_0 \cup \{i+4\}$ of Claim~\ref{claim:passage_step} for $i$.
\end{remark}

\begin{proof}[Proof of Lemma~\ref{lem:passage_step}]
  Except in the paragraph containing (\ref{eq:special_coprimality}), we
  work in the situation of Remark~\ref{rem:passage_step_general}.

  We write $E_j':=E_j^{(i-1)}$ for divisors on $\tS_{i-1}$ and
  $E_j'':=E_j^{(i)}$ for their strict transforms on $\tS_i$. The
  exceptional divisor of $\rho_i$ is $E_{i+4}'' := E_{i+4}^{(i)}$.

  Let $M'$ resp.\ $M''$ be the disjoint union in step $i-1$ resp.\ $i$ of
  Claim~\ref{claim:passage_step}. We construct a bijection between the
  $(\OO_K^\times)^i$-orbits in $M'$ and the
  $(\OO_K^\times)^{i+1}$-orbits in $M''$.  We use $\e_j'$ for
  coordinates of points in $M'$ and $\e_j''$ for coordinates in $M''$,
  and similarly $\OO_j' := \OO_j^{(i-1)}$, $\OO_j'' := \OO_j^{(i)}$ and
  $\eI_j' := \eI_j^{(i-1)}$, $\eI_j'' := \eI_j^{(i)}$ for their
  corresponding (fractional) ideals.

  Given $\ee'=(\e_1', \dots, \e_{i+3}') \in M'$, we have corresponding $C_0,
  \dots, C_{i-1} \in \classrep$ and $\OO_j'$ with $\e_j' \in \OO_{j*}'$, and
  $\eI_j' = \e_j'\OO_j'^{-1}$. Since $E_j$ is a negative curve on $\tS$ for
  some $j \in J_0$, at least one of the $\e_j'$ with $j \in J_0$ is
  nonzero. Therefore, there is a unique $C_i \in \classrep$ such that
  $[\sum_{j \in J_0} \eI_j'] = [C_i^{-1}]$, giving $\OO_j''$ and $\eI_j''$ for
  $j=1, \dots, i+3$. Choose $\e_{i+4}''\in C_i=\OO_{i+4}''$ such that
  $\eI_{i+4}'' = \sum_{j \in J_0} \eI_j'$, which is unique up to
  multiplication by $\OO_K^\times$. Then we define $\e_j'':=\e_j'/\e_{i+4}''$
  for $j \in J_0$ and $\e_j'' := \e_j'$ for all $j \in \{1, \dots, i+3\}
  \smallsetminus J_0$, giving $\ee''=(\e_1'', \dots, \e_{i+4}'') \in
  \OO_{1*}'' \times \dots \times \OO_{i+4*}''$, uniquely up to the action of
  $\lambda_i \in \OO_K^\times$ by $\e_{i+4}''\mapsto \lambda_i\e_{i+4}''$ and
  $\e_j''\mapsto\lambda_i^{-1}\e_j''$ for all $j \in J_0$ and $\e_j''\mapsto
  \e_j''$ for all $j \in \{1, \dots, i+3\}\smallsetminus J_0$.
 
  We check that these $\ee''$ satisfy the coprimality conditions on $M''$. For
  $J \subset \{1, \dots, i+4\}$ with $J \not\subset J_0 \cup \{i+4\}$, assume
  first that $i+4 \notin J$. Since blowing up $p_i$ only separates divisors
  meeting in $p_i$ and since $J \not\subset J_0$, we have $\bigcap_{j \in J}
  E_j''=\emptyset$ only for $\bigcap_{j \in J} E_j' = \emptyset$, hence
  $\sum_{j \in J} \eI_j'=\OO_K$ and hence $\sum_{j \in J} \eI_j''=\OO_K$, as
  desired, because each $\eI_j''$ divides $\eI_j'$. Assume next that $i+4 \in
  J$. Then only the case $J = \{k,i+4\}$ with $k \notin J_0$ is relevant
  because of the minimality assumption on $J$, so $E_k'' \cap E_{i+4}''
  =\emptyset$; by the assumption $\bigcap_{j \in J_0} E_j'' = \emptyset$, we
  have $\bigcap_{j \in J_0} E_j' = \{p_i\}$, hence $E_k' \cap (\bigcap_{j \in
    J_0} E_j') = \emptyset$; hence $\eI_k'+\sum_{j \in J_0} \eI_j'=\OO_K$ and
  since $\eI_{i+4}''$ divides all $\eI_j'$ with $j \in J_0$, we conclude
  $\eI_k''+\eI_{i+4}''=\OO_K$.

  It remains to check the coprimality conditions for
  \begin{equation}\label{eq:special_coprimality}
    J \subset J_0 \cup \{i+4\}.
  \end{equation}
  Here we use the additional assumption $|J_0|=2$, say $J_0=\{a,b\}$. Then our
  other assumptions imply $([E_a'],[E_b'])=1$, hence $([E_a''],[E_b''])=0$ and
  $([E_a''],[E_{i+4}''])=([E_b''],[E_{i+4}''])=1$. Therefore, the only
  remaining coprimality condition is $\eI_a'' + \eI_b'' =\OO_K$, and this is
  clearly fulfilled using $\eI_a''=\eI_a'/\eI_{i+4}''$ and
  $\eI_b''=\eI_b'/\eI_{i+4}''$ with $\eI_{i+4}''=\eI_a' + \eI_b'$.

  To check that the $\ee''$ constructed above satisfy the torsor
  equation on $M''$, we first discuss how the polynomial $R$ behaves
  under analogous substitutions. Let $c_0\ell_0+\dots+c_r\ell_r$ be
  the degree of the homogeneous relation $R$ of the Cox ring. Then
  $R(T_1', \dots, T_{i+3}', 1, \dots, 1)$ is homogeneous of degree
  $c_0\ell_0+\dots+c_{i-1}\ell_{i-1}$ if we give each $T_j'$ the
  degree $[E_j']=a_{j,0}\ell_0+\dots+a_{j,i-1}\ell_{i-1}$ for the
  moment. Similarly, $R(T_1'', \dots, T_{i+4}'', 1, \dots, 1)$ is
  homogeneous of degree $c_0\ell_0+\dots+c_i\ell_i$ if we give each
  $T_j''$ the degree $[E_j'']=a_{j,0}\ell_0+\dots+a_{j,i}\ell_i$.  If
  we substitute $T_j'$ in $R(T_1', \dots,
  T_{i+3}', 1, \dots, 1)$ by $T_j''T_{i+4}''$ for $j \in J_0$ and by 
  $T_j''$ for $j \in \{1, \dots, i+3\} \smallsetminus J_0$, then we 
  obtain an expression in $T_1'', \dots, T_{i+4}''$
  that is homogeneous of the same degree
  $c_0\ell_0+\dots+c_{i-1}\ell_{i-1}$. Indeed, $T_j''T_{i+4}''$ has
  the same degree as $T_j'$ for $j \in J_0$ since $[E_j'']=[E_j']-\ell_i$ and
  $[E_{i+4}'']=\ell_i$, and similarly for $j \in \{1, \dots, i+3\} 
  \smallsetminus J_0$. Furthermore, the
  result of the substitution clearly agrees with $R(T_1'', \dots,
  T_{i+4}'', 1, \dots, 1)$ up to powers of $T_{i+4}''$ in each
  term. But both are homogeneous of degrees differing by $c_i\ell_i$,
  so the result of the substitution is $T_{i+4}''^{-c_i}R(T_1'',
  \dots, T_{i+4}'', 1, \dots, 1)$.

  Since $\e_j'=\e_j''\e_{i+4}''$ for $j \in J_0$ and 
  $\e_j'=\e_j''$ for $j \in \{1, \dots, i+3\} \smallsetminus J_0$, 
  this implies that
  \begin{equation*}
    \e_{i+4}''^{-c_i}R(\e_1'',
    \dots, \e_{i+4}'', 1, \dots, 1) = R(\e_1', \dots, \e_{i+3}', 1, \dots, 1).
  \end{equation*}
  Since $R(\e_1', \dots, \e_{i+3}',1, \dots, 1)=0$ and $\e_{i+4}'' \ne
  0$, this implies that $\ee''$ satisfies the torsor equation on
  $M''$. In total, we have constructed for $\ee' \in M'$ an
  $\OO_K^\times$-orbit of $\ee'' \in M''$.

  In the other direction, given $\ee'' \in M''$ with corresponding $C_0,
  \dots, C_i \in \classrep$, we define $\e_j':=\e_j''\e_{i+4}''$ for $j \in
  J_0$ and $\e_j':=\e_j''$ for $j \in \{1, \dots, i+3\} \smallsetminus J_0$,
  giving $\ee'=(\e_1', \dots, \e_{i+3}') \in \OO_{1*}' \times \dots \times
  \OO_{i+3*}'$.

  If $\ee'' \in M''$ satisfies the coprimality conditions, the same holds for
  $\ee'$ that we just defined. Indeed, if $\bigcap_{j \in J} E_j' = \emptyset$,
  then $\bigcap_{j \in J} E_j'' = \emptyset$ since blowing up only decreases
  intersection numbers, so $\sum_{j \in J} I_j''=\OO_K$. Since $\bigcap_{j \in
    J} E_j' = \emptyset$ does not contain $p_i$, there is at least one $k \in
  J$ with $k \notin J_0$, so $([E_k''],[E_{i+4}''])=0$, hence
  $I_k''+I_{i+4}''=\OO_K$. In particular, the factors $\e_{i+4}''$ in
  $\e_j'=\e_j''\e_{i+4}''$ for all $j \in J \cap J_0$ do not contribute to the
  greatest common divisor, so we have $\sum_{j \in J} I_j'=\OO_K$. Therefore,
  $\ee'$ satisfies the coprimality conditions on $M'$. Similarly as above,
  $\ee'$ satisfies the torsor equation. Clearly all $\ee''$ in the same
  $\OO_K^\times$-orbit give the same $\ee'$.

  Obviously, $\ee' \mapsto \ee'' \mapsto \ee'$ is the identity on $M'$
  (for any choice of $\e''$ in the corresponding
  $\OO_K^\times$-orbit). The assumption $\bigcap_{j \in J_0} E_j''=\emptyset$
  gives the coprimality condition $\sum_{j \in J_0} \eI_j'' = \OO_K$ on $M''$,
  and this ensures that $\ee'' \mapsto \ee' \mapsto \ee''$
  yields an element of the same $\OO_K^\times$-orbit as the original
  $\ee''$. We have thus constructed a bijection between $M'$ and
  $\OO_K^\times$-orbits in $M''$.

  Moreover, it is clear that the $\OO_K^\times$-orbits in $M''$ are
  contained in the $(\OO_K^\times)^{i+1}$-orbits from Claim
  \ref{claim:passage_step}, and that $\ee_1', \ee_2' \in M'$ are in
  the same $(\OO_K^\times)^{i}$-orbit if and only if $\ee_1''$ and
  $\ee_2''$ are in the same $(\OO_K^\times)^{i+1}$-orbit. Hence, our
  bijection induces the claimed bijection between orbits on $M'$ and
  $M''$.

  Using the coprimality condition $\sum_{j \in J_0} \eI_j'' = \OO_K$, we see
  that the union defining $M''$ is disjoint if the union defining
  $M''$ is disjoint.
 
  To conclude our proof, it is enough to show that the map $M'' \to
  \PP_K^{\deg(S)}(K)$ defined in Claim \ref{claim:passage_step}, step
  $i$, coincides with the composition $M'' \to U(K)$ of the map $M''
  \to M'$ constructed above and the map $M'\to U(K)$ from step
  $i-1$. Using the same gradings and substitutions as in the
  discussion of $R$, we note that $\Psi_i(T_1', \dots, T_{i+3}',1,
  \dots, 1)$ is homogeneous of degree
  $3\ell_0-\ell_1-\dots-\ell_{i-1}$. Our substitution turns this into
  a monic monomial of the same degree that coincides up to powers of
  $T_{i+4}''$ with the monic monomial $\Psi_i(T_1'', \dots, T_{i+4}'',
  1, \dots, 1)$, which is homogeneous of degree
  $3\ell_0-\ell_1-\dots-\ell_i$. Since $T_{i+4}''$ has degree
  $\ell_i$, the substitution gives $T_{i+4}''\Psi_i(T_1'', \dots,
  T_{i+4}'', 1, \dots, 1)$. Thus, both maps send $\ee'' \in M''$ to
  $K$-rational points in projective space that differ by a factor of
  $\e_{i+4}'' \ne 0$ in each coordinate, hence are the same.
\end{proof}

\begin{remark}\label{rem:passage_degrees}
  By our assumption, in the Cox ring relation $R(\te_1, \dots,
  \te_{r+4})=\sum_{k=1}^t \lambda_k\te_1^{b_{1,k}}\cdots \te_{r+4}^{b_{r+4,k}}$
  with exponents $b_{j,k} \in \ZZnn$, all coefficients $\lambda_k$ are $\pm
  1$. For $j=1, \dots, r+4$, write $\OO_j := \OO_j^{(r)}$ for simplicity. Then
  the fractional ideals $\lambda_k \OO_1^{b_{1,k}}\cdots \OO_{r+4}^{b_{r+4,k}}$
  coincide for all $k=1, \dots, t$. Indeed, since $R$ is homogeneous of some
  degree $c_0\ell_0+\dots+c_r\ell_r \in \Pic(\tS)$, each of them is
  $C_0^{c_0}\cdots C_r^{c_r}$.
\end{remark}

\section{The first summation}\label{sec:first_summation}

Let $K$ be an imaginary quadratic field, which we regard as a subfield
of $\CC$.  Given a parameterization as in Claim~\ref{claim:passage} of
rational points on a del Pezzo surface $S$, we must estimate the
cardinality of each $M_\classtuple(B)$. As indicated in
Section~\ref{sec:plan}, we start by estimating the number of
$\e_{B_0}, \e_{C_0}$ in the fractional ideals $\OO_{B_0}, \OO_{C_0}$,
say, satisfying the torsor equation, with the remaining variables
fixed. The details depend on the precise shape of the torsor equation
and coprimality conditions, via the configuration of curves on $\tS$
encoded in an extended Dynkin diagram. In this section, we assume that
they are as in \eqref{eq:gen_torsor} and
Figure~\ref{fig:general_coprim_graph}. As discussed in
\cite[Remark~2.1]{MR2520770}, this is true for the majority of
singular del Pezzo surfaces described in \cite{math.AG/0604194}, and
the additional assumptions for Proposition~\ref{prop:first_summation}
are expected to follow from Claim~\ref{claim:passage}.

We use the following notation, similar to \cite[Section
2]{MR2520770}. Let $r$, $s$, $t \in \ZZnn$, $(a_0, \ldots, a_r) \in
\ZZp^{r+1}$, $(b_0, \ldots, b_s) \in \ZZp^{s+1}$, $(c_1, \ldots,
c_t)\in \ZZp^{t}$. Let $G = (V, E)$ be the graph given in Figure
\ref{fig:general_coprim_graph}, and let $G' = (V', E')$ be the graph
obtained from $G$ by deleting the vertices $B_0$, $C_0$ (see Figure
\ref{fig:general_coprim_graph_vertices_deleted}).
\begin{figure}[ht]
  \begin{minipage}[b]{0.50\linewidth}
    \centering
    \[
    \xymatrix@C-=12pt@R-=7pt{
      A_0 \ar@{-}[r] \ar@{-}[dd] \ar@{-}[rd] & A_r \ar@{-}[r] & A_{r-1} \ar@{-}[r] & \cdots \ar@{-}[r] & A_1 \ar@{-}[rd]\\
      &B_0 \ar@{-}[r] & B_s \ar@{-}[r] & \cdots \ar@{-}[r] & B_1 \ar@{-}[r] & D\\
      C_0 \ar@{-}[r] \ar@{-}[ru] & C_t \ar@{-}[r] & C_{t-1} \ar@{-}[r]
      & \cdots \ar@{-}[r] & C_1 \ar@{-}[ru] }
    \]
    \caption{$G = (V, E)$.}
    \label{fig:general_coprim_graph}
  \end{minipage}
  \hspace{0.5cm}
  \begin{minipage}[b]{0.44\linewidth}
    \centering
    \[
    \xymatrix@C-=12pt@R-=7pt{
      A_0 \ar@{-}[r] & A_r \ar@{-}[r] & \cdots \ar@{-}[r] & A_1 \ar@{-}[rd]\\
      B_s \ar@{-}[rr]& & \cdots \ar@{-}[r] & B_1 \ar@{-}[r] & D\\
      C_t \ar@{-}[rr]& & \cdots \ar@{-}[r] & C_1 \ar@{-}[ru] }
    \]
    \caption{$G' = (V', E')$.}
    \label{fig:general_coprim_graph_vertices_deleted}
  \end{minipage}
\end{figure}

\noindent For $v \in V$, let $\OO_v$ be a nonzero fractional ideal of
$K$ such that
\begin{equation*}
  \OO_{A_0}^{a_0}\cdots \OO_{A_r}^{a_r} = \OO_{B_0}^{b_0}\cdots \OO_{B_s}^{b_s} 
  = \OO_{C_0}\OO_{C_1}^{c_1}\cdots \OO_{C_t}^{c_t} =:\OO
\end{equation*}
(see Remark~\ref{rem:passage_degrees}). We define
\begin{equation*}
  \OO_{v*} :=
  \begin{cases}
    \OO_{A_0} \text{ or } \OO_{A_0}^{\neq 0}&\text{ if }v = A_0\\
    \OO_{v}&\text{ if } v \in \{B_0, C_0\}\\
    \OO_{v}^{\neq 0}&\text{ if } v \in V\smallsetminus\{A_0, B_0,
    C_0\}\text.
  \end{cases}
\end{equation*}
For $B > 0$, let $M(B)$ be the set of all $(\e_v)_{v \in V} \in
\prod_{v \in V}\OO_{v*}$ with the following properties:
\begin{itemize}
\item $(\e_v)_{v \in V\smallsetminus \{D\}}$ satisfies the
  \emph{torsor equation}
  \begin{equation}\label{eq:gen_torsor}
    \e_{A_0}^{a_0}\cdots \e_{A_r}^{a_r} + \e_{B_0}^{b_0}\cdots \e_{B_s}^{b_s} + \e_{C_0}\e_{C_1}^{c_1}\cdots \e_{C_t}^{c_t} = 0\text.  
  \end{equation}
\item $(\e_v)_{v \in V' \cup \{B_0\}}$ satisfies \emph{height
    conditions} written as
  \begin{equation}\label{eq:gen_height}
    ((\e_v)_{v \in V'}, \e_{B_0}) \in \mathcal{R}(B)\text,
  \end{equation}
  for a subset $\mathcal{R}(B) \subset \CC^{V'}\times\CC$. Moreover, we
  assume that for all $(\e_v)_{v\in V'}$ and $B$, the set
  $\mathcal{R}((\e_v)_{v\in V'}; B)$ of all $z \in \CC$ with $((\e_v)_{v\in V'},
  z) \in \mathcal{R}(B)$ is of class $m$ (see Definition \ref{def:class_m}) and
  contained in the union of $k$ closed balls of radius $R((\e_v)_{v
    \in V'};B)$. Here, $k$, $m$ are fixed constants.
\item The ideals
  \begin{equation*}
    \eI_v := \e_v \OO_v^{-1}\text{, }\ v \in V\text,
  \end{equation*}
  of $\OO_K$ satisfy the \emph{coprimality conditions encoded by the
    graph $G$}, in the following sense: For any two non-adjacent
  vertices $v$ and $w$ in $G$, the corresponding ideals $\eI_v$ and
  $\eI_w$ are relatively prime. We impose the additional coprimality
  condition
  \begin{equation*}
    \text{Each prime ideal $\p$ dividing $\eI_{D}$ may divide at most one of $\eI_{A_0}$, $\eI_{B_0}$, $\eI_{C_0}$,}
  \end{equation*}
  which is only relevant if at least two of $r$, $s$, $t$ are
  $0$. Thus, $(\eI_{A_0}, \eI_{B_0}, \eI_{C_0})$ is the only triplet
  of ideals $\eI_v$ allowed to have a nontrivial common divisor.
\end{itemize}

In this section, we count, for fixed $(\e_v)_{v \in V'}$, the number
of all $(\e_{B_0}, \e_{C_0})$ such that $(\e_v)_{v \in V}$ satisfies
the above conditions. This is analogous to \cite[Section
2]{MR2520770}, except that non-uniqueness of factorization in our case
(if $h_K > 1$) leads to technical difficulties. For ease of notation,
we write $\ee' := (\e_v)_{v\in V'}$, $\eII' := (\eI_v)_{v \in V'}$,
\begin{align*}
  \eeA &:= (\e_{A_1}, \ldots, \e_{A_r})\text{, }& \eeB &:= (\e_{B_1}, \ldots, \e_{B_s})\text{, } & \eeC &:= (\e_{C_1}, \ldots, \e_{C_t})\text,\\
  \eIIA &:= (\eI_{A_1}, \ldots,
  \eI_{A_r})\text{, }& \eIIB &:= (\eI_{B_1}, \ldots, \eI_{B_s})\text{,
  }&\eIIC &:= (\eI_{C_1}, \ldots, \eI_{C_t})\text.
\end{align*}
Let
\begin{equation*}
  \Pi(\eeA) := \e_{A_1}^{a_1}\cdots \e_{A_r}^{a_r}\text{, }\quad \Pi(\eIIA) := \eI_{A_1}^{a_1}\cdots \eI_{A_r}^{a_r}\text,
\end{equation*}
and
\begin{equation*}
  \Pi'(\eI_{D}, \eIIA) :=
  \begin{cases}
    \eI_{D} \eI_{A_1} \cdots \eI_{A_{r-1}}\text{, }&\text{if }r \geq 1\\
    \OO_K\text{, }&\text{if }r=0\text.
  \end{cases}
\end{equation*}
Analogously, we define $\Pi(\eeB)$, $\Pi(\eIIB)$, $\Pi'(\eI_{D},
\eIIB)$ and $\Pi(\eeC)$, $\Pi(\eIIC)$, $\Pi'(\eI_{D}, \eIIC)$.

The following notation encoding coprimality conditions is similar to
the one in Definition~\ref{def:thetarprime}.  For any prime ideal $\id
p$ of $\OO_K$, let
\begin{equation}\label{eq:general_def_Ip}
  J_{\id p}(\eII') := \{ v \in V' \ :\ \p \mid \eI_v\}\text.
\end{equation}
We define $\theta_0(\eII') := \prod_{\id p}\theta_{0,\p}(J_{\id
  p}(\eII'))$, where
\begin{equation*}
  \theta_{0, \id p}(J) :=
  \begin{cases}
    1 &\text{ if }J = \emptyset\text{, }J = \{v\}\text{ with }v \in V'\text{, or }J = \{v, w\} \in E',\\
    0 &\text{ otherwise.}
  \end{cases}
\end{equation*}

\begin{lemma}\label{lem:gen_simpler_coprimality_conditions}
  If $(\e_v)_{v \in V\smallsetminus \{D\}}$ satisfy the torsor
  equation \eqref{eq:gen_torsor}, then the coprimality conditions
  encoded by $G$ are equivalent to
  \begin{align}
    &\eI_{B_0} + \Pi'(\eI_{D}, \eIIB)\Pi(\eIIA) = \OO_K\label{eq:gen_cop_b}\\
    &\eI_{C_0} + \Pi'(\eI_{D}, \eIIC) = \OO_K\label{eq:gen_cop_c}\\
    &\theta_0(\eII') = 1\text.\label{eq:gen_cop_rest}
  \end{align}
\end{lemma}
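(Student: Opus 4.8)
The plan is to prove the two implications separately; the torsor equation \eqref{eq:gen_torsor} is used only for the implication ``\eqref{eq:gen_cop_b}--\eqref{eq:gen_cop_rest} $\Rightarrow$ $G$-coprimality''. For the reverse implication, I would exploit that $G'$ is a forest: removing $B_0$ and $C_0$ leaves the three chains attached only at the common vertex $D$. Hence a subset of $V'$ is a clique of $G'$ precisely when it is empty, a singleton, or an edge, so $\theta_0(\eII') = 1$ is just a restatement of the requirement that $\eI_v$ and $\eI_w$ be coprime for all $v, w \in V'$ non-adjacent in $G$, which is part of the $G$-coprimality conditions. Likewise, by multiplicativity of coprimality, \eqref{eq:gen_cop_b} says exactly that $\eI_{B_0}$ is coprime to each of $\eI_{D}$ (when $s \ge 1$), $\eI_{B_1}, \dots, \eI_{B_{s-1}}$ and $\eI_{A_1}, \dots, \eI_{A_r}$, whose vertices are all non-adjacent to $B_0$ in $G$; similarly \eqref{eq:gen_cop_c} for $C_0$. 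So $G$-coprimality implies \eqref{eq:gen_cop_b}--\eqref{eq:gen_cop_rest}.

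For the converse I would work one prime at a time. Fix a nonzero prime ideal $\p$ and put $S_\p := \{v \in V \mid \p \mid \eI_v\}$, with the convention $v_\p(0) = \infty$; the goal is to show that $S_\p$ is a clique of $G$ and that $S_\p$ contains at most one of $A_0, B_0, C_0$ whenever $D \in S_\p$. Condition \eqref{eq:gen_cop_rest} tells us that $S_\p \cap V'$ is a clique of the forest $G'$, hence has at most two elements, adjacent if there are two. Reading \eqref{eq:gen_torsor} as a vanishing sum of three monomials, the minimum of their $\p$-adic valuations is attained at least twice; since $\OO_{A_0}^{a_0}\cdots = \OO_{B_0}^{b_0}\cdots = \OO_{C_0}\cdots = \OO$, those valuations differ by the common constant $v_\p(\OO)$ from $n_A := \sum_i a_i v_\p(\eI_{A_i})$, $n_B := \sum_i b_i v_\p(\eI_{B_i})$, and $n_C := v_\p(\eI_{C_0}) + \sum_i c_i v_\p(\eI_{C_i})$, so the minimum of $n_A, n_B, n_C$ is attained at least twice as well. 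Since all the exponents are positive, $n_X = 0$ holds exactly when $S_\p$ avoids the $X$-chain; consequently $S_\p$ meets none, exactly one, or all three of the chains.

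If $S_\p$ meets no chain, then $S_\p \subseteq \{D\}$ and we are done. If $S_\p$ meets exactly one chain, say the $C$-chain, then $S_\p \subseteq \{C_0, \dots, C_t, D\}$, and \eqref{eq:gen_cop_c} (coprimality of $\eI_{C_0}$ with $\eI_{C_1}, \dots, \eI_{C_{t-1}}$, and with $\eI_D$ when $t \ge 1$) together with \eqref{eq:gen_cop_rest} forces $S_\p$ into a single edge of $G$; the $B$-chain case is symmetric via \eqref{eq:gen_cop_b}, and the $A$-chain case is immediate from \eqref{eq:gen_cop_rest} because then $S_\p \subseteq V'$. Finally, if $S_\p$ meets all three chains, then since $S_\p \cap V'$ has at most two elements while vertices in distinct chains are non-adjacent, $S_\p$ can meet the $B$- and $C$-chains only in $B_0$ and $C_0$; condition \eqref{eq:gen_cop_b}, making $\eI_{B_0}$ coprime to $\Pi(\eIIA) = \eI_{A_1}^{a_1}\cdots \eI_{A_r}^{a_r}$, then forces $S_\p$ to meet the $A$-chain only in $A_0$, and \eqref{eq:gen_cop_b}, \eqref{eq:gen_cop_c}, \eqref{eq:gen_cop_rest} rule out $D \in S_\p$ (using whichever of $r, s, t$ is positive). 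Hence $S_\p \subseteq \{A_0, B_0, C_0\}$, a clique of $G$ not meeting $\{D\}$.

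I expect the last case distinction to be the main obstacle: the ``valuation of a vanishing sum'' principle must be meshed carefully with the three reformulated conditions and with the degenerate situations in which some of $r$, $s$, $t$ vanish, so that several of $A_0, B_0, C_0, D$ become pairwise adjacent. Precisely in those situations the extra coprimality condition on $\eI_D$ no longer follows from non-adjacency alone and has to be recovered from \eqref{eq:gen_torsor} by the same valuation argument as in the ``all three chains'' case.
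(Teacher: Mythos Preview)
Your proposal is correct. The paper's proof rests on the same mechanism—the ultrametric fact that in the vanishing sum \eqref{eq:gen_torsor} the minimum $\p$-adic valuation among the three monomials is attained at least twice—but packages it differently: instead of a prime-by-prime trichotomy on how many chains $S_\p$ meets, the paper asserts directly that \eqref{eq:gen_cop_b}, \eqref{eq:gen_cop_rest} together with \eqref{eq:gen_torsor} upgrade \eqref{eq:gen_cop_b} to $\eI_{B_0} + \Pi'(\eI_{D}, \eIIB)\Pi(\eIIA)\Pi(\eIIC) = \OO_K$ (supplying the missing factor $\Pi(\eIIC)$), and that \eqref{eq:gen_cop_b}--\eqref{eq:gen_cop_rest} with \eqref{eq:gen_torsor} similarly upgrade \eqref{eq:gen_cop_c} to $\eI_{C_0} + \Pi'(\eI_{D}, \eIIC)\Pi(\eIIA)\Pi(\eIIB) = \OO_K$. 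These enhanced conditions encode exactly the full list of $G$-non-adjacencies for $B_0$ and $C_0$, and your case analysis (especially the ``all three chains'' case) is precisely the local verification of these two claims. So the two arguments are equivalent in content; yours is simply more explicit about how the valuation principle interacts with the individual constraints, while the paper's formulation has the advantage of immediately exhibiting the two clean ideal identities that are later reused in the M\"obius inversions.
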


\begin{proof}
  This is analogous to \cite[Lemma 2.3]{MR2520770}. Condition
  \eqref{eq:gen_cop_rest} is equivalent to the coprimality conditions
  encoded by $G$ for all $\eI_v$, $v \in V'$. Conditions
  \eqref{eq:gen_cop_b}, \eqref{eq:gen_cop_c} are clearly implied by
  the coprimality conditions for $\eI_{B_0}$, $\eI_{C_0}$,
  respectively. Using the torsor equation \eqref{eq:gen_torsor}, one
  can easily check that \eqref{eq:gen_cop_b} and
  \eqref{eq:gen_cop_rest} imply $\eI_{B_0}+\Pi'(\eI_{D},
  \eIIB)\Pi(\eIIA)\Pi(\eIIC) = \OO_K$, and that \eqref{eq:gen_cop_b},
  \eqref{eq:gen_cop_c}, \eqref{eq:gen_cop_rest} imply $\eI_{C_0}+
  \Pi'(\eI_{D}, \eIIC)\Pi(\eIIA)\Pi(\eIIB) = \OO_K$.
\end{proof}

For given $\ee'$, let $\id A = \id A(\ee')$ be a nonzero ideal of
$\OO_K$ that is relatively prime to $\Pi'(\eI_{D},
\eIIC)\Pi(\eIIC)$, such that we can write
\begin{equation*}
  \e_{A_0}^{a_0}\Pi(\eeA) = \Ao\At^{b_0}\text,
\end{equation*}
with $\At = \At(\ee') \in \id A\OO_{B_0}$ and $\Ao = \Ao(\ee') \in
\OO(\id A\OO_{B_0})^{-b_0}$.
\begin{remark}\label{rem:first_summation_choice_of_pi}
  For example, we can choose $\id A := \p$ to be a suitable prime
  ideal $\p$ not dividing $\Pi'(\eI_{D}, \eIIC)\Pi(\eIIC)$, such that
  $\p\OO_{B_0}$ is a principal fractional ideal $(t)$, and let $\At
  := t$, $\Ao := \e_{A_0}^{a_0}\Pi(\eeA)/t^{b_0}$. However, in some
  applications it is desirable use $\At^{b_0}$ to collect $b_0$-th
  powers of the variables $\e_{A_i}$ appearing in
  $\e_{A_0}^{a_0}\Pi(\eeA)$.
\end{remark}

\begin{prop}\label{prop:first_summation}
  With all the above definitions, we have
  \begin{align*}
    & |M(B)| = \frac{2}{\sqrt{|\Delta_K|}}\sum_{\substack{\ee' \in \prod_{v \in V'}\OO_{v*}}}\theta_1(\ee') V_1(\ee'; B)\\
    &+ O\left(\sum_{\ee'\text{, }\eqref{eq:first_summation_error_sum}}2^{\omega_K(\Pi'(\eI_{D},
        \eIIC)) + \omega_K(\Pi'(\eI_{D},
        \eIIB)\Pi(\eIIA))}b_0^{\omega_K(\eI_{D}\Pi(\eIIC))}\left(\frac{R(\ee';B)}{\N\Pi(\eIIC)^{1/2}}
        + 1\right)\right)\text,
  \end{align*}
  where the sum in the error term runs over all $\ee' \in \prod_{v \in
    V'}\OO_{v*}$ such that
  \begin{equation}\label{eq:first_summation_error_sum}
    \mathcal{R}(\ee'; B) \neq \emptyset,
  \end{equation}
  and the implicit constant may depend on $K$, $k$, $m$, and
  $\OO_{B_0}$. In the main term,
  \begin{equation*}
    V_1(\ee'; B):=\int_{z \in \mathcal{R}(\ee'; B)}\frac{1}{\N(\Pi(\eIIC)\OO_{B_0})}\dd z,
  \end{equation*}
  and 
  \begin{equation*}
    \theta_1(\ee') := \sum_{\substack{\kc \mid \Pi'(\eI_{D}, \eIIC)\\\kc+\eI_{A_0}\Pi(\eIIA)\Pi(\eIIB) =
        \OO_K}}\frac{\mu_K(\kc)}{\N\kc}\tilde{\theta}_1(\eII', \kc)\sum_{\substack{\rho \mod \kc\Pi(\eIIC)\\\rho\OO_K+\kc\Pi(\eIIC)=\OO_K\\
        \rho^{b_0} \equiv_{\kc\Pi(\eIIC)} -\Ao/\Pi(\eeB)}}1\text.
  \end{equation*}
  Here,
  \begin{equation*}
    \tilde{\theta}_1(\eII', \kc):=\theta_0(\eII')
    \frac{\phi_K^*(\Pi'(\eI_{D}, \eIIB)\Pi(\eIIA))}{\phi_K^*(\Pi'(\eI_{D}, \eIIB)+ \kc\Pi(\eIIC))},
  \end{equation*}
  and $\Ao/\Pi(\eeB)$ is invertible modulo $\kc\Pi(\eIIC)$ whenever
  $\theta_0(\eII')\neq 0$. In the inner sum, $\rho$ runs through a system of
  representatives for the invertible residue classes modulo $\kc\Pi(\eIIC)$
  whose $b_0$-th power is the class of $-\Ao/\Pi(\eeB)$.

\end{prop}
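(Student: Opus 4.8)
The plan is to follow the argument of \cite[Section~2]{MR2520770} while carrying fractional ideals along; the auxiliary ideal $\id A(\ee')$ and its attached element $\At$ play the role that, over $\QQ$, is played by simply splitting off the $b_0$-th power part of $\e_{A_0}^{a_0}\Pi(\eeA)$. First I would fix $\ee'=(\e_v)_{v\in V'}\in\prod_{v\in V'}\OO_{v*}$. Since $\e_{C_1},\dots,\e_{C_t}$ are nonzero, the torsor equation \eqref{eq:gen_torsor} is a nontrivial linear equation for $\e_{C_0}$, so each admissible $\e_{B_0}$ determines $\e_{C_0}=-(\e_{A_0}^{a_0}\Pi(\eeA)+\e_{B_0}^{b_0}\Pi(\eeB))/\Pi(\eeC)$, and $|M(B)|=\sum_{\ee'}N(\ee';B)$, where $N(\ee';B)$ counts the $\e_{B_0}\in\OO_{B_0}$ for which the resulting $\e_{C_0}$ lies in $\OO_{C_0}$, for which $\e_{B_0}\in\mathcal{R}(\ee';B)$, and for which the coprimality conditions of $G$ hold (the finitely many $\ee'$-dependent choices with $\e_{C_0}=0$ contribute $O(1)$ and are pushed into the error term). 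By Lemma~\ref{lem:gen_simpler_coprimality_conditions} the coprimality conditions reduce to \eqref{eq:gen_cop_b}--\eqref{eq:gen_cop_rest}; condition \eqref{eq:gen_cop_rest}, $\theta_0(\eII')=1$, depends on $\ee'$ alone and accounts for the factor $\theta_0(\eII')$ inside $\tilde\theta_1$, so from now on I assume it holds.

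Next I would remove condition \eqref{eq:gen_cop_c} by Möbius inversion over $\I_K$, replacing $\mathbf{1}[\eI_{C_0}+\Pi'(\eI_{D},\eIIC)=\OO_K]$ by $\sum_{\kc\mid\gcd(\eI_{C_0},\,\Pi'(\eI_{D},\eIIC))}\mu_K(\kc)$, and rewrite the combined condition $\e_{C_0}\in\kc\OO_{C_0}$ as a congruence for $\e_{B_0}$. Using that $B_0,B_i,A_0,A_i$ are non-adjacent to the $C_j$ in $G$, one checks that $\e_{A_0}^{a_0}\Pi(\eeA)$ and $\e_{B_0}^{b_0}\Pi(\eeB)$ have $\p$-adic valuation exactly $v_\p(\OO)$ for every $\p\mid\Pi(\eIIC)$, so the condition is exactly that $\kc\Pi(\eIIC)$ divide $(\e_{A_0}^{a_0}\Pi(\eeA)+\e_{B_0}^{b_0}\Pi(\eeB))\OO^{-1}$. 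Substituting $\e_{A_0}^{a_0}\Pi(\eeA)=\Ao\At^{b_0}$ (with $\At$ chosen, along with $\id A$, so that $\At(\id A\OO_{B_0})^{-1}$ is prime to $\Pi'(\eI_{D},\eIIC)\Pi(\eIIC)$), a valuation computation using $\theta_0(\eII')=1$ turns this into $(\e_{B_0}/\At)^{b_0}\equiv_{\kc\Pi(\eIIC)}-\Ao/\Pi(\eeB)$ with both sides coprime to $\kc\Pi(\eIIC)$, and shows that it can be met only when $\kc+\eI_{A_0}\Pi(\eIIA)\Pi(\eIIB)=\OO_K$, so the other $\kc$ drop out. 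Under this restriction $\e_{B_0}/\At$ runs through exactly the invertible residues $\rho$ mod $\kc\Pi(\eIIC)$ with $\rho^{b_0}\equiv_{\kc\Pi(\eIIC)}-\Ao/\Pi(\eeB)$, and the $\e_{B_0}$ attached to a given such $\rho$ form a single coset $\beta+\kc\Pi(\eIIC)\OO_{B_0}$.

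It remains, for each $\kc$ and each admissible $\rho$, to count the $\e_{B_0}$ in this coset lying in $\mathcal{R}(\ee';B)$ and satisfying \eqref{eq:gen_cop_b}, i.e.\ $\eI_{B_0}+\Pi'(\eI_{D},\eIIB)\Pi(\eIIA)=\OO_K$. Since $\rho$ and $\Pi(\eIIA)$ are prime to $\kc\Pi(\eIIC)$, the element $\e_{B_0}$ is automatically prime to $\gcd(\Pi'(\eI_{D},\eIIB)\Pi(\eIIA),\kc\Pi(\eIIC))=\Pi'(\eI_{D},\eIIB)+\kc\Pi(\eIIC)$, so the only genuine extra condition is primality of $\e_{B_0}$ to the part of $\Pi'(\eI_{D},\eIIB)\Pi(\eIIA)$ prime to $\kc\Pi(\eIIC)$; expanding this by Möbius over squarefree $\bbb$ dividing that part and applying Lemma~\ref{lem:lattice_points} to each sublattice $\beta'+\bbb\kc\Pi(\eIIC)\OO_{B_0}$ (of index $\asymp_{\OO_{B_0}}\N(\bbb\kc\Pi(\eIIC))$ in $\OO_{B_0}$, covered by $k$ balls of radius $R(\ee';B)$) produces, after summing over $\bbb$, the main term
\[
\frac{2\,\vol(\mathcal{R}(\ee';B))}{\sqrt{|\Delta_K|}}\cdot\frac{\phi_K^*(\Pi'(\eI_{D},\eIIB)\Pi(\eIIA))}{\phi_K^*(\Pi'(\eI_{D},\eIIB)+\kc\Pi(\eIIC))}\cdot\frac{1}{\N(\kc\Pi(\eIIC)\OO_{B_0})}
\]
and an error $\ll_{k,m,\OO_{B_0}}2^{\omega_K(\Pi'(\eI_{D},\eIIB)\Pi(\eIIA))}(R(\ee';B)/\N\Pi(\eIIC)^{1/2}+1)$. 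Summing the main term over the admissible $\rho$ (whose number is the inner sum in $\theta_1(\ee')$), then over squarefree $\kc\mid\Pi'(\eI_{D},\eIIC)$ with weight $\mu_K(\kc)/\N\kc$, then over $\ee'$, reconstructs $\frac{2}{\sqrt{|\Delta_K|}}\sum_{\ee'}\theta_1(\ee')V_1(\ee';B)$: the factor $\theta_0(\eII')$ and the $\phi_K^*$-ratio together form $\tilde\theta_1(\eII',\kc)$, and $V_1(\ee';B)=\vol(\mathcal{R}(\ee';B))/\N(\Pi(\eIIC)\OO_{B_0})$. Summing the errors, using that there are at most $2^{\omega_K(\Pi'(\eI_{D},\eIIC))}$ such $\kc$ and at most $b_0^{\omega_K(\kc\Pi(\eIIC))}\le b_0^{\omega_K(\eI_{D}\Pi(\eIIC))}$ admissible $\rho$ per $\kc$ (the number of solutions of $x^{b_0}\equiv c$ modulo a prime being at most $b_0$), and that only $\ee'$ with $\mathcal{R}(\ee';B)\neq\emptyset$ contribute, gives the stated error term.

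The step I expect to be the main obstacle is the one in the second paragraph: turning $\e_{C_0}\in\kc\OO_{C_0}$ into the clean congruence $\rho^{b_0}\equiv-\Ao/\Pi(\eeB)$ modulo $\kc\Pi(\eIIC)$, and verifying — purely from the combinatorics of $G$ together with $\theta_0(\eII')=1$ — that in the absence of unique factorization all the relevant cancellations and coprimalities behave as over $\QQ$ (so that $\Ao/\Pi(\eeB)$ and $\e_{B_0}/\At$ are units modulo $\kc\Pi(\eIIC)$ and solvability is governed exactly by $\kc+\eI_{A_0}\Pi(\eIIA)\Pi(\eIIB)=\OO_K$). Everything else is the rational-field argument of \cite{MR2520770} with ideals in place of integers, together with the bookkeeping of the three nested Möbius sums in the error term.
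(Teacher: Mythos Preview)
Your proposal is correct and follows essentially the same route as the paper: fix $\ee'$, eliminate $\e_{C_0}$ via the torsor equation, M\"obius-invert over $\kc\mid\Pi'(\eI_D,\eIIC)$, convert the resulting divisibility into the congruence $\rho^{b_0}\equiv_{\kc\Pi(\eIIC)}-\Ao/\Pi(\eeB)$, M\"obius-invert over $\kb\mid\Pi'(\eI_D,\eIIB)\Pi(\eIIA)$, combine via CRT into a single coset and apply Lemma~\ref{lem:lattice_points}. The paper makes the step you flag as the main obstacle into a separate Lemma~5.3 (supported by two elementary Lemmas~5.1--5.2 on congruences in fractional ideals) and multiplies through by an auxiliary $\delta$ to reduce the CRT step to integral ideals; also, the bound on the number of admissible $\rho$ needs Hensel's lemma for prime powers, not just the prime case you cite.
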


If $b_0=1$, then the sum over $\rho$ in the definition of $\theta_1$ is
just $1$ whenever $\theta_0(\eII')\neq 0$, so $\theta_1(\ee') = \theta_1'(\eII')$, where
\begin{equation}\label{eq:first_summation_def_theta_1_strich}
  \theta_1'(\eII') := \sum_{\substack{\kc \mid \Pi'(\eI_{D}, \eIIC)\\\kc+\eI_{A_0}\Pi(\eIIA)\Pi(\eIIB) =
      \OO_K}}\frac{\mu_K(\kc)}{\N\kc}\tilde\theta_1(\eII', \kc)\text.
\end{equation}
In our applications, the function $\theta_1'(\eII')$ plays an
important role in the computation of the main term in the second
summation, regardless of whether $b_0 = 1$ or not. Thus, let us
investigate $\theta_1'$, at least in the case where $s$, $t \geq
1$. Recall that the $\eI_v$, $v \in V' \smallsetminus\{A_0\}$, are
always nonzero ideals of $\OO_K$. In the following, we will assume
that $\eI_{A_0} \neq \{0\}$ holds as well.
\begin{lemma}\label{lem:general_comp_theta_1}
  Let $s$, $t \geq 1$. Then we have
  \begin{equation}\label{eq:general_comp_theta_1}
    \theta_1'(\eII') = \prod_{\id p}\theta'_{1, \id p}(J_{\id p}(\eII'))\text,
  \end{equation}
  where $J_\p$ is defined in \eqref{eq:general_def_Ip}, and for any $J
  \subset V'$,
  \begin{equation*}
    \theta'_{1, \id p}(J) :=
    \begin{cases}
      1 &\text{ if }   J = \emptyset, \{B_s\}, \{C_t\}, \{A_0\}\text,\\
      1-\frac{2}{\N\id p} &\text{ if } J = \{D\}\text,\\
      1-\frac{1}{\N\id p} &\text{ if $J= \{v\}$, with $v \in V' \smallsetminus \{B_s, C_t, A_0, D\}$,} \\
      \ &\text{ or $J = \{v, w\} \in E'$,} \\
      0 &\text{ otherwise.}
    \end{cases}
  \end{equation*}
  In particular, $\theta'_{1} \in \Theta_{r+s+t+2}'(2)$ and, with
  $\rk := r+s+t+1$, we have
  \begin{equation}\label{eq:general_average_theta_1}
    \mathcal{A}(\theta'(\eII'), \eII') = \prod_{\p}\left(1-\frac{1}{\N\p}\right)^\rk\left(1+\frac{\rk}{\N\p} + \frac{1}{\N\p^2}\right)\text.
  \end{equation}
  Moreover, let $v \in V' \smallsetminus\{A_1, B_1, C_1, D\}$ and let
  $\bbb$ be the product of all prime ideals of $\OO_K$ dividing at least
  one $\eI_w$ with $w \in V'\smallsetminus\{v\}$ not adjacent to
  $v$. Then, considered as a function of $\eI_v$, we have $\theta_1'(\eII')
  \in \Theta(\bbb, 1, 1, 1)$.
\end{lemma}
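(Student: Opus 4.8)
The plan is to expand $\theta_1'(\eII')$ into an Euler product over the prime ideals of $\OO_K$ and to read off its local factors. Since $\mu_K$, $\phi_K^*$ and $\theta_0$ all depend only on the radicals of the ideals involved, and since $\kc$ in \eqref{eq:first_summation_def_theta_1_strich} runs over squarefree divisors of $\Pi'(\eI_D,\eIIC)$, this reduces to a local computation at each $\p$. First I would note that if $\theta_0(\eII')=0$ — equivalently, if $J_\p(\eII')$ is neither empty, a single vertex, nor an edge of $G'$ for some $\p$ — then both sides of \eqref{eq:general_comp_theta_1} vanish (the ``otherwise''-case applies at that $\p$), so I may assume throughout that every $J_\p(\eII')$ has one of those three shapes. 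Writing
\begin{align*}
  \theta_1'(\eII')&=\theta_0(\eII')\,\phi_K^*\bigl(\Pi'(\eI_D,\eIIB)\Pi(\eIIA)\bigr)\,\Sigma(\eII'),\\
  \Sigma(\eII')&:=\sum_{\substack{\kc\mid\Pi'(\eI_D,\eIIC)\\\kc+\eI_{A_0}\Pi(\eIIA)\Pi(\eIIB)=\OO_K}}\frac{\mu_K(\kc)}{\N\kc\,\phi_K^*(\Pi'(\eI_D,\eIIB)+\kc\Pi(\eIIC))},
\end{align*}
I would handle $\Sigma$ by splitting the radical of $\Pi'(\eI_D,\eIIB)+\kc\Pi(\eIIC)$ into its $\kc$-independent part $\aaa_1$ (the primes dividing both $\Pi'(\eI_D,\eIIB)$ and $\Pi(\eIIC)$) and the part coming from primes dividing $\Pi'(\eI_D,\eIIB)$ and $\kc$ but not $\Pi(\eIIC)$; these parts are coprime, so $\Sigma(\eII')$ equals $\phi_K^*(\aaa_1)^{-1}$ times an Euler product over the \emph{admissible} primes — those dividing $\Pi'(\eI_D,\eIIC)$ and coprime to $\eI_{A_0}\Pi(\eIIA)\Pi(\eIIB)$ — whose local factor at such a $\p$ is $\tfrac{\N\p-2}{\N\p-1}$ if $\p\mid\Pi'(\eI_D,\eIIB)$ and $\p\nmid\Pi(\eIIC)$, and $1-\N\p^{-1}$ otherwise.

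With these factorisations in hand, the $\p$-factor of $\theta_1'(\eII')$ depends only on $J=J_\p(\eII')$, and I would compute it by running through the admissible shapes of $J$. Note that $G'$ is a tree on $|V'|=r+s+t+2$ vertices with three leaves $A_0$, $B_s$, $C_t$, a unique vertex $D$ of degree $3$, and all other vertices of degree $2$, and that the leaves and $D$ are exactly the vertices behaving specially. For $J=\emptyset$ all three Euler pieces contribute $1$; for a leaf $J=\{A_0\}$, $\{B_s\}$ or $\{C_t\}$ one checks, using that $\Pi'(\eI_D,\eIIB)$ and $\Pi'(\eI_D,\eIIC)$ omit the last factor of their respective legs and $\Pi(\eIIA)$ omits $\eI_{A_0}$, that $\p$ divides none of $\Pi'(\eI_D,\eIIB)\Pi(\eIIA)$, $\Pi'(\eI_D,\eIIC)$, $\aaa_1$, so again the factor is $1$; for $J=\{D\}$ the $\phi_K^*$-piece gives $1-\N\p^{-1}$ and the $\Sigma$-piece gives $\tfrac{\N\p-2}{\N\p-1}$ (here $\p$ is admissible, divides $\Pi'(\eI_D,\eIIB)$ and not $\Pi(\eIIC)$), whose product is $1-2\N\p^{-1}$; for every remaining single vertex, and for every edge of $G'$, the three pieces multiply to $1-\N\p^{-1}$ (for the edge $\{D,C_1\}$ the factor $\phi_K^*(\aaa_1)^{-1}$ cancels one power of $1-\N\p^{-1}$); all other $J$ are annihilated by $\theta_0$ and give $0$. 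One should also check that the degenerate cases $r=0$, $s=1$, $t=1$ (where some of $A_0,B_s,C_t$ are adjacent to $D$, or $\Pi'(\eI_D,\eIIC)=\eI_D$) remain consistent with the asserted formula for $\theta'_{1,\p}$. \textbf{I expect this local bookkeeping to be the main obstacle:} one must track the interplay between the sum over $\kc$ and the ideal sum $\Pi'(\eI_D,\eIIB)+\kc\Pi(\eIIC)$, and keep careful account of the asymmetries $\eI_{A_0}\notin\Pi(\eIIA)$ and $\eI_{B_s}\notin\Pi'(\eI_D,\eIIB)$, $\eI_{C_t}\notin\Pi'(\eI_D,\eIIC)$ — precisely the reason the three leaves of $G'$ receive the value $1$.

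Given the explicit local factors, membership $\theta_1'\in\Theta_{r+s+t+2}'(2)$ is immediate from Definition~\ref{def:thetarprime}, since $\theta'_{1,\p}(\emptyset)=1\ge 1-2\N\p^{-2}$, every singleton value is $\ge 1-2\N\p^{-1}$, and all values lie in $[0,1]$. For \eqref{eq:general_average_theta_1} I would insert these factors into \eqref{eq:arith_full_average}: only $L=\emptyset$, single vertices, and edges of $G'$ contribute. Writing $n:=r+s+t+2$, the $\p$-factor is then a sum of five terms, corresponding to $L=\emptyset$, the three leaves, the $n-4$ internal vertices of degree $2$, the vertex $D$ (with value $1-2\N\p^{-1}$), and the $n-1$ edges of the tree $G'$; a short computation collapses this sum to $(1-\N\p^{-1})^{n-1}\bigl(1+(n-1)\N\p^{-1}+\N\p^{-2}\bigr)$, and $n-1=r+s+t+1=\rk$, which is exactly \eqref{eq:general_average_theta_1}.

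Finally, fix $v\in V'\smallsetminus\{A_1,B_1,C_1,D\}$ and regard $\vartheta(\aaa):=\theta_1'(\eII')$ as a function of $\aaa:=\eI_v$, with the remaining $\eI_w$ held fixed; put $J_0:=\{w\ne v:\p\mid\eI_w\}$. By \eqref{eq:general_comp_theta_1}, $\vartheta(\aaa)=\prod_\p A_\p(v_\p(\aaa))$ with $A_\p(0)=\theta'_{1,\p}(J_0)$ and $A_\p(n)=\theta'_{1,\p}(J_0\cup\{v\})$ for all $n\ge 1$; in particular $A_\p(n)-A_\p(n-1)=0$ for $n\ge 2$, so the only difference to bound is $A_\p(1)-A_\p(0)$, and condition (2) of the definition holds with $C_3=1$ because each $A_\p(0)\in[0,1]$. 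If the fixed $\eI_w$ already force $\theta_0=0$ at some prime, then $\vartheta\equiv 0$, which lies in every $\Theta(\bbb,1,1,1)$. Otherwise: for $\p\mid\bbb$ we have $|A_\p(1)-A_\p(0)|\le 1=C_1$ trivially; for $\p\nmid\bbb$, every element of $J_0$ is a neighbour of $v$ in $G'$, so — since $G'$ is a tree and no two neighbours of $v$ are adjacent — $J_0$ is empty or a single neighbour $w$ of $v$ (larger $J_0$ would make $\theta_0$, hence both $A_\p(0)$ and $A_\p(1)$, vanish). As $v\ne D$, the set $\{v\}$ receives the value $1$ or $1-\N\p^{-1}$, and $\{v,w\}$ is an edge, hence receives $1-\N\p^{-1}$, while $\{w\}$ receives one of $1$, $1-\N\p^{-1}$, $1-2\N\p^{-1}$; in all these cases $|A_\p(1)-A_\p(0)|$ is $0$ or $\N\p^{-1}$, hence $\le\N\p^{-1}$. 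Thus $\theta_1'(\eII')\in\Theta(\bbb,1,1,1)$ as a function of $\eI_v$, completing the proof.
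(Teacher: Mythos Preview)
Your proof is correct and follows essentially the same approach as the paper's own proof. Both arguments factor $\theta_1'(\eII')$ into the $\theta_0$-piece, the $\phi_K^*$-numerator, and the M\"obius sum over $\kc$, evaluate the sum as an Euler product over the ``admissible'' primes, and then read off the local factor by a case analysis on $J_\p(\eII')$; the paper organizes the split of $\phi_K^*(\Pi'(\eI_D,\eIIB)+\kc\Pi(\eIIC))$ slightly differently (pulling out the $\kc$-independent part $\phi_K^*(\Pi'(\eI_D,\eIIB)+\Pi(\eIIC))^{-1}$ first rather than your ideal $\aaa_1$), but this is merely a different bookkeeping of the same cancellation. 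You supply considerably more detail than the paper, which dispatches the local computation as ``a straightforward inspection of the local factors'' and deduces \eqref{eq:general_average_theta_1} and the $\Theta(\bbb,1,1,1)$-membership in one line each.
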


\begin{proof}
  We write $\theta_1'(\eII')$ as
  \begin{equation*}
    \theta_0(\eII')\frac{\phi_K^*(\Pi'(\eI_{D}, \eIIB)\Pi(\eIIA))}{\phi_K^*(\Pi'(\eI_{D}, \eIIB)+\Pi(\eIIC))}\hspace{-0.4cm}\sum_{\substack{\kc
        \mid \Pi'(\eI_{D}, \eIIC)\\\kc+\eI_{A_0}\Pi(\eIIA)\Pi(\eIIB) =
        \OO_K}}\hspace{-0.5cm}\frac{\mu_K(\kc)}{\N\kc}\prod_{\substack{\p \mid (\kc+\Pi'(\eI_{D}, \eIIB))\\\id p \nmid \Pi(\eIIC)}}\left(1-\frac{1}{\N\p}\right)^{-1}\text.
  \end{equation*}
  The first factor is defined as a product of local factors which
  depend only on the set $J_\p(\eII')$. It is obvious how to write the
  second factor as such a product. Recall that we assumed $s$, $t \geq
  1$. Whenever $\theta_0(\eII') \neq 0$, we can write the third factor
  as
  \begin{equation*}
    \prod_{\substack{\p \mid \eI_{D}\\\p \nmid \eI_{A_0}\Pi(\eIIA)\Pi(\eIIB)\Pi(\eIIC)}}\frac{\N\p - 2}{\N\p - 1}\prod_{\substack{\p \mid (\Pi'(\eI_{D}, \eIIC)+\Pi(\eIIC))\\\id p \nmid \eI_{A_0}\Pi(\eIIA)\Pi(\eIIB)}}\left(1 - \frac{1}{\N\p}\right)\text.
  \end{equation*}
  Now \eqref{eq:general_comp_theta_1} can be proved by a
  straightforward inspection of the local factors. To prove
  \eqref{eq:general_average_theta_1}, we use
  \eqref{eq:arith_full_average} in Lemma
  \ref{lem:repeated_average}. Then \eqref{eq:general_comp_theta_1} and
  counting the vertices and edges in $G'$ show that the local factor
  at each prime ideal $\p$ is indeed as in
  \eqref{eq:general_average_theta_1}.

  The last assertion in the lemma is again an immediate consequence of
  \eqref{eq:general_comp_theta_1}.
\end{proof}
An analogous version of the last assertion in Lemma
\ref{lem:general_comp_theta_1} holds for $\tilde\theta_1$.
\begin{lemma}\label{lem:theta_1_tilde_in_theta_4}
  Let $v \in V'$ and let $\bbb$ be the product of all prime ideals of
  $\OO_K$ dividing at least one $\eI_w$ with $w \in
  V'\smallsetminus\{v\}$ not adjacent to $v$. Then, considered as a
  function of $\eI_v$, we have $\tilde\theta_1(\eII', \kc) \in \Theta(\bbb, 1, 1,
  1)$.
\end{lemma}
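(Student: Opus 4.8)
The plan is to fix $\kc$ together with the ideals $\eI_w$ for $w\in V'\smallsetminus\{v\}$, to regard $\tilde\theta_1(\eII',\kc)$ as a function of $\aaa:=\eI_v$, and to exhibit it as an Euler product whose local factors visibly satisfy the requirements in the definition of $\Theta(\bbb,1,1,1)$. This mirrors the proof of the last assertion of Lemma~\ref{lem:general_comp_theta_1} and of \eqref{eq:general_comp_theta_1}, the only new feature being the fixed parameter $\kc$.

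First I would expand $\tilde\theta_1(\eII',\kc)$ into a product over prime ideals. Writing $\theta_0(\eII')=\prod_\p\theta_{0,\p}(J_\p(\eII'))$ and $\phi_K^*(\id m)=\prod_\p(1-\N\p^{-1})^{[\p\mid\id m]}$, using that each of $\Pi'(\eI_{D},\eIIB)\Pi(\eIIA)$, $\Pi'(\eI_{D},\eIIB)$ and $\Pi(\eIIC)$ is a product of \emph{positive} powers of the ideals $\eI_w$ (so that its set of prime divisors is determined by the data $\{J_\p(\eII')\}_\p$), and that $\p\mid\Pi'(\eI_{D},\eIIB)+\kc\Pi(\eIIC)$ if and only if $\p$ divides both $\Pi'(\eI_{D},\eIIB)$ and $\kc\Pi(\eIIC)$ — which forces $\p\mid\Pi'(\eI_{D},\eIIB)\Pi(\eIIA)$ — the numerator and denominator $\phi_K^*$-factors partially cancel, leaving
\[
  \tilde\theta_1(\eII',\kc)=\prod_\p\theta_{0,\p}(J_\p(\eII'))\Bigl(1-\frac1{\N\p}\Bigr)^{\epsilon_\p},
\]
with $\epsilon_\p\in\{0,1\}$ determined by $J_\p(\eII')$ and the (fixed) quantity $v_\p(\kc)$. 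In particular every local factor lies in $\{0,1,1-\N\p^{-1}\}\subseteq[0,1]$ and depends on $\aaa$ only through whether $\p\mid\aaa$; writing $A_\p(n)$ for its value when $v_\p(\aaa)=n$, we get $A_\p(n)=A_\p(1)$ for $n\geq1$, $\tilde\theta_1(\eII',\kc)=\prod_\p A_\p(v_\p(\aaa))$, and $A_\p(0)=1$ for all but the finitely many $\p$ dividing some $\eI_w$ with $w\ne v$ (recall these are nonzero ideals).

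Granting this, conditions (2) and most of (1) are immediate: $\prod_{\p\nmid\aaa}A_\p(0)\leq1$ since each $A_\p(0)\in[0,1]$, and $A_\p(n)-A_\p(n-1)=0$ for $n\geq2$ while $|A_\p(1)-A_\p(0)|\leq1$ always (so (1) holds whenever $\p^n\mid\bbb$, which forces $n=1$ as $\bbb$ is squarefree, and for all $n\geq2$). The one remaining point is $|A_\p(1)-A_\p(0)|\leq\N\p^{-1}$ when $\p\nmid\bbb$. For this I would set $J:=\{w\in V'\smallsetminus\{v\}:\p\mid\eI_w\}$, so that $A_\p(0)$ and $A_\p(1)$ are the local factors at $J$ and at $J\cup\{v\}$ respectively. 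By the definition of $\bbb$, every element of $J$ is adjacent to $v$ in $G'$; and $G'$ is a tree (three paths joined at $D$, see Figure~\ref{fig:general_coprim_graph_vertices_deleted}), hence triangle-free, so no two neighbours of $v$ are joined by an edge. A short case split finishes it: if $|J|\geq2$, then $J$ is neither empty, nor a singleton, nor an edge of $G'$, and $J\cup\{v\}$ has at least three elements, so $\theta_{0,\p}$ — hence both $A_\p(0)$ and $A_\p(1)$ — vanishes; if $|J|\leq1$, then each of $J$ and $J\cup\{v\}$ is empty, a singleton, or (when $|J|=1$, since its element is a neighbour of $v$) an edge of $G'$, so $\theta_{0,\p}=1$ at both and $A_\p(0),A_\p(1)\in\{1,1-\N\p^{-1}\}$; in either case $|A_\p(1)-A_\p(0)|\leq\N\p^{-1}$, as needed.

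I expect the only obstacle to be bookkeeping rather than substance: carefully tracking, in the formula for $\epsilon_\p$, which of the $\eI_w$ and which part of $\kc$ a given prime $\p$ divides — remembering that the local datum $v_\p(\kc)$ is a fixed parameter that does not vary with $\aaa$ — and checking that $G'$ is a tree uniformly over all admissible $r,s,t$, including the degenerate cases where vertices are identified or a branch is empty. The mathematical content lies entirely in the Euler-product factorization, exactly as for Lemma~\ref{lem:general_comp_theta_1}.
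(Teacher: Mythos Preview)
Your proposal is correct and takes essentially the same approach as the paper, which proves this lemma by the single sentence ``This follows immediately from the definition of $\tilde\theta_1$.'' You have simply unpacked what that sentence means: write $\tilde\theta_1$ as an Euler product with local factors in $\{0,1,1-\N\p^{-1}\}$ depending only on $J_\p(\eII')$ and $v_\p(\kc)$, then verify the two defining conditions of $\Theta(\bbb,1,1,1)$, using triangle-freeness of $G'$ for the delicate case $\p\nmid\bbb$.
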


\begin{proof}
  This follows immediately from the definition of $\tilde\theta_1$.
\end{proof}

\subsection{Proof of Proposition \ref{prop:first_summation}}
The proof is mostly analogous to \cite[Proposition 2.4]{MR2520770},
but the lack of unique factorization in $\OO_K$ leads to some technical
difficulties. We use two simple lemmas.

\begin{lemma}\label{lem:aux_congruence_1}
  Let $\ideal$ be an ideal and $\id f$ a nonzero
  fractional ideal of $\OO_K$. Let $y_1$, $y_2 \in \id f$ such that
  $(y_1\id f^{-1}, \ideal) = (y_2\id f^{-1}, \ideal) = \OO_K$. Then
  $y_2/y_1$ is invertible modulo $\ideal$ and, for $x \in \OO_K$, we
  have
  \begin{equation*}
    x y_1 - y_2 \in \ideal\id f \quad\text{ if and only if }\quad x\equiv_\ideal y_2/y_1\text.
  \end{equation*}
\end{lemma}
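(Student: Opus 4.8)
The plan is to reduce everything to a statement about $\p$-adic valuations, since both membership in an ideal and the condition $x \equiv_\ideal y_2/y_1$ are checked prime by prime, using the notation and conventions from Section~\ref{sec:notation}. First I would verify that $y_2/y_1$ is \emph{defined} modulo $\ideal$: for each prime $\p \mid \ideal$, the hypothesis $(y_1\id f^{-1}, \ideal) = \OO_K$ forces $v_\p(y_1\id f^{-1}) = 0$, i.e.\ $v_\p(y_1) = v_\p(\id f)$, and likewise $v_\p(y_2) = v_\p(\id f)$, so $v_\p(y_2/y_1) = 0$. Hence $y_2/y_1$ is not merely defined but invertible modulo $\ideal$, and in particular the residue class of $y_2/y_1$ modulo $\ideal$ makes sense and the congruence $x \equiv_\ideal y_2/y_1$ is meaningful for $x \in \OO_K$.

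Next I would translate the two sides of the claimed equivalence into valuation inequalities at each prime $\p \mid \ideal$. On the one hand, $xy_1 - y_2 \in \ideal\id f$ means $v_\p(xy_1 - y_2) \geq v_\p(\ideal) + v_\p(\id f)$ for all $\p \mid \ideal$ (for $\p \nmid \ideal$ the condition $v_\p(xy_1-y_2)\ge v_\p(\id f)$ holds automatically since $x\in\OO_K$ and $y_1,y_2\in\id f$, so those primes impose nothing). On the other hand, $x \equiv_\ideal y_2/y_1$ means $v_\p(x - y_2/y_1) \geq v_\p(\ideal)$ for all $\p \mid \ideal$. The key observation is that, fixing such a $\p$ and writing $e := v_\p(y_1) = v_\p(y_2) = v_\p(\id f)$, we have
\begin{equation*}
  v_\p(xy_1 - y_2) = v_\p\!\left(y_1\left(x - \frac{y_2}{y_1}\right)\right) = e + v_\p\!\left(x - \frac{y_2}{y_1}\right)\text,
\end{equation*}
so $v_\p(xy_1-y_2) \geq v_\p(\ideal) + e$ is equivalent to $v_\p(x - y_2/y_1) \geq v_\p(\ideal)$. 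Ranging over all $\p \mid \ideal$ gives exactly the asserted equivalence.

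There is essentially no hard step here; the lemma is a routine valuation-theoretic manipulation, and the only thing to be careful about is the bookkeeping of fractional versus integral ideals (so that $y_1, y_2 \in \id f$ but $x \in \OO_K$, and the product $\ideal\id f$ is again a fractional ideal). The one point worth stating explicitly, rather than suppressing, is why primes not dividing $\ideal$ contribute nothing to the membership condition $xy_1 - y_2 \in \ideal\id f$: there $v_\p(xy_1 - y_2) \geq \min\{v_\p(xy_1), v_\p(y_2)\} \geq v_\p(\id f) = v_\p(\ideal\id f)$ since $v_\p(\ideal) = 0$. With that remark in place, the equivalence is immediate from the displayed identity applied at each $\p \mid \ideal$.
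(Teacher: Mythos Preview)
Your proof is correct and follows essentially the same approach as the paper's: both arguments check invertibility of $y_2/y_1$ modulo $\ideal$ via $v_\p(y_1)=v_\p(\id f)=v_\p(y_2)$ for $\p\mid\ideal$, and then reduce the equivalence to a prime-by-prime valuation computation using $v_\p(xy_1-y_2)=v_\p(y_1)+v_\p(x-y_2/y_1)$. Your write-up is slightly more explicit about the primes $\p\nmid\ideal$, but the content is the same.
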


\begin{proof}
  For every prime ideal $\p \mid \ideal$, we have $v_\p(y_1) =
  v_\p(\id f) = v_\p(y_2)$, so $y_2/y_1$ is invertible modulo
  $\ideal$. Moreover, $x y_1 - y_2 \in \ideal\id f$ holds if and only if
  $v_\p(x-y_2/y_1)\geq v_\p(\ideal)-v_\p(y_1\id f^{-1})$ for all prime ideals $\p$. Given our
  assumptions, this is equivalent to $x \equiv_\ideal y_2/y_1$.
\end{proof}

\begin{lemma}\label{lem:aux_congruence_2}
  Let $\ideal_1$, $\ideal_2$ be fractional ideals of $\OO_K$ and let
  $x$, $y \in \ideal_2$ such that $x - y \in \ideal_1\ideal_2$. Then,
  for any positive integer $n$, we have $x^n - y^n \in
  \ideal_1\ideal_2^n$.
\end{lemma}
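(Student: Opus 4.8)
The plan is to reduce everything to the elementary factorization
\begin{equation*}
  x^n - y^n = (x-y)\sum_{k=0}^{n-1}x^{n-1-k}y^k,
\end{equation*}
which is valid in the field $K$. First I would observe that, since $x,y \in \ideal_2$, each monomial $x^{n-1-k}y^k$ lies in the product $\ideal_2^{n-1}$; as $\ideal_2^{n-1}$ is an additive group, the entire sum $\sum_{k=0}^{n-1}x^{n-1-k}y^k$ lies in $\ideal_2^{n-1}$. Combining this with the hypothesis $x-y \in \ideal_1\ideal_2$ and the identity $(\ideal_1\ideal_2)\cdot\ideal_2^{n-1} = \ideal_1\ideal_2^n$, the displayed product lies in $\ideal_1\ideal_2^n$, which is the claim.

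Alternatively, one can argue by induction on $n$: the case $n=1$ is exactly the hypothesis, and for the inductive step one writes $x^n - y^n = x(x^{n-1}-y^{n-1}) + (x-y)y^{n-1}$; here the first summand lies in $\ideal_2\cdot\ideal_1\ideal_2^{n-1} = \ideal_1\ideal_2^n$ by the inductive hypothesis together with $x \in \ideal_2$, and the second summand lies in $\ideal_1\ideal_2\cdot\ideal_2^{n-1} = \ideal_1\ideal_2^n$ since $y^{n-1} \in \ideal_2^{n-1}$. I do not anticipate any genuine obstacle here; the only mild point worth keeping in mind is that membership in a product of fractional ideals is, in general, witnessed only by finite sums of products of elements, but in our situation each individual term of the relevant expansion already lies in the appropriate power of $\ideal_2$, so no such sums are needed and the argument goes through verbatim for fractional (not just integral) ideals.
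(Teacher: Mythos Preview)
Your first argument is exactly the paper's proof: it factors $x^n-y^n=(x-y)(x^{n-1}+\cdots+y^{n-1})$ and observes that the second factor lies in $\ideal_2^{n-1}$, so the product is in $\ideal_1\ideal_2\cdot\ideal_2^{n-1}=\ideal_1\ideal_2^n$. Your inductive alternative is correct too but unnecessary.
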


\begin{proof}
  Clearly, $x^n-y^n = (x-y)(x^{n-1}+x^{n-2}y + \cdots + y^{n-1}) \in
  \ideal_1\ideal_2 \cdot \ideal_2^{n-1}$.
\end{proof}

For fixed $B>0$ and $\ee' \in \prod_{v \in V'}\OO_{v*}$
subject to \eqref{eq:gen_cop_rest}, let $N_1 = N_1(\ee'; B)$ be the
number of all $(\e_{B_0}, \e_{C_0})\in \OO_{B_0} \times \OO_{C_0}$ such
that the torsor equation \eqref{eq:gen_torsor}, the coprimality
conditions \eqref{eq:gen_cop_b}, \eqref{eq:gen_cop_c}, and the height
conditions \eqref{eq:gen_height} are satisfied. Then
\begin{equation*}
  |M(B)| = \sum_{\substack{\ee' \in  \prod_{v \in V'}\OO_{v*}\\\eqref{eq:gen_cop_rest}}}N_1(\ee'; B)\text.
\end{equation*}
By M\"obius inversion for \eqref{eq:gen_cop_c}, we obtain
\begin{equation*}
  N_1 = \sum_{\kc \mid \Pi'(\eI_{D}, \eIIC)}\mu(\kc)\left|\left\{(\e_{B_0}, \e_{C_0}) \in \OO_{B_0} \times \kc\OO_{C_0} \mid \eqref{eq:gen_torsor}\text{, }\eqref{eq:gen_height}\text{, }\eqref{eq:gen_cop_b}\right\}\right|\text.
\end{equation*} 
We notice that, given $\e_{B_0} \in \OO_{B_0}$, there is a (unique)
$\e_{C_0} \in \kc\OO_{C_0}$ with \eqref{eq:gen_torsor} if and only if
\begin{equation}\label{eq:gen_congruence}
  \e_{A_0}^{a_0}\Pi(\eeA) + \e_{B_0}^{b_0}\Pi(\eeB) \in \Pi(\eeC)\kc\OO_{C_0} = \Pi(\eIIC)\kc\OO\text.
\end{equation}
Similarly as in the proof of \cite[Proposition 2.4]{MR2520770}, we see
that this, \eqref{eq:gen_cop_b}, and \eqref{eq:gen_cop_rest} can only hold if $\kc+\eI_{A_0}\Pi(\eIIA)\Pi(\eIIB) =
\OO_K$, so
\begin{equation*}
  N_1 = \sum_{\substack{\kc \mid \Pi'(\eI_{D}, \eIIC)\\\kc+\eI_{A_0}\Pi(\eIIA)\Pi(\eIIB)
      = \OO_K}}\mu(\kc)\left|\left\{\e_{B_0} \in \OO_{B_0} \mid
      \eqref{eq:gen_height}\text{, }\eqref{eq:gen_cop_b}, \eqref{eq:gen_congruence}\right\}\right|\text.
\end{equation*} 
Let us consider condition \eqref{eq:gen_congruence}. Recall the
definition of $\Ao$ and $\At $ before Proposition
\ref{prop:first_summation}. We note that
\begin{equation*}
  \Ao(\OO(\id A\OO_{B_0})^{-b_0})^{-1}\cdot (\At(\id A\OO_{B_0})^{-1})^{b_0} = \e_{A_0}^{a_0}\Pi(\eeA)\OO^{-1} = \eI_{A_0}^{a_0}\Pi(\eIIA), 
\end{equation*}
so $\Ao(\OO(\id A\OO_{B_0})^{-b_0})^{-1}+\kc\Pi(\eIIC)$ and $\At(\id
A\OO_{B_0})^{-1}+\kc\Pi(\eIIC)$ are $\OO_K$.

\begin{lemma}\label{lem:proof_first_sum_remove_power}
  For all $\e_{B_0} \in \OO_{B_0}$ satisfying \eqref{eq:gen_congruence}
  there exists $\rho$ in $\OO_K$, unique modulo $\kc\Pi(\eIIC)$, such
  that
  \begin{equation}\label{eq:rho_beta_0}
    \e_{B_0} - \rho \At \in \kc \Pi(\eIIC)\OO_{B_0}\text.
  \end{equation}
  This $\rho$ satisfies
  \begin{equation}\label{eq:rho_congruence}
    \rho^{b_0} \equiv_{\kc\Pi(\eIIC)} -\Ao/\Pi(\eeB)\text.
  \end{equation}
  Here, $\Ao/\Pi(\eeB) $ is invertible modulo $\kc\Pi(\eIIC)$, so
  $\rho$ is invertible modulo $\kc\Pi(\eIIC)$ as well.

  Conversely, if $\e_{B_0} \in \OO_{B_0}$ satisfies
  \eqref{eq:rho_beta_0} for some $\rho$ with \eqref{eq:rho_congruence}
  then it satisfies \eqref{eq:gen_congruence}.
\end{lemma}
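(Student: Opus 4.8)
The plan is to run the whole argument prime by prime, tracking $\p$-adic valuations; the heart of the matter is to show that $\e_{B_0}$ and the product $\Pi(\eeB)$ become units modulo $\kc\Pi(\eIIC)$, after which existence and uniqueness of $\rho$ follow from Lemma~\ref{lem:aux_congruence_1} and the passage between $\e_{B_0}$ and $\e_{B_0}^{b_0}$ from Lemma~\ref{lem:aux_congruence_2}. Throughout I may assume $\eI_{A_0}\neq\{0\}$: in the application the relevant $\kc$ satisfies $\kc+\eI_{A_0}\Pi(\eIIA)\Pi(\eIIB)=\OO_K$, which for $\eI_{A_0}=\{0\}$ forces $\kc=\OO_K$, while \eqref{eq:gen_cop_rest} then forces $\eI_{C_i}=\OO_K$ for all $i$ (the vertices $C_i$ are not adjacent to $A_0$ in $G'$), so $\kc\Pi(\eIIC)=\OO_K$ and there is nothing to prove.

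The decisive local step is this. Since $\id A$ is relatively prime to $\Pi'(\eI_{D},\eIIC)\Pi(\eIIC)$ and $\kc\mid\Pi'(\eI_{D},\eIIC)$, every $\p\mid\kc\Pi(\eIIC)$ has $v_\p(\id A)=0$, hence $v_\p(\At)=v_\p(\OO_{B_0})$ and $v_\p(\e_{A_0}^{a_0}\Pi(\eeA))=v_\p(\Ao\At^{b_0})=v_\p(\OO)$ (this is essentially recorded just before the lemma). Because \eqref{eq:gen_congruence} forces $v_\p(\e_{A_0}^{a_0}\Pi(\eeA)+\e_{B_0}^{b_0}\Pi(\eeB))>v_\p(\OO)$, the two summands must share the valuation $v_\p(\OO)$; expanding $v_\p(\e_{B_0}^{b_0}\Pi(\eeB))=v_\p(\OO)+b_0v_\p(\eI_{B_0})+\sum_{i\ge1}b_iv_\p(\eI_{B_i})$ and using that the valuations of the $\eI_{B_i}$ are $\ge 0$, I conclude $v_\p(\eI_{B_i})=0$ for $i=0,\dots,s$ and every $\p\mid\kc\Pi(\eIIC)$. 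In particular $\e_{B_0}$, $\At$, $\Pi(\eeB)$, $\At^{b_0}\Pi(\eeB)$ are local generators of $\OO_{B_0}$, $\OO_{B_0}$, $\OO\OO_{B_0}^{-b_0}$, $\OO$ respectively at each such $\p$, and $v_\p(\Ao/\Pi(\eeB))=-\sum_{i\ge1}b_iv_\p(\eI_{B_i})=0$, so all quotients that appear below are defined and invertible modulo $\kc\Pi(\eIIC)$.

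Granting this, existence and uniqueness of $\rho$ modulo $\kc\Pi(\eIIC)$ with \eqref{eq:rho_beta_0} is exactly Lemma~\ref{lem:aux_congruence_1} with $\id f=\OO_{B_0}$, $\ideal=\kc\Pi(\eIIC)$, $y_1=\At$, $y_2=\e_{B_0}$: the required coprimalities $(\At\OO_{B_0}^{-1},\kc\Pi(\eIIC))=\OO_K$ and $(\e_{B_0}\OO_{B_0}^{-1},\kc\Pi(\eIIC))=(\eI_{B_0},\kc\Pi(\eIIC))=\OO_K$ are precisely the content of the previous step, and the same lemma identifies $\rho\equiv_{\kc\Pi(\eIIC)}\e_{B_0}/\At$ as invertible modulo $\kc\Pi(\eIIC)$. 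For \eqref{eq:rho_congruence}, I would apply Lemma~\ref{lem:aux_congruence_2} to \eqref{eq:rho_beta_0} with $\ideal_1=\kc\Pi(\eIIC)$, $\ideal_2=\OO_{B_0}$ (legitimate as $\e_{B_0},\rho\At\in\OO_{B_0}$), giving $\e_{B_0}^{b_0}-\rho^{b_0}\At^{b_0}\in\kc\Pi(\eIIC)\OO_{B_0}^{b_0}$; multiplying by $\Pi(\eeB)$ and combining with \eqref{eq:gen_congruence} rewritten as $\rho^{b_0}\At^{b_0}\Pi(\eeB)+\Ao\At^{b_0}\in\kc\Pi(\eIIC)\OO$ (using $\Ao\At^{b_0}=\e_{A_0}^{a_0}\Pi(\eeA)$ and that $\OO_{B_0}^{b_0}\Pi(\eeB)\OO_K$ coincides with $\OO$ at each $\p\mid\kc\Pi(\eIIC)$), then dividing by the local generator $\At^{b_0}\Pi(\eeB)$ of $\OO$, yields $\rho^{b_0}+\Ao/\Pi(\eeB)\equiv0\pmod{\kc\Pi(\eIIC)}$.

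For the converse, the hypothesis that $\rho$ satisfies \eqref{eq:rho_congruence} (which presupposes $\Ao/\Pi(\eeB)$ invertible modulo $\kc\Pi(\eIIC)$) returns $v_\p(\eI_{B_i})=0$ for $i\ge1$ at every $\p\mid\kc\Pi(\eIIC)$, and \eqref{eq:rho_beta_0} then forces $v_\p(\eI_{B_0})=0$ there as well; with these facts in hand I would simply reverse the computation above — add the congruences $\e_{B_0}^{b_0}\Pi(\eeB)\equiv\rho^{b_0}\At^{b_0}\Pi(\eeB)$ and $\rho^{b_0}\At^{b_0}\Pi(\eeB)\equiv-\e_{A_0}^{a_0}\Pi(\eeA)$ modulo $\kc\Pi(\eIIC)\OO$, noting that $\e_{A_0}^{a_0}\Pi(\eeA)=\Ao\At^{b_0}$ and $\e_{B_0}^{b_0}\Pi(\eeB)$ already lie in $\OO$, so the resulting congruence \eqref{eq:gen_congruence} is automatic at primes not dividing $\kc\Pi(\eIIC)$. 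Nothing here is deep; the only real obstacle — and the reason this is longer than its counterpart over $\QQ$ in \cite[Proposition~2.4]{MR2520770} — is the fractional-ideal bookkeeping, specifically squeezing out of the torsor congruence \eqref{eq:gen_congruence} together with the coprimality of $\id A$ the fact that $\e_{B_0}$ and $\Pi(\eeB)$ are units modulo $\kc\Pi(\eIIC)$, which is what licenses every division performed above.
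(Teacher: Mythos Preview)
Your proposal is correct and follows essentially the same route as the paper's proof: both establish that $\eI_{B_0}$ and $\Pi(\eIIB)$ are coprime to $\kc\Pi(\eIIC)$, then invoke Lemma~\ref{lem:aux_congruence_1} for the existence and uniqueness of $\rho$, Lemma~\ref{lem:aux_congruence_2} to pass to $b_0$-th powers, and Lemma~\ref{lem:aux_congruence_1} again (your ``dividing by the local generator $\At^{b_0}\Pi(\eeB)$'') to extract \eqref{eq:rho_congruence}. The only cosmetic differences are that you phrase everything via $\p$-adic valuations whereas the paper works with the ideal-theoretic coprimalities recorded immediately before the lemma, and that you treat the degenerate case $\eI_{A_0}=\{0\}$ explicitly up front (the paper leaves this implicit in the ambient context of Proposition~\ref{prop:first_summation}).
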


\begin{proof}
  We write (\ref{eq:gen_congruence}) as
  \begin{equation}\label{eq:proof_first_sum_AB_cong}
    \e_{B_0}^{b_0}\Pi(\eeB) + \Ao \At ^{b_0} \in \kc\Pi(\eIIC)\OO\text.
  \end{equation}
  Since $\Ao\At ^{b_0}\OO^{-1} = \eI_{A_0}^{a_0}\Pi(\eIIA)$ is coprime
  to $\kc\Pi(\eIIC)$, we see that $\e_{B_0}^{b_0}\Pi(\eeB)\OO^{-1} =
  \eI_{B_0}^{b_0}\Pi(\eIIB)$ is coprime to $\kc\Pi(\eIIC)$ as well.

  Therefore, $\e_{B_0}\OO_{B_0}^{-1} = \eI_{B_0}$ is relatively prime
  with $\kc\Pi(\eIIC)$. Moreover, $\At \in \OO_{B_0}$ and, by our
  choice of $\id A$, we have $\At \OO_{B_0}^{-1}+\kc\Pi(\eIIC) =
  \OO_K$. Therefore, we can apply Lemma \ref{lem:aux_congruence_1} with
  $x:=\rho$, $y_1 := \At$, $y_2 := \e_{B_0}$, $\ideal :=
  \kc\Pi(\eIIC)$, and $\id f := \OO_{B_0}$ to see that there is a
  unique $\rho$ modulo $\kc\Pi(\eIIC)$ with (\ref{eq:rho_beta_0}).

  By Lemma \ref{lem:aux_congruence_2}, this $\rho$ satisfies
  \begin{equation}\label{eq:proof_first_sum_AB2_cong}
    \e_{B_0}^{b_0} - (\rho \At )^{b_0} \in \kc\Pi(\eIIC)\OO_{B_0}^{b_0}\text.
  \end{equation}
  Clearly, $\Pi(\eeB)\OO_K = \Pi(\eIIB)\OO\OO_{B_0}^{-b_0} \subset
  \OO\OO_{B_0}^{-b_0}$, so \eqref{eq:proof_first_sum_AB_cong} and
  \eqref{eq:proof_first_sum_AB2_cong} imply
  \begin{equation}\label{eq:proof_first_sum_AB2_cong_2}
    \rho^{b_0}\Pi(\eeB)  \At ^{b_0} + \Ao \At ^{b_0} \in \kc\Pi(\eIIC)\OO\text.
  \end{equation}
  Now $\Pi(\eeB) \At ^{b_0}\OO^{-1} = \Pi(\eIIB) \At
  ^{b_0}\OO_{B_0}^{-b_0}$, so $\Pi(\eeB)\At ^{b_0} \in \OO$ and
  $\Pi(\eeB)\At ^{b_0}\OO^{-1}+\kc \Pi(\eIIC) = \OO_K$. We have
  already seen that $\Ao\At ^{b_0} \in \OO$ and $\Ao\At ^{b_0}\OO^{-1}+
  \kc\Pi(\eIIC) = \OO_K$. By Lemma \ref{lem:aux_congruence_1},
  $\Ao/\Pi(\eeB)$ is invertible modulo $\kc\Pi(\eIIC)$ and
  \eqref{eq:rho_congruence} holds.

  Now assume that we are given $\e_{B_0} \in \OO_{B_0}$ such that
  \eqref{eq:rho_beta_0} and \eqref{eq:rho_congruence} hold for some
  $\rho$. By the same argument as in the above paragraph, using the
  reverse implication in Lemma \ref{lem:aux_congruence_1},
  \eqref{eq:rho_congruence} implies
  \eqref{eq:proof_first_sum_AB2_cong_2}. By Lemma
  \ref{lem:aux_congruence_2}, \eqref{eq:rho_beta_0} implies that
  $\e_{B_0}^{b_0} - (\rho \At)^{b_0} \in \kc\Pi(\eIIC)\OO_{B_0}^{b_0}$,
  which, together with \eqref{eq:proof_first_sum_AB2_cong_2}, yields
  \eqref{eq:gen_congruence}.
\end{proof}
By the lemma,
\begin{equation*}
  N_1 =\sum_{\substack{\kc \mid \Pi'(\eI_{D}, \eIIC)\\\kc+\eI_{A_0}\Pi(\eIIA)\Pi(\eIIB)
      = \OO_K}}\mu(\kc)\sum_{\substack{\rho \mod \kc\Pi(\eIIC)\\\rho\OO_K+\kc\Pi(\eIIC)=\OO_K\\\eqref{eq:rho_congruence}}}|\{\e_{B_0} \in \OO_{B_0}\mid
  (\ref{eq:gen_height}), (\ref{eq:gen_cop_b}), (\ref{eq:rho_beta_0})\}|\text.
\end{equation*}
After M\"obius inversion for the coprimality condition
(\ref{eq:gen_cop_b}), we have
\begin{equation*}
  N_1 =\sum_{\substack{\kc \mid \Pi'(\eI_{D}, \eIIC)\\\kc+\eI_{A_0}\Pi(\eIIA)\Pi(\eIIB)
      = \OO_K}}\mu(\kc)\sum_{\substack{\rho \mod \kc\Pi(\eIIC)\\\rho\OO_K+\kc\Pi(\eIIC)=\OO_K\\\eqref{eq:rho_congruence}}}\sum_{\kb \mid \Pi'(\eI_{D},
    \eIIB)\Pi(\eIIA)}N_2(\kc, \kb, \rho)\text,
\end{equation*}
where
\begin{equation*}
  N_2(\kc, \kb, \rho) := |\{\e_{B_0} \in \kb \OO_{B_0} \mid
  \eqref{eq:gen_height}, \eqref{eq:rho_beta_0}\}|\text.
\end{equation*}
Since $\rho \At \OO_{B_0}^{-1}+\kc\Pi(\eIIC)=\OO_K$, congruence
(\ref{eq:rho_beta_0}) implies that $\e_{B_0} \OO_{B_0}^{-1}+
\kc\Pi(\eIIC) = \OO_K$. Therefore, we can add the condition $\kb+
\kc\Pi(\eIIC) = \OO_K$ to the sum over $\kb$.

Let $\delta \in \OO_K\smallsetminus \{0\}$ such that $\delta \OO_{B_0}$ is an integral
ideal of $\OO_K$. The conditions $\e_{B_0} \in \kb\OO_{B_0}$ and
(\ref{eq:rho_beta_0}) can be written as a system of congruences
\begin{align*}
  \delta\e_{B_0} &\equiv 0 \mod \kb(\delta\OO_{B_0})\\
  \delta\e_{B_0} &\equiv \delta\rho \At \mod \kc\Pi(\eIIC)(\delta\OO_{B_0})\text.
\end{align*}
Since $\kb \delta\OO_{B_0}+\kc\Pi(\eIIC)\delta\OO_{B_0} = \delta\OO_{B_0}$ and $\delta\rho
\At \equiv 0 \mod \delta\OO_{B_0}$, we can apply the Chinese remainder
theorem. Thus, there is an element $x \in \OO_K$ such that these
congruences are equivalent to
\begin{equation*}
  \delta \e_{B_0} \equiv x \mod \kb\kc\Pi(\eIIC)(\delta\OO_{B_0})\text.
\end{equation*}
Hence,
\begin{equation*}
  N_2(\kc, \kb, \rho) = |(x/\delta + \kb\kc\Pi(\eIIC)\OO_{B_0}) \cap \mathcal{R}(\ee';B)|\text.
\end{equation*}
With our assumptions on $\mathcal{R}(\ee'; B)$, Lemma \ref{lem:lattice_points} yields
\begin{equation*}
  N_2(\kc, \kb, \rho) = \frac{2}{\sqrt{|\Delta_K|}}\frac{V_1(\ee'; B)}{\N(\kb\kc)}  + O\left(\frac{R(\ee';B)}{\N\Pi(\eIIC)^{1/2}} + 1\right)\text.
\end{equation*}
Now a simple computation shows that the main term in the proposition
is the correct one. For the error term, we notice that the number of
$\rho$ modulo $\kc\Pi(\eIIC)$ with (\ref{eq:rho_congruence}) is $\ll
b_0^{\omega_K(\eI_{D} \Pi(\eIIC))}$ by Hensel's lemma.

\section{The second summation}\label{sec:second_summation}
As in the previous section, $K$ denotes an imaginary quadratic
field. We provide tools to sum the main term resulting from
Proposition \ref{prop:first_summation} over a further variable.

First, we fix some notation: Let $\OO$ be a nonzero fractional ideal of
$K$, let $\id q \in \I_K$, and $n \in \ZZp$. Let $A \in K$ such that
$v_\p(A \OO) = 0$ for all prime ideals $\p$ of $\OO_K$ dividing $\id
q$. In particular, $A z$ is defined modulo $\id q$ for all $z \in \OO$.

We consider a function $\vartheta : \I_K \to \RR$ such that, with
constants $c_\vartheta > 0$ and $C \geq 0$,
\begin{equation}\label{eq:second_sum_thetabound}
  \sum_{\substack{\aaa \in \I_K\\\N\aaa \le t}} |(\vartheta*\mu_K)(\aaa)|\cdot \N\aaa \ll c_\vartheta t(\log(t+2))^{C}
\end{equation}
holds for all $t > 0$. We write
\begin{equation*}
  \mathcal{A}(\vartheta(\aaa), \aaa, \id q) := \sum_{\substack{\aaa \in \I_K\\\aaa+\id q = \OO_K}}\frac{(\vartheta*\mu_K)(\aaa)}{\N\aaa}\text.
\end{equation*}
(For $\vartheta \in \Theta(\bbb, C_1, C_2, C_3)$, this is
consistent with the definition given in Section
\ref{sec:arithmetic_functions}.)

For $1 \leq t_1 \leq t_2$, let $g : [t_1, t_2] \to \RR$ be a function
such that there exists a partition of $[t_1,t_2]$ into at most $R(g)$
intervals on whose interior $g$ is continuously differentiable and
monotonic.  Moreover, with constants $c_g > 0$ and $a \leq 0$, we
assume that
\begin{equation}\label{eq:second_sum_gbound}
  |g(t)| \ll c_g t^a \text{ on }[t_1, t_2].
\end{equation}
We find an asymptotic formula for the sum
\begin{equation*}
  S(t_1, t_2) := \sum_{\substack{z \in \OO^{\neq 0}\\t_1 < \N(z\OO^{-1}) \leq t_2}}\vartheta(z\OO^{-1})\sum_{\substack{\rho \mod \id q\\\rho\OO_K+\id q = \OO_K\\\rho^n \equiv_{\id q} A z}}g(\N(z\OO^{-1}))\text.
\end{equation*}

\begin{prop}\label{prop:second_summation}
  With the above definitions, we have
  \begin{equation*}
      S(t_1, t_2) = \frac{2\pi}{\sqrt{|\Delta_K|}}\phi_K^*(\id
      q)\mathcal{A}(\vartheta(\aaa), \aaa, \id q) \int_{t_1}^{t_2}g(t)\dd
      t+ O\left(c_\vartheta c_g(\sqrt{\N\id q}\mathcal{E}_1 + \N\id q
        \mathcal{E}_2)\right)\text,
  \end{equation*}
  where
  \begin{equation}\label{eq:lem_3_1_errorbound_1}
    \mathcal{E}_1 \ll_{a,C} R(g)
    \begin{cases}
      \sup_{t_1 \leq t \leq t_2}(t^{a+1/2}) &\text{ if }a \ne -1/2,\\
      \log(t_2 + 2) &\text{ if }a = -1/2,
    \end{cases}
  \end{equation}
  and
  \begin{equation}\label{eq:lem_3_1_errorbound_2}
    \mathcal{E}_2 \ll_{a,C} R(g)
    \begin{cases}
      t_1^{a}\log(t_1+2)^{C+1} &\text{ if }a < 0, \\
      \log(t_2 + 2)^{C+1} &\text{ if }a = 0\text.
    \end{cases}
  \end{equation}
  Moreover, the same formula holds if, in the definition of
  $S(t_1, t_2)$, the range $t_1 < \N(z\OO^{-1}) \leq t_2$ is replaced
  by $t_1 \leq \N(z\OO^{-1}) \leq t_2$.
\end{prop}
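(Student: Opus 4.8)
The plan is to turn the congruence condition on $z$ into a condition on the residue class of $z$, reduce the resulting sums to lattice-point counts in discs handled by Lemma~\ref{lem:lattice_points}, and conclude by the partial-summation result Lemma~\ref{lem:3.1}. \emph{Step~1 (swapping the summations).} Since $v_\p(A\OO)=0$ for all $\p\mid\id q$, the element $Az$ is defined modulo $\id q$ for every $z\in\OO$, and it is invertible modulo $\id q$ exactly when $z\OO^{-1}$ is coprime to $\id q$. Writing $S(t_1,t_2)$ as a sum over pairs $(z,\rho)$ and performing the $\rho$-summation first, one gets $S(t_1,t_2)=\sum_\rho S_\rho(t_1,t_2)$, where $\rho$ runs through the $\phi_K(\id q)$ invertible residue classes modulo $\id q$ and
\[
  S_\rho(t_1,t_2):=\sum_{\substack{z\in\OO,\ Az\equiv_{\id q}\rho^n\\ t_1<\N(z\OO^{-1})\le t_2}}\vartheta(z\OO^{-1})g(\N(z\OO^{-1})).
\]
A short local computation (using $z\in\OO$ and $v_\p(A\OO)=0$ for $\p\mid\id q$) shows that $z\mapsto\overline{Az}$ defines a surjection $\OO\to\OO_K/\id q$ with kernel $\id q\OO$; hence $\{z\in\OO:Az\equiv_{\id q}\rho^n\}$ is a single coset $c_\rho+\id q\OO$ of the fractional ideal $\id q\OO$, and $0$ does not lie in it when $\id q\neq\OO_K$ (for $\id q=\OO_K$ the condition $t_1\ge1$ already excludes $z=0$).

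\emph{Step~2 (an ideal theorem for cosets of $\id q\OO$).} I would first prove that for $t\ge0$,
\[
  \sum_{\substack{z\in c_\rho+\id q\OO\\ \N(z\OO^{-1})\le t}}\vartheta(z\OO^{-1})
  =\frac{2\pi}{\sqrt{|\Delta_K|}\,\N\id q}\,\mathcal{A}(\vartheta(\aaa),\aaa,\id q)\,t
  +O\!\left(c_\vartheta\Bigl(\sqrt{t/\N\id q}+\log(t+2)^{C+1}\Bigr)\right).
\]
Using $\vartheta=(\vartheta*\mu_K)*1$, the left-hand side equals $\sum_\bbb(\vartheta*\mu_K)(\bbb)\,\#\{z\in(c_\rho+\id q\OO)\cap\bbb\OO:\N(z\OO^{-1})\le t\}$, where only $\bbb$ with $\bbb+\id q=\OO_K$ and $\N\bbb\le t$ contribute; for such $\bbb$, $(c_\rho+\id q\OO)\cap\bbb\OO$ is a coset of the fractional ideal $\bbb\id q\OO$. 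As the disc $\{|z|\le\sqrt{t\,\N\OO}\}$ is convex, hence of class~$1$, and lies in a single ball of radius $\sqrt{t\,\N\OO}$, Lemma~\ref{lem:lattice_points} evaluates the inner cardinality as $\frac{2\pi t}{\sqrt{|\Delta_K|}\,\N(\bbb\id q)}+O(\sqrt{t/\N(\bbb\id q)}+1)$. Summing over $\bbb$ and extending the $\bbb$-sum to all $\bbb$ coprime to $\id q$ produces the factor $\mathcal{A}(\vartheta(\aaa),\aaa,\id q)$ in the main term, the tail and the lattice-point errors being controlled by \eqref{eq:second_sum_thetabound} together with Lemma~\ref{lem:3.4} applied to $\bbb\mapsto|(\vartheta*\mu_K)(\bbb)|\N\bbb$ (with exponents $\kappa=3/2$, $\kappa=1$, $\kappa=2$).

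\emph{Step~3 (partial summation and the $\rho$-sum).} Feeding the asymptotic of Step~2 into Lemma~\ref{lem:3.1}, with error parameters $(c_1,b_1,k_1)=\bigl(c_\vartheta/\sqrt{\N\id q},\,1/2,\,0\bigr)$ and $(c_2,b_2,k_2)=(c_\vartheta,0,C+1)$ and the given function $g$, yields
\[
  S_\rho(t_1,t_2)=\frac{2\pi}{\sqrt{|\Delta_K|}\,\N\id q}\,\mathcal{A}(\vartheta(\aaa),\aaa,\id q)\int_{t_1}^{t_2}g(t)\,\dd t
  +O_{a,C}\!\left(c_\vartheta c_g\Bigl(\frac{1}{\sqrt{\N\id q}}\mathcal{E}_1+\mathcal{E}_2\Bigr)\right),
\]
where the bounds \eqref{eq:lem_3_1_errorbound_1}, \eqref{eq:lem_3_1_errorbound_2} for $\mathcal{E}_1$, $\mathcal{E}_2$ fall directly out of the case distinction in \eqref{eq:lem_3_1_errorbound}; for $a<0$ one uses $\sup_{t_1\le t\le t_2}t^{a}\log(t+2)^{C+1}\ll_{a,C}t_1^{a}\log(t_1+2)^{C+1}$, valid since $t_1\ge1$. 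Summing over the $\phi_K(\id q)$ values of $\rho$ and using $\phi_K(\id q)/\N\id q=\phi_K^*(\id q)$ in the main term and $\phi_K(\id q)\le\N\id q$ in the error term gives the stated formula; the variant with $t_1\le\N(z\OO^{-1})$ is identical, using the corresponding version of Lemma~\ref{lem:3.1}.

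I expect the main obstacle to be the error-term bookkeeping linking Steps~2 and~3: one has to follow the lattice-point error $\sqrt{t/\N(\bbb\id q)}+1$ through the $\bbb$-summation, where \eqref{eq:second_sum_thetabound} and Lemma~\ref{lem:3.4} turn it into $c_\vartheta\bigl(\sqrt{t/\N\id q}+\log(t+2)^{C+1}\bigr)$, and then through the $\rho$-summation, where the factor $\phi_K(\id q)\asymp\N\id q$ enters, so that in the end $\mathcal{E}_1$ carries a factor $\sqrt{\N\id q}$ and $\mathcal{E}_2$ a factor $\N\id q$, exactly as claimed. By contrast, the geometric input — identifying the congruence locus with a coset of $\id q\OO$ and counting its points in a disc via Lemma~\ref{lem:lattice_points} — is routine once that lemma is in hand.
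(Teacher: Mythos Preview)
Your proposal is correct and follows essentially the same route as the paper. The paper first packages the $\rho$-sum into an auxiliary function $\tilde\vartheta(\aaa):=\vartheta(\aaa)\sum_{z\OO^{-1}=\aaa}\sum_\rho 1$ (via a lemma equivalent to your Step~2, proved after rewriting $A=A_1/A_2$ to reduce to an integral ideal), and then applies Lemma~\ref{lem:3.1} once to $\sum_\aaa\tilde\vartheta(\aaa)g(\N\aaa)$; you instead apply Lemma~\ref{lem:3.1} to each $S_\rho$ and then sum over $\rho$, which is the same computation in a slightly different order.
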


\begin{remark}\label{rem:second_summation_no_rho}
  In particular, we can apply Proposition \ref{prop:second_summation}
  with $\id q = \OO_K$, $n=A=1$ to handle sums of the form
  \begin{equation*}
    S(t_1, t_2) = \sum_{\substack{z \in \OO^{\neq 0}\\t_1 < \N(z\OO^{-1}) \leq t_2}}\vartheta(z\OO^{-1})g(\N(z\OO^{-1}))\text.
  \end{equation*}
  In this case, the error term is $\ll_{a,C}R(g)c_\vartheta c_g
  \sup_{t_1 \leq t \leq t_2}(t^{a+1/2})$ if $a \neq -1/2$ and
  $\ll_{C}R(g)c_\vartheta c_g\log(t_2+2)$ if $a = -1/2$. (Note
  that $t_2 \geq t_1 \geq 1$).
\end{remark}

Recall the notation of Section \ref{sec:first_summation}, in
particular Proposition \ref{prop:first_summation}. In a typical
application, we have $r$, $s$, $t \geq 1$, $b_0 \in \{1, 2\}$, and
\begin{equation*}
  V_1(\ee'; B) =: \tilde{V}_1((\N\eI_v)_{v \in V'}; B)
\end{equation*}
depends only on $B$ and the absolute norms of the ideals $\eI_v$, and
not on the $\e_v$. Let us describe how we apply Proposition
\ref{prop:second_summation} to sum the main term in the result of
Proposition \ref{prop:first_summation} over a further variable, say,
$\e_w$. We write $V'' := V'\smallsetminus \{w\}$, $\ee'' := (\e_v)_{v \in
  V''}$ and assume that $g(t) := \tilde{V}_1((\N\eI_v)_{v \in V''}, t;
B)$ satisfies the hypotheses from the beginning of this section. We
define
\begin{equation*}
  V_2((\N\eI_v)_{v \in V''}; B) := \pi\int\limits_{t \geq 1}g(t) \dd t
\end{equation*}
and distinguish between two cases.

In the first case, let $b_0 = 1$.  As mentioned after Proposition
\ref{prop:first_summation}, $\theta_1(\ee') = \theta_1'(\eII')$. Let
$\vartheta(\eI_w) := \theta_1'(\eII')$, considered as a function of
$\eI_w$. By the last assertion of Lemma \ref{lem:general_comp_theta_1}
and Lemma \ref{lem:6.8}, (2), $\vartheta$ satisfies
\eqref{eq:second_sum_thetabound} with $c_\theta = 2^{\omega(\bbb)}$
and $C=0$. Up to a possible contribution of $\e_{w} = 0$ (if $w =
A_0$), we can use Remark \ref{rem:second_summation_no_rho} to estimate
\begin{equation*}
  \frac{2}{\sqrt{|\Delta_K|}} \sum_{\substack{\ee''\in\prod_{v \in V''}\OO_{v*}}}\sum_{\e_w \in \OO_{w*}}\vartheta(\e_w\OO_{w}^{-1})g(\N(\e_w\OO_w^{-1}))\text.
\end{equation*}
We obtain a main term
\begin{equation}\label{eq:second_main_term}
  \left(\frac{2}{\sqrt{|\Delta_K|}}\right)^2 \sum_{\substack{\ee''\in\prod_{v \in V''}\OO_{v*}}}\mathcal{A}(\theta_1'(\eII'), \eI_v)V_2((\N\eI_v)_{v \in V''}; B)\text.
\end{equation}
It remains to bound the sum over $\ee''$ of the error term from Remark
\ref{rem:second_summation_no_rho}.

In the second case $b_0 \geq 2$, the sum over $\rho$ in the definition of $\theta_1$ is
not just $1$. However, we notice that, if $r$, $s \geq 1$, the
condition $\kc+\eI_{A_0}\Pi(\eIIA)\Pi(\eIIB) = \OO_K$ can be replaced
by $\kc+\eI_{A_1}\eI_{B_1} = \OO_K$, since the remaining coprimality
conditions follow from $\theta_0(\eII') = 1$. We additionally
assume that
\begin{equation*}
  w \in \{A_0, A_2, \ldots, A_r\}
\end{equation*}
and that $-\Ao/\Pi(\eeB)$ has the form $A\e_w$, where $A$ does not
depend on $\e_w$. Then $v_\p(A\OO_w) = 0$ for all $\p \mid
\kc\Pi(\eIIC)$. We apply Proposition \ref{prop:second_summation} once
for every summand in the sum over $\kc$, to sum the expression
\begin{equation*}
  \tilde\theta_1(\eII', \kc)\sum_{\substack{\rho \mod \kc\Pi(\eIIC)\\\rho\OO_K+\kc\Pi(\eIIC) = \OO_K\\\rho^{b_0} \equiv_{\kc\Pi(\eIIC)} A\e_{w}}}V_1(\ee'; B)
\end{equation*}
over $\e_w\in \OO_{w*}$. Let $\vartheta(\eI_w) := \tilde\theta_1(\eII',
\kc)$, considered as a function of $\eI_w$. By Lemma
\ref{lem:theta_1_tilde_in_theta_4} and Lemma \ref{lem:6.8}, (2),
$\vartheta$ satisfies \eqref{eq:second_sum_thetabound} with $c_\vartheta
= 2^{\omega_K(\bbb)}$ and $C=0$. After applying Proposition
\ref{prop:second_summation} and summing the result over $\kc$, we
obtain a main term
\begin{equation*}
  \left(\frac{2}{\sqrt{\Delta_K}}\right)^2\sum_{\substack{\ee''\in\prod_{v \in V''}\OO_{v*}}}\theta_2(\eII'')V_2((\N\eI_v)_{v \in V''}; B)\text,
\end{equation*}
where
\begin{equation}\label{eq:sec_sum_def_theta_2}
  \theta_2(\eII'') := \sum_{\substack{\kc \mid \Pi'(\eI_{D}, \eIIC)\\\kc+\eI_{A_1}\eI_{B_1} =
      \OO_K}}\frac{\mu_K(\kc)}{\N\kc}\phi_K^*(\kc\Pi(\eIIC))\mathcal{A}(\vartheta(\eI_w), \eI_w, \kc\Pi(\eIIC))\text.
\end{equation}
 
The following lemma shows that the main term is the same as in the
case $b_0 = 0$. It remains to bound the sum over $\e''$ and $\kc$ of
the error term multiplied by $\mu_K(\kc)/\N\kc$.

\begin{lemma}
  Assume that $r$, $s \geq 1$, choose $w \in \{A_0, A_2, \ldots, A_r,
  B_2, \ldots, B_s\}$, and let $\vartheta(\eI_w) :=
  \tilde\theta_1(\eII', \kc)$, considered as a function of
  $\eI_w$. Then $\vartheta(\eI_w) \in \Theta(\bbb, 1,1,1)$, where $\bbb$
  is given in Lemma \ref{lem:theta_1_tilde_in_theta_4}. Define
  $\theta_2(\eII'')$ as in \eqref{eq:sec_sum_def_theta_2} and
  $\theta_1'(\eII')$ as in
  \eqref{eq:first_summation_def_theta_1_strich}. Then we have
  \begin{equation*}
    \theta_2(\eII'') = \mathcal{A}(\theta_1'(\eII'), \eI_w)\text.
  \end{equation*}
\end{lemma}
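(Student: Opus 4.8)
The plan is as follows. The first assertion---that $\tilde\theta_1(\eII',\kc)$, regarded as a function of $\eI_w$, lies in $\Theta(\bbb,1,1,1)$---is nothing but Lemma~\ref{lem:theta_1_tilde_in_theta_4} applied with $v=w\in V'$, so there is nothing to do. For the identity $\theta_2(\eII'')=\mathcal{A}(\theta_1'(\eII'),\eI_w)$, I would reduce both sides to one and the same $\kc$-sum of averages of the single building block $\tilde\theta_1(\eII',\kc)$, and then match these averages term by term.

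First I would rewrite $\theta_1'(\eII')$ from \eqref{eq:first_summation_def_theta_1_strich}, loosening the condition on $\kc$ from $\kc+\eI_{A_0}\Pi(\eIIA)\Pi(\eIIB)=\OO_K$ to the weaker, $\eI_w$-free condition $\kc+\eI_{A_1}\eI_{B_1}=\OO_K$ that appears in \eqref{eq:sec_sum_def_theta_2}. This is legitimate: since $\eI_{A_1}\eI_{B_1}$ divides $\eI_{A_0}\Pi(\eIIA)\Pi(\eIIB)$, the original index set is contained in the new one, and for any $\kc\mid\Pi'(\eI_{D},\eIIC)$ lying in the new set but not the old there is a prime $\p\mid\kc$ dividing simultaneously some $\eI_u$ with $u\in\{D,C_1,\dots,C_{t-1}\}$ and some $\eI_v$ with $v\in\{A_0,A_2,\dots,A_r,B_2,\dots,B_s\}$; the only edges of $G'$ joining a vertex of $\{A_0,\dots,A_r,B_1,\dots,B_s\}$ to one of $\{D,C_1,\dots,C_t\}$ are $\{A_1,D\}$ and $\{B_1,D\}$, so $J_\p(\eII')\supseteq\{u,v\}$ is neither empty, a singleton, nor an edge, whence $\theta_0(\eII')=0$ and $\tilde\theta_1(\eII',\kc)=0$. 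After this rewriting the range of $\kc$ is a fixed finite set independent of $\eI_w$; since each $\tilde\theta_1(\eII',\kc)$ is in $\Theta(\bbb,1,1,1)$ as a function of $\eI_w$, the averages $\mathcal{A}(\tilde\theta_1(\eII',\kc),\eI_w)$ converge (Lemma~\ref{lem:6.8}(3)), so the finite $\kc$-sum commutes with the operator $\mathcal{A}(\,\cdot\,,\eI_w)$. Comparing with \eqref{eq:sec_sum_def_theta_2}, it then remains to prove, for each such $\kc$, the single identity $\mathcal{A}(\tilde\theta_1(\eII',\kc),\eI_w)=\phi_K^*(\kc\Pi(\eIIC))\,\mathcal{A}(\tilde\theta_1(\eII',\kc),\eI_w,\kc\Pi(\eIIC))$.

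To prove this I would expand both averages as Euler products via Lemma~\ref{lem:6.8}(3), with $A_\p$ the functions corresponding to $\tilde\theta_1(\eII',\kc)$ viewed as a function of $\eI_w$. The two products have identical factors at primes $\p\nmid\kc\Pi(\eIIC)$; at a prime $\p\mid\kc\Pi(\eIIC)$ the first has the factor $(1-\N\p^{-1})\sum_{n\ge0}A_\p(n)\N\p^{-n}$ and the second the factor $A_\p(0)$. The crucial point is that $A_\p(n)=0$ for all $n\ge1$ whenever $\p\mid\kc\Pi(\eIIC)$: such a $\p$ divides $\eI_u$ for some $u\in\{D,C_1,\dots,C_t\}$, while $A_\p(n)$ is the value of $\tilde\theta_1(\eII',\kc)$ obtained when $\p\mid\eI_w$, and $w\in\{A_0,A_2,\dots,A_r,B_2,\dots,B_s\}$ is not adjacent in $G'$ to any of $D,C_1,\dots,C_t$ (the neighbours of $D$ in $G'$ are only $A_1,B_1,C_1$, and each $C_k$ has only $C$-vertices and $D$ as neighbours), so $\{u,w\}\subseteq J_\p(\eII')$ is not an edge and $\theta_0(\eII')=0$. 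Hence $\sum_{n\ge0}A_\p(n)\N\p^{-n}=A_\p(0)$ at these primes, the $\p$-factor of the ratio of the two averages equals $1-\N\p^{-1}$, and the product of these factors over $\p\mid\kc\Pi(\eIIC)$ is exactly $\phi_K^*(\kc\Pi(\eIIC))$. The degenerate case in which some $A_\p(0)$ with $\p\mid\kc\Pi(\eIIC)$ vanishes is handled separately: then both averages vanish, since $A_\p(n)=0$ for $n\ge1$ forces the $\p$-factor $(1-\N\p^{-1})A_\p(0)$ of the first average to be $0$ as well, so the identity still holds.

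The main obstacle is the graph-theoretic bookkeeping with $G'$: both the validity of loosening the $\kc$-condition and the vanishing $A_\p(n)=0$ for $n\ge1$ hinge on pinning down exactly which vertices of $G'$ are adjacent to $D$ and to the $C_k$, together with the hypothesis $w\notin\{A_1,B_1,C_1,D\}$. Once these adjacency facts are established, the remaining manipulations of Euler products are routine, apart from the minor care needed for the vanishing factors.
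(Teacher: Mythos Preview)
Your proposal is correct and follows essentially the same route as the paper: reduce to the termwise identity $\mathcal{A}(\tilde\theta_1(\eII',\kc),\eI_w)=\phi_K^*(\kc\Pi(\eIIC))\,\mathcal{A}(\tilde\theta_1(\eII',\kc),\eI_w,\kc\Pi(\eIIC))$, expand both sides as Euler products via Lemma~\ref{lem:6.8}(3), and verify $A_\p(n)=0$ for $n\ge 1$ at primes $\p\mid\kc\Pi(\eIIC)$ by the graph-theoretic observation that $w$ is not adjacent in $G'$ to any vertex in $\{D,C_1,\dots,C_t\}$. You are simply more explicit than the paper about two preliminary points---the loosening of the $\kc$-condition (which the paper notes just before the lemma) and the commutation of the finite $\kc$-sum with $\mathcal{A}$---but these are routine once stated.
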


\begin{proof}
  It is enough to prove that
  $\phi_K^*(\kc\Pi(\eIIC))\mathcal{A}(\vartheta(\eI_w), \eI_w,
  \kc\Pi(\eIIC)) = \mathcal{A}(\vartheta(\eI_w), \eI_w)$ holds
  whenever $\kc$ satisfies the conditions under the sum. This is
  clearly true if $\vartheta$ is the zero function. If not, write
  $\vartheta(\eI_w) = \prod_\p A_\p(v_\p(\eI_w))$ with $A_\p(n) =
  A_\p(1)$ for all prime ideals $\p$ and all $n \geq 1$. By Lemma
  \ref{lem:6.8}, \emph{(3)},
  $\phi_K^*(\kc\Pi(\eIIC))\mathcal{A}(\vartheta(\eI_w), \eI_w,
  \kc\Pi(\eIIC))$ is given by
  \begin{equation*}
    \prod_{\p \nmid \kc\Pi(\eIIC)}\left(\left(1-\frac{1}{\N\p}\right)A_\p(0) + \frac{1}{\N\p}A_\p(1)\right)\prod_{\p\mid\kc\Pi(\eIIC)}\left(1-\frac{1}{\N\p}\right)A_\p(0)\text,
  \end{equation*}
  and
  \begin{equation*}
    \mathcal{A}(\vartheta(\eI_w), \eI_w) = \prod_{\p}\left(\left(1-\frac{1}{\N\p}\right)A_\p(0) + \frac{1}{\N\p}A_\p(1) \right)\text.
  \end{equation*}
  By our choice of $w$, we have $\vartheta(\eI_w) =
  \tilde\theta_1(\eII', \kc) = 0$ whenever $p \mid (\eI_w+
  \kc\Pi(\eIIC))$. Since $\vartheta$ is not identically zero, this
  implies $A_\p(1) = 0$ for all $\p \mid \kc\Pi(\eIIC)$.
\end{proof}

\subsection{Proof of Proposition \ref{prop:second_summation}}
First, we prove a version of Lemma \ref{lem:6.2} that counts elements
in a given residue class instead of ideals.
\begin{lemma}\label{lem:6.2_congr}
  Let $\aaa$ be an ideal of $K$ and let $\beta \in \OO_K$ such that $\aaa+
  \id q = \beta\OO_K + \id q = \OO_K$. Moreover, let $\vartheta: \I_K \to
  \RR$ satisfy \eqref{eq:second_sum_thetabound}. Then, for $t\geq 0$,
  \begin{equation*}
    \sum_{\substack{z \in \aaa\smallsetminus\{0\}\\z \equiv \beta \mod \id q\\\N(z\aaa^{-1}) \le t}} \!\!\!\vartheta(z\aaa^{-1}) = \frac{2\pi}{\sqrt{|\Delta_K|}\N\id q}\mathcal{A}(\vartheta(\bbb), \bbb, \id q) t + O_C\left(c_\vartheta\left(\sqrt{\frac{t}{\N\id q}} + \log(t+2)^{C+1}\right)\right).
  \end{equation*}
\end{lemma}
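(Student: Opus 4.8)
The plan is to follow the proof of Lemma~\ref{lem:6.2}, with the ideal theorem replaced by the lattice‑point estimate of Lemma~\ref{lem:lattice_points} applied to a disc. First I would dispose of $t<1$: then $\N(z\aaa^{-1})$ is a positive integer for every $z\in\aaa\smallsetminus\{0\}$, so the left‑hand side vanishes, while $\mathcal{A}(\vartheta(\bbb),\bbb,\id q)$ converges absolutely (by \eqref{eq:second_sum_thetabound} and Lemma~\ref{lem:3.4} with $\kappa=2$) and $t/\N\id q\le\sqrt{t/\N\id q}$ in that range, so the right‑hand side is absorbed into its own error term. Hence assume $t\geq1$. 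Next I would note that for any $z$ occurring in the sum, the integral ideal $z\aaa^{-1}$ is coprime to $\id q$: for $\p\mid\id q$ one has $v_\p(z\aaa^{-1})=v_\p(z)$ since $\aaa+\id q=\OO_K$, and $v_\p(z)=0$ because $z\equiv\beta\pmod{\id q}$ with $v_\p(\beta)=0$. Writing $\vartheta=(\vartheta*\mu_K)*1$ and interchanging summations,
\[
\sum_{\substack{z\in\aaa\smallsetminus\{0\}\\z\equiv\beta\bmod\id q\\\N(z\aaa^{-1})\le t}}\vartheta(z\aaa^{-1})=\sum_{\substack{\id d+\id q=\OO_K\\\N\id d\le t}}(\vartheta*\mu_K)(\id d)\,N(\id d,t),
\]
where $N(\id d,t)$ counts those $z\in\aaa\smallsetminus\{0\}$ with $z\in\id d\aaa$, $z\equiv\beta\pmod{\id q}$, and $|z|^2\le t\N\aaa$ (using $\N(z\aaa^{-1})=|z|^2/\N\aaa$), and the sum over $\id d$ is automatically restricted to $\id d$ coprime to $\id q$.

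For fixed $\id d$ coprime to $\id q$, the conditions $z\in\id d\aaa$ and $z\equiv\beta\pmod{\id q}$ single out, by the Chinese remainder theorem, one residue class modulo $\id d\aaa\id q$. Applying Lemma~\ref{lem:lattice_points} to this class and to the disc $S=\{|z|\le\sqrt{t\N\aaa}\}$ — which is convex, hence of class $1$, and is itself a single ball of radius $R=\sqrt{t\N\aaa}$, with $\vol(S)=\pi t\N\aaa$ — would give
\[
N(\id d,t)=\frac{2\pi t}{\sqrt{|\Delta_K|}\,\N\id d\,\N\id q}+O\!\left(\sqrt{\frac{t}{\N\id d\,\N\id q}}+1\right),
\]
where the possible contribution of $z=0$, which arises only when $\id q=\OO_K$, is absorbed into the $O(1)$.

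Substituting this, the main term becomes $\frac{2\pi t}{\sqrt{|\Delta_K|}\,\N\id q}\sum_{\id d+\id q=\OO_K,\ \N\id d\le t}\frac{(\vartheta*\mu_K)(\id d)}{\N\id d}$; completing this to $\mathcal{A}(\vartheta(\bbb),\bbb,\id q)$ costs $O_C(c_\vartheta\log(t+2)^{C+1})$ after applying Lemma~\ref{lem:3.4} with $\kappa=2$ to the function $\id d\mapsto|(\vartheta*\mu_K)(\id d)|\,\N\id d$, which satisfies the hypothesis of that lemma by \eqref{eq:second_sum_thetabound}. The error term coming from $N(\id d,t)$ splits into $\sqrt{t/\N\id q}\cdot\sum_{\N\id d\le t}|(\vartheta*\mu_K)(\id d)|\,\N\id d^{-1/2}$ and $\sum_{\N\id d\le t}|(\vartheta*\mu_K)(\id d)|$, which Lemma~\ref{lem:3.4} (with $\kappa=3/2$ and $\kappa=1$ respectively, applied to the same function) bounds by $O_C(c_\vartheta\sqrt{t/\N\id q})$ and $O_C(c_\vartheta\log(t+2)^{C+1})$. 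Adding everything up yields the claimed formula.

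No single step is deep here; the work is in the bookkeeping. The points that need care are: checking that $z\aaa^{-1}$ is coprime to $\id q$, so that the divisors $\id d$ in the Möbius‑type decomposition are all coprime to $\id q$ and the truncated sum genuinely converges to $\mathcal{A}(\vartheta(\bbb),\bbb,\id q)$; the Chinese remainder reduction to a single class modulo $\id d\aaa\id q$ together with the (harmless) $z=0$ issue; and tracking the powers of $\N\id q$ through the lattice‑point error uniformly in $\id q$, so that the three applications of Lemma~\ref{lem:3.4} assemble into exactly the bound $c_\vartheta\bigl(\sqrt{t/\N\id q}+\log(t+2)^{C+1}\bigr)$.
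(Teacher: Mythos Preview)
Your proposal is correct and follows essentially the same approach as the paper's proof: decompose via $\vartheta=(\vartheta*\mu_K)*1$, restrict to divisors $\id d$ coprime to $\id q$ (you argue this directly from $v_\p(z\aaa^{-1})=0$ for $\p\mid\id q$, the paper by observing the inner sum vanishes otherwise), apply the Chinese remainder theorem and Lemma~\ref{lem:lattice_points} to the disc of radius $\sqrt{t\N\aaa}$, and then bound the three resulting error sums using Lemma~\ref{lem:3.4}. The bookkeeping and the resulting exponents match.
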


\begin{proof}
  The case $t<1$ can be handled as in Lemma \ref{lem:6.2}, so let $t
  \geq 1$. Using $\vartheta = (\vartheta * \mu_K) * 1$, we see that
  \begin{equation*}
    \sum_{\substack{z \in \aaa\smallsetminus\{0\}\\z \equiv \beta \mod \id q\\\N(z\aaa^{-1}) \le t}} \!\vartheta(z\aaa^{-1}) = \sum_{\N\bbb \leq t}(\vartheta*\mu_K)(\bbb)\sum_{\substack{z \in \aaa\bbb\smallsetminus\{0\}\\z \equiv \beta \mod \id q\\\N(z\aaa^{-1}) \le t}}\!1 = \sum_{\substack{\N\bbb \leq t\\\bbb+ \id q=\OO_K}}(\vartheta*\mu_K)(\bbb)\sum_{\substack{z \in \aaa\bbb\smallsetminus\{0\}\\z \equiv \beta \mod \id q\\\abs{z} \le t\N\aaa}}1\text.
  \end{equation*}
  For the second equality, note that the inner sum is $0$ whenever
  $\id b+\id q\neq \OO_K$. Now $\aaa\bbb+\id q = \OO_K$, so the
  Chinese remainder theorem yields an $x \in \OO_K$ such that
  \begin{equation*}
    \sum_{\substack{z \in \aaa\smallsetminus\{0\}\\z \equiv \beta \mod \id q\\\N(z\aaa^{-1}) \le t}} \vartheta(z\aaa^{-1}) = \sum_{\substack{\N\bbb \leq t\\\bbb+ \id q=\OO_K}}(\vartheta*\mu_K)(\bbb)\sum_{\substack{z \in\OO_K^{\neq 0}\\z \equiv x \mod \aaa\bbb\id q\\\abs{z} \le t\N\aaa}}1\text.
  \end{equation*}
  We use Lemma \ref{lem:lattice_points} to estimate the inner sum and
  obtain
  \begin{equation*}
    \sum_{\substack{z \in \aaa\smallsetminus\{0\}\\z \equiv \beta \mod \id q\\\N(z\aaa^{-1}) \le t}} \vartheta(z\aaa^{-1}) = \sum_{\substack{\N\bbb \leq t\\\id b+\id q = \OO_K}}(\vartheta * \mu_K)(\bbb)\left(\frac{2 \pi t}{\sqrt{|\Delta_K|}\N\bbb\N\id q} + O\left(\sqrt{\frac{t}{\N\id b\N\id q}} + 1\right)\right),
  \end{equation*}
  which we expand to the main term in the lemma plus an error term
  \begin{equation*}
    \ll \frac{t}{\N\id q}\sum_{\N\bbb > t}\frac{|(\vartheta * \mu_K)(\bbb)|}{\N\bbb} + \sqrt\frac{t}{\N\id q}\sum_{\N\bbb \leq t}\frac{|(\vartheta * \mu_K)(\bbb)|}{\sqrt{\N\bbb}} + \sum_{\N\bbb \leq t}|(\vartheta * \mu_K)(\bbb)|\text.
  \end{equation*}
  By \eqref{eq:second_sum_thetabound} and Lemma \ref{lem:3.4}, the
  first part of the error term is $\ll_C c_\vartheta\N\id
  q^{-1}\log(t+2)^C$, the second part is $\ll_C
  c_\vartheta\sqrt{t/\N\id q}$, and the third part is
  $\ll_Cc_\vartheta\log(t+2)^{C+1}$.
\end{proof}

\begin{lemma}\label{lem:rho}
  Using the notation from the beginning of this section, we have
  \begin{align}\label{eq:lem_rho}
    \sum_{\substack{z \in \OO^{\neq 0}\\\N(z\OO^{-1})\leq t}}\vartheta(z\OO^{-1})\sum_{\substack{\rho \mod \id q\\\rho\OO_K+\id q = \OO_K\\\rho^n \equiv_{\id q} A z}}1 &= \frac{2\pi}{\sqrt{|\Delta_K|}}\phi_K^*(\id q)\mathcal{A}(\vartheta(\aaa), \aaa, \id q)t\\
    &+ O(c_\vartheta(\sqrt{\N\id q t} + \N\id
    q\log(t+2)^{C+1}))\text.\nonumber
  \end{align}
\end{lemma}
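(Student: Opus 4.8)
\emph{Proof sketch.} The plan is to interchange the two sums and reduce everything to Lemma~\ref{lem:6.2_congr}. First, note that for a prime ideal $\p \mid \id q$ we have $v_\p(Az) = v_\p(z\OO^{-1}) + v_\p(A\OO) = v_\p(z\OO^{-1}) \geq 0$ (using the hypothesis $v_\p(A\OO)=0$ and $z \in \OO$), so $Az$ is defined modulo $\id q$ and is invertible modulo $\id q$ precisely when $z\OO^{-1} + \id q = \OO_K$. In particular the inner sum over $\rho$ on the left-hand side of \eqref{eq:lem_rho} is empty unless this coprimality holds. Viewing the left-hand side as an exact weighted count of pairs $(\rho,z)$, I would rewrite it as
\begin{equation*}
  \sum_{\substack{\rho \bmod \id q\\\rho\OO_K + \id q = \OO_K}}\ \sum_{\substack{z \in \OO^{\neq 0},\ \N(z\OO^{-1}) \le t\\ Az \equiv_{\id q} \rho^n}}\vartheta(z\OO^{-1})\text.
\end{equation*}

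Fix an invertible residue class $\rho$ (with representative $\rho \in \OO_K$) and substitute $w := Az$. Since $A \neq 0$ (forced by $v_\p(A\OO)=0$ unless $\id q = \OO_K$, in which case the statement is immediate from Lemma~\ref{lem:6.2_congr} with $\aaa = A\OO$), this is a bijection between $\{z \in \OO^{\neq 0} : \N(z\OO^{-1}) \le t,\ Az \equiv_{\id q} \rho^n\}$ and $\{w \in (A\OO)^{\neq 0} : \N(w(A\OO)^{-1}) \le t,\ w \equiv_{\id q} \rho^n\}$, and it preserves the summand because $w(A\OO)^{-1} = z\OO^{-1}$ as integral ideals. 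The fractional ideal $A\OO$ is coprime to $\id q$ by hypothesis, and $\rho^n \in \OO_K$ is coprime to $\id q$ as well, so Lemma~\ref{lem:6.2_congr}, applied with $\aaa := A\OO$, $\beta := \rho^n$, and the given $\vartheta$ (which satisfies \eqref{eq:second_sum_thetabound} by assumption), shows that the inner sum equals
\begin{equation*}
  \frac{2\pi}{\sqrt{|\Delta_K|}\,\N\id q}\,\mathcal{A}(\vartheta(\aaa),\aaa,\id q)\,t + O_C\!\left(c_\vartheta\Bigl(\sqrt{t/\N\id q} + \log(t+2)^{C+1}\Bigr)\right)\text,
\end{equation*}
with an implied constant independent of $\rho$ (the main and error terms in Lemma~\ref{lem:6.2_congr} do not involve $\N\aaa$).

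It then remains to sum over the $\phi_K(\id q)$ invertible residue classes $\rho$ modulo $\id q$. This multiplies the main term by $\phi_K(\id q)$, and since $\phi_K(\id q)/\N\id q = \phi_K^*(\id q)$ we obtain $\tfrac{2\pi}{\sqrt{|\Delta_K|}}\phi_K^*(\id q)\mathcal{A}(\vartheta(\aaa),\aaa,\id q)\,t$. It also multiplies the error term by $\phi_K(\id q) \le \N\id q$, turning $\sqrt{t/\N\id q}$ into $\sqrt{\N\id q\,t}$ and $\log(t+2)^{C+1}$ into $\N\id q\log(t+2)^{C+1}$, which is exactly the error term claimed in \eqref{eq:lem_rho}. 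No step is difficult; the only thing to get right is to apply Lemma~\ref{lem:6.2_congr} to the auxiliary fractional ideal $A\OO$ (which, unlike $\OO$, is coprime to $\id q$) via the substitution $w = Az$, and to note that the resulting estimate is uniform in $\rho$, so that summing over the $\phi_K(\id q)$ classes merely multiplies it through.
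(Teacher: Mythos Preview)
Your approach is exactly the paper's: interchange the two sums, reduce the inner sum to Lemma~\ref{lem:6.2_congr}, then sum over the $\phi_K(\id q)$ invertible residue classes and use $\phi_K(\id q)\le \N\id q$. The bookkeeping for the main and error terms is correct.

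There is, however, a genuine technical gap in your application of Lemma~\ref{lem:6.2_congr}. That lemma is stated (and proved) for an \emph{integral} ideal $\aaa$ of $\OO_K$: its proof combines the conditions $z\in\aaa\bbb$ and $z\equiv\beta\bmod\id q$ via the Chinese remainder theorem into a single congruence modulo $\aaa\bbb\id q$ inside $\OO_K$, which fails if $\aaa\bbb$ has negative valuation at some prime. In your substitution $w=Az$ you take $\aaa:=A\OO$, but the hypotheses only give $v_\p(A\OO)=0$ for $\p\mid\id q$; nothing forces $A\OO$ to be integral at the remaining primes (and $\OO$ itself is merely a nonzero fractional ideal). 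So Lemma~\ref{lem:6.2_congr} does not apply as written.

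The paper patches this by weak approximation: write $A=A_1/A_2$ with $A_1\in\OO^{-1}$ and $A_2\in\OO_K$, chosen so that $A_1\OO+\id q=A_2\OO_K+\id q=\OO_K$. Then $A_1\OO\subset\OO_K$ \emph{is} integral, and the substitution $w=A_1 z$ turns the inner sum into a sum over $w\in A_1\OO$ with $w\equiv A_2\rho^n\bmod\id q$, to which Lemma~\ref{lem:6.2_congr} applies with $\aaa=A_1\OO$ and $\beta=A_2\rho^n\in\OO_K$. With this adjustment your argument goes through verbatim.
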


\begin{proof}
  Denote the expression on the left-hand side of \eqref{eq:lem_rho} by
  $L$. Since $v_\p(A\OO) = 0$ for all $\p \mid \id q$, we can, by weak
  approximation, find $A_1 \in \OO^{-1}$, $A_2 \in \OO_K$ such that $A =
  A_1/A_2$ and $A_1\OO + \id q = A_2\OO_K+\id q = \OO_K$. Changing the
  order of summation, we obtain
  \begin{equation*}
    L= \sum_{\substack{\rho \mod \id q\\\rho\OO_K+\id q=\OO_K}}\sum_{\substack{z \in \OO^{\neq 0}\\A_1 z \equiv A_2 \rho^n \mod \id q\\\N(z\OO^{-1})\leq t}}\hspace{-0.5cm}\vartheta(z\OO^{-1}) = \sum_{\substack{\rho \mod \id q\\\rho\OO_K+\id q=\OO_K}} \sum_{\substack{A_1 z \in A_1\OO^{\neq 0}\\A_1 z \equiv A_2 \rho^n \mod \id q\\\N(A_1 z(A_1\OO)^{-1})\leq t}}\hspace{-0.3cm}\vartheta(A_1z(A_1\OO)^{-1})\text.
  \end{equation*}
  The lemma now follows from Lemma \ref{lem:6.2_congr} and the trivial
  estimate $\phi_K(\id q) \leq \N\id q$.
\end{proof}

Define $\tilde{\vartheta} : \I_K \to \RR$ by
\begin{equation*}
  \tilde{\vartheta}(\aaa) := \vartheta(\aaa)\sum_{\substack{z \in \OO^{\neq 0}\\z\OO^{-1} = \aaa}}\sum_{\substack{\rho \mod \id q\\\rho\OO_K+\id q = \OO_K\\\rho^n \equiv_{\id q} A z}} 1\text.
\end{equation*}
The first sum is finite, since $|\OO_K^\times|< \infty$. Then
\begin{equation*}
  S(t_1, t_2) = \sum_{\substack{\aaa \in [\OO^{-1}] \cap \I_K\\t_1 < \N\aaa \leq t_2}}\tilde{\vartheta}(\aaa)g(\N\aaa)\text,
\end{equation*}
and by Lemma \ref{lem:rho} we have
\begin{equation*}
  \sum_{\substack{a \in [\OO^{-1}] \cap \I_K\\\N\aaa \leq t}} \tilde{\vartheta}(\aaa) = \frac{2\pi}{\sqrt{|\Delta_K|}}\phi_K^*(\id q)\mathcal{A}(\vartheta(\aaa), \aaa, \id q)t
  + O(c_\vartheta(\sqrt{\N\id q t} + \N\id q\log(t+2)^{C+1}))\text.
\end{equation*}
With (\ref{eq:second_sum_gbound}) and simple calculations, the
proposition now follows from Lemma \ref{lem:3.1}.

\section{Further summations}\label{sec:further_summations}

Here, we show how to evaluate the main term of the second summation as
in \eqref{eq:second_main_term}, once the sums over $\classtuple \in
\classrep^{r+1}$ from Claim~\ref{claim:passage} and over elements
$\ee'' \in \OO_{1*} \times \dots \times \OO_{r+1*}$ have been
transformed into a sum over ideals $(\aaa_1, \ldots,
\aaa_{r+1})\in\I_K^{r+1}$ (see
Lemma~\ref{lem:A3_second_summation_ideals_M11}, for example).

In this section, $K$ can be an arbitrary number field of degree $d
\geq 2$. For $K = \QQ$, we refer to \cite{MR2520770}.
Let $r \in \ZZp$, $s \in \{0, 1\}$. We consider functions $V :
\RR^{r+s}_{\ge 1}\times \RR_{\ge 3} \to \RR_{\ge 0}$ similar to the
ones in \cite[Proposition~3.9, Proposition~3.10]{MR2520770}. That is,
we consider three cases:
\begin{enumerate}[(a)]
\item We have $s = 0$ and
  \begin{equation*}
    V(t_1, \ldots, t_r; B) \ll \frac{B}{t_1 \cdots t_r}.
  \end{equation*}
\item We have $s = 1$ and there exist $k_2$, $\ldots$, $k_{r} \in
  \RR$, $k_{1}$, $k_{r+1} \in \RR_{\neq 0}$, $a \in \RR_{> 0}$ with
  \begin{equation*}
    V(t_1, \ldots, t_{r+1}; B) \ll \frac{B}{t_1 \cdots t_{r+1}}\cdot\left(\frac{B}{t_1^{k_1}\cdots t_{r+1}^{k_{r+1}}}\right)^{-a}\text.
  \end{equation*}
  Moreover, $V(t_1, \ldots, t_{r+1}; B) = 0$ unless $t_1^{k_1}\cdots
  t_{r+1}^{k_{r+1}} \leq B$.
\item We have $s = 1$ and there exist $k_2$, $\ldots$, $k_{r} \in
  \RR$, $k_1$, $k_{r+1} \in \RR_{\neq 0}$, $a$, $b\in \RR_{>0}$ with
  \begin{equation*}
    V(t_1, \ldots, t_{r+1}; B) \ll \frac{B}{t_1 \cdots t_{r+1}}\cdot\min\left\{\left(\frac{B}{t_1^{k_1}\cdots t_{r+1}^{k_{r+1}}}\right)^{-a}, \left(\frac{B}{t_1^{k_1}\cdots t_{r+1}^{k_{r+1}}}\right)^b\right\}\text.
  \end{equation*}
\end{enumerate}
Additionally, we assume that $V(t_1, \ldots, t_{r+s}) = 0$ unless
$t_1, \ldots, t_{r+s} \leq B$, and that there is a constant $R(V)$
such that for all fixed $t_1$, $\ldots$, $t_{r+s-1}$, $B$, there is a
partition of $[1, B]$ into at most $R(V)$ intervals on whose interior
$V(t_1, \ldots, t_{r+s}; B)$, considered as a function of $t_{r+s}$,
is continuously differentiable and monotonic. We note that (b) implies
(c) for any $b > 0$.
\begin{lemma}\label{lem:3.6}
  Let $V(t_1, \ldots, t_{r+s}; B)$ be as above, $t_{r+s} \geq 1$, $B
  \geq 3$. Then
  \begin{equation*}
    \sum_{\aaa_1, \ldots, \aaa_{r+s-1}\in\I_K}V(\N\aaa_1, \ldots, \N\aaa_{r+s-1}, t_{r+s};B) \ll \frac{B(\log B)^{r-1}}{t_{r+s}}\text.
  \end{equation*}
\end{lemma}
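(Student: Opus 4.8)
The plan is to reduce to cases (a) and (c)---the excerpt already remarks that (b) implies (c)---and in each case to factor the multiple sum over ideals. The only input about sums over ideals I use is the ideal theorem $\sum_{\N\aaa \le X}1 \ll X$ (valid for $X \ge 1$), which lets me invoke Lemma~\ref{lem:3.4} with $\vartheta \equiv 1$ and $C = 0$: this yields $\sum_{\N\aaa \le X}\N\aaa^{-1} \ll \log(X+2)$, as well as $\sum_{\N\aaa \le X}\N\aaa^{-\kappa} \ll_\kappa X^{1-\kappa}$ for $\kappa < 1$ and $\sum_{\N\aaa \ge Y}\N\aaa^{-\kappa} \ll_\kappa Y^{1-\kappa}$ for $\kappa > 1$. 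I will also use throughout that $V$ vanishes unless $t_1, \dots, t_{r+s} \le B$ and that $\log(B+2) \ll \log B$ for $B \ge 3$. In case (a) ($s=0$) this is immediate: the multiple sum runs over $\aaa_1, \dots, \aaa_{r-1}$, and the bound $V \ll B/(t_1\cdots t_r)$ factors completely, giving $\ll (B/t_r)\prod_{i=1}^{r-1}\bigl(\sum_{\N\aaa_i \le B}\N\aaa_i^{-1}\bigr) \ll (B/t_r)(\log B)^{r-1}$, which is the claim since $t_r = t_{r+s}$.

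In case (c) ($s=1$) the sum runs over the $r$ ideals $\aaa_1, \dots, \aaa_r$ with $t_{r+1}$ fixed, and the trivial bound would produce one spurious extra logarithm, so one has to exploit the decay in the $\min$-factor. The key idea is to single out $\aaa_1$, whose exponent $k_1$ is nonzero, sum over it first so as to gain a full logarithm, and then estimate the remaining $r-1$ ideals trivially. Writing $Q := t_2^{k_2}\cdots t_{r+1}^{k_{r+1}}$ (which does not involve $t_1$), the heart of the matter is the uniform bound
\[
  \sum_{\N\aaa_1 \le B}\frac{1}{\N\aaa_1}\,\min\Bigl\{\bigl(\tfrac{B}{\N\aaa_1^{k_1}Q}\bigr)^{-a},\ \bigl(\tfrac{B}{\N\aaa_1^{k_1}Q}\bigr)^{b}\Bigr\} \ll 1,
\]
uniformly in $Q$ and $B$. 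To prove it I put $u := \N\aaa_1$, let $u_0$ be defined by $u_0^{k_1}Q = B$, and split the range according to whether $u^{k_1}Q \le B$ or $> B$. On the part $u^{k_1}Q \le B$ the minimum equals $(u^{k_1}Q/B)^{a}$, so the contribution is $(Q/B)^{a}\sum \N\aaa_1^{k_1 a - 1}$; by Lemma~\ref{lem:3.4} with $\kappa = 1 - k_1 a < 1$ this is $\ll (Q/B)^{a}$ times a power of the relevant endpoint, and since $u_0^{k_1 a} = (B/Q)^{a}$ (together with the inequality on $Q$ forced when $u_0\notin[1,B]$) this is $\ll 1$. On the part $u^{k_1}Q > B$ the minimum equals $(B/(u^{k_1}Q))^{b}$, so the contribution is $B^{b}Q^{-b}\sum \N\aaa_1^{-k_1 b - 1}$; by Lemma~\ref{lem:3.4} with $\kappa = 1 + k_1 b > 1$ the powers again cancel, leaving $\ll 1$. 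The case $k_1 < 0$ is symmetric, with the two halves of the minimum interchanged.

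Granting this, in case (c) we get
\[
  \sum_{\aaa_1, \dots, \aaa_r}V(\N\aaa_1, \dots, \N\aaa_r, t_{r+1}; B) \ll \frac{B}{t_{r+1}}\Bigl(\sum_{\N\aaa_1 \le B}\frac{\min\{\cdots\}}{\N\aaa_1}\Bigr)\prod_{i=2}^{r}\Bigl(\sum_{\N\aaa_i \le B}\frac{1}{\N\aaa_i}\Bigr) \ll \frac{B(\log B)^{r-1}}{t_{r+1}},
\]
which is the assertion since $t_{r+1} = t_{r+s}$. The main obstacle is precisely the uniform estimate on the $\aaa_1$-sum above: the computation is short, but it requires care with the sign of $k_1$ and with the degenerate cases in which the threshold $u_0$ falls outside $[1,B]$. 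I expect these to be handled routinely by distinguishing the three regimes $\kappa < 1$, $\kappa = 1$, $\kappa > 1$ of Lemma~\ref{lem:3.4}, so no genuine difficulty should arise beyond the bookkeeping.
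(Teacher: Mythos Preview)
Your proposal is correct and follows essentially the same approach as the paper: case (a) is handled by $r-1$ applications of Lemma~\ref{lem:3.4} with $\kappa=1$, and in case (c) you single out the sum over $\aaa_1$, split it at the threshold $\N\aaa_1^{k_1}Q = B$, and apply Lemma~\ref{lem:3.4} with $\kappa = 1 - k_1 a$ and $\kappa = 1 + k_1 b$ on the two pieces, then sum the remaining $r-1$ variables trivially. The paper treats (b) separately and then invokes it for one half of the split in (c), whereas you absorb (b) into (c); this is purely a matter of presentation. One small remark: your displayed ``factorization'' is not literally a product since the $\min$ still depends on $\aaa_2,\dots,\aaa_r$ through $Q$, but your text makes clear that you mean to sum over $\aaa_1$ first with the others fixed and use the uniform bound, which is exactly what the paper does.
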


\begin{proof}
  In case (a) this follows immediately from Lemma \ref{lem:3.4} with
  $C=0$, $\kappa=1$ applied $r$ times. In case (b), we apply Lemma
  \ref{lem:3.4} with $C=0$, $\kappa = 1-ak_{1}$ to the sum over
  $\aaa_{1}$ and then proceed as in case (a).

  In case (c), we split the sum over $\aaa_{1}$ into two sums: One
  over all $\aaa_{1}$ with $\N\aaa_1^{k_1}\cdots
  \N\aaa_{r+1}^{k_{r+1}} \leq B$ and one where the opposite inequality
  holds. For the first, we use $V(\N\aaa_1, \ldots, \N\aaa_{r},
  t_{r+1};B) \ll B/(\N\aaa_1\cdots
  \N\aaa_{r}t_{r+1})(B/(\N\aaa_1^{k_1}\cdots
  \N\aaa_{r}^{k_r}t_{r+1}^{k_{r+1}}))^{-a}$ and proceed as in case
  (b). For the second sum, we use $V(\N\aaa_1, \ldots, \N\aaa_{r},
  t_{r+1};B) \ll B/(\N\aaa_1\cdots \N\aaa_{r}
  t_{r+1})(B/(\N\aaa_1^{k_1}\cdots \N\aaa_{r}^{k_r}
  t_{r+1}^{k_{r+1}}))^{b}$ and apply Lemma \ref{lem:3.4} with $C=0$,
  $\kappa=1+bk_{1}$. The remaining summations over $\aaa_2$, $\ldots$,
  $\aaa_{r}$ are again handled as in case (a).
\end{proof}

\begin{prop}\label{prop:3.9}
  Let $V$ be as above and $\theta \in \Theta_{r+s}'(C)$ for some $C
  \in \ZZp$. Then
  \begin{align*}
    &\sum_{\aaa_1, \ldots, \aaa_{r+s}} \theta(\aaa_1, \dots,
    \aaa_{r+s})V(\N\aaa_1, \dots,
    \N\aaa_{r+s};B)\\
    &= \rho_K h_K \!\!\sum_{\aaa_1, \ldots, \aaa_{r+s-1}}\!\!\A(\theta(\aaa_1, \dots, \aaa_{r+s}),\aaa_{r+s}) \int_1^\infty V(\N\aaa_1, \dots, \N\aaa_{r+s-1}, t_{r+s}; B)\dd t_{r+s}\\
    &+ O_{V,C}(B(\log B)^{r-1}\log\log B)\text.
  \end{align*}
\end{prop}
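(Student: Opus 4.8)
The strategy is to carry out the summation over $\aaa_{r+s}$ first, for fixed $\aaa_1,\dots,\aaa_{r+s-1}$. Put $\bbb:=\prod_{\p\mid\aaa_1\cdots\aaa_{r+s-1}}\p$, so that, as observed before Corollary~\ref{cor:6.9}, $\theta(\aaa_1,\dots,\aaa_{r+s-1},\cdot)$ lies in $\Theta(\bbb,1,C,1)$ and, in the variable $\aaa_{r+s}$, equals its convolution with $\mu_K$ convolved with $1$. Writing $\aaa_{r+s}=\id e\id f$ and interchanging the resulting sums (legitimate, since $V$ is bounded with bounded support and $\sum_{\id e}|(\theta*\mu_K)(\aaa_1,\dots,\aaa_{r+s-1},\id e)|/\N\id e<\infty$ by Lemma~\ref{lem:6.8}(1)),
\[
  \sum_{\aaa_{r+s}\in\I_K}\theta(\aaa_1,\dots,\aaa_{r+s})V(\N\aaa_1,\dots,\N\aaa_{r+s};B)=\sum_{\id e\in\I_K}(\theta*\mu_K)(\aaa_1,\dots,\aaa_{r+s-1},\id e)\sum_{\id f\in\I_K}V(\N\aaa_1,\dots,\N\aaa_{r+s-1},\N\id e\N\id f;B).
\]
For each $\id e$ I would evaluate the inner sum over $\id f$ by Lemma~\ref{lem:3.1} applied with the constant function $\vartheta\equiv1$ (whose hypothesis is the ideal theorem $\sum_{\N\id f\le t}1=\rho_Kh_Kt+O(t^{1-1/d})$, the case $\theta\equiv1$ of Corollary~\ref{cor:6.9}) and with $g(u):=V(\N\aaa_1,\dots,\N\aaa_{r+s-1},\N\id e\,u;B)$, which is piecewise monotone with at most $R(V)$ pieces and satisfies $g(u)\ll\tfrac{B}{\N\aaa_1\cdots\N\aaa_{r+s-1}\N\id e}\,u^{-1}$. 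After the substitution $v=\N\id e\,u$ and the splitting $\int_{\N\id e}^{\infty}=\int_1^{\infty}-\int_1^{\N\id e}$, summing the result against $(\theta*\mu_K)(\aaa_1,\dots,\aaa_{r+s-1},\id e)$, and using $\sum_{\id e}(\theta*\mu_K)(\aaa_1,\dots,\aaa_{r+s-1},\id e)/\N\id e=\A(\theta(\aaa_1,\dots,\aaa_{r+s}),\aaa_{r+s})$ (Lemma~\ref{lem:6.8}(3)), the term $\rho_Kh_K\A(\theta(\aaa_1,\dots,\aaa_{r+s}),\aaa_{r+s})\int_1^{\infty}V(\dots,v;B)\dd v$ emerges — and its sum over $\aaa_1,\dots,\aaa_{r+s-1}$ is the claimed main term — while the remaining pieces, namely $\tfrac1{\N\id e}\int_1^{\N\id e}V(\dots,v;B)\dd v$ and the Lemma~\ref{lem:3.1}-error $R(V)\tfrac{B}{\N\aaa_1\cdots\N\aaa_{r+s-1}\N\id e}$, both weighted by $(\theta*\mu_K)(\ldots,\id e)$ and summed over $\id e$, constitute the error $\mathcal E(\aaa_1,\dots,\aaa_{r+s-1};B)$.

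The crux is the bound $\sum_{\aaa_1,\dots,\aaa_{r+s-1}}|\mathcal E(\aaa_1,\dots,\aaa_{r+s-1};B)|\ll_{V,C}B(\log B)^{r-1}\log\log B$. One must \emph{not} simply feed the coarse error $O_C((2C)^{\omega_K(\aaa_1\cdots\aaa_{r+s-1})}t^{1-1/d})$ of Corollary~\ref{cor:6.9} into Lemma~\ref{lem:3.1}, as this loses powers of $\log B$; instead one uses that, by Lemma~\ref{lem:6.8}(1) and Definition~\ref{def:thetarprime}, $(\theta*\mu_K)(\aaa_1,\dots,\aaa_{r+s-1},\id e)$ vanishes unless $\id e$ is squarefree and is $\ll_C\prod_{\p\mid\id e,\ \p\nmid\bbb}(C/\N\p)$ on squarefree $\id e$. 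Splitting $\id e$ into a divisor of $\bbb$ times a $\bbb$-coprime ideal, and invoking the Mertens-type estimate $\sum_{\p\mid\bbb}\tfrac{\log\N\p}{\N\p}\ll1+\log\log\N\bbb$ together with $\N\bbb\le B^{r+s-1}$, yields
\[
  \sum_{\id e\in\I_K}\frac{|(\theta*\mu_K)(\aaa_1,\dots,\aaa_{r+s-1},\id e)|\,(1+\log\N\id e)}{\N\id e}\ll_C\frac{\log\log B}{\phi_K^*(\bbb)}.
\]
Combining this with the decay bounds on $V$ of cases (a)--(c) one reaches $|\mathcal E(\aaa_1,\dots,\aaa_{r+s-1};B)|\ll_{V,C}\tfrac{\log\log B}{\phi_K^*(\bbb)}\,\widetilde V(\N\aaa_1,\dots,\N\aaa_{r+s-1};B)$, where $\widetilde V$ is a non-negative function in $r+s-1$ variables satisfying a bound of the same shape as $V$ in the respective case, with one variable integrated out (and, in cases (b)/(c), retaining the constraint $t_1^{k_1}\cdots\le B$ and the extra decay).

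It then remains to sum over $\aaa_1,\dots,\aaa_{r+s-1}$. Since $\phi_K^*(\bbb)^{-1}\le\prod_i\phi_K^*(\aaa_i)^{-1}$ and $\sum_{\N\aaa\le B}1/(\N\aaa\,\phi_K^*(\aaa))=\sum_{\N\aaa\le B}1/\phi_K(\aaa)\ll\log B$, one gets $\sum_{\aaa_1,\dots,\aaa_{r+s-1}}\widetilde V(\N\aaa_1,\dots,\N\aaa_{r+s-1};B)/\phi_K^*(\bbb)\ll B(\log B)^{r-1}$: in case (a) each of the $r-1$ summations contributes one factor $\log B$, while in cases (b)/(c) one summation over a variable with nonzero exponent in the constraint is carried out by Lemma~\ref{lem:3.4} with $\kappa\neq1$ and produces no logarithm, absorbing the extra variable — this is precisely the mechanism behind Lemma~\ref{lem:3.6}. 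Multiplying by $\log\log B$ gives the error term of Proposition~\ref{prop:3.9}.

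The step I expect to be the main obstacle is the error bound, in two respects. First, controlling $\mathcal E$ without losing powers of $\log B$ forces one to work with $(\theta*\mu_K)$ in the last variable and to extract precisely one factor $\log\log B$ (and no more) from $\sum_{\p\mid\bbb}\tfrac{\log\N\p}{\N\p}$ via Mertens; the coarse uniform estimates of Section~\ref{sec:arithmetic_functions} are insufficient here. Second, in cases (b) and (c) one must verify that after integrating out $\aaa_{r+s}$ the function $\widetilde V$ still has exactly the decay needed for Lemma~\ref{lem:3.6} to apply with one logarithm fewer; for this it matters that the summation over $\aaa_{r+s}$ be restricted to its admissible range, so that the genuine decay of $V$ — not merely the trivial bound $V\ll B/(t_1\cdots t_{r+s})$ — passes into $\mathcal E$.
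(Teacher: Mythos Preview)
Your approach is genuinely different from the paper's. The paper does \emph{not} work with the M\"obius decomposition of $\theta$ in the last variable. Instead it introduces a cutoff $T=(\log B)^{d((2C-1)(r+s-1)+s)}$ and splits into three pieces: (1) the sum over $\N\aaa_{r+s}<T$, bounded trivially using $0\le\theta\le1$ and Lemma~\ref{lem:3.6}, giving $\sum_{\N\aaa_{r+s}<T}B(\log B)^{r-1}/\N\aaa_{r+s}\ll B(\log B)^{r-1}\log\log B$; (2) the range $\N\aaa_{r+s}\ge T$, where Corollary~\ref{cor:6.9} and Lemma~\ref{lem:3.1} are applied with the \emph{coarse} error $(2C)^{\omega_K(\aaa_1\cdots\aaa_{r+s-1})}$ and $c_g=B/(\N\aaa_1\cdots\N\aaa_{r+s-1})$, $a=-1$, producing an error $\ll B(\log B)^{2C(r+s-1)}T^{-1/d}\ll B(\log B)^{r-1}$; and (3) the corresponding integral over $[1,T]$, handled like (1). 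The point of the cutoff is precisely to let the factor $T^{-1/d}$ swallow the extra logarithms coming from $(2C)^{\omega_K}$ --- so the ``coarse uniform estimates of Section~\ref{sec:arithmetic_functions}'' that you dismiss are in fact exactly what the paper uses.

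Your route works cleanly in case~(a), but there is a real gap in cases~(b)/(c). The obstacle you flag at the end is not merely delicate: as stated, it fails. Take case~(b) with $k_{r+1}>0$. The correction term $\tfrac{1}{\N\id e}\int_1^{\N\id e}V(t_1,\dots,t_r,v;B)\dd v$ is bounded, using the case~(b) decay, by
\[
  \frac{B}{t_1\cdots t_r}\left(\frac{B}{t_1^{k_1}\cdots t_r^{k_r}}\right)^{-a}\cdot\frac{1}{\N\id e}\int_1^{\min(\N\id e,v_0)}v^{ak_{r+1}-1}\dd v,
  \qquad v_0:=(B/(t_1^{k_1}\cdots t_r^{k_r}))^{1/k_{r+1}},
\]
and since $ak_{r+1}>0$ this is $\ll\tfrac{B}{t_1\cdots t_r}\cdot\tfrac{1}{\N\id e}$: the factor $(B/(t_1^{k_1}\cdots t_r^{k_r}))^{-a}$ is \emph{lost}. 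After summing over $\id e$ you get $\ll B/(t_1\cdots t_r\phi_K^*(\bbb))$ (with no $\log\log B$, since there is no $\log\N\id e$), and then $\sum_{\aaa_1,\dots,\aaa_r}B/(\N\aaa_1\cdots\N\aaa_r\phi_K^*(\bbb))\ll B(\log B)^r$, one logarithm too many. If instead you keep the crude bound $V\ll B/(t_1\cdots t_r v)$ to produce the $\log\N\id e$ (and hence the $\log\log B$), you also lose the decay and again end at $B(\log B)^r\log\log B$. Interchanging the orders of summation does not help cleanly, because the weight $|(\theta*\mu_K)(\aaa_1,\dots,\aaa_r,\id e)|$ couples $\id e$ to $\bbb=\mathrm{rad}(\aaa_1\cdots\aaa_r)$. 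In short, you cannot simultaneously extract the $\log\log B$ and retain enough structure in $t_1,\dots,t_r$ to save a $\log B$ via Lemma~\ref{lem:3.4} with $\kappa\ne1$. The paper's cutoff is the device that decouples these two requirements.
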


\begin{proof}
  This is mostly analogous to a special case of \cite[Proposition~3.9,
  Proposition~3.10]{MR2520770}, but we could simplify the third step
  significantly.
 
  We define $T := (\log B)^{d((2C-1)(r+s-1)+s)}$ and proceed in three
  steps:
  \begin{enumerate}
  \item Bound
    \begin{equation*}
      \sum_{\substack{\aaa_1, \ldots, \aaa_{r+s}\\\N\aaa_{r+s} < T}} \theta(\aaa_1, \ldots, \aaa_{r+s})V(\N\aaa_1, \ldots, \N\aaa_{r+s};B)
    \end{equation*}
  \item Bound the sum over $\aaa_1$, $\ldots$, $\aaa_{r+s-1}$ of
    \begin{align*}
      &\sum_{\N\aaa_{r+s} \geq T}\theta(\aaa_1, \ldots, \aaa_{r+s})V(\N\aaa_1,
      \ldots, \N\aaa_{r+s};B)\\ -\ &\rho_Kh_K\A(\theta(\aaa_1, \dots,
      \aaa_{r+s}),\aaa_{r+s}) \int_T^\infty V(\N\aaa_1, \dots, \N\aaa_{r+s-1},
      t_{r+s}; B)\dd t_{r+s}\text.
    \end{align*}
  \item Bound
    \begin{equation*}
      \sum_{\substack{\aaa_1, \ldots, \aaa_{r+s-1}}}\A(\theta(\aaa_1, \dots, \aaa_{r+s}),\aaa_{r+s})\int_{1}^TV(\N\aaa_1, \dots, \N\aaa_{r+s-1}, t_{r+s};B)\dd t_{r+s}\text. 
    \end{equation*}
  \end{enumerate}
  Using $0 \le\theta(\aaa_1, \dots, \aaa_{r+s}) \le 1$ and Lemma
  \ref{lem:3.6}, we see that the expression in step (1) is indeed
  bounded by
  \begin{equation*}
    \sum_{\substack{\aaa_1, \ldots, \aaa_{r+s}\\\N\aaa_{r+s} < T}} V(\N\aaa_1, \ldots, \N\aaa_{r+s};B) \ll \sum_{\N\aaa_{r+s} < T} \frac{B(\log B)^{r-1}}{\N\aaa_{r+s}} \ll B(\log B)^{r-1}\log\log B\text.
  \end{equation*}
  Analogously, since $\A(\theta(\aaa_1, \dots, \aaa_{r+s}),\aaa_{r+s}) \in
  \Theta_{r+s-1}'(2C)$, the expression in step (3) is bounded by
  \begin{align*}
    \sum_{\aaa_1, \ldots, \aaa_{r+s-1}}\int\limits_{1}^T V(\N\aaa_1, \ldots,
    \N\aaa_{r+s-1}, t_{r+s};B)\dd t_{r+s} &\ll \int\limits_{1}^T
    \frac{B(\log B)^{r-1}}{t_{r+s}}\dd t_{r+s}\\ &\ll B(\log
    B)^{r-1}\log\log B\text.
  \end{align*}

  For step (2), we note that in all three cases (a), (b), (c) we have
  \begin{equation*}
    V(t_1, \dots, t_{r+s};B) \ll \frac{B}{t_1\cdots t_{r+s}}.
  \end{equation*}
  By Corollary \ref{cor:6.9}, we may apply Lemma
  \ref{lem:3.1} with $m=1$, $c_1 = (2C)^{\omega(\aaa_1, \ldots,
    \aaa_{r+s-1})}$, $b_1 = 1-1/d$, $k_1=0$, $c_g = B/(\N\aaa_1 \cdots
  \N\aaa_{r+s-1})$, $a=-1$ for the sum over $\aaa_{r+s}$. We obtain an
  error term of order
  \begin{align*}
    \ll_{V, C} \sum_{\aaa_1, \ldots, \aaa_{r+s-1}}\frac{(2C)^{\omega(\aaa_1,
        \ldots, \aaa_{r+s-1})}B}{\N\aaa_1\cdots \N\aaa_{r+s-1}}T^{-1/d}
    &\ll_C B(\log B)^{(2C)(r+s-1)}T^{-1/d}\\ &\ll B(\log
    B)^{r-1}\text.\qedhere
  \end{align*}
\end{proof}

Let $V_{r+1} := V$ be as in cases (b), (c) at the start of this
section. For all $l \in \{0, \ldots, r\}$, we define
\begin{equation*}
  V_l(t_1, \ldots, t_l; B) := \int_{t_{l+1}, \ldots, t_{r+1} \ge 1}V(t_1, \ldots, t_{r+1}; B)\dd t_{l+1} \cdots \dd t_{r+1}\text.
\end{equation*}
For $l \geq 1$, and fixed $t_1$, $\ldots$, $t_{l-1}$, $B$, we
additionally require that there is a partition of $[1, B]$ into at
most $R(V)$ intervals on which $V_l(t_1, \ldots, t_l; B)$, as a
function of $t_l$, is continuously differentiable and monotonic. For
$\theta \in \Theta_{r+1}'(C)$, let
\begin{equation*}
  \theta_l(\aaa_1, \ldots, \aaa_l) := \A(\theta(\aaa_1, \ldots, \aaa_{r+1}), \aaa_{r+1}, \ldots, \aaa_{l+1})\in \Theta_l'(2^{r-l+1}C)\text.
\end{equation*}
The following proposition is analogous to \cite[Proposition~4.3,
Remark~4.4]{MR2520770}.
\begin{prop}\label{prop:4.3}
  Let $V$ be as above and $\theta \in \Theta_{r+1}'(C)$. Then
  \begin{align*}
    &\sum_{\aaa_1, \dots, \aaa_{r+1}} \theta(\aaa_1, \dots, \aaa_{r+1})
    V(\N\aaa_1, \dots, \N\aaa_{r+1};B)\\ &= (\rho_Kh_K)^{r+1}\theta_0
    V_0(B) + O_{V,C}(B(\log B)^{r-1}\log\log B).
  \end{align*}
\end{prop}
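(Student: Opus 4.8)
The plan is to prove Proposition~\ref{prop:4.3} by peeling off the variables $\aaa_{r+1},\aaa_r,\ldots,\aaa_1$ one at a time, applying Proposition~\ref{prop:3.9} a total of $r+1$ times: once in the case $s=1$ to the distinguished variable $\aaa_{r+1}$ (since $V=V_{r+1}$ is of type (b) or (c)), and then $r$ times in the case $s=0$ to the remaining variables, which at each stage will carry a volume function of type (a).

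For the first step, Proposition~\ref{prop:3.9} with its ``$r$'' equal to ours and with $s=1$ replaces $\sum_{\aaa_1,\ldots,\aaa_{r+1}}\theta\,V$ by
\[
  \rho_Kh_K\sum_{\aaa_1,\ldots,\aaa_r}\theta_r(\aaa_1,\ldots,\aaa_r)\,V_r(\N\aaa_1,\ldots,\N\aaa_r;B)+O_{V,C}\bigl(B(\log B)^{r-1}\log\log B\bigr),
\]
where $\theta_r=\A(\theta(\aaa_1,\ldots,\aaa_{r+1}),\aaa_{r+1})\in\Theta_r'(2C)$ and $V_r=\int_1^\infty V\,\dd t_{r+1}$ is the function defined before the proposition. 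The step that needs genuine work is checking that $V_r$ is of type (a) in $r$ variables: writing $P:=t_1^{k_1}\cdots t_r^{k_r}$, the integrand is $\ll \frac{B}{t_1\cdots t_r}\,t_{r+1}^{-1}\,(Pt_{r+1}^{k_{r+1}}/B)^a$ in the region $Pt_{r+1}^{k_{r+1}}\le B$ (in case (c) there is an additional, rapidly decaying branch beyond that region), and since $a>0$ and $k_{r+1}\neq 0$ the integral over $t_{r+1}\in[1,\infty)$ converges and is $\ll B/(t_1\cdots t_r)$; moreover $V_r$ vanishes unless all $t_i\le B$, and its $R(V)$-partition regularity in $t_r$ is part of the standing hypotheses. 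This convergence estimate is the only place where the precise shape of the cases (b), (c) is used, and it is the main obstacle.

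For the iteration, I would show by induction on $l$ going down from $r$ that the sum equals $(\rho_Kh_K)^{r+1-l}\sum_{\aaa_1,\ldots,\aaa_l}\theta_l\,V_l(\N\aaa_1,\ldots,\N\aaa_l;B)$ plus an error $O_{V,C}(B(\log B)^{r-1}\log\log B)$, where $\theta_l\in\Theta_l'(2^{r+1-l}C)$ and $V_l\ll B(\log B)^{r-l}/(t_1\cdots t_l)$, with $V_l=0$ unless all $t_i\le B$ and with the required monotonic-partition regularity in $t_l$ (all of which hold for $l=r$ by the first step). The inductive step applies Proposition~\ref{prop:3.9} with $s=0$ to $(\log B)^{-(r-l)}V_l$, which is of type (a) in $l$ variables, and multiplies the conclusion back by $(\log B)^{r-l}$: this produces $\rho_Kh_K\sum_{\aaa_1,\ldots,\aaa_{l-1}}\theta_{l-1}\,V_{l-1}(\N\aaa_1,\ldots,\N\aaa_{l-1};B)$ plus an error $O_{V,C}(B(\log B)^{(l-1)+(r-l)}\log\log B)=O_{V,C}(B(\log B)^{r-1}\log\log B)$, where $\theta_{l-1}=\A(\theta_l(\aaa_1,\ldots,\aaa_l),\aaa_l)\in\Theta_{l-1}'(2^{r+2-l}C)$ by Lemma~\ref{lem:repeated_average}, and $V_{l-1}=\int_1^\infty V_l\,\dd t_l\ll B(\log B)^{r-l+1}/(t_1\cdots t_{l-1})$ because $V_l$ vanishes for $t_l>B$ so each integration costs one factor $\log B$; again $V_{l-1}=0$ unless all $t_i\le B$, and its regularity in $t_{l-1}$ is assumed. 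Since $\rho_Kh_K$ depends only on $K$, the accumulated constant stays bounded and the recursion advances from $l$ to $l-1$.

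Running the first step and then these $r$ inductive steps down to $l=0$ leaves $(\rho_Kh_K)^{r+1}\theta_0\,V_0(B)$, with $\theta_0=\A(\theta(\aaa_1,\ldots,\aaa_{r+1}),\aaa_{r+1},\ldots,\aaa_1)$ and $V_0(B)=\int_{t_1,\ldots,t_{r+1}\ge 1}V\,\dd t_1\cdots\dd t_{r+1}$ exactly as in the statement, plus a sum of $r+1$ error terms each $O_{V,C}(B(\log B)^{r-1}\log\log B)$, which has the asserted size. Everything apart from the type-(a) bound on $V_r$ is bookkeeping: composing the averaging operators via Lemma~\ref{lem:repeated_average}, tracking the class memberships $\theta_l\in\Theta_l'(\cdot)$, and absorbing powers of $\log B$ into the hypotheses of Proposition~\ref{prop:3.9}.
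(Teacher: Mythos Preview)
Your proposal is correct and follows essentially the same route as the paper: apply Proposition~\ref{prop:3.9} once with $s=1$ to peel off $\aaa_{r+1}$, verify that $V_r\ll B/(t_1\cdots t_r)$ by integrating the type-(b)/(c) bound in $t_{r+1}$ (using $k_{r+1}\neq 0$), and then iterate Proposition~\ref{prop:3.9} with $s=0$ on $V_l/(\log B)^{r-l}$ for $l=r,r-1,\ldots,1$, each step contributing an error of the stated size. The only quibble is that the membership $\theta_{l-1}\in\Theta_{l-1}'(2^{r+2-l}C)$ follows from the remark just before Lemma~\ref{lem:repeated_average} rather than from that lemma itself.
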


\begin{proof}
  By a similar argument as in Lemma \ref{lem:3.6}, we see that, for $l
  \in \{1, \ldots, r\}$,
  \begin{equation*}
    V_l(t_1, \ldots, t_l; B) \ll \frac{B(\log B)^{r-l}}{t_1\cdots t_l}\text.
  \end{equation*}
  Since $\theta_l(\aaa_1, \ldots, \aaa_l)\in \Theta_l'(2^{r-l+1}C)$,
  we can apply Proposition \ref{prop:3.9} inductively to $V_{r+1}$,
  $V_r$, $V_{r-1}/\log B$, $\ldots$, $V_1/(\log B)^{r-1}$.
\end{proof}
Note that $\theta_0$ can be computed by Lemma
\ref{lem:repeated_average}.

\section{The factor $\alpha$}\label{sec:strategy_adjustion}

Let $K$ be an imaginary quadratic field. Let $S$ be a split singular del Pezzo
surface of degree $d=9-r$ over $K$, with minimal desingularization $\tS$. The
final result of our summation process is typically provided by
Proposition~\ref{prop:4.3}. To derive Manin's conjecture as in
Theorem~\ref{thm:a3_main} from this, it remains to compare the integral
$V_0(B)$ with $\alpha(\tS)\pi^{r+1}\omega_\infty B(\log B)^r$. Here,
$\alpha(\tS)$ is a constant defined in \cite[D\'efinition~2.4]{MR1340296},
\cite[Definition~2.4.6]{MR1369408} that is expected be a factor of the leading
constant $c_{S,H}$ in Manin's conjecture (\ref{eq:manin_formula}).

For a split singular del Pezzo surface $S$ of degree $d \le 7$, its value can
be computed by \cite[Theorem~1.3]{MR2377367} as
\begin{equation}\label{eq:alpha}
\alpha(\tS) = \frac{\alpha(S_0)}{|W|}
\end{equation}
where $S_0$ is a split ordinary del Pezzo surface of the same degree and $|W|$
is the order of the Weyl group $W$ associated to the singularities of $S$. For
example, $|W| = (n+1)!$ if $S$ has precisely one singularity whose type is
$\mathbf{A}_n$. The value of $\alpha(S_0)$ can be computed by
\cite[Theorem~4]{MR2318651}, with $\alpha(S_0) = 1/180$ in degree $4$.

To rewrite $\alpha(\tS)$ as an integral, it is most convenient to work with
\cite[Definition~1.1]{arXiv:1202.6287}, giving
\begin{equation*}
  \alpha(\tS) := (r+1)\cdot \vol\{x \in \Lambda_{\mathrm{eff}}^\vee(\tS) \mid
  (x,-K_\tS) \le 1\},
\end{equation*}
since $\Pic(\tS)$ has rank $r+1$, where $\Lambda_{\mathrm{eff}}^\vee(\tS)
\subset (\Pic(\tS) \otimes_\ZZ \RR)^\vee$ is the dual of the effective cone of
$\tS$ (which is generated by the classes of the negative curves since $d \le
7$), $(\cdot,\cdot)$ is the natural pairing between $\Pic(\tS) \otimes_\ZZ
\RR$ and its dual space, and the volume is normalized such that
$\Pic(\tS)^\vee$ has covolume $1$.

Suppose that the negative curves on $\tS$ are $E_1, \dots, E_{r+1+s}$, for
some $s \ge 0$, where $E_1, \dots, E_{r+1}$ are a basis of $\Pic(\tS)$; for
example, this holds in the ordering chosen in
Section~\ref{sec:passage}. Expressing $-K_\tS$ and $E_{r+2},\dots, E_{r+1+s}$
in terms of this basis, we have
\begin{equation*}
  [-K_\tS]= \sum_{j=1}^{r+1} c_j[E_j]
\end{equation*}
and, for $i=1, \dots, s$,
\begin{equation*}
  [E_{r+1+i}] = \sum_{j=1}^{r+1} b_{i,j}[E_j]
\end{equation*}
for some $b_{i,j}, c_j \in \ZZ$. 

\begin{lemma}\label{lem:alpha_integral}
  With the above notation, assume that $c_{r+1} > 0$. Define, for $j = 1,
  \dots, r$ and $i=1, \dots, s$,
  \begin{equation*}
    a_{0,j}:= c_j, \quad a_{i,j}:=b_{i,r+1}c_j-b_{i,j}c_{r+1},\quad 
    A_0:=1,\quad A_i:=b_{i,r+1}.
  \end{equation*}
  Then
  \begin{equation*}
    \alpha(\tS)(\log B)^r = \frac{1}{c_{r+1}\pi^r} \int_{R_1(B)}
    \frac{1}{\abs{\e_1\cdots \e_r}} \dd\e_1\cdots\dd\e_r
  \end{equation*}
  with a domain of integration
  \begin{equation*}
    R_1(B)
    :=\left\{(\e_1, \dots, \e_r) \in \CC^r \WHERE
      \begin{aligned}
        &\abs{\e_1}, \dots, \abs{\e_r} \ge 1,\\
        &\text{$\prod_{j=1}^r
          \abs{\e_j}^{a_{i,j}} \le B^{A_i}$ for all $i \in \{0, \dots, s\}$}
      \end{aligned}
\right\}.
  \end{equation*}
\end{lemma}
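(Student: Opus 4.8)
The plan is to first reduce the complex integral on the right-hand side to the volume of a rational polytope in $\RR^r$, and then to identify that volume with $\alpha(\tS)$ by means of the integral-geometric description of $\alpha$ recalled just above the lemma.

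\emph{From $\CC^r$ to a polytope.} In each factor $\CC=\RR^2$ I would write $\e_j=\sqrt{t_j}\,e^{i\varphi_j}$, so that $t_j=\abs{\e_j}$ and $\dd\e_j=\tfrac12\,\dd t_j\,\dd\varphi_j$, whence $\abs{\e_j}^{-1}\dd\e_j=(2t_j)^{-1}\,\dd t_j\,\dd\varphi_j$. Since both the integrand and the domain $R_1(B)$ depend only on the $t_j$, carrying out the $r$ angular integrations produces a factor $\pi^r$ and
\[
  \frac{1}{c_{r+1}\pi^r}\int_{R_1(B)}\frac{\dd\e_1\cdots\dd\e_r}{\abs{\e_1\cdots\e_r}}
  = \frac{1}{c_{r+1}}\int_{\{\,t_j\ge 1,\ \prod_j t_j^{a_{i,j}}\le B^{A_i}\ (0\le i\le s)\,\}}\frac{\dd t_1\cdots\dd t_r}{t_1\cdots t_r}.
\]
The substitution $t_j=B^{u_j}$ (so $\dd t_j/t_j=\log B\,\dd u_j$; $t_j\ge1\Leftrightarrow u_j\ge0$; and, since $B\ge3$, $\prod_j t_j^{a_{i,j}}\le B^{A_i}\Leftrightarrow\sum_j a_{i,j}u_j\le A_i$) turns the last integral into $(\log B)^r\vol(\Delta)$, where $\Delta:=\{(u_1,\dots,u_r)\in\RR_{\ge0}^r:\sum_{j=1}^r a_{i,j}u_j\le A_i\text{ for }i=0,\dots,s\}$. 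Thus the asserted formula reduces to the identity $\alpha(\tS)=\vol(\Delta)/c_{r+1}$.

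\emph{Identifying $\vol(\Delta)$ with $\alpha(\tS)$.} Next I would use the description $\alpha(\tS)=(r+1)\vol(P)$ with $P=\{x\in\Lambda_{\mathrm{eff}}^\vee(\tS):(x,-K_\tS)\le1\}$ from \cite{arXiv:1202.6287}. Since $d\le7$, the cone $\Lambda_{\mathrm{eff}}^\vee(\tS)$ is dual to the cone generated by $[E_1],\dots,[E_{r+1+s}]$; in the coordinates $(x_1,\dots,x_{r+1})$ dual to the basis $[E_1],\dots,[E_{r+1}]$ of $\Pic(\tS)$ (with respect to which $\Pic(\tS)^\vee$ is the standard lattice, so that the relevant volume is Lebesgue measure), the conditions $x([E_j])\ge0$ become $x_j\ge0$ ($1\le j\le r+1$) and $\sum_{j=1}^{r+1}b_{i,j}x_j\ge0$ ($1\le i\le s$), and $(x,-K_\tS)\le1$ becomes $\sum_{j=1}^{r+1}c_jx_j\le1$; the bigness of $-K_\tS$ guarantees that $P$ is bounded. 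Because $c_{r+1}>0$, I would replace the coordinate $x_{r+1}$ by $\lambda:=\sum_{j=1}^{r+1}c_jx_j$, a linear change of variables with Jacobian $c_{r+1}^{-1}$ and with $x_{r+1}=(\lambda-\sum_{j\le r}c_jx_j)/c_{r+1}$. A direct computation, using the definitions $a_{i,j}=b_{i,r+1}c_j-b_{i,j}c_{r+1}$, $A_i=b_{i,r+1}$ (and noting that $x_{r+1}\ge0$ is exactly the index $i=0$, with $a_{0,j}=c_j$, $A_0=1$), rewrites $P$ as
\[
  \Bigl\{(x_1,\dots,x_r,\lambda):0\le\lambda\le1,\ x_j\ge0,\ \sum_{j=1}^r a_{i,j}x_j\le A_i\lambda\ (0\le i\le s)\Bigr\},
\]
whose $\lambda$-slice is $\lambda\Delta$, of $r$-dimensional volume $\lambda^r\vol(\Delta)$. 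Hence $\vol(P)=c_{r+1}^{-1}\int_0^1\lambda^r\vol(\Delta)\,\dd\lambda=\vol(\Delta)/\bigl((r+1)c_{r+1}\bigr)$, i.e.\ $\alpha(\tS)=\vol(\Delta)/c_{r+1}$, which is what was needed.

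The only genuinely delicate point is the bookkeeping in the last change of variables: one has to verify carefully that each of the $s$ inequalities $\sum_j b_{i,j}x_j\ge0$ transforms into $\sum_{j\le r}a_{i,j}x_j\le A_i\lambda$ with exactly the stated coefficients and with all signs correct, and that the facet $x_{r+1}\ge0$ joins this family as the $i=0$ case. The rest — the polar-coordinate reduction, the logarithmic substitution, and the slicing of the cone over $\Delta$ — is routine.
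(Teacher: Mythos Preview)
Your proof is correct and follows essentially the same approach as the paper's: both reduce the complex integral to $(\log B)^r$ times the volume of the polytope $\Delta=R_0$ via polar coordinates and the substitution $t_j=B^{u_j}$, and both identify $\vol(\Delta)/c_{r+1}$ with $\alpha(\tS)$ via the same linear change of variables replacing $x_{r+1}$ by $\lambda=\sum_j c_jx_j$. The only cosmetic differences are that you run the argument in the opposite direction and phrase the pyramid computation as a slicing integral $\int_0^1\lambda^r\vol(\Delta)\,\dd\lambda$ rather than as the volume of a cone over $\Delta\times\{1\}$; one small point you (and the paper) leave implicit is that $\lambda\ge 0$ on $P$, which follows since $-K_{\tS}$ lies in the effective cone and hence pairs non-negatively with every element of $\Lambda_{\mathrm{eff}}^\vee(\tS)$.
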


\begin{proof}
  Since $[E_1], \dots, [E_{r+1+s}]$ generate the effective cone of $\tS$, the
  value of $\alpha(\tS)$ is
  \begin{equation*}
    (r+1)\cdot \vol\left\{(t_1', \dots, t_{r+1}') \in
      \RRnn^{r+1} \Where \sum_{j=1}^{r+1} b_{i,j}t_j' \ge 0\ (i=1, \dots, s),
      \ \sum_{j=1}^{r+1}c_jt_j' \le 1\right\}.
  \end{equation*}
  We make a linear change of variables $(t_1, \dots,
  t_r,t_{r+1})=(t_1', \dots, t_r', c_1t_1'+\dots+c_{r+1}t_{r+1}')$,
  with Jacobian $c_{r+1}$. This transforms the polytope in the
  previous formula into a pyramid whose base is $R_0 \times \{1\}$ in
  the hyperplace $\{t_{r+1}=1\}$ in $\RR^{r+1}$, and whose apex is the
  origin, where
  \begin{equation*}
    R_0 := \left\{(t_1, \dots, t_r) \in \RRnn^r \Where
      \text{$\sum_{j=1}^r a_{i,j}t_j \le A_i$ for all $i \in \{0, \dots,
        s\}$}\right\}.
  \end{equation*}
  This pyramid has volume $(r+1)^{-1}\vol R_0$ since its height is $1$
  and its dimension is $r+1$. Writing $\vol R_0$ as an integral, we get
  \begin{equation*}
    \alpha(\tS) = \frac{1}{c_{r+1}} \int_{(t_1, \dots, t_r) \in R_0} \dd t_r
    \cdots \dd t_1,
  \end{equation*}
  where the factor $c_{r+1}^{-1}$ appears because of our change of
  coordinates. Now the change of coordinates $\e_i=B^{t_i}$ for $i \in \{1,
  \dots, r\}$ gives a real integral with the factor $(\log B)^r$. The final
  complex integral with the factor $\pi^r$ is obtained via polar coordinates.
\end{proof}

\section{The quartic del Pezzo surface of type $\Athree$ with five
  lines}
\label{sec:A3}

Let $S \subset \PP^4_K$ be the anticanonically embedded del Pezzo
surface defined by \eqref{eq:def_S}. In this section, we apply our
general techniques to prove Manin's conjecture for $S$
(Theorem~\ref{thm:a3_main}).

Our surface $S$ contains precisely one singularity $(0:0:0:0:1)$ (of
type $\Athree$) and the five lines $\{x_0=x_1=x_2=0\}$, $\{x_0=x_2=x_3=0\}$,
  $\{x_0=x_3=x_4=0\}$, $\{x_1=x_2=x_3=0\}$,
$\{x_1=x_3=x_4=0\}$. Let $U$ be the complement of these lines in $S$.

By \cite{MR2753646, arXiv:1212.3518}, $S$ is not an equivariant
compactification of an algebraic group, so that Manin's conjecture does not
follow from \cite{MR1620682, MR1906155, MR2858922}.

\subsection{Passage to a universal torsor}
To parameterize the rational points on $U\subset S$ by integral points
on an affine hypersurface, we apply the strategy described in
Section~\ref{sec:passage}, based on the description of the Cox ring of
its minimal desingularization $\tS$ in \cite{math.AG/0604194}. In
particular, we will refer to the extended Dynkin diagram in
Figure~\ref{fig:A3_dynkin} encoding the configuration of curves $E_1,
\dots, E_9$ corresponding to generators of $\Cox(\tS)$. Here, a vertex
marked by a circle (resp. a box) corresponds to a $(-2)$-curve
(resp. $(-1)$-curve), and there are $([E_j], [E_k])$ edges between the
vertices corresponding to $E_j$ and $E_k$.

\begin{figure}[ht]
  \centering
  \[\xymatrix@R=0.05in @C=0.05in{\li{7} \ar@{-}[r] \ar@{-}[dd] \ar@{-}[dr] & \li{4} \ar@{-}[r] & \ex{1} \ar@{-}[dr]\\
    & E_9 \ar@{-}[r] & \li{5} \ar@{-}[r] & \ex{2}\\
    \li{8} \ar@{-}[r] \ar@{-}[ur] & \li{6} \ar@{-}[r] & \ex{3}
    \ar@{-}[ur]}\]
  \caption{Configuration of curves on $\tS$.}
  \label{fig:A3_dynkin}
\end{figure}
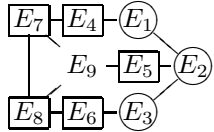

For any given $\classtuple = (C_0, \dots, C_5) \in \classrep^6$, we
define $u_\classtuple := \N(C_0^3 C_1^{-1}\cdots C_5^{-1})$ and
\begin{equation}\label{eq:A3_def_Oj}
  \begin{aligned}
    \OO_1 &:= C_1C_4^{-1},  & \OO_2 &:= C_0C_1^{-1}C_2^{-1}C_3^{-1},  & \OO_3 &:= C_2C_5^{-1}, \\
    \OO_4 &:= C_4,  & \OO_5 &:= C_3,  & \OO_6 &:= C_5,  \\
    \OO_7 &:= C_0C_1^{-1}C_4^{-1},  & \OO_8 &:= C_0 C_2^{-1} C_5^{-1},  &
    \OO_9 &:= C_0 C_3^{-1}.
  \end{aligned}
\end{equation}
Let
\begin{equation*}
  \OO_{j*} :=
  \begin{cases}
    \OO_j^{\neq 0}, & j \in \{1,\ldots, 8\},\\
    \OO_j, & j = 9.
  \end{cases}
\end{equation*}
For $\eta_j \in \OO_j$, we define
\begin{equation*}
  \eI_j := \e_j \OO_j^{-1}\text.
\end{equation*}
For $B \geq 0$, let $\mathcal{R}(B)$ be the set of all $(\e_1, \ldots, \e_8)
\in \CC^8$ with $\e_5\neq 0$ and
\begin{align}
      \abs{\e_1^2\e_2^2\e_3\e_4^2\e_5\e_7} &\leq B,\label{eq:A3_height_1}\\
      \abs{\e_1\e_2^2\e_3^2\e_5\e_6^2\e_8} &\leq B,\label{eq:A3_height_2}\\
      \abs{\e_1^2\e_2^3\e_3^2\e_4\e_5^2\e_6} &\leq B,\label{eq:A3_height_3}\\
      \abs{\e_1\e_2\e_3\e_4\e_6\e_7\e_8} &\leq B,\label{eq:A3_height_4}\\
      \abs{\frac{\e_1\e_4^2\e_7^2\e_8 + \e_3\e_6^2\e_7\e_8^2}{\e_5}} &\leq B\label{eq:A3_height_5}\text.
\end{align}
Moreover, let $M_\classtuple(B)$ be the set of all 
\begin{equation*}
  (\e_1, \ldots, \e_9) \in \OO_{1*} \times \cdots \times \OO_{9*} 
\end{equation*}
that satisfy the \emph{height conditions}
\begin{equation}\label{eq:A3_height}
     (\e_1, \ldots, \e_8) \in \mathcal{R}(u_\classtuple B)\text,
\end{equation}
the \emph{torsor equation}
\begin{equation}\label{eq:A3_torsor}
  \e_1\e_4^2\e_7 + \e_3\e_6^2\e_8 + \e_5\e_9 = 0,
\end{equation}
and the \emph{coprimality conditions}
\begin{equation}\label{eq:A3_coprimality}
  \eI_j + \eI_k = \OO_K \text{ for all distinct nonadjacent vertices $E_j$, $E_k$ in Figure~\ref{fig:A3_dynkin}.}
\end{equation}

\begin{lemma}\label{lem:A3_passage_to_torsor}
  Let $K$ be a imaginary quadratic field. Then
  \begin{equation*}
    N_{U,H}(B) = \frac{1}{\omega_K^6}\sum_{\classtuple \in \classrep^6}|M_\classtuple(B)|\text.
  \end{equation*}
\end{lemma}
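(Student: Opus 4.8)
The plan is to deduce Lemma~\ref{lem:A3_passage_to_torsor} from the machinery of Section~\ref{sec:passage}, specialized to $S$. First I would record the geometric input from \cite{math.AG/0604194}: the minimal desingularization $\tS$ is a blow-up $\rho : \tS \to \PP^2_K$ in five points in almost general position, its Cox ring is generated by nine homogeneous elements $\te_1, \dots, \te_9$ subject to the single relation \eqref{eq:A3_torsor}, the corresponding curves $E_1, \dots, E_9$ are configured as in Figure~\ref{fig:A3_dynkin}, and a basis of $H^0(\tS, \OO(-K_\tS))$ is given by the monic monomials $\Psi_0 = \te_1^2\te_2^2\te_3\te_4^2\te_5\te_7$, $\Psi_1 = \te_1\te_2^2\te_3^2\te_5\te_6^2\te_8$, $\Psi_2 = \te_1^2\te_2^3\te_3^2\te_4\te_5^2\te_6$, $\Psi_3 = \te_1\te_2\te_3\te_4\te_6\te_7\te_8$, $\Psi_4 = \te_7\te_8\te_9$, read off from \eqref{eq:A3_height_1}--\eqref{eq:A3_height_4} and, for $\Psi_4$, from \eqref{eq:A3_height_5} after clearing $\e_5$ by \eqref{eq:A3_torsor}. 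A direct monomial computation, using \eqref{eq:A3_torsor} for the second identity, gives $\Psi_0\Psi_1 = \Psi_2\Psi_3$ and $\Psi_0\Psi_3 + \Psi_1\Psi_3 + \Psi_2\Psi_4 = 0$, so $(\Psi_0 : \dots : \Psi_4)$ realizes precisely the anticanonical embedding \eqref{eq:def_S}; all assumptions of Section~\ref{sec:passage} (hypersurface universal torsor, coefficients $\pm 1$) then hold with $r = 9 - \deg(S) = 5$.

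Next I would establish Claim~\ref{claim:passage_step} for $i = 0, 1, \dots, 5$. After reordering the nine generators so that $E_{i+4}^{(i)}$ is the exceptional divisor of $\rho_i$ and $E_1^{(0)}, \dots, E_4^{(0)}$ play the roles fixed in \eqref{eq:coefficients}, the base case $i = 0$ follows from Lemma~\ref{lem:passage_start}: one computes $R'$ from \eqref{eq:rprime} and checks that the explicit rational map $\psi$ of \eqref{eq:psi} represents $\pi \circ \rho^{-1}$ on the complement of the negative curves. Each of the five inductive steps is then an application of Lemma~\ref{lem:passage_step}: reading off Figure~\ref{fig:A3_dynkin} and the blow-up structure one verifies that $\rho_i$ blows up a point lying on precisely two of $E_1^{(i-1)}, \dots, E_{i+3}^{(i-1)}$, that these two meet transversally there and nowhere else, and that at least one of their strict transforms on $\tS$ is a negative curve. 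Taking $i = r = 5$ yields a bijection between $U(K)$ and the $(\OO_K^\times)^6$-orbits on the disjoint union over $\classtuple \in \classrep^6$ of the sets cut out by \eqref{eq:A3_torsor} and the coprimality conditions; along the way one checks that the minimal $J$ in \eqref{eq:coprimality} are exactly the pairs of non-adjacent vertices of Figure~\ref{fig:A3_dynkin} (so that the only ``triangle'' $E_7, E_8, E_9$ is concurrent and contributes no further condition), giving \eqref{eq:A3_coprimality}.

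Finally I would lift the height via $\Psi$. Setting $\eII_i := \Psi_i(\e_1, \dots, \e_9)\cdot(C_0^3C_1^{-1}\cdots C_5^{-1})^{-1}$, each $\eII_i$ is an integral ideal of the form $\prod_j \eI_j^{m_{ij}}$ (by the degree argument of Remark~\ref{rem:passage_degrees}, all $\Psi_i$ having anticanonical degree), so $\Psi_0(\e)\OO_K + \dots + \Psi_4(\e)\OO_K = (C_0^3C_1^{-1}\cdots C_5^{-1})(\eII_0 + \dots + \eII_4)$. A prime-by-prime inspection using \eqref{eq:A3_coprimality} shows $\eII_0 + \dots + \eII_4 = \OO_K$: if a prime $\p$ divided all five, then $\p \mid \eII_4 = \eI_7\eI_8\eI_9$ forces $\p$ to divide one of $\eI_7, \eI_8, \eI_9$, and in each case the remaining divisibilities forced by $\eII_0, \eII_1, \eII_2$ produce two non-adjacent vertices of Figure~\ref{fig:A3_dynkin} whose ideals $\p$ would have to divide simultaneously, contradicting \eqref{eq:A3_coprimality}. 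Hence $\N(\Psi_0(\e)\OO_K + \dots + \Psi_4(\e)\OO_K) = \N(C_0^3C_1^{-1}\cdots C_5^{-1}) = u_\classtuple$, so $H(\Psi(\e)) \le B$ is equivalent to $(\e_1, \dots, \e_8) \in \mathcal{R}(u_\classtuple B)$; combining with the orbit bijection and $|(\OO_K^\times)^6| = \omega_K^6$ gives the asserted formula. I expect the main obstacle to be the base case of the induction --- verifying that the concrete map \eqref{eq:psi} genuinely represents $\pi \circ \rho^{-1}$, i.e.\ matching the chosen coordinates on $\PP^2_K$ against the anticanonical monomials --- together with the bookkeeping of the relabeling needed to fit the blow-up sequence into the framework of Claim~\ref{claim:passage_step}.
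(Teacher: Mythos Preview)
Your proposal is correct and follows essentially the same route as the paper's proof: both specialize the framework of Section~\ref{sec:passage}, establish Claim~\ref{claim:passage_step} for $i=0$ via Lemma~\ref{lem:passage_start} and for $i=1,\dots,5$ via Lemma~\ref{lem:passage_step}, and then lift the height by showing that the coprimality conditions \eqref{eq:A3_coprimality} force $\Psi_0(\e)\OO_K+\dots+\Psi_4(\e)\OO_K = C_0^3C_1^{-1}\cdots C_5^{-1}$. The paper makes the bookkeeping you flag as the main obstacle fully explicit---it writes down the four initial curves $E_7^{(0)},E_8^{(0)},E_2^{(0)},E_9^{(0)}$ in $\PP^2_K$, the five blow-ups in order, and the birational inverse \eqref{eq:A3_parameterization}---but otherwise the arguments coincide.
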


\begin{proof}
  We apply the strategy from Section \ref{sec:passage}. We work with
  the data in \cite{math.AG/0604194}. For our surface $S$,
  Claim~\ref{claim:passage} specializes precisely to the statement of
  our lemma (where \eqref{eq:A3_height_5} is $\abs{\e_7\e_8\e_9} \le
  B$ with $\e_9$ eliminated using \eqref{eq:A3_torsor}).

  We prove it via the induction process described in
  Claim~\ref{claim:passage_step}. It is based on the construction of
  the minimal desingularization $\pi: \tS \to S$ by the following
  sequence of blow-ups $\rho=\rho_1\circ\dots\circ\rho_5: \tS \to
  \PP^2_K$. Starting with the curves $E^{(0)}_7:= \{y_0=0\}$,
  $E^{(0)}_8 := \{y_1=0\}$, $E^{(0)}_2 := \{y_2 = 0\}$, $E^{(0)}_9 :=
  \{-y_0 - y_1 = 0\}$ in $\PP^2_K$,
  \begin{enumerate}
  \item blow up $E^{(0)}_2\cap E^{(0)}_7$, giving $E_1^{(1)}$,
  \item blow up $E^{(1)}_2\cap E^{(1)}_8$, giving $E^{(2)}_3$,
  \item blow up $E^{(2)}_2\cap E^{(2)}_9$, giving $E^{(3)}_5$,
  \item blow up $E^{(3)}_1\cap E^{(3)}_7$, giving $E^{(4)}_4$,
  \item blow up $E^{(4)}_3\cap E^{(4)}_8$, giving $E^{(5)}_6$.
  \end{enumerate}

  The inverse $\pi \circ \rho^{-1}: \PP^2_K \rto S$ of the
  projection $\phi = \rho \circ \pi^{-1} : S \rto \PP^2_K$, $(x_0 :
  \cdots : x_4) \mapsto (x_0 : x_1 : x_2)$ is given by
  \begin{equation}\label{eq:A3_parameterization}
    (y_0 : y_1 : y_2) \mapsto (y_0y_2^2 : y_1y_2^2 : y_2^3 : y_0y_1y_2 : -y_0y_1(y_0 + y_1)).
  \end{equation}
  
  In our case, the map $\Psi$ appearing in
  Claim~\ref{claim:passage_step} sends $(\e_1, \ldots, \e_9)$ to
  \begin{equation*}
    (\e_1^2\e_2^2\e_3\e_4^2\e_5\e_7, \e_1\e_2^2\e_3^2\e_5\e_6^2\e_8,
    \e_1^2\e_2^3\e_3^2\e_4\e_5^2\e_6, \e_1\e_2\e_3\e_4\e_6\e_7\e_8, \e_7\e_8\e_9).
  \end{equation*}
 
  Claim~\ref{claim:passage_step} holds for $i=0$ by
  Lemma~\ref{lem:passage_start} since the map $\psi : \PP^2_K \rto S$
  obtained from $\Psi$ by the substitution
  $(\e_1, \dots, \e_8) \mapsto (1, y_2, 1, 1, 1, 1, y_0, y_1, -y_0-y_1)$
  as in \eqref{eq:psii}, \eqref{eq:psi} agrees with $\pi \circ \rho^{-1}$ on $\PP^2_K
  \smallsetminus \{(1:0:0),(0:1:0),(1:-1:0)\}$.

  Since the five blow-ups described above satisfy the assumptions of
  Lemma~\ref{lem:passage_step}, Claim~\ref{claim:passage_step} follows
  by induction for $i=1, \dots, 5$.

  Hence $\Psi$ induces a $\omega_K^6$-to-$1$ map from the set of all
  $(\e_1, \ldots, \e_9) \in \bigcup_{\classtuple \in
    \classrep^6}\OO_{1*}\times\cdots\times\OO_{9*}$ satisfying
  \eqref{eq:A3_torsor}, \eqref{eq:A3_coprimality}, $H(\Psi(\e_1,
  \ldots, \e_9)) \leq B$ to the set of $K$-rational points on $U$ of
  height bounded by $B$.  One easily sees that
  \eqref{eq:A3_coprimality} implies that
  \begin{equation*}
    \e_1^2\e_2^2\e_3\e_4^2\e_5\e_7\OO_K + \ldots +  \e_7\e_8\e_9\OO_K = C_0^3C_1^{-1}\cdots C_5^{-1}.
  \end{equation*}
  As discussed after Claim~\ref{claim:passage_step}, this completes
  the proof of Claim~\ref{claim:passage}.
\end{proof}

\subsection{Summations}
In a direct application of Proposition \ref{prop:first_summation} to
$M_\classtuple(B)$, our height conditions would not
yield sufficiently good estimates for the sum over the error terms, so
we consider two cases: Let $M_{\classtuple}^{(8)}(B)$ be
the set of all $(\e_1, \ldots, \e_9) \in M_\classtuple(B)$ with $\N\eI_8\ge \N\eI_7$, and let $M_{\classtuple}^{(7)}(B)$ be the set of all $(\e_1,\ldots, \e_9) \in M_\classtuple(B)$ with $\N\eI_7 > \N\eI_8$. Moreover, let
\begin{equation*}
  N_8(B) := \frac{1}{\omega_K^6}\sum_{\classtuple \in \classrep^6}|M_{\classtuple}^{(8)}(B)|\text,
\end{equation*}
and define $N_7(B)$ analogously. Then clearly $N_{U,H}(B) = N_8(B) + N_7(B)$.

\subsubsection{The first summation over $\e_8$ in
  $M^{(8)}_\classtuple(B)$ with dependent $\e_9$}
\begin{lemma}\label{lem:A3_first_summation_M1}
  Write $\ee' := (\e_1, \ldots, \e_7)$ and $\eII' := (\eI_1, \ldots,
  \eI_7)$. For $B >0$, $\classtuple \in \classrep^6$, we have
  \begin{equation*}
    |M^{(8)}_\classtuple(B)| = \frac{2}{\sqrt{|\Delta_K|}} \sum_{\ee' \in \OO_{1*} \times \dots \times \OO_{7*}} \theta_8(\eII')V_8(\N\eI_1, \ldots, \N\eI_7; B) + O_\classtuple(B(\log B)^3),
  \end{equation*}
  where
  \begin{equation*}
    V_8(t_1, \ldots, t_7; B) :=  \frac{1}{t_5}\int_{\substack{(\sqrt{t_1}, \ldots, \sqrt{t_7},\e_8)\in \mathcal{R}(B)\\\abs{\e_8}\geq t_7}} \dd \e_8
  \end{equation*}
  with a complex variable $\e_8$, and where
  \begin{equation*}
    \theta_8(\eII') := \prod_{\id p} \theta_{8,\id p}(J_{\id p}(\eII'))
  \end{equation*}
  with $J_{\id p}(\eII') := \{j \in \{1, \dots, 7\}\ :\ \id p \mid
  \eI_j\}$ and
  \begin{equation*}
    \theta_{8,\id p}(J) :=
    \begin{cases}
      1 &\text{ if }J = \emptyset,\{5\},\{6\},\{7\},\\
      1-\frac{1}{\N\id p} &\text{ if }J = \{1\},\{3\},\{4\},\{1,2\}, \{1,4\}, \{2,3\},\{2,5\},\{3,6\},\{4,7\}\\
      1-\frac{2}{\N\id p} &\text{ if }J = \{2\},\\
      0 &\text{otherwise.}
    \end{cases}
  \end{equation*}
  Moreover, the same asymptotic formula holds if we replace the
  condition $\N\eI_8 \geq \N\eI_7$ in the definition of
  $M^{(8)}_\classtuple(B)$ by $\N\eI_8 > \N\eI_7$.
\end{lemma}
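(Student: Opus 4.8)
The plan is to apply Proposition~\ref{prop:first_summation}. First I would set up the dictionary with Section~\ref{sec:first_summation}: in the ordering of Figure~\ref{fig:A3_dynkin} the three curves $E_7,E_8,E_9$ (all meeting at the strict transform of $(0:0:1)$) play the roles of $A_0,B_0,C_0$, and the extended Dynkin diagram coincides with the graph $G$ of Figure~\ref{fig:general_coprim_graph} for $r=s=2$, $t=1$ via $A_0=E_7$, $A_1=E_1$, $A_2=E_4$; $B_0=E_8$, $B_1=E_3$, $B_2=E_6$; $C_0=E_9$, $C_1=E_5$; $D=E_2$. Comparing monomials, \eqref{eq:A3_torsor} is the torsor equation \eqref{eq:gen_torsor} with $(a_0,a_1,a_2)=(b_0,b_1,b_2)=(1,1,2)$ and $c_1=1$, and by Remark~\ref{rem:passage_degrees} the fractional ideals of \eqref{eq:A3_def_Oj} satisfy $\OO_{A_0}^{a_0}\OO_{A_1}^{a_1}\OO_{A_2}^{a_2}=\OO_{B_0}^{b_0}\OO_{B_1}^{b_1}\OO_{B_2}^{b_2}=\OO_{C_0}\OO_{C_1}^{c_1}=C_0$. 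Here $\e_{B_0}=\e_8$ is the variable summed over, $\e_{C_0}=\e_9$ is eliminated via \eqref{eq:A3_torsor}, and $\ee'=(\e_v)_{v\in V'}$ is exactly $(\e_1,\dots,\e_7)$.

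Next I would verify the hypotheses of Proposition~\ref{prop:first_summation}. Since $r,s\ge 1$, the extra coprimality condition on $\eI_{D}$ in Section~\ref{sec:first_summation} is vacuous, and \eqref{eq:A3_coprimality} is precisely the coprimality condition encoded by $G$; thus Lemma~\ref{lem:gen_simpler_coprimality_conditions} applies. For the height conditions I would substitute \eqref{eq:A3_torsor} into \eqref{eq:A3_height_5} to express it (and hence all of $\mathcal{R}(u_\classtuple B)$) in terms of $(\ee',\e_8)$ alone; the extra requirement $\N\eI_8\ge\N\eI_7$ (resp.\ $>$) then amounts to removing a disc centred at $0$ from the set of admissible $\e_8$. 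For fixed $\ee'$ this set is bounded, by \eqref{eq:A3_height_2} or \eqref{eq:A3_height_4}, and of class $m$ for a fixed $m$ by Lemma~\ref{lem:preimage_of_unit_disc}; since \eqref{eq:A3_height_5} is quadratic in $\e_8$, Lemma~\ref{lem:5.1_4}\,(1) together with the discs coming from the linear conditions \eqref{eq:A3_height_2}, \eqref{eq:A3_height_4} exhibits it as a subset of a fixed number $k$ of balls of a radius $R(\ee';B)$ bounded by an explicit monomial in $B$ and the $\abs{\e_j}$.

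Proposition~\ref{prop:first_summation} then gives the result. As $b_0=1$ the inner sum over $\rho$ equals $1$ whenever $\theta_0(\eII')\ne 0$, so $\theta_1(\ee')=\theta_1'(\eII')$, and since $|V'|=r+s+t+2=7$ with $s,t\ge 1$, Lemma~\ref{lem:general_comp_theta_1} writes $\theta_1'$ as $\prod_\p\theta_{1,\p}'(J_\p(\eII'))$; translating $A_0=E_7$, $B_s=E_6$, $C_t=E_5$, $D=E_2$ and the edges of the graph $G'$ of Figure~\ref{fig:general_coprim_graph_vertices_deleted} back into Figure~\ref{fig:A3_dynkin} shows that $\theta_{1,\p}'$ coincides with the $\theta_{8,\p}$ of the statement. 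The main term is $\frac{2}{\sqrt{|\Delta_K|}}\sum_{\ee'}\theta_8(\eII')V_1(\ee';B)$ with $V_1(\ee';B)=\N(\Pi(\eIIC)\OO_{B_0})^{-1}\int_{\mathcal{R}(\ee';B)}\dd z=\N(\eI_5\OO_8)^{-1}\int\dd z$; after the change of variable $z=\sqrt{\N\OO_8}\,w$, using that $\abs{\Psi_i(\e)}=u_\classtuple\prod_j\N\eI_j^{m_{ij}}$ by the homogeneity (of degree $[-K_{\tS}]$) of the height monomials and that a rotation in $w$ leaves the resulting volume depending only on the $\abs{\e_j}$, this becomes exactly $V_8(\N\eI_1,\dots,\N\eI_7;B)$.

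The main obstacle will be the error estimate. With $b_0=1$, $\Pi'(\eI_{D},\eIIC)=\eI_2$, $\Pi'(\eI_{D},\eIIB)\Pi(\eIIA)=\eI_1\eI_2\eI_3\eI_4^2$ and $\Pi(\eIIC)=\eI_5$, the error term of Proposition~\ref{prop:first_summation} is
\begin{equation*}
  \ll \sum_{\substack{\ee'\in\OO_{1*}\times\dots\times\OO_{7*}\\\mathcal{R}(\ee';B)\ne\emptyset}}2^{\omega_K(\eI_2)+\omega_K(\eI_1\eI_2\eI_3\eI_4)}\left(\frac{R(\ee';B)}{\sqrt{\N\eI_5}}+1\right),
\end{equation*}
and the point is to bound this by $O_\classtuple(B(\log B)^3)$. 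Here I would exploit the constraint $\N\eI_8\ge\N\eI_7$ --- which is exactly why $M_\classtuple(B)$ is split into $M^{(8)}_\classtuple(B)$ and $M^{(7)}_\classtuple(B)$: it lets one bound $\abs{\e_7}$ by a quantity involving $\abs{\e_8}$, and so tighten what $\mathcal{R}(\ee';B)\ne\emptyset$ forces on $\ee'$. Bounding $R(\ee';B)$ by the appropriate monomial via Lemma~\ref{lem:5.1_4}, and summing over $\e_1,\dots,\e_7$ in a well-chosen order while collapsing the divisor-function weights $2^{\omega_K(\cdot)}$ into logarithmic factors by Lemmas~\ref{lem:omega} and \ref{lem:3.4}, should yield the bound; the delicate part is that \eqref{eq:A3_height_1} and \eqref{eq:A3_height_3}, which do not involve $\e_8$, must be combined with the $\e_8$-conditions so that no partial sum exceeds $B(\log B)^3$, while the lack of unique factorization enters only through the harmless weights $2^{\omega_K(\cdot)}$. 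The same argument goes through verbatim with $\N\eI_8\ge\N\eI_7$ replaced by $\N\eI_8>\N\eI_7$.
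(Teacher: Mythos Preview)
Your proposal is correct and follows the paper's proof closely: the same dictionary with Section~\ref{sec:first_summation}, the same identification $\theta_8=\theta_1'$ via Lemma~\ref{lem:general_comp_theta_1}, the same rotation/scaling for $V_1\to V_8$, and the error is handled just as you outline---the paper uses the radius $R\asymp_\classtuple(B\,\N\eI_5/(\N\eI_3\N\eI_6^2\N\eI_7))^{1/4}$ from Lemma~\ref{lem:5.1_4}\,(1) applied to \eqref{eq:A3_height_5} alone, and constrains $\eII'$ via \eqref{eq:A3_height_1} together with \eqref{eq:A3_height_2} combined with $\N\eI_8\ge\N\eI_7$. The one place the paper deviates from your sketch is the final assertion: rather than rerun the argument verbatim with the strict inequality (which would take the $\e_8$-region outside the compact framework of Lemma~\ref{lem:preimage_of_unit_disc}), it applies Proposition~\ref{prop:first_summation} once more to the boundary set $\{\N\eI_8=\N\eI_7\}$, observes that the corresponding $V_1$ vanishes, and absorbs the whole contribution into the $O_\classtuple(B(\log B)^3)$ error.
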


\begin{proof}
  We express the condition $\N\eI_8 \geq \N\eI_7$ as
  \begin{equation*}
    \abs{\sqrt{\N\eI_7}} \leq \abs{\sqrt{\N\OO_8^{-1}}\e_8}.
  \end{equation*}
  Let $\ee' \in \OO_{1*} \times \cdots \times \OO_{7*}$. By Lemma
  \ref{lem:preimage_of_unit_disc}, the subset $\mathcal{R}(\ee';u_\classtuple B)
  \subset \CC$ of all $\e_8$ with $(\e_1, \ldots, \e_8) \in
  \mathcal{R}(u_\classtuple B)$ and $\N\eI_8 \geq \N\eI_7$ is of class $m$, where
  $m$ is an absolute constant. Moreover, by Lemma \ref{lem:5.1_4},
  (1), applied to \eqref{eq:A3_height_5} with $u_\classtuple B$ instead of $B$, we see that $\mathcal{R}(\ee';u_\classtuple B)$ is
  contained in the union of $2$ balls of radius
  \begin{equation*}
    R(\ee'; u_\classtuple B) := (u_\classtuple B\abs{\e_3^{-1}\e_5\e_6^{-2}\e_7^{-1}})^{1/4}\ll_\classtuple(B\N\eI_3^{-1}\N\eI_5\N\eI_6^{-2}\N\eI_7^{-1})^{1/4}\text.
  \end{equation*}

  We may sum over all $\e_8 \in \OO_8$ instead of $\e_8 \in
  \OO_{8*}$, since $0 \notin \mathcal{R}(\ee'; u_\classtuple B)$. We apply
  Proposition \ref{prop:first_summation} with $(A_1, A_2, A_0) := (1,
  4, 7)$, $(B_1, B_2, B_0) := (3, 6, 8)$, $(C_1, C_0) := (5, 9)$, $D :=
  2$, and $u_\classtuple B$ instead of $B$. We choose $\Ao
  $ and $\At $ as suggested by Remark
  \ref{rem:first_summation_choice_of_pi}. Then
  \begin{equation*}
    V_1(\ee';u_\classtuple B) = \frac{1}{\N(\eI_5 \OO_8)}\int_{\e_8 \in \mathcal{R}(\ee';u_\classtuple B)} \dd \e_8\text.
  \end{equation*}
  A straightforward computation shows that $\e_8 \in \mathcal{R}(\ee'; u_\classtuple B)$ if and only if
  \begin{equation*}
    (\sqrt{\N\eI_1}, \ldots, \sqrt{\N\eI_7}, \varphi(\e_8)) \in \mathcal{R}(B) \text{ and } \abs{\varphi(\e_8)}\geq \N\eI_7\text,
  \end{equation*}
  where $\varphi : \CC \to \CC$ is given by $z \mapsto e^{i
    \arg(\e_3\e_6^2/(\e_1\e_4^2\e_7))}/\sqrt{\N\OO_8}\cdot
  z$. Therefore, $V_1(\ee';u_\classtuple B) = V_8(\N\eI_1, \ldots,
  \N\eI_7;B)$. Moreover, since $b_0 = 1$, Lemma
  \ref{lem:general_comp_theta_1} shows that $\theta_1(\ee') =
  \theta_1'(\eII') = \theta_8(\eII')$, so the main term is as desired.

  The error term from Proposition \ref{prop:first_summation} is
  \begin{equation*}
    \ll \sum_{\ee', \eqref{eq:first_summation_error_sum} }2^{\omega(\eI_2)+\omega(\eI_1\eI_2\eI_3\eI_4)}\left(\frac{R(\ee'; u_\classtuple B)}{\N(\eI_5)^{1/2}}+1\right)\text.
  \end{equation*}
  Using \eqref{eq:A3_height_1}, \eqref{eq:A3_height_2}, the definitions of $u_\classtuple$ and $\OO_j$, and our
  assumption $\N\eI_8 \geq \N\eI_7$, we see that \eqref{eq:first_summation_error_sum} (with $u_\classtuple B$ instead of $B$) implies
  \begin{align}
    \N\eI_1^2\N\eI_2^2\N\eI_3\N\eI_4^2\N\eI_5\N\eI_7 &\leq B\text{,
      and}\label{eq:A3_first_height_cond_ideals}\\
    \N\eI_1\N\eI_2^2\N\eI_3^2\N\eI_5\N\eI_6^2\N\eI_7 &\leq
    B \label{eq:A3_second_height_cond_ideals_I7}\text{.}
  \end{align}
  Let $\aaa \in \I_K$. Since there are at most $|\OO_K^\times| < \infty$
  elements $\e_j \in \OO_j$ with $\eI_j = \aaa$, we can sum over the
  ideals $I_j \in \I_K$ instead of the $\e_j \in \OO_j$. Moreover, we
  can replace \eqref{eq:first_summation_error_sum} by
  \eqref{eq:A3_first_height_cond_ideals} and
  \eqref{eq:A3_second_height_cond_ideals_I7}, and estimate the error
  term by
  \begin{align*}
      &\ll_\classtuple \sum_{\substack{\eI_1 \ldots, \eI_7\\\eqref{eq:A3_first_height_cond_ideals},\eqref{eq:A3_second_height_cond_ideals_I7}}}2^{\omega(\eI_2)+\omega(\eI_1\eI_2\eI_3\eI_4)}\left(\frac{B^{1/4}}{\N\eI_3^{1/4}\N\eI_5^{1/4}\N\eI_6^{1/2}\N\eI_7^{1/4}}+1\right)\\
      &\ll\sum_{\substack{\eI_1, \dots, \eI_5, \eI_7\\\eqref{eq:A3_first_height_cond_ideals}}}\left(\frac{2^{\omega(\eI_2)+\omega(\eI_1\eI_2\eI_3\eI_4)}B^{1/2}}{\N\eI_1^{1/4}\N\eI_2^{1/2}\N\eI_3^{3/4}\N\eI_5^{1/2}\N\eI_7^{1/2}}+\frac{2^{\omega(\eI_2)+\omega(\eI_1\eI_2\eI_3\eI_4)}B^{1/2}}{\N\eI_1^{1/2}\N\eI_2\N\eI_3\N\eI_5^{1/2}\N\eI_7^{1/2}}\right)\\
      &\ll\sum_{\substack{\eI_1, \dots, \eI_5\\\N\eI_j \leq B}}\left(\frac{2^{\omega(\eI_2)+\omega(\eI_1\eI_2\eI_3\eI_4)}B}{\N\eI_1^{5/4}\N\eI_2^{3/2}\N\eI_3^{5/4}\N\eI_4\N\eI_5}+\frac{2^{\omega(\eI_2)+\omega(\eI_1\eI_2\eI_3\eI_4)}B}{\N\eI_1^{3/2}\N\eI_2^2\N\eI_3^{3/2}\N\eI_4\N\eI_5}\right)\\
      &\ll B(\log B)^3.
  \end{align*}
  For the last estimation, we used Lemma \ref{lem:omega} and Lemma \ref{lem:3.4}.

  Let $M_\classtuple^{(8)'}(B)$ be defined as
  $M_\classtuple^{(8)}(B)$, except that the condition $\N\eI_8 \geq
  \N\eI_7$ is replaced by $\N\eI_8 = \N\eI_7$. We apply Proposition
  \ref{prop:first_summation} in an analogous way as above. Since then
  $V_1(\ee'; u_{\classtuple} B) = 0$, we obtain
  $|M_\classtuple^{(8)'}(B)| \ll B(\log B)^3$. This shows the last
  assertion of the lemma.
\end{proof}

For the second summation, we need another dichotomy: Let
$M_\classtuple^{(87)}(B)$ be the main term in the
expression for $|M^{(8)}_\classtuple(B)|$ given in Lemma
\ref{lem:A3_first_summation_M1} with the additional condition $\N\eI_7
> \N\eI_4$ in the sum, and let $M_\classtuple^{(84)}(B)$
be the same main term with the additional condition $\N\eI_4 \geq
\N\eI_7$ in the sum, so that $|M_\classtuple^{(8)}(B)| =
M_\classtuple^{(87)}(B) +
M_\classtuple^{(84)}(B) + O(B(\log B)^3)$. Moreover, let
\begin{equation*}
  N_{87}(B) := \frac{1}{\omega_K^6}\sum_{\classtuple \in \classrep^6}M^{(87)}_\classtuple(B)
\end{equation*}
and define $N_{84}(B)$ analogously. Then $N_8(B) = N_{87}(B) +
N_{84}(B) + O(B(\log B)^3)$.

\subsubsection{The second summation over $\e_7$ in
  $M^{(87)}_\classtuple(B)$}
\begin{lemma}\label{lem:A3_second_summation_M11}
  Write $\ee'' := (\e_1, \dots, \e_6)$. For $B \geq 3$, $\classtuple \in \classrep^6$, we have
  \begin{align*}
    M_\classtuple^{(87)}(B) &= \left(\frac{2}{\sqrt{|\Delta_K|}}\right)^2\sum_{\ee''\in \OO_{1*} \times \dots \times \OO_{6*}} \mathcal{A}(\theta_8(\eII'), \eI_7)V_{87}(\N\eI_1, \ldots, \N\eI_6; B)\\ &+ O_\classtuple(B(\log B)^3).
  \end{align*}
  For $t_1$, $\ldots$, $t_6 \geq 1$,
  \begin{equation*}
    V_{87}(t_1, \ldots, t_6; B) := \frac{\pi}{t_5}\int_{\substack{(\sqrt{t_1}, \ldots, \sqrt{t_7}, \e_8) \in \mathcal{R}(B)\\t_4 < t_7 \leq \abs{\e_8}}} \dd t_7 \dd \e_8,
  \end{equation*}
  with a real variable $t_7$ and a complex variable $\e_8$.
\end{lemma}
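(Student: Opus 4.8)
The plan is to carry out the summation over $\e_7$ by a single application of Proposition~\ref{prop:second_summation} in its $\id q = \OO_K$, $n = A = 1$ form (Remark~\ref{rem:second_summation_no_rho}), following the recipe described after Proposition~\ref{prop:first_summation} in the case $b_0 = 1$. Fix $\ee'' := (\e_1,\dots,\e_6) \in \OO_{1*}\times\dots\times\OO_{6*}$. The inner sum in $M^{(87)}_\classtuple(B)$ is
\[
  \sum_{\substack{\e_7 \in \OO_7^{\ne 0}\\\N\eI_4 < \N\eI_7}} \theta_8(\eII')\,V_8(\N\eI_1,\dots,\N\eI_6,\N\eI_7;B),
\]
which is of the shape handled by Proposition~\ref{prop:second_summation}, with $\OO := \OO_7$, $z := \e_7$, $\vartheta(\eI_7) := \theta_8(\eII')$ regarded as a function of $\eI_7$, and $g(t) := V_8(\N\eI_1,\dots,\N\eI_6,t;B)$. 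The height condition \eqref{eq:A3_height_1} forces $g(t) = 0$ unless $\N\eI_4 \le t \le t_2$ with $t_2 \ll_\classtuple B$, and if $t_2 < \N\eI_4$ both sides vanish for that $\ee''$, so we may assume $\N\eI_4 \le t_2$. No boundary term appears, since $\e_7 \ne 0$ throughout.

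I would then check the hypotheses of Proposition~\ref{prop:second_summation}. As a function of $\eI_7$, the factors $\theta_{8,\p}$ depend only on whether $\p \mid \eI_7$, and in Figure~\ref{fig:A3_dynkin} the vertex $E_7$ is, among $E_1,\dots,E_6$, adjacent only to $E_4$; inspecting the list defining $\theta_{8,\p}$ gives $\vartheta \in \Theta(\bbb,1,1,1)$ with $\bbb$ the radical of $\eI_1\eI_2\eI_3\eI_5\eI_6$, so by Lemma~\ref{lem:6.8}\emph{(2)} condition \eqref{eq:second_sum_thetabound} holds with $c_\vartheta \ll 2^{\omega_K(\bbb)}$ and $C = 0$. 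For $g$, its integrand is the constant $1/t_5$ on the bounded semialgebraic set $\{\e_8 \in \CC : (\sqrt{t_1},\dots,\sqrt{t_6},\sqrt t,\e_8) \in \mathcal R(B),\ \abs{\e_8} \ge t\}$, so Lemma~\ref{lem:omin} (parameters $(\N\eI_1,\dots,\N\eI_6,B)$, distinguished variable $t$, integration variable $\e_8 \in \RR^2$) yields $R(g) \ll 1$. Substituting $\e_j = \sqrt{t_j}$ and $\abs{\e_7} = t$ into \eqref{eq:A3_height_2} and \eqref{eq:A3_height_4} bounds $\abs{\e_8}$, hence the area of the fibre, by $\ll_\classtuple B/(t_1 t_2^2 t_3^2 t_5 t_6^2)$ and by $\ll_\classtuple B/(t_1 t_2 t_3 t_4 t_6 t)$, so
\[
  g(t) \ll_\classtuple \frac{1}{t_5}\min\Bigl\{\frac{B}{t_1 t_2^2 t_3^2 t_5 t_6^2},\ \frac{B}{t_1 t_2 t_3 t_4 t_6 t}\Bigr\} \ll_\classtuple \frac{B}{t_1\, t_2^{5/4}t_3^{5/4}t_4^{3/4}t_5^{5/4}t_6^{5/4}\, t^{3/4}},
\]
the last step being the geometric mean with weights $\tfrac14$ and $\tfrac34$. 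Thus \eqref{eq:second_sum_gbound} holds with $a := -\tfrac34$ and $c_g := c_\classtuple B/(\N\eI_1\N\eI_2^{5/4}\N\eI_3^{5/4}\N\eI_4^{3/4}\N\eI_5^{5/4}\N\eI_6^{5/4})$.

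Granting the hypotheses, Proposition~\ref{prop:second_summation} (via Remark~\ref{rem:second_summation_no_rho}) rewrites the inner sum as $\tfrac{2\pi}{\sqrt{|\Delta_K|}}\mathcal A(\theta_8(\eII'),\eI_7)\int_{\N\eI_4}^{t_2}g(t)\,\dd t$ plus an error. By Fubini and $g \equiv 0$ beyond $t_2$,
\[
  \int_{\N\eI_4}^{t_2} g(t)\,\dd t = \int_{\N\eI_4}^{\infty} V_8(\N\eI_1,\dots,\N\eI_6,t_7;B)\,\dd t_7 = \frac{1}{\pi}\,V_{87}(\N\eI_1,\dots,\N\eI_6;B),
\]
so restoring the factor $\tfrac{2}{\sqrt{|\Delta_K|}}$ of Lemma~\ref{lem:A3_first_summation_M1} and summing over $\ee''$ produces the claimed main term with constant $(2/\sqrt{|\Delta_K|})^2$. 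For the error term, since $a \ne -\tfrac12$ and $t \ge \N\eI_4 \ge 1$, the error per $\ee''$ is $\ll_\classtuple R(g)c_\vartheta c_g\N\eI_4^{-1/4} \ll_\classtuple 2^{\omega_K(\bbb)}\,B/(\N\eI_1\N\eI_2^{5/4}\N\eI_3^{5/4}\N\eI_4\N\eI_5^{5/4}\N\eI_6^{5/4})$. Passing from elements to ideals (at a cost $\ll 1$) and using that the summand vanishes unless $\N\eI_j \ll_\classtuple B$ for $1 \le j \le 6$ (e.g.\ by \eqref{eq:A3_height_3}), the total error is
\[
  \ll_\classtuple B\!\!\sum_{\substack{\eI_1,\dots,\eI_6 \in \I_K\\\N\eI_j \ll_\classtuple B}}\!\!\frac{2^{\omega_K(\eI_1\eI_2\eI_3\eI_5\eI_6)}}{\N\eI_1\,\N\eI_2^{5/4}\N\eI_3^{5/4}\N\eI_4\,\N\eI_5^{5/4}\N\eI_6^{5/4}} \ll_\classtuple B(\log B)^3,
\]
because $2^{\omega_K(\eI_1\eI_2\eI_3\eI_5\eI_6)} \le 2^{\omega_K(\eI_1)}\cdots 2^{\omega_K(\eI_6)}$, the sums over $\eI_2,\eI_3,\eI_5,\eI_6$ converge (Lemma~\ref{lem:3.4} with $\kappa = \tfrac54$), the sum over $\eI_1$ contributes $\ll (\log B)^2$ (Lemmas~\ref{lem:omega} and \ref{lem:3.4}), and the sum over $\eI_4$, which carries no $2^{\omega_K}$, contributes $\ll \log B$. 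The only step that genuinely requires thought is this choice of the decay bound for $g$: no single height condition suffices (\eqref{eq:A3_height_4} alone gives $a = -1$ but leaves every $\N\eI_j$ with exponent $\le \tfrac32$, hence $(\log B)^{10}$ after summation, while \eqref{eq:A3_height_2} alone is $t$-independent, giving an unacceptable $B^{3/2}$), and one must interpolate between the two with precisely the weights $\tfrac14,\tfrac34$ so that each $\N\eI_j$ acquires exponent $\ge 1$, with exponent $1$ surviving only on $\N\eI_1$ (which carries $2^{\omega_K}$) and on $\N\eI_4$ (which does not). Everything else is a routine application of Proposition~\ref{prop:second_summation}, Lemma~\ref{lem:omin}, and Lemmas~\ref{lem:omega} and \ref{lem:3.4}, paralleling the proof of Lemma~\ref{lem:A3_first_summation_M1}.
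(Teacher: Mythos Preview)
Your proof is correct and yields the same asymptotic with error $O_\classtuple(B(\log B)^3)$, but the route to the bound on $g$ and hence the error summation differs from the paper's. The paper bounds $g(t)$ via Lemma~\ref{lem:5.1_4}\,(2) applied to the quadratic height condition \eqref{eq:A3_height_5}, obtaining
\[
  g(t) \ll \frac{B^{1/2}}{\N\eI_3^{1/2}\N\eI_5^{1/2}\N\eI_6}\,t^{-1/2},
\]
so that $a=-\tfrac12$ lands exactly on the logarithmic case of Remark~\ref{rem:second_summation_no_rho}; the resulting error per $\ee''$, namely $2^{\omega_K(\bbb)}B^{1/2}(\log B)/(\N\eI_3^{1/2}\N\eI_5^{1/2}\N\eI_6)$, is then summed using the combined constraint \eqref{eq:A3_first_third_height_cond_M11} coming from the product of \eqref{eq:A3_height_1} and \eqref{eq:A3_height_3} together with $\N\eI_7>\N\eI_4$. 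By contrast, you use only the \emph{linear} conditions \eqref{eq:A3_height_2} and \eqref{eq:A3_height_4} on $\e_8$ and interpolate with weights $\tfrac14,\tfrac34$ to get $a=-\tfrac34$; this produces an error summand that already has every $\N\eI_j$ with exponent $\ge 1$, so only the crude bound $\N\eI_j\le B$ from \eqref{eq:A3_height_3} is needed. Your approach avoids both the quadratic Lemma~\ref{lem:5.1_4} and the auxiliary height product, at the price of the interpolation step you highlight; the paper's approach keeps the bound on $g$ in line with how \eqref{eq:A3_height_5} is used elsewhere (Lemmas~\ref{lem:A3_first_summation_M1}, \ref{lem:A3_completion_M11}). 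Either argument works cleanly here.
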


\begin{proof}
  We use the strategy described in Section \ref{sec:second_summation}
  in the case $b_0 = 1$. For $\aaa \in \I_K$, $t \geq 1$, let
  $\vartheta(\aaa) := \theta_8(\eI_1, \ldots, \eI_6, \aaa)$ and $g(t) :=
  V_8(\N\eI_1, \ldots, \N\eI_6, t; B)$. Then
  \begin{equation}\label{eq:A3_sec_sum_M1_start}
    M^{(87)}_\classtuple(B) = \frac{2}{\sqrt{|\Delta_K|}}\sum_{\ee'' \in \OO_{1*} \times \dots \times \OO_{6*}}\sum_{\substack{\e_7 \in \OO_{7*}\\\N\eI_7 > \N\eI_4}}\vartheta(\eI_7)g(\N\eI_7)\text.
  \end{equation}
  By Lemma \ref{lem:general_comp_theta_1} and Lemma \ref{lem:6.8}, $\vartheta$ satisfies
  \eqref{eq:second_sum_thetabound} with $C = 0$ and $c_\vartheta = 2^{\omega_K(\eI_1\eI_2\eI_3\eI_5\eI_6)}$.
 
  The first height condition \eqref{eq:A3_height_1} implies that $g(t)
  = 0$ whenever $t > t_2 := B/(\N\eI_1^2\N\eI_2^2\N\eI_3\N\eI_4^2
  \N\eI_5)$. Moreover, applying Lemma \ref{lem:5.1_4}, (2), to the
  fifth height condition \eqref{eq:A3_height_5}, we see that
  \begin{equation*}
    g(t) \ll \frac{1}{\N\eI_5}\cdot \frac{(B\N\eI_5)^{1/2}}{(\N\eI_3\N\eI_6^2t)^{1/2}} = \frac{B^{1/2}}{\N\eI_3^{1/2}\N\eI_5^{1/2}\N\eI_6}t^{-1/2}\text.
  \end{equation*}

  We may assume that $\N\eI_4 \leq t_2$.  By Lemma \ref{lem:omin}, $g$
  is piecewise continuously diferentiable and monotonic on $[\N\eI_4,
  t_2]$, and the number of pieces can be bounded by an absolute
  constant. Using the notation from Section \ref{sec:second_summation}
  (with $a = -1/2$), we see that the sum over $\e_7$ in
  \eqref{eq:A3_sec_sum_M1_start} is just $S(\N\eI_4, t_2)$, and
  Proposition \ref{prop:second_summation}, applied as suggested by
  Remark \ref{rem:second_summation_no_rho}, yields
  \begin{equation}\label{eq:A3_sec_sum_prop}
  \begin{aligned}
    S(\N\eI_4, t_2) &= \frac{2\pi}{\sqrt{|\Delta_K|}}
    \mathcal{A}(\vartheta(\aaa), \aaa, \OO_K) \int_{t \geq \N\eI_4}g(t)\dd
    t\\ &+
    O\left(\frac{2^{\omega(\eI_1\eI_2\eI_3\eI_5\eI_6)}B^{1/2}(\log B)}{\N\eI_3^{1/2}\N\eI_5^{1/2}\N\eI_6}\right).
  \end{aligned}
\end{equation}

  Clearly, $\pi \int_{t \geq \N\eI_4}g(t)\dd t = V_{87}(\N\eI_1,
  \ldots, \N\eI_6; B)$, so we obtain the correct main term.

  Let us consider the error term. Taking the product of \eqref{eq:A3_height_1} and \eqref{eq:A3_height_3} together with
  $\N\eI_7 > \N\eI_4$ (resp. $t > \N\eI_4$), we see that both the sum and the integral in \eqref{eq:A3_sec_sum_prop} are
  zero unless
  \begin{equation}
    \label{eq:A3_first_third_height_cond_M11}
    \N\eI_1^4\N\eI_2^5\N\eI_3^3\N\eI_4^4\N\eI_5^3\N\eI_6 \leq B^2\text.
  \end{equation}
  Since $|\OO_K^\times| < \infty$, we may sum over the $\eII'':=(\eI_1, \ldots, \eI_6)$ satisfying
  \eqref{eq:A3_first_third_height_cond_M11} instead of the $\ee''$, so
  the total error term is
  \begin{align*}
      &\ll \sum_{\substack{\eI_1, \ldots, \eI_6\in\I_K\\\eqref{eq:A3_first_third_height_cond_M11}}}\frac{2^{\omega(\eI_1\eI_2\eI_3\eI_5\eI_6)} B^{1/2}(\log B)}{\N\eI_3^{1/2}\N\eI_5^{1/2}\N\eI_6}\\
      &\ll \sum_{\substack{\eI_2, \ldots, \eI_6\in\I_K\\\N\eI_j \leq
          B}}\frac{2^{\omega(\eI_2\eI_3\eI_5\eI_6)} B(\log
        B)^2}{\N\eI_2^{5/4}\N\eI_3^{5/4}\N\eI_4\N\eI_5^{5/4}\N\eI_6^{5/4}}
      &\ll B(\log B)^3.
  \end{align*}
  In the summations, we used
  \eqref{eq:A3_first_third_height_cond_M11}, Lemma \ref{lem:omega},
  and Lemma \ref{lem:3.4}.
\end{proof}

\begin{lemma}\label{lem:A3_second_summation_ideals_M11}
  If $\eII''$ runs over all six-tuples $(\eI_1, \ldots, \eI_6)$ of
  nonzero ideals of $\OO_K$, then we have
  \begin{equation*}
    N_{87}(B) = \left(\frac{2}{\sqrt{|\Delta_K|}}\right)^2\sum_{\eII''}\mathcal{A}(\theta_8(\eII'', \eI_7), \eI_7) V_{87}(\N\eI_1, \ldots, \N\eI_6; B) + O(B(\log B)^3)\text. 
  \end{equation*}
\end{lemma}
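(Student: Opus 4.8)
The plan is to substitute the asymptotic formula of Lemma~\ref{lem:A3_second_summation_M11} into the definition $N_{87}(B) = \omega_K^{-6}\sum_{\classtuple \in \classrep^6} M^{(87)}_\classtuple(B)$ and then to re-index the resulting double sum — over $\classtuple \in \classrep^6$ and over elements $\ee'' \in \OO_{1*}\times\cdots\times\OO_{6*}$ — as a single sum over six-tuples of nonzero ideals of $\OO_K$. The key point is that the summand $\mathcal{A}(\theta_8(\eII'),\eI_7)\,V_{87}(\N\eI_1,\ldots,\N\eI_6;B)$ depends on $\classtuple$ and $\ee''$ only through the ideal tuple $\eII'' = (\eI_1,\ldots,\eI_6)$: by construction $\theta_8$ is a product of local factors that only read off the divisibilities $\p \mid \eI_j$, so the average $\mathcal{A}(\cdot,\eI_7)$ over $\eI_7$ is a function of $\eI_1,\ldots,\eI_6$ alone (this is just $\mathcal{A}(\theta_8(\eII'',\eI_7),\eI_7)$ in the notation of the present lemma; cf.\ Lemma~\ref{lem:general_comp_theta_1}), and $V_{87}$ depends only on the norms $\N\eI_1,\ldots,\N\eI_6$. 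Write $F(\eII'')$ for this common value. Since $F \geq 0$, all rearrangements below are harmless.

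First I would count, for a fixed $\classtuple$ and a fixed tuple $\eII'' = (\eI_1,\ldots,\eI_6)$ of nonzero integral ideals, the number of $\ee'' = (\e_1,\ldots,\e_6) \in \OO_1^{\ne 0}\times\cdots\times\OO_6^{\ne 0}$ with $\e_j\OO_j^{-1} = \eI_j$ for all $j$. The condition $\e_j\OO_j^{-1} = \eI_j$ is equivalent to $\e_j\OO_K = \eI_j\OO_j$, so the number of admissible $\e_j$ is $\omega_K = |\OO_K^\times|$ if the fractional ideal $\eI_j\OO_j$ is principal, and $0$ otherwise; as the choices for different $j$ are independent, the count is $\omega_K^6$ when every $\eI_j\OO_j$ is principal and $0$ otherwise. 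Hence
\[
 \sum_{\ee''\in\OO_{1*}\times\cdots\times\OO_{6*}} F(\eII'')
 = \omega_K^6 \sum_{\substack{\eI_1,\ldots,\eI_6\in\I_K\\ \eI_j\OO_j\ \text{principal for all }j}} F(\eII'')\text.
\]

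Next I would sum over $\classtuple = (C_0,\ldots,C_5) \in \classrep^6$. The conditions ``$\eI_j\OO_j$ principal for $j=1,\ldots,6$'' read, in the ideal class group of $K$, as $[\eI_j] = [\OO_j]^{-1}$, and with the formulas \eqref{eq:A3_def_Oj} for $\OO_1,\ldots,\OO_6$ these six relations determine $[C_0],\ldots,[C_5]$ uniquely: the equations for $\OO_4,\OO_5,\OO_6$ give $[C_4] = [\eI_4]^{-1}$, $[C_3] = [\eI_5]^{-1}$, $[C_5] = [\eI_6]^{-1}$; the equations for $\OO_1,\OO_3$ then give $[C_1]$ and $[C_2]$; and the equation for $\OO_2$ finally gives $[C_0] = [\eI_1\cdots\eI_6]^{-1}$. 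Since $\classrep$ contains exactly one ideal from each class, for every tuple $(\eI_1,\ldots,\eI_6)$ there is exactly one $\classtuple \in \classrep^6$ for which all six principality conditions hold. Therefore
\[
 \sum_{\classtuple\in\classrep^6}\ \sum_{\ee''\in\OO_{1*}\times\cdots\times\OO_{6*}} F(\eII'')
 = \omega_K^6 \sum_{\eI_1,\ldots,\eI_6\in\I_K} F(\eII'')\text.
\]

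Finally I would assemble the result. By Lemma~\ref{lem:A3_second_summation_M11},
\[
 N_{87}(B) = \frac{1}{\omega_K^6}\sum_{\classtuple\in\classrep^6}\left( \Bigl(\tfrac{2}{\sqrt{|\Delta_K|}}\Bigr)^{2}\sum_{\ee''} F(\eII'') + O_\classtuple(B(\log B)^3)\right)\text,
\]
and the identity just displayed turns the main term into exactly $\bigl(2/\sqrt{|\Delta_K|}\bigr)^2\sum_{\eII''} F(\eII'')$, i.e.\ the claimed sum over all six-tuples of nonzero ideals, while the error is $\omega_K^{-6}\sum_{\classtuple\in\classrep^6} O_\classtuple(B(\log B)^3) = O(B(\log B)^3)$ because $|\classrep^6| = h_K^6$ depends only on $K$ (and the implied constants may depend on $K$). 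The only step that is not purely formal — and therefore the main, if modest, obstacle — is the class-group bookkeeping establishing that the six principality conditions single out one and only one $\classtuple$; the rest is unwinding definitions.
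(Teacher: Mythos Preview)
Your argument is correct and is essentially the same as the paper's: both hinge on the observation that the map $(C_0,\ldots,C_5)\mapsto([\OO_1]^{-1},\ldots,[\OO_6]^{-1})$ is a bijection from $\classrep^6$ onto all six-tuples of ideal classes (you verify this by explicitly inverting via \eqref{eq:A3_def_Oj}, the paper just asserts it), after which each $\eI_j$ in a fixed class is hit $\omega_K$ times as $\e_j$ ranges over $\OO_{j*}$. Your write-up is simply a more detailed unwinding of the paper's two-sentence proof.
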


\begin{proof}
  It follows directly from (\ref{eq:A3_def_Oj}) that $([\OO_1^{-1}],
  \ldots, [\OO_6^{-1}])$ runs through all six-tuples of ideal classes
  whenever $\classtuple$ runs through $\classrep^6$. If $\OO_j^{-1}$
  runs through a set of representatives for the ideal classes and
  $\e_j$ runs through all nonzero elements in $\OO_j$, then $\eI_j =
  \e_j\OO_j^{-j}$ runs through all nonzero integral ideals of $\OO_K$,
  each one occurring $|\OO_K^\times| = \omega_K$ times. This proves the
  lemma.
\end{proof}

\subsubsection{The remaining summations in $N_{87}(B)$}
\begin{lemma}\label{lem:A3_completion_M11}
  We have
  \begin{equation*}
    N_{87}(B) = \pi^6\left(\frac{2}{\sqrt{|\Delta_K|}}\right)^8 \left(\frac{h_K}{\omega_K}\right)^6 \theta_0V_{870}(B) + O(B(\log B)^4\log \log B),
  \end{equation*}
  where
  \begin{equation*}
    V_{870}(B) := \int_{t_1, \dots, t_6 \ge 1}V_{87}(t_1, \dots, t_6;B) \dd t_1 \cdots \dd t_6
  \end{equation*}
  and
  \begin{equation}
    \label{eq:theta_0}
    \theta_0 := \prod_{\p}\left(1-\frac{1}{\N\p}\right)^6\left(1 + \frac{6}{\N\p} + \frac{1}{\N\p^2}\right)\text.
  \end{equation}
\end{lemma}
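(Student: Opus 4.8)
The plan is to feed the output of Lemma~\ref{lem:A3_second_summation_ideals_M11} into Proposition~\ref{prop:4.3} with $r = 5$ and $s = 1$. Thus we start from
\[
  N_{87}(B) = \left(\frac{2}{\sqrt{|\Delta_K|}}\right)^2\sum_{\eII''}\mathcal{A}(\theta_8(\eII'', \eI_7), \eI_7)\, V_{87}(\N\eI_1, \ldots, \N\eI_6; B) + O(B(\log B)^3),
\]
with $\eII'' = (\eI_1, \ldots, \eI_6)$ running over $\I_K^6$, and we verify the two hypotheses of Proposition~\ref{prop:4.3}. For the arithmetic function: with the labelling $(A_1, A_2, A_0) = (1,4,7)$, $(B_1, B_2, B_0) = (3,6,8)$, $(C_1, C_0) = (5,9)$, $D = 2$ used in Lemma~\ref{lem:A3_first_summation_M1}, we have $\theta_8 = \theta_1'$ in the sense of Section~\ref{sec:first_summation}, so $\theta_8 \in \Theta_7'(2)$ by Lemma~\ref{lem:general_comp_theta_1}, and hence $\theta := \mathcal{A}(\theta_8(\eII'', \eI_7), \eI_7) \in \Theta_6'(4) = \Theta_{r+1}'(C)$ by the discussion following Corollary~\ref{cor:6.9}.

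For the volume function, I would show $V_{87}(t_1, \ldots, t_6; B)$ satisfies case~(c) of the list at the start of Section~\ref{sec:further_summations}. Bounding the inner $\e_8$-integral by Lemma~\ref{lem:5.1_4}, (2) applied to \eqref{eq:A3_height_5} (as in the proof of Lemma~\ref{lem:A3_second_summation_M11}), the $t_7$-integral by an elementary estimate, and using \eqref{eq:A3_height_1}--\eqref{eq:A3_height_4} to extract the factor $(B/(t_1^{k_1}\cdots t_6^{k_6}))^b$, one gets a bound
\[
  V_{87}(t_1, \ldots, t_6; B) \ll \frac{B}{t_1\cdots t_6}\min\left\{\left(\frac{B}{t_1^{k_1}\cdots t_6^{k_6}}\right)^{-a},\ \left(\frac{B}{t_1^{k_1}\cdots t_6^{k_6}}\right)^{b}\right\}
\]
for suitable exponents $k_j$ and $a, b > 0$; the height conditions also give $V_{87} = 0$ unless all $t_j \le B$, and the piecewise continuous differentiability and monotonicity in the last integration variable, with an absolute constant $R(V_{87})$, both for $V_{87}$ and for its partial integrals, follow from Lemma~\ref{lem:omin} applied to the semialgebraic set $\mathcal{R}(B)$ of Lemma~\ref{lem:preimage_of_unit_disc}.

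With both hypotheses verified, Proposition~\ref{prop:4.3} yields
\[
  \sum_{\eII''}\theta(\eII'')V_{87}(\N\eI_1, \ldots, \N\eI_6; B) = (\rho_Kh_K)^6\,\mathcal{A}(\theta_8(\eII'', \eI_7), \eI_7, \eI_6, \ldots, \eI_1)\, V_{870}(B) + O(B(\log B)^4\log\log B),
\]
where the remark after Proposition~\ref{prop:4.3} together with associativity of iterated averages identifies its constant ``$\theta_0$'' with the full average of $\theta_8$ over $\eI_7, \ldots, \eI_1$; evaluating this via \eqref{eq:arith_full_average} at the local values $\theta_{8,\p}$ of Lemma~\ref{lem:A3_first_summation_M1} (equivalently, invoking \eqref{eq:general_average_theta_1} with $\rk = 6$) gives exactly $\theta_0 = \prod_\p(1-1/\N\p)^6(1+6/\N\p+1/\N\p^2)$. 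It remains to collect constants: for imaginary quadratic $K$ we have $s_1 = 0$, $s_2 = 1$, $R_K = 1$, so $\rho_K = 2\pi/(\omega_K\sqrt{|\Delta_K|})$, whence $(2/\sqrt{|\Delta_K|})^2(\rho_Kh_K)^6 = 4(2\pi)^6 h_K^6/(\omega_K^6|\Delta_K|^4) = \pi^6(2/\sqrt{|\Delta_K|})^8(h_K/\omega_K)^6$, and the $O(B(\log B)^3)$ term from Lemma~\ref{lem:A3_second_summation_ideals_M11} is absorbed into the new error term. The one genuinely technical point is the second step: fixing the exponents $k_j$, $a$, $b$ so that $V_{87}$ really fits case~(c) --- in particular so that both the per-variable convergence needed for the iterated application of Proposition~\ref{prop:3.9} inside the proof of Proposition~\ref{prop:4.3} and the truncation $t_j \le B$ are respected; the remaining steps (membership in $\Theta_6'(4)$, the Euler-factor evaluation, and tracking the archimedean and class-number constants) are routine bookkeeping.
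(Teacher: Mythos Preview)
Your approach is essentially the paper's, and all the bookkeeping (the $\Theta_6'(4)$ membership, the invocation of Lemma~\ref{lem:omin}, the evaluation of $\theta_0$ via \eqref{eq:general_average_theta_1}, and the constant tracking) is correct. The one place where the paper is more concrete than you is exactly the point you flag as technical. Rather than bounding the $\e_8$-integral by Lemma~\ref{lem:5.1_4},~(2) and then the $t_7$-integral separately to land in case~(c), the paper applies Lemma~\ref{lem:5.1_4},~(6) directly to the combined $(\e_8,t_7)$-integral coming from \eqref{eq:A3_height_5}, obtaining in one stroke
\[
  V_{87}(t_1,\ldots,t_6;B) \ll \frac{1}{t_5}\cdot\frac{B^{2/3}t_5^{2/3}}{t_1^{1/3}t_3^{1/3}t_4^{2/3}t_6^{2/3}}
  = \frac{B}{t_1\cdots t_6}\left(\frac{B}{t_1^{2}t_2^{3}t_3^{2}t_4 t_5^{2}t_6}\right)^{-1/3}.
\]
The point is that the exponent vector $(k_1,\ldots,k_6)=(2,3,2,1,2,1)$ is precisely that of height condition~\eqref{eq:A3_height_3}, so $V_{87}=0$ unless $t_1^{k_1}\cdots t_6^{k_6}\le B$; this places $V_{87}$ in case~(b) with $a=1/3$, and the second bound with an exponent $b>0$ that you propose to extract from \eqref{eq:A3_height_1}--\eqref{eq:A3_height_4} is then unnecessary (recall case~(b) implies case~(c)). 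So your plan works, but you are doing slightly more than needed; the cleaner route is to use part~(6) of Lemma~\ref{lem:5.1_4} and case~(b).
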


\begin{proof}
  We start from Lemma
  \ref{lem:A3_second_summation_ideals_M11}. Applying Lemma
  \ref{lem:5.1_4}, (6), to \eqref{eq:A3_height_5}, we see that
  \begin{equation*}
    V_{87}(t_1, \ldots, t_6; B) \ll\frac{1}{t_5}\cdot \frac{B^{2/3}t_5^{2/3}}{t_1^{1/3}t_3^{1/3}t_4^{2/3}t_6^{2/3}}
    =\frac{B}{t_1 \cdots
      t_6}\left(\frac{B}{t_1^2t_2^3t_3^2t_4t_5^2t_6}\right)^{-1/3}\text.
  \end{equation*}
  We apply Proposition~\ref{prop:4.3} with $r=5$ (the assumptions on
  $V = V_{87}$ are satisfied by Lemma~\ref{lem:omin}). By Lemma
  \ref{lem:general_comp_theta_1}, $\theta_0 =
  \mathcal{A}(\theta_8(\eII'), \eII')$ has the desired form.
\end{proof}

\subsubsection{The second summation over $\e_4$ in
  $M^{(84)}_\classtuple(B)$}

\begin{lemma}\label{lem:A3_second_summation_M12}
  Write $\ee'' := (\e_1, \e_2, \e_3, \e_5, \e_6, \e_7)$. Moreover, let $\bO'' :=
  \OO_{1*} \times\OO_{2*} \times\OO_{3*} \times\OO_{5*}\times\OO_{6*}
  \times \OO_{7*}$. We have
  \begin{align*}
    M_\classtuple^{(84)}(B) &= \left(\frac{2}{\sqrt{|\Delta_K|}}\right)^2\sum_{\ee''\in \bO''} \mathcal{A}(\theta_8(\eII'), \eI_4)V_{84}(\N\eI_1, \N\eI_2, \N\eI_3, \N\eI_5, \N\eI_6, \N\eI_7; B)\\
    &+ O_\classtuple(B(\log B)^3),
  \end{align*}
  For $t_1$, $t_2$, $t_3$, $t_5$, $t_6$, $t_7 \geq 1$,
  \begin{equation*}
    V_{84}(t_1, t_2, t_3, t_5, t_6, t_7; B) := \frac{\pi}{t_5}\int_{\substack{(\sqrt{t_1}, \ldots, \sqrt{t_7}, \e_8) \in \mathcal{R}(B)\\t_4 \geq t_7\text{, }\abs{\e_8}\geq t_7}}\dd t_4 \dd \e_8,
  \end{equation*}
  with a real variable $t_4$ and a complex variable $\e_8$.
\end{lemma}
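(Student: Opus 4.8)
The plan is to mirror the proof of Lemma~\ref{lem:A3_second_summation_M11}, applying the machinery of Section~\ref{sec:second_summation} with $b_0 = 1$, but carrying out the second summation over $\e_4$ instead of over $\e_7$, under the restriction $\N\eI_4 \geq \N\eI_7$ that defines $M^{(84)}_\classtuple(B)$. For $\aaa \in \I_K$ and $t \geq 1$ I would put $\vartheta(\aaa) := \theta_8(\eI_1, \eI_2, \eI_3, \aaa, \eI_5, \eI_6, \eI_7)$, regarded as a function of $\eI_4$, and $g(t) := V_8(\N\eI_1, \N\eI_2, \N\eI_3, t, \N\eI_5, \N\eI_6, \N\eI_7; B)$, so that
\begin{equation*}
  M^{(84)}_\classtuple(B) = \frac{2}{\sqrt{|\Delta_K|}}\sum_{\ee'' \in \bO''}\,\sum_{\substack{\e_4 \in \OO_{4*}\\\N\eI_4 \geq \N\eI_7}}\vartheta(\eI_4)\,g(\N\eI_4)\text.
\end{equation*}
As $\OO_{4*} = \OO_4^{\neq 0}$, there is no contribution from $\e_4 = 0$, and Proposition~\ref{prop:second_summation} (in the form of Remark~\ref{rem:second_summation_no_rho}, with $\id q = \OO_K$ and $n = A = 1$) will apply to the inner sum.

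Next I would verify the hypotheses. Since $E_4$ is adjacent only to $E_1$ and $E_7$ in Figure~\ref{fig:A3_dynkin}, the last assertion of Lemma~\ref{lem:general_comp_theta_1} gives $\vartheta \in \Theta(\bbb, 1, 1, 1)$ with $\bbb$ the product of the primes dividing $\eI_2\eI_3\eI_5\eI_6$, so by Lemma~\ref{lem:6.8}, \emph{(2)}, $\vartheta$ satisfies \eqref{eq:second_sum_thetabound} with $C = 0$ and $c_\vartheta = 2^{\omega_K(\eI_2\eI_3\eI_5\eI_6)}$. The height conditions \eqref{eq:A3_height_1} and \eqref{eq:A3_height_3} make $g$ vanish once $t$ exceeds a threshold. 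For the decay of $g$ I would not use \eqref{eq:A3_height_5} by itself, but bound the defining $\e_8$-integral by a geometric mean $\vol(A \cap C) \leq \vol(A)^{1/2}\vol(C)^{1/2}$ of the elementary disk bound coming from \eqref{eq:A3_height_4} (which contains $\e_4$) and the bound from Lemma~\ref{lem:5.1_4}, \emph{(2)}, applied to \eqref{eq:A3_height_5}; this gives $g(t) \ll c_g\,t^{-1/2}$ with $c_g = B^{3/4}\,\N\eI_1^{-1/2}\N\eI_2^{-1/2}\N\eI_3^{-3/4}\N\eI_5^{-3/4}\N\eI_6^{-1}\N\eI_7^{-3/4}$ (each $\abs{\e_j}$ being comparable, up to a $\classtuple$-dependent constant, to $\N\eI_j$). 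Finally, Lemma~\ref{lem:omin} shows that on $[\N\eI_7, t_2]$ the function $g$ is piecewise continuously differentiable and monotonic, with a number of pieces bounded independently of all parameters, so $R(g) = O(1)$.

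Applying Proposition~\ref{prop:second_summation} then replaces $\sum_{\e_4}$ by $\tfrac{2\pi}{\sqrt{|\Delta_K|}}\mathcal{A}(\vartheta(\aaa), \aaa, \OO_K)\int_{t \geq \N\eI_7}g(t)\,\dd t$ up to an error $\ll R(g)\,c_\vartheta c_g\log(t_2 + 2)$ (the $a = -\tfrac12$ case of Remark~\ref{rem:second_summation_no_rho}). Because $\mathcal{A}(\vartheta(\aaa), \aaa, \OO_K) = \mathcal{A}(\theta_8(\eII'), \eI_4)$ and $\pi\int_{t \geq \N\eI_7}g(t)\,\dd t = V_{84}(\N\eI_1, \N\eI_2, \N\eI_3, \N\eI_5, \N\eI_6, \N\eI_7; B)$, the resulting main term is precisely the one claimed. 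For the error I would pass from the $\ee''$ to tuples of integral ideals (each occurring $\omega_K$ times, absorbed into $O_\classtuple$) and use that \eqref{eq:A3_height_1}, \eqref{eq:A3_height_3} together with $\N\eI_4 \geq \N\eI_7$ confine $(\N\eI_1, \dots, \N\eI_7)$ to the region $\N\eI_1^4\N\eI_2^5\N\eI_3^3\N\eI_5^3\N\eI_6\N\eI_7^4 \leq B^2$; after summing $\eI_1$ against this constraint via $\sum_{\N\eI_1 \leq X}\N\eI_1^{-1/2} \ll X^{1/2}$ (Lemma~\ref{lem:3.4}), the exponents of $\N\eI_2, \N\eI_3, \N\eI_5, \N\eI_6, \N\eI_7$ in the resulting bound all exceed $1$, so the remaining summations converge after absorbing the factor $2^{\omega_K(\eI_2\eI_3\eI_5\eI_6)}$ by Lemma~\ref{lem:omega} and Lemma~\ref{lem:3.4}; this leaves an error $O_\classtuple(B(\log B)^3)$. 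As in Lemma~\ref{lem:A3_first_summation_M1}, replacing $\N\eI_4 \geq \N\eI_7$ by $\N\eI_4 > \N\eI_7$ changes nothing, since the contribution of $\N\eI_4 = \N\eI_7$ is absorbed into the error.

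I expect the error estimate to be the main obstacle. The bound for $g$ obtained from \eqref{eq:A3_height_5} alone, namely $g(t) \ll B\,\N\eI_1^{-1}\N\eI_7^{-2}\,t^{-2}$, carries a full power of $B$ and involves too few of the $\N\eI_j$ in the denominator, so summing it against the available constraints does not yield $O(B(\log B)^3)$; the interpolated bound above is essential, and the whole point of isolating the case $\N\eI_4 \geq \N\eI_7$ is that it converts \eqref{eq:A3_height_1} and \eqref{eq:A3_height_3} into constraints with exponents on $\N\eI_2, \N\eI_3, \N\eI_5, \N\eI_6$ large enough to close the summation.
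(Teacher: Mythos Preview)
Your argument is correct and yields the claimed statement, but your route for bounding $g$ differs from the paper's. The paper simply takes the $\abs{a}^{-1/2}$ branch of Lemma~\ref{lem:5.1_4}\,(2) applied to \eqref{eq:A3_height_5}, obtaining
\[
g(t)\ \ll\ \frac{(B\,\N\eI_5)^{1/2}}{\N\eI_5\,(\N\eI_3\N\eI_6^2\N\eI_7)^{1/2}}\ =:\ c_g,
\]
which is independent of $t$, so Proposition~\ref{prop:second_summation} is applied with $a=0$; the resulting per-term error is $c_\vartheta c_g\,t_2^{1/2}$ with $t_2=B^{1/2}/(\N\eI_1\N\eI_2\N\eI_3^{1/2}\N\eI_5^{1/2}\N\eI_7^{1/2})$ from \eqref{eq:A3_height_1}, which equals exactly your $c_g$ up to the extra $\log B$ you incur from the $a=-\tfrac12$ case. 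The paper then sums against the single constraint $\N\eI_1^2\N\eI_2^3\N\eI_3^2\N\eI_5^2\N\eI_6\N\eI_7\le B$ (from \eqref{eq:A3_height_3} with $\N\eI_4\ge \N\eI_7$), while you sum against the product of \eqref{eq:A3_height_1} and \eqref{eq:A3_height_3}; both give $O_\classtuple(B\log B)$, well within the stated error. So your geometric-mean step is not needed: your closing remark that ``the interpolated bound above is essential'' overstates the situation, since you overlooked that Lemma~\ref{lem:5.1_4}\,(2) already offers the $t$-independent bound $\abs{a}^{-1/2}$, which the paper uses directly.
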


\begin{proof}
  This is similar to Lemma \ref{lem:A3_second_summation_M11}. Let
  $\vartheta(\aaa) := \theta_8(\eI_1, \eI_2, \eI_3, \aaa, \eI_5,
  \eI_6, \eI_7)$ and $g(t) := V_8(\N\eI_1, \N\eI_2, \N\eI_3, t,
  \N\eI_5, \N\eI_6, \N\eI_7; B)$. Then
  \begin{equation}\label{eq:A3_sec_sum_M12_start}
    M^{(84)}_\classtuple(B) = \frac{2}{\sqrt{|\Delta_K|}}\sum_{\ee'' \in \bO''}\sum_{\substack{\e_4 \in \OO_{4*}\\\N\eI_4 \geq \N\eI_7}}\vartheta(\eI_4)g(\N\eI_4) + O(B(\log B)^3)\text.
  \end{equation}
  By Lemma \ref{lem:general_comp_theta_1} and Lemma \ref{lem:6.8},
  $\vartheta$ satisfies \eqref{eq:second_sum_thetabound} with $C =
  0$ and $c_\theta \ll 2^{\omega(\eI_2\eI_3\eI_5\eI_6)}$. By
  \eqref{eq:A3_height_1}, $g(t) = 0$ whenever $t > t_2
  := B^{1/2}/(\N\eI_1\N\eI_2\N\eI_3^{1/2}\N\eI_5^{1/2}
  \N\eI_7^{1/2})$. Moreover, applying Lemma \ref{lem:5.1_4}, (2), to
  \eqref{eq:A3_height_5}, we see that
  \begin{equation*}
    g(t) \ll \frac{1}{\N\eI_5}\cdot \frac{(B\N\eI_5)^{1/2}}{(\N\eI_3\N\eI_6^2\N\eI_7)^{1/2}} = \frac{B^{1/2}}{\N\eI_3^{1/2}\N\eI_5^{1/2}\N\eI_6\N\eI_7^{1/2}} =: c_g\text.
  \end{equation*}

  Clearly, we may assume that $\N\eI_7 \leq t_2$. Using the notation
  from Section \ref{sec:second_summation} (with $a = 0$), the sum over
  $\e_4$ in \eqref{eq:A3_sec_sum_M12_start} is just $S(\N\eI_7, t_2)$,
  and Proposition \ref{prop:second_summation} yields
  \begin{align*}
    S(\N\eI_7, t_2) &= \frac{2\pi}{\sqrt{|\Delta_K|}}
    \mathcal{A}(\vartheta(\aaa), \aaa, \OO_K) \int_{t \geq \N\eI_7}g(t)\dd
    t\\ &+
    O\left(\frac{2^{\omega(\eI_2\eI_3\eI_5\eI_6)}B^{1/2}}{\N\eI_3^{1/2}\N\eI_5^{1/2}\N\eI_6\N\eI_7^{1/2}}\cdot
      \frac{B^{1/4}}{\N\eI_1^{1/2}\N\eI_2^{1/2}\N\eI_3^{1/4}\N\eI_5^{1/4}\N\eI_7^{1/4}}\right).
  \end{align*}
  Now $\pi \int_{t \geq \N\eI_7}g(t)\dd t = V_{84}(\N\eI_1, \N\eI_2,
  \N\eI_3, \N\eI_5, \N\eI_6, \N\eI_7; B)$, so we obtain the correct
  main term.  Let us consider the error term. Height condition
  \eqref{eq:A3_height_3} and $\N\eI_4 \geq \N\eI_7$ imply that both
  the sum and the integral are zero unless
  \begin{equation}
    \label{eq:A3_third_height_cond_M12}
    \N\eI_1^2\N\eI_2^3\N\eI_3^2\N\eI_5^2\N\eI_6\N\eI_7 \leq B\text.
  \end{equation}
  Since $|\OO_K^\times| < \infty$, we may sum over the $\eII'' :=(\eI_1, \eI_2, \eI_3, \eI_5, \eI_6, \eI_7)$ satisfying
  \eqref{eq:A3_third_height_cond_M12} instead of the $\ee''$, so the
  error term is
    \begin{align*}
      &\ll \sum_{\substack{\eI_1, \eI_2, \eI_3, \eI_5, \eI_6, \eI_7\in\I_K\\\eqref{eq:A3_third_height_cond_M12}}}\frac{2^{\omega(\eI_2\eI_3\eI_5\eI_6)} B^{3/4}}{\N\eI_1^{1/2}\N\eI_2^{1/2}\N\eI_3^{3/4}\N\eI_5^{3/4}\N\eI_6\N\eI_7^{3/4}}\\
      &\ll \sum_{\substack{\eI_1, \eI_2, \eI_3, \eI_5, \eI_6\in\I_K\\\N\eI_j \leq B}}\frac{2^{\omega(\eI_2\eI_3\eI_5\eI_6)} B}{\N\eI_1\N\eI_2^{5/4}\N\eI_3^{5/4}\N\eI_5^{5/4}\N\eI_6^{5/4}}\\
      &\ll B(\log B)\text.\qedhere
    \end{align*}
\end{proof}

\begin{lemma}\label{lem:A3_second_summation_ideals_M12}
  If $\eII''$ runs over all six-tuples $(\eI_1, \eI_2, \eI_3, \eI_5,
  \eI_6, \eI_7)$ of nonzero ideals of $\OO_K$, then we have
  \begin{align*}
    N_{84}(B) &= \left(\frac{2}{\sqrt{|\Delta_K|}}\right)^2\sum_{\eII''}\mathcal{A}(\theta_8(\eII'), \eI_4) V_{84}(\N\eI_1, \N\eI_2, \N\eI_3, \N\eI_5, \N\eI_6, \N\eI_7; B)\\
    &+ O(B(\log B)^3)\text.
  \end{align*}
\end{lemma}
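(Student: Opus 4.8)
The plan is to deduce Lemma~\ref{lem:A3_second_summation_ideals_M12} from Lemma~\ref{lem:A3_second_summation_M12} by exactly the argument used to deduce Lemma~\ref{lem:A3_second_summation_ideals_M11} from Lemma~\ref{lem:A3_second_summation_M11}; the only change is that the six summation variables are now $\e_1,\e_2,\e_3,\e_5,\e_6,\e_7$ instead of $\e_1,\dots,\e_6$.

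First I would record the relevant class-group bijection. By \eqref{eq:A3_def_Oj}, expressing $[\OO_1],[\OO_2],[\OO_3],[\OO_5],[\OO_6],[\OO_7]$ in the ideal class group $\mathrm{Cl}_K$ in terms of $[C_0],\dots,[C_5]$ yields a $6\times 6$ integer matrix. Expanding its determinant along the rows coming from $\OO_5=C_3$ and $\OO_6=C_5$, and then along the row coming from $\OO_3=C_2$, leaves an explicit $3\times 3$ determinant which equals $\pm 1$; hence the $6\times 6$ matrix has determinant $\pm 1$ and induces an automorphism of $\mathrm{Cl}_K^6$. Therefore, as $\classtuple=(C_0,\dots,C_5)$ runs through $\classrep^6$, the six-tuple of classes $[\OO_1^{-1}],[\OO_2^{-1}],[\OO_3^{-1}],[\OO_5^{-1}],[\OO_6^{-1}],[\OO_7^{-1}]$ runs through every six-tuple of ideal classes of $K$, each exactly once.

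Next I would carry out the passage from elements to integral ideals. Given $(\aaa_1,\aaa_2,\aaa_3,\aaa_5,\aaa_6,\aaa_7)\in\I_K^6$, the previous step provides a unique $\classtuple\in\classrep^6$ with $[\OO_j^{-1}]=[\aaa_j]$ for $j\in\{1,2,3,5,6,7\}$; for this $\classtuple$ each $\aaa_j\OO_j$ is a principal fractional ideal, so the equation $\e_j\OO_j^{-1}=\aaa_j$ with $\e_j\in\OO_{j*}=\OO_j^{\neq 0}$ has exactly $\omega_K=|\OO_K^\times|$ solutions, while for every other $\classtuple$ it has none. Thus $(\classtuple,\ee'')\mapsto(\eI_1,\eI_2,\eI_3,\eI_5,\eI_6,\eI_7)$ is an $\omega_K^6$-to-$1$ map from $\classrep^6\times\bO''$ onto $\I_K^6$. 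Since the summand $\mathcal{A}(\theta_8(\eII'),\eI_4)\,V_{84}(\N\eI_1,\N\eI_2,\N\eI_3,\N\eI_5,\N\eI_6,\N\eI_7;B)$ in Lemma~\ref{lem:A3_second_summation_M12} depends only on the ideals $\eI_1,\dots,\eI_7$, not on the $\e_j$ or on the chosen representatives, the double sum $\sum_{\classtuple\in\classrep^6}\sum_{\ee''\in\bO''}$ of this summand equals $\omega_K^6$ times its sum over all $\eII''=(\eI_1,\eI_2,\eI_3,\eI_5,\eI_6,\eI_7)\in\I_K^6$.

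Finally I would substitute the formula of Lemma~\ref{lem:A3_second_summation_M12} into the definition $N_{84}(B)=\frac{1}{\omega_K^6}\sum_{\classtuple\in\classrep^6}M^{(84)}_\classtuple(B)$: the factor $1/\omega_K^6$ cancels the $\omega_K^6$ produced in the previous step, and the finitely many ($h_K^6$) error terms $O_\classtuple(B(\log B)^3)$ combine into a single $O(B(\log B)^3)$ whose implied constant depends only on $K$. This yields the claimed identity. The whole argument is bookkeeping; the only point requiring any genuine computation --- and hence the nearest thing to an obstacle --- is the $6\times 6$ determinant evaluation, which is precisely what makes the substitution over $\classtuple$ a bijection.
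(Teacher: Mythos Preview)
Your proposal is correct and follows essentially the same approach as the paper, which simply notes that the proof is ``entirely analogous to the proof of Lemma~\ref{lem:A3_second_summation_ideals_M11}.'' You have spelled out more explicitly than the paper does the reason why $([\OO_1^{-1}],[\OO_2^{-1}],[\OO_3^{-1}],[\OO_5^{-1}],[\OO_6^{-1}],[\OO_7^{-1}])$ runs through all of $\mathrm{Cl}_K^6$ as $\classtuple$ runs through $\classrep^6$, namely by verifying that the exponent matrix from \eqref{eq:A3_def_Oj} has determinant $\pm 1$; the paper leaves this as ``follows directly from \eqref{eq:A3_def_Oj}.''
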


\begin{proof}
  This is entirely analogous to the proof of Lemma \ref{lem:A3_second_summation_ideals_M11}.
\end{proof}

\subsubsection{The remaining summations in $N_{84}(B)$}
\begin{lemma}\label{lem:A3_completion_M12}
  We have
  \begin{equation*}
    N_{84}(B) = \pi^6\left(\frac{2}{\sqrt{|\Delta_K|}}\right)^8 \left(\frac{h_K}{\omega_K}\right)^6 \theta_0V_{840}(B) + O(B(\log B)^4\log \log B),
  \end{equation*}
  where
  \begin{equation*}
    V_{840}(B) := \int_{t_1, t_2, t_3, t_5, t_6, t_7 \ge 1}V_{84}(t_1, t_2, t_3, t_5, t_6, t_7;B) \dd t_1 \dd t_2 \dd t_3 \dd t_5 \dd t_6 \dd t_7
  \end{equation*}
  and $\theta_0$ is given in \eqref{eq:theta_0}.
\end{lemma}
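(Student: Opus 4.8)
The plan is to mirror the proof of Lemma~\ref{lem:A3_completion_M11} almost verbatim, adapting only the bookkeeping of which variable plays the role of the ``last'' summation variable. We start from Lemma~\ref{lem:A3_second_summation_ideals_M12}, which expresses $N_{84}(B)$ (up to the error $O(B(\log B)^3)$) as a sum over six-tuples of nonzero ideals $\eII'' = (\eI_1, \eI_2, \eI_3, \eI_5, \eI_6, \eI_7)$ of the arithmetic function $\mathcal{A}(\theta_8(\eII'), \eI_4)$ times the volume factor $V_{84}(\N\eI_1, \N\eI_2, \N\eI_3, \N\eI_5, \N\eI_6, \N\eI_7; B)$. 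The goal is to carry out the remaining six summations at once using Proposition~\ref{prop:4.3} with $r = 5$, $s = 0$.

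First I would check that $V_{84}$ satisfies the hypotheses required by Proposition~\ref{prop:4.3}. Applying Lemma~\ref{lem:5.1_4}, (6), to the fifth height condition \eqref{eq:A3_height_5} (which has the shape $\abs{az^2t^{1/2} + bzt}\le 1$ in the variables $z = \e_8$, $t = t_4$, after absorbing the norms of the remaining ideals into $a$ and $b$), I expect to obtain a bound of the form
\begin{equation*}
  V_{84}(t_1, t_2, t_3, t_5, t_6, t_7; B) \ll \frac{1}{t_5}\cdot\frac{B^{2/3}t_5^{2/3}}{t_1^{1/3}t_3^{1/3}t_6^{2/3}t_7^{2/3}} = \frac{B}{t_1t_2t_3t_5t_6t_7}\left(\frac{B}{t_1^2t_2^3t_3^2t_4^{0}t_5^2t_6t_7^{?}}\right)^{-a}
\end{equation*}
for a suitable positive exponent $a$ (likely $a = 1/3$) and suitable $k_i$; the precise exponents come out of the monomial $\e_1^2\e_2^3\e_3^2\e_4\e_5^2\e_6$ in \eqref{eq:A3_height_3} combined with the elimination of $t_4$ by integration. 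I would verify that case (b) (or at least (c)) at the start of Section~\ref{sec:further_summations} holds, that $V_{84} = 0$ outside the appropriate box, and that the piecewise $C^1$-monotonicity condition on the last variable (and on the iterated integrals $V_{84,l}$) follows from Lemma~\ref{lem:omin}, exactly as invoked in the proof of Lemma~\ref{lem:A3_completion_M11}.

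Next I would identify the arithmetic function: by Lemma~\ref{lem:general_comp_theta_1}, $\theta_8 = \theta_1' \in \Theta_7'(2)$ (the vertex count $r+s+t+2 = 7$ matches, since in the $\Athree$ application $(r,s,t) = (2,2,1)$ minus the two deleted vertices gives $|V'| = 7$), so $\mathcal{A}(\theta_8(\eII'),\eI_4) = \mathcal{A}(\theta_8(\eII'),\eII')$ after the full repeated averaging, and \eqref{eq:general_average_theta_1} with $\rho = 6$ gives precisely $\theta_0$ as in \eqref{eq:theta_0}. Applying Proposition~\ref{prop:4.3} then produces the main term $(\rho_Kh_K)^6\,\theta_0\,V_{840}(B)$ together with an error $O_{V,C}(B(\log B)^4\log\log B)$; unwinding $\rho_K = (2\pi)^2R_K/(\omega_K\sqrt{|\Delta_K|})$ in the imaginary quadratic case, where $R_K = 1$ and $s_1 = 0$, $s_2 = 1$, gives $\rho_K = 2\pi/(\omega_K\sqrt{|\Delta_K|})$, so $(\rho_Kh_K)^6 = (2\pi)^6(h_K/\omega_K)^6/|\Delta_K|^3$. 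Combining this with the two factors of $2/\sqrt{|\Delta_K|}$ from Lemma~\ref{lem:A3_second_summation_ideals_M12} and rewriting $\pi^6\cdot(2\pi)^6/(2^2) = \ldots$ — more simply, collecting $(2/\sqrt{|\Delta_K|})^2\cdot(\rho_Kh_K)^6$ and pulling out a $\pi^6$ — yields exactly $\pi^6(2/\sqrt{|\Delta_K|})^8(h_K/\omega_K)^6\theta_0$, as claimed. The main obstacle I anticipate is not conceptual but the bookkeeping in the first paragraph: getting the exponents $k_i$ and the power $a$ in the $V_{84}$ bound right so that Proposition~\ref{prop:4.3} applies with error exponent $r - 1 = 4$ in the final $\log$-power, and making sure the box constraint ``$V_{84} = 0$ unless $t_1^{k_1}\cdots t_7^{k_7}\le B$'' is genuinely implied by one of \eqref{eq:A3_height_1}--\eqref{eq:A3_height_4}; this requires carefully tracking how the two integrations (over $t_4$ and over $\e_8$) interact with the five height inequalities, and is the step most likely to need the same ``choose different orders of summation'' care used elsewhere in Section~\ref{sec:A3}.
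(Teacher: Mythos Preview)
Your overall architecture is right (start from Lemma~\ref{lem:A3_second_summation_ideals_M12}, bound $V_{84}$, apply Proposition~\ref{prop:4.3} with $r=5$, identify $\theta_0$ via Lemma~\ref{lem:general_comp_theta_1}, and unwind the constants), and your computation of the constants at the end is correct. But there is a genuine gap in the bounding step, and it is exactly at the spot you flagged as worrying.

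First, a small error: the fifth height condition \eqref{eq:A3_height_5} does \emph{not} have the shape $\abs{az^2t^{1/2}+bzt}\le 1$ in $(z,t)=(\e_8,t_4)$. Since $\e_4=\sqrt{t_4}$ only enters through $\e_4^2=t_4$ in the $\e_8$-linear term and not at all in the $\e_8^2$-term, the shape is $\abs{az^2+bzt}\le 1$, so you need Lemma~\ref{lem:5.1_4}~(5) with $k=2$, not~(6). This yields
\[
V_{84}(t_1,t_2,t_3,t_5,t_6,t_7;B)\ll\frac{1}{t_5}\cdot\frac{B^{3/4}t_5^{3/4}}{t_1^{1/2}t_3^{1/4}t_6^{1/2}t_7^{5/4}}=\frac{B}{t_1t_2t_3t_5t_6t_7}\left(\frac{B}{t_1^{2}t_2^{4}t_3^{3}t_5^{3}t_6^{2}t_7^{-1}}\right)^{-1/4},
\]
so $a=1/4$, not $1/3$. (Relatedly, you wrote $s=0$; since you are in the setting of cases~(b)/(c), you need $s=1$.)

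The substantive gap is this: unlike in the $V_{87}$ case, the monomial $t_1^{2}t_2^{4}t_3^{3}t_5^{3}t_6^{2}t_7^{-1}$ carries a \emph{negative} exponent on $t_7$, so no single height inequality among \eqref{eq:A3_height_1}--\eqref{eq:A3_height_4} forces $t_1^{k_1}\cdots t_7^{k_7}\le B$. Hence case~(b) of Section~\ref{sec:further_summations} genuinely fails here, and you cannot ``hope'' it holds. The paper therefore establishes case~(c): it produces a \emph{second} bound, using \eqref{eq:A3_height_1} to bound the $t_4$-integration and \eqref{eq:A3_height_2} to bound the $\e_8$-integration directly, giving
\[
V_{84}(t_1,t_2,t_3,t_5,t_6,t_7;B)\ll\frac{B}{t_1t_2t_3t_5t_6t_7}\left(\frac{B}{t_1^{2}t_2^{4}t_3^{3}t_5^{3}t_6^{2}t_7^{-1}}\right)^{1/2}.
\]
With these two bounds (exponents $-1/4$ and $+1/2$ on the same bracket), case~(c) applies with $k_7=-1\ne 0$, and Proposition~\ref{prop:4.3} goes through. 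Your proposal is missing this second bound; without it the ``box constraint'' you worry about at the end is not available and Proposition~\ref{prop:4.3} cannot be invoked.
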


\begin{proof}
  We start from Lemma \ref{lem:A3_second_summation_ideals_M12}. Using
  Lemma \ref{lem:5.1_4}, (5), applied to \eqref{eq:A3_height_5}, we
  have
  \begin{align*}
    V_{84}(t_1, t_2, t_3, t_5, t_6, t_7; B) &\ll\frac{1}{t_5}\cdot\frac{B^{3/4}t_5^{3/4}}{t_1^{1/2}t_3^{1/4}t_6^{1/2}t_7^{5/4}}\\
    &=\frac{B}{t_1t_2t_3t_5t_6t_7}\left(\frac{B}{t_1^2t_2^4t_3^3t_5^3t_6^2t_7^{-1}}\right)^{-1/4}\text.
  \end{align*}
  Moreover, using \eqref{eq:A3_height_1} to bound $t_4$ and \eqref{eq:A3_height_2} to bound $\e_8$, we
  have
  \begin{align*}
    V_{84}(t_1, t_2, t_3, t_5, t_6, t_7; B) &\ll\frac{1}{t_5}\cdot\frac{B^{1/2}}{t_1t_2t_3^{1/2}t_5^{1/2}t_7^{1/2}}\cdot\frac{B}{t_1t_2^2t_3^2t_5t_6^2}\\
    &=\frac{B}{t_1t_2t_3t_5t_6t_7}\left(\frac{B}{t_1^2t_2^4t_3^3t_5^3t_6^2t_7^{-1}}\right)^{1/2}\text.
  \end{align*}
  We apply Proposition~\ref{prop:4.3} with $r=5$. Again, we evaluate $\theta_0 = \mathcal{A}(\theta_8(\eII'), \eII')$ using Lemma \ref{lem:general_comp_theta_1}.
\end{proof}

\subsubsection{Combining the summations}
\begin{lemma}\label{lem:A3_completion}
  We have
  \begin{equation*}
    N_{U,H}(B) = \left(\frac{2}{\sqrt{|\Delta_K|}}\right)^8 \left(\frac{h_K}{\omega_K}\right)^6 \theta_0V_0(B) + O(B(\log B)^4\log \log B),
  \end{equation*}
  where $\theta_0$ is given in \eqref{eq:theta_0} and
  \begin{equation*}
    V_0(B) := \int\limits_{\substack{\abs{\e_1}, \ldots, \abs{\e_8} \ge 1\\(\e_1, \ldots, \e_8) \in \mathcal{R}(B)}}\frac{1}{\abs{\e_5}} \dd \e_1 \cdots \dd \e_8\text,
  \end{equation*}
  with complex variables $\e_i$.
\end{lemma}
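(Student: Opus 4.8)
The plan is to assemble Lemma~\ref{lem:A3_completion} from the ``completion'' results already established for the two pieces of $N_{U,H}(B) = N_8(B) + N_7(B)$. Since $N_8(B) = N_{87}(B) + N_{84}(B) + O(B(\log B)^3)$ and Lemma~\ref{lem:A3_completion_M11}, Lemma~\ref{lem:A3_completion_M12} express $N_{87}(B)$, $N_{84}(B)$ as $\pi^6(2/\sqrt{|\Delta_K|})^8(h_K/\omega_K)^6\theta_0$ times $V_{870}(B)$, respectively $V_{840}(B)$, up to $O(B(\log B)^4\log\log B)$, adding them gives the same constant times $V_{870}(B) + V_{840}(B)$.

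The next step is to merge these two integrals. Unfolding the definitions of $V_{870}$, $V_{840}$, $V_{87}$, $V_{84}$, each of $V_{870}(B)$ and $V_{840}(B)$ is $\pi$ times the integral of $1/t_5$ over $(t_1,\dots,t_7)\in\RR_{\geq 1}^{7}$ and $\e_8\in\CC$ subject to $(\sqrt{t_1},\dots,\sqrt{t_7},\e_8)\in\mathcal{R}(B)$ and $\abs{\e_8}\geq t_7$, with the extra condition $t_4<t_7$ in the first case and $t_4\geq t_7$ in the second. As these two conditions partition $\RR_{\geq 1}^{2}$ in the $(t_4,t_7)$-variables, Fubini gives that $V_{870}(B)+V_{840}(B)$ equals $\pi$ times the integral of $1/t_5$ over $\{t_1,\dots,t_7\geq1,\ (\sqrt{t_1},\dots,\sqrt{t_7},\e_8)\in\mathcal{R}(B),\ \abs{\e_8}\geq t_7\}$. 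Then I would pass to complex coordinates: by the rotation argument used in the proof of Lemma~\ref{lem:A3_first_summation_M1}, for complex $\e_1,\dots,\e_7$ with $\abs{\e_j}=t_j$ the inner integral $\int_{\abs{\e_8}\geq t_7,\,(\e_1,\dots,\e_8)\in\mathcal{R}(B)}\dd\e_8$ depends only on $t_1,\dots,t_7$ and $B$; hence, rewriting each real integral over $t_j\geq1$ as $\pi^{-1}$ times the integral over $\abs{\e_j}\geq1$ via polar coordinates (exactly as in the final step of the proof of Lemma~\ref{lem:alpha_integral}), the merged integral becomes $\pi^{-7}$ times the part $V_0^{\geq}(B)$ of $V_0(B)$ on which in addition $\abs{\e_8}\geq\abs{\e_7}$ (which forces $\abs{\e_8}\geq1$, so nothing is lost). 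All powers of $\pi$ then cancel, leaving $N_8(B) = (2/\sqrt{|\Delta_K|})^8(h_K/\omega_K)^6\theta_0\,V_0^{\geq}(B) + O(B(\log B)^4\log\log B)$.

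For $N_7(B)$ I would appeal to symmetry. The involution $\sigma$ that interchanges $(\e_1,\e_4,\e_7)$ with $(\e_3,\e_6,\e_8)$ and fixes $\e_2,\e_5,\e_9$ --- together with the corresponding permutation of $\classtuple$ --- preserves the torsor equation~\eqref{eq:A3_torsor}, the coprimality conditions~\eqref{eq:A3_coprimality} (it is an automorphism of Figure~\ref{fig:A3_dynkin}) and the region $\mathcal{R}(B)$, and $\theta_0$ is invariant under the induced relabelling. Running the analysis behind $N_8(B)$ with $7$ and $8$ (and $1$ with $3$, $4$ with $6$) interchanged therefore splits $N_7(B)$ into the analogues of $N_{87}(B)$ and $N_{84}(B)$ and yields $N_7(B) = (2/\sqrt{|\Delta_K|})^8(h_K/\omega_K)^6\theta_0\,V_0^{>}(B) + O(B(\log B)^4\log\log B)$, where $V_0^{>}(B)$ is the part of $V_0(B)$ with $\abs{\e_7}>\abs{\e_8}$. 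Since $\abs{\e_8}\geq\abs{\e_7}$ and $\abs{\e_7}>\abs{\e_8}$ split the domain of $V_0(B)$ up to a set of measure zero, $V_0^{\geq}(B)+V_0^{>}(B)=V_0(B)$; adding the two formulas and using $N_{U,H}(B)=N_8(B)+N_7(B)$ finishes the proof.

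The most delicate point will be the passage to complex coordinates: one has to track the powers of $\pi$ and the weight $1/t_5=1/\abs{\e_5}$ exactly, and, above all, justify that the inner $\e_8$-integral is genuinely a function of the moduli $\abs{\e_1},\dots,\abs{\e_7}$ alone, so that the polar-coordinate substitution is legitimate --- this is precisely the content of the rotation computation already carried out for Lemma~\ref{lem:A3_first_summation_M1}. Making the symmetry reduction for $N_7(B)$ fully rigorous (checking that every individual summation step is invariant under $\sigma$, including which variables are summed and in which order) is routine but requires some care.
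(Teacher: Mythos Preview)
Your proposal is correct and follows essentially the same route as the paper: combine $V_{870}(B)+V_{840}(B)$ via the $t_4\lessgtr t_7$ split into a single $(t_1,\dots,t_7,\e_8)$-integral, use the rotation observation from the proof of Lemma~\ref{lem:A3_first_summation_M1} together with polar coordinates to pass to the complex integral with the condition $\abs{\e_8}\ge\abs{\e_7}$, and then invoke the $(1,4,7)\leftrightarrow(3,6,8)$ symmetry to obtain the companion formula for $N_7(B)$ with $\abs{\e_7}\ge\abs{\e_8}$ (your strict inequality differs only on a set of measure zero). The power-of-$\pi$ bookkeeping you outline matches the paper's $\pi\cdot\pi^{-7}=\pi^{-6}$ cancellation against the $\pi^6$ in Lemmas~\ref{lem:A3_completion_M11} and~\ref{lem:A3_completion_M12}.
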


\begin{proof}
  Similarly as in the proof of Lemma \ref{lem:A3_first_summation_M1},
  we note that $(\e_1, \ldots, \e_8) \in \mathcal{R}(B)$ holds if and
  only if $(|\e_1|, \ldots, |\e_7|, e^{i
    \arg((\e_3\e_6^2)/(\e_1\e_4^2\e_7))}\e_8) \in
  \mathcal{R}(B)$. Using polar coordinates, we obtain
  \begin{align*}
    V_{870}(B) + V_{840}(B) &= \pi\int\limits_{\substack{t_1, \ldots, t_7 \ge 1\text{, }\abs{\e_8}\geq t_7\\(\sqrt{t_1}, \ldots, \sqrt{t_7}, \e_8) \in \mathcal{R}(B)}}\frac{1}{t_5} \dd t_1 \cdots \dd t_7\dd \e_8\\
    &= \pi^{-6} \!\!\int\limits_{\substack{\abs{\e_1}, \ldots, \abs{\e_8} \ge 1\text{, }\abs{\e_8} \ge
        \abs{\e_7}\\(\e_1, \ldots, \e_8)\in \mathcal{R}(B)}}\!\frac{1}{\abs{\e_5}} \dd \e_1 \cdots \dd \e_8 =: \tilde{V}_8(B)\text.
  \end{align*}
  Therefore,
  \begin{equation*}
    N_8(B) = \pi^6\left(\frac{2}{\sqrt{|\Delta_K|}}\right)^8 \left(\frac{h_K}{\omega_K}\right)^6 \theta_0\tilde{V}_8(B) + O(B(\log B)^4\log \log B)\text.
  \end{equation*}
  For the computation of $N_7(B)$, we notice that our height and
  coprimality conditions are symmetric with respect to swapping the
  indices $(1, 4, 7)$ with $(3, 6, 8)$. This allows us to perform the
  first summation over $\e_7$ analogously to Lemma
  \ref{lem:A3_first_summation_M1}, the second summation over $\e_8$
  (resp. $\e_6$) analogously to Lemma
  \ref{lem:A3_second_summation_M11} (resp. Lemma
  \ref{lem:A3_second_summation_M12}), and the remaining summations analogously to Lemma \ref{lem:A3_completion_M11} (resp. Lemma \ref{lem:A3_completion_M12}). We obtain
  \begin{equation*}
    N_7(B) = \pi^6\left(\frac{2}{\sqrt{|\Delta_K|}}\right)^8 \left(\frac{h_K}{\omega_K}\right)^6 \theta_0\tilde V_7(B) + O(B(\log B)^4\log \log B)\text,
  \end{equation*}
  where
  \begin{equation*}
    \tilde V_7(B):= \pi^{-6}\int\limits_{\substack{\abs{\e_1}, \ldots, \abs{\e_8} \ge 1\text{, }\abs{\e_7} \geq \abs{\e_8}\\(\e_1, \ldots, \e_8)\in \mathcal{R}(B)}}\frac{1}{\abs{\e_5}} \dd \e_1 \cdots \dd \e_8\text.
  \end{equation*}
  The lemma follows immediately.
\end{proof}

\subsection{Proof of Theorem \ref{thm:a3_main}}

To compare the result of Lemma \ref{lem:A3_completion} with
Theorem \ref{thm:a3_main}, we introduce the conditions
\begin{align}
  &\abs{\e_1^2\e_3^{2}\e_4\e_5^2\e_6}\leq B,\label{eq:A3_comparison_1}\\
  &\abs{\e_1^2\e_3^{2}\e_4\e_5^2\e_6}\leq B \text{, }\abs{\e_1^{2}\e_3^{-1}\e_4^{4}\e_5^{-1}\e_6^{-2}}\le B,\label{eq:A3_comparison_2}\\
  &\abs{\e_1^2\e_3^{2}\e_4\e_5^2\e_6}\leq B \text{,
  }\abs{\e_1^{2}\e_3^{-1}\e_4^{4}\e_5^{-1}\e_6^{-2}}\le B \text{,
  }\abs{\e_1^{-1}\e_3^2\e_4^{-2}\e_5^{-1}\e_6^4}\le
  B.\label{eq:A3_comparison_3}
\end{align}

\begin{lemma}\label{lem:A3_predicted_volume}
  Let $\omega_\infty$ be as in Theorem \ref{thm:a3_main},
  $\mathcal{R}(B)$ as in
  \eqref{eq:A3_height_1}--\eqref{eq:A3_height_5}, and
  \begin{equation*}
    V_0'(B) := \int_{\substack{(\e_1, \dots, \e_8)\in \mathcal{R}(B)\\\abs{\e_1}, \abs{\e_3}, \ldots, \abs{\e_6} \geq 1\\\eqref{eq:A3_comparison_3}}} \frac{1}{\abs{\e_5}} \dd \e_1 \cdots \dd
      \e_8\text.
  \end{equation*}
  Then $\frac{1}{4320}\pi^6 \omega_\infty B(\log B)^5 = 4 V_0'(B)$.
\end{lemma}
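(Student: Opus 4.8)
The plan is to evaluate $V_0'(B)$ by reversing, in the three torsor coordinates that parametrize $S$ itself, the passage to the universal torsor; this factors the eight‑dimensional integral as a product of a five‑dimensional ``toric'' piece, producing $(\log B)^5$, and the three‑dimensional integral defining $\omega_\infty$, producing the factor $B$. \emph{Step 1 (change of variables).} Fix $\e_1,\e_3,\e_4,\e_5,\e_6\in\CC$ with $\abs{\e_j}\ge1$ (hence nonzero); choose a cube root $\mu$ of $\e_1^2\e_3^2\e_4\e_5^2\e_6$ and set $\lambda:=\e_1^2\e_3\e_4^2\e_5\mu^{-2}$, $\nu:=\e_1\e_3^2\e_5\e_6^2\mu^{-2}$. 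The diagonal $\CC$‑linear map $(\e_2,\e_7,\e_8)\mapsto(z_2,z_0,z_1):=(\mu\e_2,\lambda\e_7,\nu\e_8)$ has complex Jacobian $\mu\lambda\nu=\e_1\e_3\e_4\e_6$, hence real Jacobian $\abs{\e_1\e_3\e_4\e_6}$. Using the torsor equation \eqref{eq:A3_torsor} in the form $\e_9=-(\e_1\e_4^2\e_7+\e_3\e_6^2\e_8)/\e_5$, the same kind of monomial bookkeeping as in the proof of Lemma~\ref{lem:A3_passage_to_torsor} shows that under this substitution the five monomials occurring in \eqref{eq:A3_height_1}--\eqref{eq:A3_height_5} become $z_0z_2^2$, $z_1z_2^2$, $z_2^3$, $z_0z_1z_2$ and $-z_0z_1(z_0+z_1)$, i.e.\ the components $\psi_i(z_0,z_1,z_2)$ of \eqref{eq:A3_parameterization}. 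Thus $(\e_1,\dots,\e_8)\in\mathcal{R}(B)$ becomes the condition $\max_i\abs{\psi_i(z_0,z_1,z_2)}\le B$ on $(z_0,z_1,z_2)$ alone, while $\abs{\e_j}\ge1$ ($j\in\{1,3,4,5,6\}$) and the inequalities \eqref{eq:A3_comparison_3} — which are exactly $\abs{\mu^3}\le B$, $\abs{\lambda^3}\le B$, $\abs{\nu^3}\le B$ — involve only $\e_1,\e_3,\e_4,\e_5,\e_6$. Since the Jacobian $\abs{\e_1\e_3\e_4\e_6}^{-1}$ combines with the integrand $\abs{\e_5}^{-1}$ to give $\abs{\e_1\e_3\e_4\e_5\e_6}^{-1}$, the integral factors as $V_0'(B)=I(B)\,J(B)$, where
\[
  I(B):=\int_{\substack{\abs{\e_1},\abs{\e_3},\abs{\e_4},\abs{\e_5},\abs{\e_6}\ge1\\\abs{\mu^3},\abs{\lambda^3},\abs{\nu^3}\le B}}\frac{\dd\e_1\,\dd\e_3\,\dd\e_4\,\dd\e_5\,\dd\e_6}{\abs{\e_1\e_3\e_4\e_5\e_6}},\qquad J(B):=\int_{\max_i\abs{\psi_i(z_0,z_1,z_2)}\le B}\dd z_0\,\dd z_1\,\dd z_2 .
\]

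\emph{Step 2 (the archimedean factor).} Each $\psi_i$ is a cubic form, so the substitution $z_j\mapsto B^{1/6}z_j$ carries $\{\max_i\abs{\psi_i}\le1\}$ onto $\{\max_i\abs{\psi_i}\le B\}$ and multiplies $\dd z_0\,\dd z_1\,\dd z_2$ by $B$; hence $J(B)=B\,J(1)=\tfrac{\pi}{12}\omega_\infty B$ by the definition of $\omega_\infty$ in Theorem~\ref{thm:a3_main}.

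\emph{Step 3 (the toric factor).} In polar coordinates $\e_j=r_je^{i\varphi_j}$, writing $u_j:=\log\abs{\e_j}$, the five angular integrations give $\pi^5$, and after the scaling $u_j=(\log B)w_j$ one obtains $I(B)=\pi^5(\log B)^5\vol(\mathcal{P})$, where $\mathcal{P}\subset\RRnn^5$ (coordinates $w_1,w_3,w_4,w_5,w_6$) is cut out by
\[
  2w_1+2w_3+w_4+2w_5+w_6\le1,\quad 2w_1-w_3+4w_4-w_5-2w_6\le1,\quad -w_1+2w_3-2w_4-w_5+4w_6\le1 .
\]
The first inequality bounds $\RRnn^5$ inside a simplex $\Sigma$ of volume $\tfrac1{960}$; on $\Sigma$ the sum of the second and third left‑hand sides is at most twice the first, so their ``$>1$''‑regions are disjoint, and by the symmetry $w_1\leftrightarrow w_3$, $w_4\leftrightarrow w_6$ one gets $\vol(\mathcal{P})=\vol(\Sigma)-2\vol(\Sigma\cap\{\text{2nd LHS}>1\})$. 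A shear in $w_4$ reduces the latter volume to $\tfrac1{48}\int_{\Delta_4}(1-\sum_i v_i)\,\dd v=\tfrac1{48}\cdot\tfrac1{120}=\tfrac1{5760}$ over the standard $4$‑simplex $\Delta_4$, so $\vol(\mathcal{P})=\tfrac1{960}-\tfrac2{5760}=\tfrac1{1440}$ (equivalently $3\,\alpha(\tS)$, with $\alpha(\tS)=\tfrac1{4320}$ by \eqref{eq:alpha}).

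\emph{Conclusion and main obstacle.} Combining the three factors,
\[
  4\,V_0'(B)=4\cdot\pi^5(\log B)^5\cdot\tfrac1{1440}\cdot\tfrac{\pi}{12}\omega_\infty B=\tfrac1{4320}\pi^6\omega_\infty B(\log B)^5 ,
\]
as claimed. The main obstacle is Step~1: one must check carefully, using the torsor equation and the $\Pic(\tilde S_i)$‑grading of Section~\ref{sec:passage}, that the height monomials \eqref{eq:A3_height_1}--\eqref{eq:A3_height_5} really collapse onto the $\psi_i$ after the stated substitution and that the Jacobian is the monomial $\abs{\e_1\e_3\e_4\e_6}^{-1}$; once this is in place, Steps~2 and~3 are elementary.
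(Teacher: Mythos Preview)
Your proof is correct and follows essentially the same strategy as the paper: factor $V_0'(B)$ via the change of variables in $(\e_2,\e_7,\e_8)$ into an inner integral giving $\tfrac{\pi}{12}\omega_\infty B$ and an outer five-variable integral giving $\pi^5(\log B)^5/1440$. The only real difference is in Step~3: the paper obtains the outer integral by invoking Lemma~\ref{lem:alpha_integral} together with the known value $\alpha(\tS)=1/4320$ from \eqref{eq:alpha}, whereas you compute $\vol(\mathcal{P})=1/1440$ directly by inclusion--exclusion on the polytope (your disjointness claim $A_2+A_3\le 2$ on $\Sigma$ is indeed valid, since $A_2+A_3=w_1+w_3+2w_4-2w_5+2w_6\le 2(2w_1+2w_3+w_4+2w_5+w_6)\le 2$). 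Your route is slightly more self-contained, while the paper's route explains the factor $1/4320$ conceptually as $\alpha(\tS)$; otherwise the two arguments coincide, including the substitution and Jacobian in Step~1.
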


\begin{proof}
  Note that substituting $y_0 = \e_1\e_4^2\e_7$, $y_1 =
  \e_3\e_6^2\e_8$, $y_2 = \e_1\e_2\e_3\e_4\e_5\e_6$, $-(y_0+y_1) =
  \e_5\e_9$ (which are obtained using the substitutions in Section
  \ref{sec:passage}) in \eqref{eq:A3_parameterization} and cancelling
  out $\e_1\e_3\e_4^2\e_5\e_6^2$ gives $\Psi(\e_1, \ldots, \e_9)$ as
  in the proof of Lemma \ref{lem:A3_passage_to_torsor}. This motivates
  the following substitutions in $\omega_\infty$: Let $\e_1$, $\e_3$, $\e_4$, $\e_5$,
  $\e_6 \in \CC\smallsetminus\{0\}$ and $B \in \RR_{>0}$. Let $\e_2$,
  $\e_7$, $\e_8$ be complex variables. With $l :=
  (B\abs{\e_1\e_3\e_4^2\e_5\e_6^2})^{1/2}$, we apply the coordinate transformation
  $y_0 = l^{-1/3}\e_1\e_4^2\cdot \e_7$, $y_1 = l^{-1/3}\e_3\e_6^2
  \cdot \e_8$, $y_2 = l^{-1/3}\e_1\e_3\e_4\e_5\e_6\cdot \e_2$ of
  Jacobi determinant
  \begin{equation}
    \frac{\abs{\e_1\e_3\e_4\e_5\e_6}}{B}\frac{1}{\abs{\e_5}}
  \end{equation}
  and obtain
  \begin{equation}\label{eq:A3_complex_density_torsor}
    \omega_\infty = \frac{12}{\pi}\frac{\abs{\e_1\e_3\e_4\e_5\e_6}}{B} \int_{(\e_1, \ldots, \e_8)\in \mathcal{R}(B)}\frac{1}{\abs{\e_5}}\dd \e_2 \dd \e_7 \dd \e_8\text.
  \end{equation}

  An application of Lemma~\ref{lem:alpha_integral} with exchanged roles of $\e_2$ and
  $\e_6$ gives
  \begin{equation*}
    \alpha(\tS)(\log B)^5 = \frac{1}{3\pi^5} \int_{\substack{\abs{\e_1}, \abs{\e_3}, \ldots, \abs{\e_6} \geq 1\\\eqref{eq:A3_comparison_3}}} \frac{\dd
      \e_1\dd \e_3 \cdots \dd \e_6}{\abs{\e_1\e_3 \cdots \e_6}},
  \end{equation*}
  since $[-K_{\tS}] = [2E_1+3E_2+2E_3+E_4+2E_5+E_6]$, $[E_7] =
  [E_2+E_3-E_4+E_5+E_6]$, and $[E_8] =[E_1+E_2+E_4+E_5-E_6]$. By
  (\ref{eq:alpha}), we have $\alpha(\tS) = 1/4320$.
  
  The lemma follows by substituting this
  and \eqref{eq:A3_complex_density_torsor} in $\frac{1}{4320}\pi^6 \omega_\infty B(\log B)^5$.  
\end{proof}

To finish our proof, we compare $V_0(B)$ from Lemma
\ref{lem:A3_completion} with $V_0'(B)$ from Lemma
\ref{lem:A3_predicted_volume}. Let
\begin{align*}
  \mathcal{D}_0(B) &:= \{(\e_1, \ldots,
\e_8) \in \mathcal{R}(B) \mid \abs{\e_1}, \ldots,  \abs{\e_8} \geq 1\},\\
  \mathcal{D}_1(B) &:= \{(\e_1, \ldots,
\e_8) \in \mathcal{R}(B) \mid \abs{\e_1}, \ldots,  \abs{\e_8} \geq 1\text{, }\eqref{eq:A3_comparison_1}\},\\
  \mathcal{D}_2(B) &:= \{(\e_1, \ldots,
\e_8) \in \mathcal{R}(B) \mid \abs{\e_1}, \ldots,  \abs{\e_8} \geq 1\text{, }\eqref{eq:A3_comparison_2}\},\\
  \mathcal{D}_3(B) &:= \{(\e_1, \ldots,
\e_8) \in \mathcal{R}(B) \mid \abs{\e_1}, \ldots,  \abs{\e_8}\geq 1\text{, }\eqref{eq:A3_comparison_3}\},\\
  \mathcal{D}_4(B) &:= \{(\e_1, \ldots,
\e_8) \in \mathcal{R}(B) \mid \abs{\e_1}, \ldots, \abs{\e_6}, \abs{\e_8} \geq 1\text{, }\eqref{eq:A3_comparison_3}\},\\
  \mathcal{D}_5(B) &:= \{(\e_1, \ldots,
\e_8) \in \mathcal{R}(B) \mid \abs{\e_1}, \ldots, \abs{\e_6} \geq 1\text{, }\eqref{eq:A3_comparison_3}\},\\
  \mathcal{D}_6(B) &:= \{(\e_1, \ldots,
\e_8) \in \mathcal{R}(B) \mid \abs{\e_1}, \abs{\e_3},\ldots,\abs{\e_6} \geq 1\text{, }\eqref{eq:A3_comparison_3}\}.
\end{align*}
Moreover, let
\begin{equation*}
  V_i(B) := \int_{\mathcal{D}_i(B)}\frac{1}{\abs{\e_5}}\dd \e_1 \cdots \dd \e_8 \text.
\end{equation*}
Then clearly $V_0(B)$ is as in Lemma \ref{lem:A3_completion} and
$V_6(B) = V_0'(B)$. We show that, for $i=1, \dots, 6$, $V_i(B) -
V_{i-1}(B) = O(B(\log B)^4)$. This is clear for $i = 1$, since, by
\eqref{eq:A3_height_3} and $t_2 \geq 1$, we have $\mathcal{D}_1 = \mathcal{D}_0$. Moreover,
using Lemma \ref{lem:5.1_4}, (4), and \eqref{eq:A3_height_5} to bound the
integral over $\e_7$ and $\e_8$, we have
\begin{equation*}
  V_2(B) - V_1(B) \ll \int_{\substack{\abs{\e_1}, \ldots, \abs{\e_6}\geq 1\\\abs{\e_1^2\e_2^2\e_3\e_4^2\e_5}\leq B\\\abs{\e_1^2\e_3^{-1}\e_4^4\e_5^{-1}\e_6^{-2}} > B}}\frac{B^{2/3}}{\abs{\e_1\e_3\e_4^{2}\e_5\e_6^{2}}^{1/3}}\dd \e_1 \cdots \dd \e_6 \ll B(\log B)^4\text.
\end{equation*}
An entirely symmetric argument shows that $V_3(B) - V_2(B) \ll B(\log
B)^4$. Using Lemma \ref{lem:5.1_4}, (2), and \eqref{eq:A3_height_5} to
bound the integral over $\e_8$, we obtain
\begin{equation*}
  V_4(B) - V_3(B) \ll \int\limits_{\substack{\abs{\e_1},\ldots,\abs{\e_6}\geq 1\\\abs{\e_7} < 1\text{, }\eqref{eq:A3_comparison_3}\\ \abs{\e_1^2\e_2^3\e_3^2\e_4\e_5^2\e_6} \leq B}}\frac{B^{1/2}}{\abs{\e_3 \e_5 \e_6^2 \e_7}^{1/2}}\dd \e_1 \cdots \dd \e_7 \ll B(\log B)^4\text.  
\end{equation*}
Here, we first integrate over $\e_7$ and $t_2$. Again, an analogous
argument shows that $V_5(B) - V_4(B) \ll B(\log B)^4$. Finally, using
Lemma \ref{lem:5.1_4}, (4), and \eqref{eq:A3_height_5} to bound
the integral over $\e_7$ and $\e_8$, we have
\begin{equation*}
  V_6(B) - V_5(B) \ll \int\limits_{\substack{\abs{\e_1},\ldots,\abs{\e_6}\geq 1\\0 < t_2 < 1\text{, }\eqref{eq:A3_comparison_1}}}\frac{B^{2/3}}{\abs{\e_1\e_3\e_4^{2}\e_5\e_6}^{1/3}}\dd \e_1 \cdots \dd \e_6 \ll B(\log B)^4\text.
\end{equation*}
Thus, $V_0(B) = V_0'(B) + O(B(\log B)^4)$. Using Lemma~\ref{lem:A3_completion} and
Lemma~\ref{lem:A3_predicted_volume}, this implies
Theorem~\ref{thm:a3_main}.

\subsection{Over $\QQ$}\label{sec:A3_Q}

The following result is the analog over $\QQ$ of Theorem \ref{thm:a3_main}.

\begin{theorem}\label{thm:a3_main_Q}
  For the number of $\QQ$-rational points of bounded height on the
  subset $U$ obtained by removing the lines of $S$ and $B \ge 3$, we have
  \begin{equation*}
    N_{U,H}(B) = c_{S, H} B(\log
    B)^5 + O(B(\log B)^4\log \log B),
  \end{equation*}
  where
  \begin{equation*}
    c_{S, H} = \frac{1}{4320} \cdot \prod_p \left(1-\frac{1}{p}\right)^6\left(1+\frac{6}{p}+\frac{1}{p^2}\right) \cdot \omega_\infty 
  \end{equation*}
  with
  \begin{equation*}
    \omega_\infty = \frac{3}{2}\int_{\max\{|y_0y_2^2|, |y_1y_2^2|,
      |y_2^3|, |y_0y_1y_2|, |y_0y_1(y_0+y_1)|\}\leq 1}\dd y_0 \dd y_1
    \dd y_2.
  \end{equation*}
\end{theorem}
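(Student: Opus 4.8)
The plan is to mirror the proof of Theorem~\ref{thm:a3_main} over $\QQ$, replacing each imaginary-quadratic input from Sections~\ref{sec:arithmetic_functions}--\ref{sec:strategy_adjustion} by its classical counterpart in \cite{MR2520770}; indeed $S$ is one of the quartic del Pezzo surfaces over $\QQ$ covered by the basic form of that strategy. The first step is the passage to the universal torsor. Since Lemmas~\ref{lem:passage_start} and~\ref{lem:passage_step} hold over any number field, and Claims~\ref{claim:passage} and~\ref{claim:passage_step} are stated for $K=\QQ$ as well, the blow-up induction used in the proof of Lemma~\ref{lem:A3_passage_to_torsor} (with the model \eqref{eq:A3_parameterization} and the Cox-ring data of \cite{math.AG/0604194}) specializes to
\[
  N_{U,H}(B) = \frac{1}{\omega_\QQ^{6}}\,\bigl|M(B)\bigr|,\qquad \omega_\QQ = |\ZZ^\times| = 2,
\]
where $M(B)$ is the set of $(\e_1,\dots,\e_9)\in\ZZ^9$ with $\e_1,\dots,\e_8\neq 0$ satisfying the torsor equation \eqref{eq:A3_torsor}, the coprimality conditions \eqref{eq:A3_coprimality} (ordinary coprimality of integers), and the height conditions \eqref{eq:A3_height_1}--\eqref{eq:A3_height_5} read with the usual real absolute value and $u_\classtuple = 1$; the sum over $\classtuple \in \classrep^6$ is trivial because $h_\QQ = 1$.

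The second step is the chain of summations, carried out exactly as in Section~\ref{sec:A3}. As there, I would split according to $|\e_8|\ge|\e_7|$ versus $|\e_7|>|\e_8|$ (in place of $\N\eI_8\ge\N\eI_7$), perform the first summation over $\e_8$, and over $\e_7$ by the symmetry swapping $(1,4,7)\leftrightarrow(3,6,8)$, via \cite[Proposition~2.4]{MR2520770} (the $\QQ$-version of Proposition~\ref{prop:first_summation}). After the further dichotomy $|\e_7|>|\e_4|$ versus $|\e_4|\ge|\e_7|$, the second summation over $\e_7$ respectively $\e_4$ is handled by \cite[Proposition~3.9, Proposition~3.10]{MR2520770} (the $\QQ$-version of Proposition~\ref{prop:second_summation}), after which the passage from elements to ideals is immediate (class number one), and the remaining summations over $\e_1,\dots,\e_6$ are handled by \cite[Proposition~4.3, Remark~4.4]{MR2520770} (the $\QQ$-version of Proposition~\ref{prop:4.3}); the volume estimates feeding these steps are the real-variable versions of Lemmas~\ref{lem:5.1_4} and~\ref{lem:5.1_1}, i.e.\ \cite[Lemma~5.1]{MR2520770}. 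The error terms are estimated exactly as in the proofs of Lemmas~\ref{lem:A3_first_summation_M1}--\ref{lem:A3_completion} and contribute $O(B(\log B)^4\log\log B)$. Since $\rho_\QQ = 1$ and the density of $\ZZ$ in $\RR$ is $1$ (in place of the density $2/\sqrt{|\Delta_K|}$ of a fractional ideal of an imaginary quadratic field in $\CC$), the $\QQ$-analog of Lemma~\ref{lem:A3_completion} reads
\[
  N_{U,H}(B) = \frac{1}{\omega_\QQ^{6}}\,\theta_0\,V_0(B) + O(B(\log B)^4\log\log B),
\]
with $\theta_0$ as in \eqref{eq:theta_0} and $V_0(B)$ the integral of $|\e_5|^{-1}$ over the set of real $(\e_1,\dots,\e_8)$ with $|\e_1|,\dots,|\e_8|\ge 1$ lying in the (real) region $\mathcal{R}(B)$.

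The final step is to identify $V_0(B)$ with the archimedean factor. The real analog of Lemma~\ref{lem:alpha_integral} expresses $\alpha(\tS)(\log B)^5$ as a real integral over a polytope-type domain $R_1(B)$, and the substitution $y_0 = l^{-1/3}\e_1\e_4^2\e_7$, $y_1 = l^{-1/3}\e_3\e_6^2\e_8$, $y_2 = l^{-1/3}\e_1\e_3\e_4\e_5\e_6\e_2$ with $l := B\,|\e_1\e_3\e_4^2\e_5\e_6^2|$ turns $\omega_\infty$ into the real counterpart of \eqref{eq:A3_complex_density_torsor}, the normalizing constant being $3/2$ in place of $12/\pi$. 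Combining these as in Lemma~\ref{lem:A3_predicted_volume} and truncating to the conditions \eqref{eq:A3_comparison_3} by the real versions of the volume bounds (each difference $V_i(B) - V_{i-1}(B) \ll B(\log B)^4$) gives $V_0(B) = \tfrac{1}{4320}\,\omega_\QQ^{6}\,\omega_\infty\,B(\log B)^5 + O(B(\log B)^4)$, whence $c_{S,H} = \tfrac{1}{4320}\,\theta_0\,\omega_\infty$ with $\theta_0 = \prod_p(1-1/p)^6(1+6/p+1/p^2)$; here $\alpha(\tS) = \alpha(S_0)/|W| = \tfrac{1}{180}\cdot\tfrac{1}{4!} = \tfrac{1}{4320}$ by \eqref{eq:alpha}, unchanged from the imaginary-quadratic case. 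I do not expect a substantial obstacle: the $\QQ$-case is strictly easier than the imaginary-quadratic one, the Dedekind--Weber arguments and the element-versus-ideal bookkeeping becoming vacuous; the only step requiring genuine care is pinning down the archimedean normalization $3/2$, i.e.\ matching the real analog of the substitution in Lemma~\ref{lem:A3_predicted_volume} with Peyre's definition of $\omega_\infty$.
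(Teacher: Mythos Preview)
Your proposal is correct and follows essentially the same route as the paper's own proof: specialize the torsor parameterization to $\QQ$, run the same dichotomies and summations using the rational analogs from \cite{MR2520770}, and match the archimedean constant via the real version of Lemma~\ref{lem:alpha_integral}. One small correction: the second summation over $\e_7$ (resp.\ $\e_4$) is handled in \cite{MR2520770} by \cite[Lemma~3.1, Corollary~6.9]{MR2520770}, not by \cite[Propositions~3.9--3.10]{MR2520770}; the latter are the $\QQ$-analogs of Proposition~\ref{prop:3.9} (one step of the \emph{further} summations), while the former correspond to Proposition~\ref{prop:second_summation}. The paper also records the second-summation error bound over $\QQ$ slightly differently (a $\sup_{|\e_7|>|\e_4|}\tilde V_8$ estimate), but your outlined estimates lead to the same $O(B(\log B)^3)$ at that stage.
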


\begin{proof}
This is similar to the case of imaginary quadratic $K$ above, so we shall be
very brief.

The parameterization of rational points by integral points on the
universal torsor is as in Lemma~\ref{lem:A3_passage_to_torsor}, here
and everywhere below with $\omega_\QQ=2$, $h_K=1$ so that $\classrep$
contains only the trivial ideal class, with $\OO_j=\ZZ$ for $j=1,
\dots, 9$, $\OO_{1*}=\dots=\OO_{8*}=\ZZ_{\ne 0}$ and $\OO_{9*}=\ZZ$, and
with $\abs{\cdot}$ replaced by the ordinary absolute value $|\cdot|$
on $\RR$ in (\ref{eq:A3_height}).

The proof of the asymptotic formula proceeds as in the imaginary
quadratic case, but using the original techniques over $\QQ$ from
\cite{MR2520770}. In the statements of the intermediate results, we
must always replace $2/\sqrt{|\Delta_K|}$ by $1$, complex by real
integration, $\pi$ by $2$, and $\sqrt{t_i}$ by $t_i$. The computation of
the main terms is always analogous, but less technical. The estimation
of the error terms is often analogous and sometimes easier.

The main changes are as follows. For the first summation, we apply
\cite[Proposition~2.4]{MR2520770}. The error term
$2^{\omega(\e_2)+\omega(\e_1\e_2\e_3\e_4)}$ can be estimated as the
second summand of the error term in
Lemma~\ref{lem:A3_first_summation_M1}.

For the second summation over $\e_7$, we can apply \cite[Lemma~3.1,
Corollary~6.9]{MR2520770}. The error term is
\begin{align*}
  &\sum_{\e_1, \dots, \e_6} 2^{\omega(\e_1\e_2\e_3\e_5\e_6)}
  \sup_{|\e_7|>|\e_4|} \tilde{V}_8(\e_1, \dots, \e_7;B) \ll
  \sum_{\e_1, \dots, \e_6}
  \frac{2^{\omega(\e_1\e_2\e_3\e_5\e_6)}B^{1/2}\log B}{|\e_3|^{1/2}|\e_4|^{1/2}|\e_5|^{1/2}|\e_6|}\\
  \ll{}& \sum_{\e_2, \dots, \e_6} \frac{2^{\omega(\e_2\e_3\e_5\e_6)}B
    (\log
    B)^2}{|\e_2|^{5/4}|\e_3|^{5/4}|\e_4|^{3/2}|\e_5|^{5/4}|\e_6|^{5/4}}
  \ll B (\log B)^2
\end{align*}
where (using $|\e_4|<|\e_7|$)
\begin{equation*}
  |\e_1| \le \left(\frac{B}{|\e_2^2\e_3\e_4^3\e_5|}\right)^{1/4}\left(\frac{B}{|\e_2^3\e_3^2\e_4\e_5^2\e_6|}\right)^{1/4}.
\end{equation*}
For the second summation over $\e_4$, the computation is very similar.

The remaining summations and the completion of the proof of
Theorem~\ref{thm:a3_main_Q} remain essentially unchanged.
\end{proof}

\bibliographystyle{alpha}

\bibliography{counting_imaginary_quadratic_points}

\end{document}